\documentclass[12pt, reqno]{amsart}
\usepackage[english]{babel}
\usepackage[utf8]{inputenc}
\usepackage{lipsum}
\usepackage{dsfont}
\usepackage{multirow}
\usepackage{mathrsfs}
\usepackage{stix}
\usepackage{fullpage}
\usepackage{mathtools}
\usepackage{amsmath}
\usepackage{bbm}
\usepackage{amsfonts}
\usepackage{tikz}
\usepackage{tikz-cd}
\usetikzlibrary{decorations.pathreplacing,angles,quotes}
\usetikzlibrary{arrows,chains,positioning,scopes,quotes}
\newtheorem{defi}{Definition}[section]

\newtheorem*{conjecture}{Conjecture}
\newtheorem{lema}[defi]{Lemma}
\newtheorem{teo}[defi]{Theorem}
\newtheorem{rem}[defi]{Remark}

\newtheorem*{rem*}{Remark}

\newcommand{\C}{\mathbb{C}}
\newcommand{\Q}{\mathbb{Q}}
\newcommand{\K}{\mathbb{K}}
\newcommand{\R}{\mathbb{R}}
\newcommand{\N}{\mathbb{N}}
\newcommand{\Z}{\mathbb{Z}}

\newcommand{\1}{\mathds{1}}

\DeclareMathOperator{\spn}{\mathrm{span}}

\newcommand{\interior}[1]{%
  {\kern0pt#1}^{\mathrm{o}}%
}

\newcommand{\esp}{\text{  }}

\usepackage[colorlinks=false]{hyperref}
\renewcommand\eqref[1]{(\ref{#1})} 
\begin{document}

\title[Unitary dual and matrix coefficients of compact nilpotent $p$-adic Lie groups with dimension $d\leq5$]
 {Unitary dual and matrix coefficients of compact nilpotent $p$-adic Lie groups with dimension $d\leq5$ }

\author{
  J.P. Velasquez-Rodriguez
}

\newcommand{\Addresses}{{
  \bigskip
  \footnotesize

 J.P. Velasquez-Rodriguez, \textsc{Departamento de Matematicas, Universidad del Valle, Cali-Colombia}\par\nopagebreak
  \textit{E-mail address:} \texttt{velasquez.juan@correounivalle.edu.co}}

}


\thanks{The author is supported by the grace of God, a.k.a. The Truth, and the magnanimity of his majesty Lord Bruce the First, ruler of The Lands Between }

\subjclass[2020]{Primary; 22E35, 58J40 ; Secondary: 20G05, 35R03, 42A16. }

\keywords{Pseudo-differential operators, p-adic Lie groups, representation theory, compact groups, Vladimirov–Taibleson operator}

\date{\today}
\begin{abstract}
Let $p>2$ be a prime number, and let $\mathbb{G}$ be a compact nilpotent $p$-adic Lie group with nilpotency class $\mathscr{N}<p$. In this note we calculate explicitly the unitary dual and the matrix coefficients of every compact nilpotent $p$-adic Lie group with dimension less or equal than $5$. As an application, we provide the corresponding spectral theorem for the Vladimirov sub-Laplacian, and show how this operator provides a non-trivial example of a globally hypoelliptic operator on compact nilpotent $p$-adic Lie groups.
\end{abstract}
\maketitle
\tableofcontents
\section{Introduction}
\subsection{Motivation} Noncommutative groups have been very important mathematical objects since their introduction, specially after physicist started viewing them as the appropriate language to talk about \emph{transformations and symmetries of physical systems}. For instance, Marius Sophus Lie initially introduced what we call now \emph{Lie groups} to study continuous symmetries related to differential equations. He aimed to understand how these symmetries could be represented, which led to significant advancements in both mathematics and physics. Lie groups allow for the analysis of transformations that preserve the structure of differential equations, enabling the classification and solution of these equations through the concept of symmetry. By applying the tools of differential calculus to groups, Lie could explore how these symmetries interact and how they could simplify complex problems. 
z
The archetypal example of the principle Lie discovered is the \emph{heat equation} on the interval $[0,1]$, which can be thought as a bar of a certain material with length one, and with periodic boundary conditions: $f(0) = f(1)$. This equation was fully solved by Joseph Fourier in his seminal work \textit{Théorie Analytique de la Chaleur} \cite{Fourier2009}, and it marks a pivotal moment in the history of both mathematics and physics, as this groundbreaking book presents the first comprehensive mathematical treatment of solutions to the heat diffusion equation
\[
\frac{\partial f}{\partial t} = \mathscr{L} f, \quad \mathscr{L} := \frac{d^2}{dx^2},
\]where the operator $\mathscr{L}$ is the one-dimensional Laplacian. This equation serves as a fundamental model for understanding how heat disperses through a given medium, and Fourier's analysis introduces a variety of special series and integrals to articulate its solutions. The implications of Fourier's work were profound; he established that real-valued functions, which can be interpreted as representing heat distributions across a physical object, could be decomposed into sums of simpler, more manageable functions: specifically, sines and cosines. These functions turn out to be a complete system of eigenfunctions of the operator $\mathscr{L}$, which means they can serve as the fundamental building blocks for constructing more complicated functions. Fourier's concept extends naturally into the realm of complex-valued functions, which can similarly be written as sums of complex exponentials $\mathscr{e}_k(\theta) = e^{2 \pi i k \theta}$, where $k$ is an integer number. This way of expressing functions is not merely an isolated case; it exemplifies a broader mathematical phenomenon that would not be fully understood until centuries later, particularly due to the advancements made in group theory and the study of topological vector spaces. 

From a more elaborated point of view, functions defined on $[0,1]$ with periodic boundary conditions can be identified with functions on the unit circle
\[
\mathbb{T} := \{ z \in \mathbb{C} : z z^* = 1 \},
\]where $z^*$ indicates the complex conjugate of $z$. This set is actually a topological group with a differential structure, i.e., a Lie group, and its algebraic structure is important because, by the Peter-Weyl theorem, the set 

\[
\widehat{\mathbb{T}} := \{\mathscr{e}_k : k \in \mathbb{Z}\},
\]
which is a complete collection of representatives of equivalence classes of unitary irreducible representations of $\mathbb{T}$, serves as an orthonormal basis for the space \(L^2(\mathbb{T}) \cong L^2 [0,1]\). In particular, if we only consider real-valued functions, we recover Fourier's original ideas. But if instead we choose to consider more general noncommutative spaces, the extensions and generalizations of Fourier's techniques are collectively referred to as \emph{Fourier analysis}. This field encompasses a great deal of examples where functions defined on certain topological spaces can be decomposed into sums of "simpler" functions. When one focuses on spaces endowed with additional algebraic structure, which are  spaces rich in symmetries, there are two distinct pathways for exploration:

\begin{enumerate}
    \item \textbf{Studying locally connected spaces:} A natural progression from the theory on the torus $\mathbb{T}$ involves investigating compact Lie groups. This topic is meticulously addressed in the work of \cite{Ruzhansky2010}, which lays out the foundational elements of a comprehensive theory of pseudo-differential operators applicable to compact Lie groups. Understanding the representation theory of these groups is a crucial aspect, though it often presents significant challenges. The complexity of the representation theory can vary greatly depending on the specific group in question. For example, the representation theory for $\mathrm{SU}(2)$ is explored in \cite{Ruzhansky2010}, while spin groups are discussed in \cite{Cerejeiras2023}.
    
    \item \textbf{Studying totally disconnected spaces:} In the domain of totally disconnected spaces, a rich and intricate theory has developed concerning Fourier analysis on non-Archimedean local fields and their corresponding rings of integers. The first major generalization in this context involves examining what are known as \emph{locally compact abelian Vilenkin groups}. These groups are characterized by specific sequences of compact open subgroups, and further details can be found in references such as \cite{pseudosvinlekinsaloff}. Much like the locally connected case, when transitioning to non-commutative groups, we must consider compact $p$-adic Lie groups. These groups have predominantly been analyzed from an algebraic perspective within the literature \cite{Boyarchenko2008, Howe1977KirillovTF}, as well as within the broader category of \emph{compact Vilenkin groups}.
\end{enumerate}

In the present paper, we choose to pursue the second pathway, initiating our exploration with compact nilpotent $p$-adic Lie groups. Over the past fifteen years, a comprehensive symbol calculus for compact (noncommutative) groups has been developed by researchers such as Ruzhansky, Turunen, and Wirth. Although their primary focus has been on compact Lie groups over the reals \cite{Ruzhansky2010}, this new framework serves as a noncommutative extension of the classical Kohn–Nirenberg quantization. This quantization is an essential tool for analyzing differential operators on groups and presents several advantages compared to Hörmander's principal calculus or other local methods. For a given Lie group \( G \), this innovative approach leverages the representation theory and the corresponding harmonic analysis to establish a global Fourier transform, which effectively diagonalizes differential operators, including the Laplace-Beltrami operator. This is, in essence, a far-reaching generalization of the original ideas proposed by Fourier.

Our motivation to extend the ideas known on real Lie groups to the t.d. case is deeply rooted in a principle attributed to Harish-Chandra, known as the "Lefschetz principle." This principle asserts that \emph{real groups, $p$-adic groups, and automorphic forms—corresponding to both archimedean and non-archimedean local fields, as well as number fields—should be regarded on equal footing, and that ideas and results emerging from one of these categories should be transferable to the other two} \cite{HarisLanglands}. Within this framework, we aim to extend certain techniques from harmonic analysis on real groups to the realm of compact $p$-adic Lie groups. These topological groups have primarily been examined in the literature through an algebraic lens. Our main focus will be on Fourier analysis, particularly investigating the Vladimirov-Taibleson operator, which has a significant parallel with the fractional Laplacian in real analysis, within the context of non-commutative groups defined over the $p$-adic integers.

\newpage

To further elucidate what we mean by the phrase \emph{the extension of some techniques from harmonic analysis on Lie groups}, it is crucial to understand that analysis on Lie groups fundamentally relies on the differential structure of the group as a smooth manifold. To date, to the best of the author’s knowledge, there exists no comprehensive theory of derivative, differential, or pseudo-differential operators specifically on $p$-adic manifolds, nor on the broader class of non-commutative locally profinite groups. Despite the rich theoretical landscape surrounding $p$-adic Lie groups, algebraic groups, and $\K$-analytic manifolds, significant gaps remain in our understanding. In recent years, however, researchers like P.E. Bradley and collaborators have begun the study of integral operators operators (which can be regarded as pseudo-differential) that remain invariant under finite group actions, heat equations on Mumford curves, and $p$-adic Laplacians \cite{Bradley3, Bradley2, bradley1}. Beyond this, it is worth noting that only Kochubei has examined equations defined on a $p$-adic ball \cite{Kochubei2018}, and a limited number of works have addressed general ultrametric spaces \cite{Bendikov2014}. A common thread that emerges within the $p$-adic side of the theory is the Vladimirov–Taibleson operator, which provides a definition of ``differentiability'' for functions defined on ultrametric spaces. Most pseudo-differential equations in the literature, especially those aimed at real-world applications, are expressed in terms of this operator or a similar one \cite{Khrennikov2018}. This operator is commonly regarded as a kind of fractional Laplacian for complex-valued functions on totally disconnected spaces \cite{Kochubei2023}, particularly because, in some cases, it actually coincides with the isotropic Laplacian associated with a certain ultrametric in general ultrametric spaces, as demonstrated by the works of A. Bendikov, A. Grigoryan, C. Pittet, and W. Woess.

\begin{defi}\normalfont\label{defidualcompact}
\,
\begin{itemize}
    \item We will use the symbols $a \lesssim b$ and $a \gtrsim b$ to indicate that the quantity $a$ is, respectively, less or equal, or greater or equal than a constant times the quantity $b$. We use the notation $a \asymp b$ to indicate that $a \lesssim b$ and $a \gtrsim b$ hold simultaneously.
    \item Let $(G, \mu)$ be a measure space. For a measurable set $A \subset G$, we will denote by $\mu(A)$ the measure of the set $A$, and the symbol $\1_A$ will be used to denote the indicator function on $A$. 
    \item Let $G$ be a compact topological group. We will denote by $\mathrm{Rep}(G)$ the collection of all (equivalent classes of) continuous, unitary, finite-dimensional representations of $G$. The symbol $\widehat{G}$ will be used to denote the \emph{unitary dual} of $G$, that is, the collection of all (equivalence classes of) continuous unitary irreducible representations of $G$. Remember how any element $[\pi]$ of $\mathrm{Rep}(G)$ can be written as a direct sum of finitely many elements of $\widehat{G}$.
\end{itemize}      
\end{defi}

\subsection{Our setting}
This paper focuses on the representation theory of compact nilpotent $p$-adic Lie groups with dimensions up to 5. In his seminal work \cite{Dixmier1958}, Dixmier established that any nilpotent Lie algebra with dimension $d\leq 5$ is isomorphic to one of the following:
\begin{itemize}
    \item \textbf{Dimension 1:} $\mathfrak{g}_1$.
    \item \textbf{Dimension 2:} $(\mathfrak{g}_1)^2$.
    \item \textbf{Dimension 3:} $(\mathfrak{g}_1)^3$, $\mathfrak{h}_1 $.
    \item \textbf{Dimension 4:} $(\mathfrak{g}_1)^4$, $\mathfrak{h}_{1} \times \mathfrak{g}_1$, $\mathfrak{b}_4$.
    \item \textbf{Dimension 5:} $(\mathfrak{g}_1)^5$, $\mathfrak{h}_{1} \times (\mathfrak{g}_1)^2$, $\mathfrak{b}_4 \times \mathfrak{g}_1$, $ \mathfrak{g}^{5,1}= \mathfrak{h}_{2 } $, $\mathfrak{g}^{5,2}$, $\mathfrak{g}^{5,3}$, $\mathfrak{g}^{5,4}$, $\mathfrak{g}^{5,5}$ ,$\mathfrak{g}^{5,6}$.
\end{itemize}
The above listed Lie algebras are defined by the following commutation relations: 
\begin{itemize}
    \item \textbf{Dimension 3:} $$\mathfrak{h}_1 \, : \quad [X_1 , X_2]=[X_3].$$
    \item \textbf{Dimension 4:} $$\mathfrak{b}_4 \, : \quad [X_1 , X_2]=X_3, \quad [X_1, X_3]=X_4.$$
    \item \textbf{Dimension 5:} $$\mathfrak{g}^{5,1}=\mathfrak{h}_2 \, : \quad [X_1 , X_3]=X_5, \quad [X_2, X_4]=X_5.$$
    $$\mathfrak{g}^{5,2} \, : [X_1,X_2] = X_4 , \quad [X_1 , X_3]= X_5.$$
    $$\mathfrak{g}^{5,3} \, : [X_1,X_2] = X_4 , \quad [X_1 , X_4]= X_5, \quad [X_2 , X_3]=X_5.$$
    $$\mathfrak{g}^{5,4} \, : [X_1,X_2] = X_3 , \quad [X_1 , X_3]= X_4, \quad [X_2 , X_3]=X_5.$$
    $$\mathfrak{g}^{5,5} \, : [X_1,X_2] = X_3 , \quad [X_1 , X_3]= X_4, \quad [X_1 , X_4]=X_5.$$
    $$\mathfrak{g}^{5,6} \, : [X_1,X_2] = X_3 , \quad [X_1 , X_3]= X_4, \quad [X_1 , X_4]=X_5, \quad [X_2, X_3] = X_5.$$
\end{itemize}
In his paper, Dixmier characterized the unitary dual of all Lie groups over $\R$ that can be represented as the exponential image of the aforementioned Lie algebras, see \cite{Dixmier1958}. Here in this paper, we aim to achieve a similar analysis for the compact $\Z_p$-Lie modules derived from any of the specified commutation relations, including the case of $\mathfrak{g}^{5,4}$, which is referred to as the \emph{Cartan algebra} by some authors. We have already begun this endeavor by explicitly calculating the unitary dual of the Heisenberg groups $\mathbb{H}_d(\Z_p)$, $d \in  \N$, and the Engel group $\mathcal{B}_4(\Z_p)$, which are the exponential images of the $\Z_p$-Lie modules $\mathfrak{h}_{d}$ and $\mathfrak{b}_4$, respectively. See \cite{velasquezrodriguez2024Engelspectrumvladimirovsublaplaciancompact, velasquezrodriguez2024spectrumvladimirovsublaplaciancompact} for the full calculations. In this paper, we focus exclusively on the $5$-dimensional algebras $\mathfrak{g}^{5,i}$, where $2 \leq i \leq 6$, and the unitary irreducible representations of the compact groups obtained as their exponential images. Our main objective is to derive explicit expressions for the unitary irreducible representations and the matrix coefficients of $5$-dimensional compact nilpotent $p$-adic Lie groups, which will later aid in the study of pseudo-differential operators. In particular, we will focus on a specific operator known as \emph{the Vladimirov sub-Laplacian}, which we introduced in \cite{velasquezrodriguez2024spectrumvladimirovsublaplaciancompact, velasquezrodriguez2024Engelspectrumvladimirovsublaplaciancompact}. This operator shares some similarities with the sub-Laplacian on graded Lie groups over $\R$, or it can also be considered close to a H{\"o}rmander's operator. 

Compact nilpotent $p$-adic Lie groups have been extensively studied from an algebraic standpoint, particularly within the framework of pro-$p$ groups or general profinite groups. In this work, we treat them as the unit ball $\Z_p^d$ of $\Q_p^d$, endowed with a non-commutative operation denoted by "$\star$", which gives $\Z_p^d$ the structure of a non-commutative (compact) topological group. While we explore their algebraic properties to some extent, our primary focus here is the analysis on these groups, specially the use of representation theory to study harmonic analysis and pseudo-differential operators. For instance, to illustrate our approach, consider the $(2d+1)$-dimensional Heisenberg group over $\mathbb{Z}_p$, denoted as $\mathbb{H}_d(\mathbb{Z}_p)$, which can be defined as $\Z_p^{2d+1}$ equipped with the non-commutative operation  $$(\mathbf{x}, \mathbf{y},z) \star (\mathbf{x}',\mathbf{y}', z') := (\textbf{x} + \textbf{x}' , \textbf{y} + \textbf{y}' , z + z' + \mathbf{x} \cdot \mathbf{y}').$$
For this group, Theorem \ref{TeoRepresentationsHd} was established in \cite{velasquezrodriguez2024spectrumvladimirovsublaplaciancompact}, which is the following characterization of the \emph{unitary dual}, that is, the collection of all (equivalence classes of) unitary irreducible representations of $\mathbb{H}_d$.  
 \begin{rem}
    In this paper, we will identify each equivalence class $\lambda$ in $\widehat{\Z}_p \cong \Q_p / \Z_p$, with its associated representative in the complete system of representatives $$\{1\} \cup \big\{ \sum_{k =1}^\infty \lambda_k p^{-k} \, : \, \, \text{only finitely many $\lambda_k$ are non-zero.} \big\}.$$Similarly, every time we consider an element of the quotients $\Q_p / p^{-n} \Z_p$ it will be chosen from the complete system of representatives   $$\{1\} \cup \big\{ \sum_{k =n+1}^\infty \lambda_k p^{-k} \, : \, \text{only finitely many $\lambda_k$ are non-zero} \big\}.$$Also, given any $p$-adic number $\lambda$, we will denote by $\vartheta(\lambda)$ the $p$-adic valuation of $\lambda \in \Q_p$.
\end{rem}

\begin{teo}\label{TeoRepresentationsHd}
Let $p>2$. Let $\mathbb{H}_d (\Z_p)$, or simply $\mathbb{H}_d$ for short, be the $(2d+1)$-dimensional Heisenberg group over the $p$-adic integers, and let us denote by $\vartheta:\Q_p \to \Z$ the $p$-adic valuation over $\Q_p$. Let us denote by $\widehat{\mathbb{H}}_d$ the unitary dual of $\mathbb{H}_d$, i.e., the collection of all equivalence classes of unitary irreducible representations of $\mathbb{H}_d$. Then we can identify $\widehat{\mathbb{H}}_d$ with the following subset of $\widehat{\Z}_p^{2d+1} \cong \Q_p^{2d+1}/\Z_p^{2d+1}$: $$\widehat{\mathbb{H}}_d = \{(\xi , \eta, \lambda ) \in  \widehat{\Z}_p^{2d+1} \, : (\xi , \eta) \in \Q_p^{2d} / p^{\vartheta(\lambda)} \Z_p^{2d} \}.$$Moreover, each non-trivial representation $[\pi_{(\xi, \eta, \lambda)}] \in \widehat{\mathbb{H}}_d $ can be realized in the finite dimensional sub-space $\mathcal{H}_\lambda$ of $L^2(\Z_p^d)$ defined as
$$\mathcal{H}_\lambda := \spn_\C \{ \varphi_h \, : \, h \in \Z_p^d / p^{-\vartheta(\lambda)}\Z_p^d  \}, \, \, \, \varphi_h (u) := |\lambda|_p^{d/2} \1_{h + {p^{-\vartheta(\lambda)}} \Z_p^d} (u), \, \, \dim_{\C}(\mathcal{H}_\lambda) =|\lambda|_p^d,$$
where the representation acts on functions $\varphi \in \mathcal{H}_\lambda$ according to the formula 
$$\pi_{(\xi , \eta , \lambda)}(\mathbf{x}, \mathbf{y} ,z) \varphi (u) := e^{2 \pi i \{\xi \cdot \mathbf{x} + \eta \cdot \mathbf{y} + \lambda (z +  u \cdot \mathbf{y}) \}_p} \varphi (u + \mathbf{x}), \, \, \, \, \varphi \in \mathcal{H}_\lambda.$$With this explicit realization, and by choosing the basis $\{ \varphi_h\,: \, h \in \Z_p^d / {p^{-\vartheta(\lambda)}}\Z_p^d\}$ for each representation space, the associated matrix coefficients are given by $$(\pi_{(\xi , \eta , \lambda)})_{h h'}(\mathbf{x},\mathbf{y}, z)=e^{ 2 \pi i \{ \xi \cdot \mathbf{x} +  \eta \cdot \mathbf{y} + \lambda(z + h' \cdot \mathbf{y} )  \}_p} \1_{h - h' + {p^{-\vartheta(\lambda)}}  \Z_p^d}(\mathbf{x}) .$$Sometimes we will use the notation $$\mathcal{V}_{(\xi , \eta , \lambda)}:= \mathrm{span}_\C \{ (\pi_{(\xi , \eta , \lambda)})_{h h'} \, : \, h,h' \in \Z_p^d / {p^{-\vartheta(\lambda)}} \Z_p^d \}.$$
\end{teo}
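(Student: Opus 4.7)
The plan is to apply Mackey's method of induced representations to the abelian normal subgroup $N := \{(\mathbf{0},\mathbf{y},z) : \mathbf{y}\in\Z_p^d,\,z\in\Z_p\}\subset\mathbb{H}_d$. Its Pontryagin dual is $\widehat{N}\cong (\Q_p^d/\Z_p^d)\times(\Q_p/\Z_p)$, parametrized by $(\eta,\lambda)$, and a direct computation of the conjugation $(\mathbf{x},\mathbf{0},0)^{-1}\star(\mathbf{0},\mathbf{y},z)\star(\mathbf{x},\mathbf{0},0)=(\mathbf{0},\mathbf{y},z-\mathbf{x}\cdot\mathbf{y})$ shows that $\mathbb{H}_d/N\cong\Z_p^d$ acts on $\widehat{N}$ by $\mathbf{x}\cdot(\eta,\lambda)=(\eta-\lambda\mathbf{x},\lambda)$. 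Hence $\lambda$ is fixed on each orbit, the orbit of $\eta$ in $\Q_p^d/\Z_p^d$ is the image of the coset $\eta+p^{\vartheta(\lambda)}\Z_p^d$ (so the orbit space for fixed $\lambda$ is $\Q_p^d/p^{\vartheta(\lambda)}\Z_p^d$), and the stabilizer of $\chi_{(\eta,\lambda)}$ in $\Z_p^d$ is $p^{-\vartheta(\lambda)}\Z_p^d$. Combined with the still-to-be-determined character of the little group, this already reproduces the indexing set $\widehat{\mathbb{H}}_d$ given in the statement.

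For $\lambda\neq 0$, I would introduce the polarization subgroup $M:=p^{-\vartheta(\lambda)}\Z_p^d\times\Z_p^d\times\Z_p$. Its commutator subgroup lies in $\{(\mathbf{0},\mathbf{0},z) : z\in p^{-\vartheta(\lambda)}\Z_p\}$, which is killed by $e^{2\pi i\{\lambda z\}_p}$, so for each $\xi\in\widehat{p^{-\vartheta(\lambda)}\Z_p^d}\cong\Q_p^d/p^{\vartheta(\lambda)}\Z_p^d$ the formula $\tilde\chi_{(\xi,\eta,\lambda)}(\mathbf{x},\mathbf{y},z):=e^{2\pi i\{\xi\cdot\mathbf{x}+\eta\cdot\mathbf{y}+\lambda z\}_p}$ descends to a well-defined character of $M$ extending the chosen character of $N$. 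I would then set $\pi_{(\xi,\eta,\lambda)}:=\mathrm{Ind}_M^{\mathbb{H}_d}\tilde\chi_{(\xi,\eta,\lambda)}$ and use the section $\{(h,\mathbf{0},0) : h\in\Z_p^d/p^{-\vartheta(\lambda)}\Z_p^d\}$ of $M\backslash\mathbb{H}_d$ to identify the representation space with $\mathcal{H}_\lambda$; transferring the induced action through this identification produces exactly the formula stated in the theorem.

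The required verifications are then: (i) well-definedness $\pi(g)\mathcal{H}_\lambda\subset\mathcal{H}_\lambda$, reducing to the fact that $u\mapsto e^{2\pi i\{\lambda u\cdot\mathbf{y}\}_p}$ is constant on $p^{-\vartheta(\lambda)}\Z_p^d$-cosets (immediate from $\lambda p^{-\vartheta(\lambda)}\in\Z_p^\times$); (ii) the identity $\pi(g_1)\pi(g_2)=\pi(g_1\star g_2)$, where the cross term $\lambda\mathbf{x}_1\cdot\mathbf{y}_2$ produced by evaluating $\pi(g_1)$ on the shifted argument $u+\mathbf{x}_2$ matches the twist in the group law; (iii) unitarity, immediate from $\|\varphi_h\|^2=|\lambda|_p^d\cdot|\lambda|_p^{-d}=1$, and irreducibility, automatic for induction from a polarizing character via Mackey's irreducibility criterion; (iv) pairwise inequivalence, by reading off $\lambda$ from the central character, the class $\bar\eta$ from the $N$-restriction, and $\xi$ from the $M$-restriction; and (v) exhaustiveness, a direct consequence of Mackey's theorem since we have enumerated all (orbit, little-group character) pairs.

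Finally, the matrix coefficient formula follows by direct computation: applying $\pi(g)$ to $\varphi_h$ yields a scalar multiple of $\varphi_{h-\mathbf{x}}$, so $\langle\pi(g)\varphi_h,\varphi_{h'}\rangle$ carries the indicator $\1_{h-h'+p^{-\vartheta(\lambda)}\Z_p^d}(\mathbf{x})$, and on that support the congruence $\lambda(h-\mathbf{x})\cdot\mathbf{y}\equiv\lambda h'\cdot\mathbf{y}\pmod{\Z_p}$ rewrites the scalar in the stated form. The main technical obstacle throughout is the constant bookkeeping with $p$-adic valuations: one must check that every phase $e^{2\pi i\{\cdot\}_p}$ we write down is well-defined modulo $\Z_p$, and that the induced action really lands in $\mathcal{H}_\lambda$ rather than a larger space. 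Once one internalizes the single fact that $\lambda\mathbf{x}\cdot\mathbf{y}\in\Z_p$ whenever $\mathbf{x}\in p^{-\vartheta(\lambda)}\Z_p^d$ and $\mathbf{y}\in\Z_p^d$, every remaining step becomes mechanical.
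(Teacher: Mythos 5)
Your proposal is correct, but it takes a genuinely different route from the one used in this paper (and in the companion papers where Theorem \ref{TeoRepresentationsHd} is actually proved). You run the Mackey machine: decompose $\mathbb{H}_d = N \rtimes \Z_p^d$ with $N$ the abelian normal subgroup $\{(\mathbf{0},\mathbf{y},z)\}$, compute the coadjoint-type action $(\eta,\lambda)\mapsto(\eta-\lambda\mathbf{x},\lambda)$ on $\widehat{N}$, identify the stabilizer $p^{-\vartheta(\lambda)}\Z_p^d$ and the polarization $M$, and induce a character of $M$ up to $\mathbb{H}_d$. This buys you irreducibility, pairwise inequivalence, and exhaustiveness essentially for free from general theory (and, since $\widehat{N}$ is discrete and all orbits are finite, the regularity hypotheses of Mackey's theorem are automatic), and it explains conceptually where the parameter ranges $(\xi,\eta)\in\Q_p^{2d}/p^{\vartheta(\lambda)}\Z_p^{2d}$ come from. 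The paper instead proceeds by bare hands, exactly as in the proofs of Theorems \ref{teoRepG22}--\ref{teorepG56}: it writes the operators $\pi_{(\xi,\eta,\lambda)}$ down explicitly, verifies the cocycle identity $P(\mathbf{x},u)+P(\mathbf{y},u+\mathbf{x})=P(\mathbf{x}\star\mathbf{y},u)$ directly, proves irreducibility by computing the character and checking $\int_{\mathbb{G}}|\chi_\pi|^2=1$, distinguishes equivalence classes by counting distinct characters, and proves exhaustiveness by the Peter--Weyl dimension count $\sum_{[\pi]\in B_{\widehat{\mathbb{G}}}(n)}d_\pi^2=|\mathbb{G}/\mathbb{G}(p^n\Z_p)|$; this is more elementary and hands you the matrix coefficients (needed later for the sub-Laplacian) as a byproduct. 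One small point you should not wave away: the induced model with section $\{(h,\mathbf{0},0)\}$ naturally produces the phase $e^{2\pi i\{\xi\cdot(u+\mathbf{x}-u')\}_p}$, where $u'$ is the chosen representative of $u+\mathbf{x}$, rather than the phase $e^{2\pi i\{\xi\cdot\mathbf{x}\}_p}$ of the stated formula; the two models are unitarily equivalent, but exhibiting the intertwiner (or arguing via $\mathrm{Ind}_M^G(\tilde\chi\cdot\psi|_M)\cong\psi\otimes\mathrm{Ind}_M^G\tilde\chi$) is part of the bookkeeping you defer, and is worth writing out since $e^{2\pi i\{\xi\cdot u\}_p}$ is only well defined up to the choice of lift of $\xi$ modulo $p^{\vartheta(\lambda)}\Z_p^d$.
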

The above result, whose statement is included here for a matter of completeness, is the model for the theorems we will be proving on $\mathbb{G}^{5,i}$, $2 \leq i \leq 6$. Concretely, we are interested in showing how the unitary dual of any nilpotent $\Z_p$-Lie group, with $d=5$, can be identified with some subset of $\widehat{\Z}_p^5$, kind of like a pruned version of the three $\widehat{\Z}_p^5$ particular to each group. The guess is: the same holds true for higher dimensional groups, and one should be able to obtain the matrix coefficients with a similar procedure, but proving it is beyond the scope of this work. 

\subsection{Main goals}
In 1967, H{\"o}rmander proved that, under the assumption that the system of smooth vector fields $X_1, \ldots, X_\kappa$ generates the entire tangent space at any point, a condition nowadays called \emph{H{\"o}rmander's condition}, the operator
\begin{equation}
    L := \sum_{i=1}^\kappa X_i^2 + X_0 + c,
\end{equation}
which is called a \emph{H{\"o}rmander's operator}, has the following property: if $Lu = f$ and $f$ is smooth, we can conclude for sure that $u$ is also smooth. Operators satisfying this condition are called \emph{hypoelliptic operators}, and they play a very important role in the theory of partial differential operators. Important examples of H{\"o}rmander's operators are the sub-Laplacians on Lie groups, see \cite{Bramanti2014} and the references therein.

In general, proving that a given partial differential operator is hypoelliptic is a non-trivial problem, and there is a substantial body of literature dedicated to addressing this question. In this paper we consider the problem of the global hypoellipticity on compact nilpotent groups over $\Z_p$. Specifically, we will study the \emph{directional Vladimirov–Taibleson operators} introduced in \cite{velasquezrodriguez2024spectrumvladimirovsublaplaciancompact, velasquezrodriguez2024Engelspectrumvladimirovsublaplaciancompact}, which somehow reassemble directional derivatives. 

\begin{defi}\label{defiDirectionalK}\normalfont
Let $\K$ be a non-archimedean local field with ring of integers $\mathscr{O}_\K$, prime ideal $\mathfrak{p}= \textbf{p} \mathscr{O}_\K$, and residue field $\mathbb{F}_q = \mathscr{O}_\K/\textbf{p} \mathscr{O}_\K.$ Let $\mathfrak{g} = \spn_{\mathscr{O}_\K} \{ X_1,..,X_d \}$ be a nilpotent $\mathscr{O}_\K$-Lie algebra, and let $\mathbb{G}$ be the exponential image of $\mathfrak{g}$, so that $\mathbb{G}$ is a compact nilpotent $\K$-Lie group. We will use the symbol symbol $\partial_{X}^\alpha$ to denote the \emph{directional Vladimirov–Taibleson operator in the direction of $X \in \mathfrak{g}$}, or directional VT operator for short, which we define as $$\partial_{X}^\alpha f(\mathbf{x}) := \frac{1 - q^\alpha}{1-q^{-(\alpha +1)}} \int_{\mathscr{O}_\K} \frac{f(\mathbf{x} \star \textbf{exp}(tX)^{-1}) - f(\mathbf{x}) dt}{|t|_\K^{\alpha +1}}, \, \, \, \, f \in \mathcal{D}(\mathbb{G}).$$Here $\mathcal{D}(\mathbb{G})$ denotes the space of smooth functions on $\mathbb{G}$, i.e., the collection of locally constant functions with a fixed index of local constancy.   
\end{defi}

\begin{rem}
Notice how the above definition is independent of any coordinate system we choose for the group as a manifold. Even though we are using the exponential map in the definition, this does not mean we are using the exponential system of coordinates. First, $x \in \mathbb{G}$ is not written in coordinates, and second, we know there is a one to one correspondence between elements of the Lie algebra $\mathfrak{g}$ and one-parameter subgroups of $\mathbb{G}$. Here a one-parameter subgroup of $\mathbb{G}$ is an analytic homomorphism $\gamma_X:\mathscr{O}_\K \to \mathbb{G}$.  Using this fact, given any $X \in \mathfrak{g}$, we can define the directional VT operator in the direction of $X \in \mathfrak{g}$ via the formula  $$\partial_{X}^\alpha f(\mathbf{x}) := \frac{1 - q^\alpha}{1-q^{-(\alpha +1)}} \int_{\mathscr{O}_\K} \frac{f(\mathbf{x} \star \gamma_X(t)^{-1}) - f(\mathbf{x}) dt}{|t|_\K^{\alpha +1}}.$$
Since $\mathbb{G}$ is compact, we can identify it with a matrix group, where all one parameter subgroups have the form $\gamma_X (t) = e^{tX}$, which justifies our initial definition. More generally, we can write any analytic vector field on $\mathbb{G}$ as $$X(\mathbf{x}):= 
\sum_{j=1}^d c_j (\mathbf{x}) X_j,$$where the coefficient functions $c_j$ are analytic. For this vector field, the directional VT operator is defined in a similar way as $$\partial_{X}^\alpha f(\mathbf{x}) := \frac{1 - q^\alpha}{1-q^{-(\alpha +1)}} \int_{\mathscr{O}_\K} \frac{f(\mathbf{x} \star \mathbf{exp}(tX (\mathbf{x}))^{-1}) - f(\mathbf{x}) dt}{|t|_\K^{\alpha +1}}.$$However, in this work we wont consider the case where the coefficients are not constant, because such operators are not necessarily invariant.         
\end{rem}

By using Definition \ref{defiDirectionalK}, we can link an specific Vladimirov-type operator to each direction $X \in \mathfrak{g}$ and subsequently study the resulting operators. While this association does not preserve the Lie algebra structure, as seen in Lie groups over the real numbers, the resulting operators are nonetheless interesting and share similarities with partial differential operators on Lie groups. We want to study polynomials in these operators and, specifically, a distinguished one called here the \emph{Vladimirov sub-Laplacian}, which we can understand very well in the nilpotent case, $d:= dim_{\mathbb{Z}_p} (\mathbb{G}) \leq 5$,  due to our explicit knowledge of the representation theory. For this operator, which we define in principle for compact nilpotent groups, we would like to pose the following conjecture:

\begin{conjecture}\label{conjecture}\normalfont   
 Let $\K$ be a non-archimedean local field with ring of integers $\mathscr{O}_\K$, prime ideal $\mathfrak{p}= \textbf{p} \mathscr{O}_\K$, and residue field $\mathbb{F}_q = \mathscr{O}_\K/\textbf{p} \mathscr{O}_\K.$ Let $\mathfrak{g} = \spn_{\mathscr{O}_\K} \{ X_1,..,X_d \}$ be a nilpotent $\mathscr{O}_\K$-Lie algebra, and let $\mathbb{G}$ be the exponential image of $\mathfrak{g}$, so that $G$ is a compact nilpotent $\K$-Lie group. Let $X_1,...,X_{\kappa}$, $1 \leq \kappa \leq d$, be a basis for $\mathfrak{g}/ [\mathfrak{g} , \mathfrak{g}]$, so that when $\mathfrak{g}$ is graded or stratified $X_1,...,X_{\kappa}$ generates $\mathfrak{g}$, and let $\mathbb{G}$ be the exponential image of $\mathfrak{g}$. Then the \emph{Vladimirov sub-Laplacian of order $\alpha>0$} is a hypoelliptic operator on $G$, and it is invertible in the space of mean-zero functions. Here the Vladimirov sub-Laplacian is the pseudo-differential operator $\mathscr{L}^\alpha_{sub}$, defined on the space of smooth functions $\mathcal{D}(\mathbb{G})$ via the formula $$\mathscr{L}^\alpha_{sub} f(\mathbf{x}) := \sum_{k= 1}^\kappa 
\partial_{X_k}^\alpha f(\mathbf{x}),$$where $X_1 , ...,X_\kappa $ spans $\mathfrak{g}/[\mathfrak{g},\mathfrak{g}]$. 
\end{conjecture}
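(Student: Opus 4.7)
The plan is to reduce both claims --- hypoellipticity and invertibility on mean-zero functions --- to a single algebraic statement: the operator-valued symbol $\sigma_{\mathscr{L}^\alpha_{sub}}(\pi)$ is an invertible endomorphism of $\mathcal{H}_\pi$ for every non-trivial $[\pi] \in \widehat{\mathbb{G}}$. A crucial simplification available in the non-archimedean setting is that $\mathcal{D}(\mathbb{G})$ consists of locally constant functions, so any $f \in \mathcal{D}(\mathbb{G})$ factors through a finite quotient $\mathbb{G}/\mathbb{G}_N$ and therefore has Fourier support contained in a \emph{finite} subset of $\widehat{\mathbb{G}}$. Unlike the real case, no polynomial growth estimate on $\sigma(\pi)^{-1}$ is needed: mere invertibility of the symbol on every non-trivial class already forces any distributional solution of $\mathscr{L}^\alpha_{sub} u = f$ to be locally constant.

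First, I would develop the symbolic calculus for each directional operator. Since $\partial^\alpha_{X_k}$ is left-invariant, it acts on matrix coefficients of $\pi$ by right multiplication with a symbol $\sigma_{\partial^\alpha_{X_k}}(\pi) \in \mathrm{End}(\mathcal{H}_\pi)$, and a direct computation from Definition \ref{defiDirectionalK} yields
\[
\sigma_{\partial^\alpha_{X_k}}(\pi) \;=\; \frac{1-q^\alpha}{1-q^{-(\alpha+1)}} \int_{\mathscr{O}_\K} \frac{\pi(\textbf{exp}(tX_k))^* - I}{|t|_\K^{\alpha+1}}\, dt.
\]
The restriction of $\pi$ to the one-parameter subgroup $\{\textbf{exp}(tX_k) : t \in \mathscr{O}_\K\}$ is a unitary representation of the abelian group $\mathscr{O}_\K$ on the finite-dimensional space $\mathcal{H}_\pi$, and hence decomposes into characters $\chi^{(k)}_j(t) = e^{2\pi i \{\xi^{(k)}_j t\}_\K}$. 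In a basis $\{v^{(k)}_j\}$ diagonalizing this restriction, the standard one-dimensional Vladimirov symbol computation on $\mathscr{O}_\K$ yields that $\sigma_{\partial^\alpha_{X_k}}(\pi)$ is diagonal with non-negative entries $|\xi^{(k)}_j|_\K^\alpha$. In particular it is a positive semi-definite operator whose kernel is precisely the space $\mathcal{H}_\pi^{X_k}$ of vectors fixed by every $\pi(\textbf{exp}(tX_k))$, $t\in\mathscr{O}_\K$.

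The key algebraic step is then to show that
\[
\ker \sigma_{\mathscr{L}^\alpha_{sub}}(\pi) \;=\; \bigcap_{k=1}^\kappa \mathcal{H}_\pi^{X_k} \;=\; \{v \in \mathcal{H}_\pi : \pi(\textbf{exp}(tX_k))v = v \text{ for all } k, t\}
\]
is trivial whenever $[\pi]$ is non-trivial. For this I would invoke the classical fact that in a nilpotent Lie algebra, any lift of a basis of $\mathfrak{g}/[\mathfrak{g},\mathfrak{g}]$ generates $\mathfrak{g}$ as a Lie algebra --- an induction on the nilpotency class via the lower central series, which in particular removes any need to assume that $\mathfrak{g}$ is graded or stratified. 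Under the paper's standing assumption $\mathscr{N}<p$, the Baker--Campbell--Hausdorff series converges on $\mathfrak{g}$, and the closed subgroup $H \leq \mathbb{G}$ generated by $\{\textbf{exp}(tX_k) : t \in \mathscr{O}_\K, 1 \leq k \leq \kappa\}$ corresponds to the Lie subalgebra generated by $X_1, \ldots, X_\kappa$, i.e.\ all of $\mathfrak{g}$, so $H = \mathbb{G}$. Any vector in the intersection above is then fixed by a dense subset of $\mathbb{G}$, hence by continuity of $\pi$ by all of $\mathbb{G}$, and Schur's lemma forces either $[\pi]$ to be trivial or the vector to vanish.

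To conclude, if $\mathscr{L}^\alpha_{sub} u = f$ with $f \in \mathcal{D}(\mathbb{G})$ and $u \in \mathcal{D}'(\mathbb{G})$, the Peter--Weyl expansion and invertibility of the symbol give $\hat{u}(\pi) = \hat{f}(\pi)\, \sigma(\pi)^{-1}$ for every non-trivial $\pi$; since $\hat{f}$ has finite support, so does $\hat{u}$ (modulo the arbitrary additive constant left undetermined by $\mathscr{L}^\alpha_{sub}$), so $u$ is locally constant. Invertibility on mean-zero functions is immediate, since the orthogonal complement of the constants in $L^2(\mathbb{G})$ decomposes as the Hilbert sum over non-trivial $[\pi]$ and the symbol is invertible on each summand. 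The main technical obstacle I anticipate is the careful justification of the symbol formula: the integral defining $\partial^\alpha_{X_k}$ is a principal-value-type object, and one must verify convergence, self-adjointness, and the reduction to the one-dimensional Vladimirov calculation on each one-parameter subgroup --- a check that becomes delicate when the restriction of $\pi$ to $\{\textbf{exp}(tX_k)\}$ produces characters $\xi^{(k)}_j$ with large $\K$-adic denominators inherited from the non-abelian structure of $\mathbb{G}$.
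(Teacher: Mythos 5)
Your reduction of both claims to the behaviour of the symbol is the right framework, and the qualitative half of your argument is sound: each $\sigma_{\partial^\alpha_{X_k}}(\pi)$ is positive semi-definite with kernel $\mathcal{H}_\pi^{X_k}$, any lift of a basis of $\mathfrak{g}/[\mathfrak{g},\mathfrak{g}]$ generates the nilpotent algebra $\mathfrak{g}$, and Schur's lemma then kills the common kernel for non-trivial $[\pi]$. This gives invertibility of $\sigma_{\mathscr{L}^\alpha_{sub}}(\pi)$ for every non-trivial class, hence the $\mathcal{D}$-version of hypoellipticity (finite Fourier support of $\mathscr{L}^\alpha_{sub}u$ forces finite Fourier support of $u$) and formal solvability on mean-zero functions. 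Note also that this statement is posed as a conjecture precisely because the paper only establishes it for $d\leq 5$, by a case-by-case computation of the unitary dual; a correct abstract argument of the kind you propose would be a genuine improvement.

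However, there is a genuine gap, located exactly where you claim there is no difficulty. The paper's notion of global hypoellipticity (Definition \ref{defiGH}) is formulated with the Schwartz space $\mathcal{S}(\mathbb{G})$, not with $\mathcal{D}(\mathbb{G})$, and the criterion used throughout is the quantitative bound $C\langle\xi\rangle_{\mathbb{G}}^{s}\leq\|\sigma(\xi)\|_{inf}$ for some $s\in\R$ and all but finitely many $[\xi]$. Your assertion that ``no polynomial growth estimate on $\sigma(\pi)^{-1}$ is needed'' is therefore false for the statement being proved: if $\|\sigma(\pi)\|_{inf}$ decays faster than every power of $\langle\pi\rangle_{\mathbb{G}}^{-1}$, then $\hat u(\pi)=\sigma(\pi)^{-1}\hat f(\pi)$ need not be rapidly decreasing even when $\hat f$ is, and the inverse on mean-zero functions is only densely defined rather than bounded. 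Your argument cannot produce the required lower bound, because you diagonalize each $\sigma_{\partial^\alpha_{X_k}}(\pi)$ in a basis adapted to the one-parameter subgroup $\exp(tX_k)$, and these bases differ with $k$: a sum of positive semi-definite operators, each with spectrum in $\{0\}\cup[p^\alpha-c,\infty)$ and with trivial common kernel, can still have arbitrarily small positive bottom eigenvalue (already for two rank-one kernels at a small angle in $\dim 2$), and nothing in your setup controls the relative position of the subspaces $\mathcal{H}_\pi^{X_k}$ uniformly in $\pi$. This is precisely what the paper's explicit computations supply: for each group the matrix-coefficient spaces $\mathcal{V}_\xi^{h'}$ carry a \emph{common} eigenbasis $\{\mathscr{e}_{\xi,h',\tau}\}$ for all the directional VT operators, so the eigenvalues of the sum are sums of eigenvalues and the bound $\|(\xi_1,\dots,\xi_\kappa)\|_p^\alpha\lesssim\|\sigma_{\mathcal{L}^\alpha_{sub}}(\xi)\|_{inf}$ of Theorem \ref{teogeneralresult} can be read off. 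To close your argument you would need either an abstract proof of this simultaneous diagonalizability (which amounts to knowing the restriction of $\pi$ to the first stratum in the detail the paper extracts from its explicit realizations) or an independent uniform angle estimate between the fixed-vector subspaces $\mathcal{H}_\pi^{X_k}$; neither is addressed in the proposal.
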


\begin{rem}\normalfont
    To the knowledge of the author, an operator like the Vladimirov sub-Laplacian has not appeared before in the mathematical literature. The closest thing is probably the study of the \emph{Vladimirov-Laplacian}, which the reader can find in the work of Rajkumar and Weisbart \cite{Rajkumar2023}, and the paper by Bendikov,  Grigoryan,  Pittet,  and Woess \cite{Bendikov2014} where the authors studied the operators $$\mathscr{L}^\alpha f(\mathbf{x}) := \sum_{k= 1}^d 
\partial_{X_k}^\alpha f(\mathbf{x}),$$as the generators of a Dirichlet form associated with a certain jump kernel. See \cite[Section 5]{Bendikov2014} for more details. From the perspective of \cite{Rajkumar2023}, when $X_k = \mathbf{e}_k$ are the canonical directions, the Vladimirov Laplacian is the infinitesimal generator of a certain Markov process called there \emph{the product process}. Here we want to to emphasize how it is not necessary to restrict ourselves to the canonical directions, and furthermore we can extend the same analysis to noncommutative groups.
\end{rem}

So, drawing an analogy between the real case and the non-archimedean case, we can think of the Vladimirov sub-Laplacian as an analog of the sub-Laplacian or H{\"o}rmander's sum of squares for the non-archimedean case. Consequently, it is reasonable to expect the global hypoellipticity for these operators, and the second main goal of this paper is to demonstrate that this is indeed the case for all the compact nilpotent $p$-adic Lie groups, assuming the dimension of the group is less or equal than $5$, and the nilpotency index of the group is strictly less than $p$. As the main results of this paper, we will provide a complete description of the unitary dual and the matrix coefficients of the representations of these groups, and as an application we will obtain the spectal theorem for the Vladimirov sub-Laplacian on any of these groups. To be more precise, our goal in this work is to establish the following result, which justify our conjecture about sub-Laplacians on more general stratified, or even just  nilpotent groups.

\begin{teo}\label{teogeneralresult}
Let $\mathbb{G}$ be a compact nilpotent $p$-adic Lie group with nilpotency class $\mathscr{N}$, and assume that $p>\mathscr{N}$, $d:=\mathrm{dim}_{\Z_p} (\mathbb{G}) \leq 5$. Let $\mathfrak{g}$ be its associated $\Z_p$-Lie module, and let let $\{ X_1 , ...,X_\kappa \}$ be a $\Z_p$-basis for $\mathfrak{g}/[\mathfrak{g},\mathfrak{g}]$. Then: 
\begin{enumerate}
    \item The unitary dual of $\mathbb{G}$ can be indexed by a certain $\Gamma  \subset \widehat{\Z}_p^d$, i.e. any $[\pi] \in \widehat{\mathbb{G}}$ is equivalent to some $\pi_\xi$, with $\xi \in \Gamma$, and the annihilators $$ \widehat{\mathbb{G}} \cap \mathbb{G} (p^n \Z_p)^\bot:=\{ [\pi] \in \widehat{\mathbb{G}} \, : \, \pi|_{\mathbb{G}(p^n \Z_p)} = I_{d_\pi} \},$$can be identified with the balls $$B_{\widehat{\mathbb{G}}} (n) := \{ \xi \in \Gamma \, : \, \| \xi \|_p \leq p^n \}.$$
    \item The Vladimirov sub-Laplacian, here defined as $$\mathcal{L}_{sub}^\alpha f (\mathbf{x}) := \sum_{k=1}^\kappa \partial_{X_k}^\alpha f (\mathbf{x}), \quad f \in \mathcal{D}(\mathbb{G}),$$is a globally hypoelliptic, self-adjoint operator on $L^2 (\mathbb{G})$, with an associated complete system of smooth generalized eigenfunctions. Moreover, it holds: $$ \|(\xi_1 , ..., \xi_\kappa) \|_p^\alpha \lesssim \| \sigma_{\mathcal{L}_{sub}^\alpha} (\xi) \|_{inf} , \quad \xi \in \Gamma,$$where $\sigma_{\mathcal{L}_{sub}^\alpha}$ denotes the symbol of the invariant operator $\mathcal{L}_{sub}^\alpha$.
\end{enumerate}
   
\end{teo}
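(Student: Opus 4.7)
The plan is to treat the theorem as a case-by-case corollary of the explicit representation theory that will be developed in the body of the paper for each of the five-dimensional Lie algebras $\mathfrak{g}^{5,i}$, $2 \leq i \leq 6$, in the same spirit as Theorem \ref{TeoRepresentationsHd} for the Heisenberg case and the analogous result already obtained for $\mathfrak{b}_4$. The crucial hypothesis $p > \mathscr{N}$ ensures that the Baker--Campbell--Hausdorff series terminates after finitely many brackets and converges $p$-adically, so that $\exp:\mathfrak{g} \to \mathbb{G}$ is a bijection and the group law $\star$ on $\mathbb{G} = \Z_p^d$ can be written as a polynomial deformation of addition. This places Kirillov's orbit method at our disposal: in this regime the coadjoint orbits on $\mathfrak{g}^\ast \cong \widehat{\Z}_p^d$ should parametrise the unitary dual in the same way as in the real nilpotent case.

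For part (1), I would proceed algebra by algebra. For each $\mathfrak{g}^{5,i}$, first identify the descending central series and a Malcev (polarising) basis adapted to it; then, for each linear functional $\xi \in \widehat{\Z}_p^5$, construct a maximal subordinate subalgebra $\mathfrak{m}_\xi$ and induce the one-dimensional character $\chi_\xi(X) := e^{2\pi i \{\xi(X)\}_p}$ up to $\mathbb{G}$. Irreducibility of each induced representation, together with the fact that two such representations are equivalent if and only if the defining functionals lie in the same coadjoint orbit, yields a set $\Gamma$ of representatives inside $\widehat{\Z}_p^5$. The polynomial nature of the coadjoint action (again thanks to $p > \mathscr{N}$) then allows one to choose $\Gamma$ so that the annihilator condition $\pi|_{\mathbb{G}(p^n \Z_p)} = I_{d_\pi}$ translates into the purely metric condition $\|\xi\|_p \leq p^n$, which is exactly the ball $B_{\widehat{\mathbb{G}}}(n)$.

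For part (2), once the matrix coefficients $(\pi_\xi)_{hh'}$ are written down as in Theorem \ref{TeoRepresentationsHd}, the symbol of each directional operator $\partial_{X_k}^\alpha$ can be computed directly from Definition \ref{defiDirectionalK} by commuting the integral with the representation and using the explicit formulas to reduce $\pi_\xi(\exp(tX_k))$ to a product of a character in $t$ and a translation. The resulting symbols are diagonal or block-diagonal, with entries equal to either $|\xi_k|_p^\alpha$ (when $X_k$ acts by a character) or an average of such quantities over cosets of $p^{-\vartheta(\lambda)}\Z_p$ (when $X_k$ acts by translation on the representation space). In either case, the inf-norm of $\sigma_{\partial_{X_k}^\alpha}(\xi)$ is bounded below by $|\xi_k|_p^\alpha$ up to a constant, and summing over $1 \leq k \leq \kappa$ yields the inequality $\|(\xi_1,\ldots,\xi_\kappa)\|_p^\alpha \lesssim \|\sigma_{\mathcal{L}_{sub}^\alpha}(\xi)\|_{inf}$. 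Global hypoellipticity then follows from a standard symbolic argument on compact totally disconnected groups: this bound, combined with the bracket-generating property of $\{X_1,\ldots,X_\kappa\}$, implies invertibility of the symbol away from the trivial representation and polynomial decay of its inverse, which is equivalent to global hypoellipticity via Fourier inversion.

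The main obstacle will be twofold. First, the brute-force case analysis for the five algebras $\mathfrak{g}^{5,i}$ with $2 \leq i \leq 6$: although each case is modelled on the Heisenberg calculation, choosing polarisations adapted to the step structure of the algebra and verifying irreducibility of the induced representations on \emph{every} coadjoint orbit requires care, especially for $\mathfrak{g}^{5,3}$ and $\mathfrak{g}^{5,6}$, whose descending central series has three non-trivial steps. Second, the lower bound on the symbol is delicate because the generators $X_1,\ldots,X_\kappa$ span only the abelianisation $\mathfrak{g}/[\mathfrak{g},\mathfrak{g}]$; the ``higher'' coordinates $\xi_{\kappa+1},\ldots,\xi_5$ enter the matrix coefficients only through the centre, and one must use the fact that the polarising subgroup has index $|\lambda|_p^{\dim\mathfrak{g} - \dim\mathfrak{m}_\xi}$ to show that, on each non-trivial representation, \emph{some} directional symbol is genuinely large. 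This is the step where the assumption $p>\mathscr{N}$, together with the benign polynomial shape of BCH that it guarantees, is likely to be used essentially.
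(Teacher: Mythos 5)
Your proposal follows the same global architecture as the paper --- a case-by-case treatment of the Dixmier list, explicit matrix coefficients, then a symbol computation for $\mathcal{L}^\alpha_{sub}$ --- but the engine you use for part (1) is genuinely different. You propose to run the Kirillov orbit method properly: choose a polarising subalgebra $\mathfrak{m}_\xi$ for each functional, induce the character $\chi_\xi$, and classify equivalence classes by coadjoint orbits. The paper never does this. Instead, for each group it writes the candidate representations down explicitly (guessing them from the structure of $\mathbb{G}$ as an iterated central extension, so that the ``old'' representations of $\mathbb{G}/\mathcal{Z}(\mathbb{G})$, $\mathbb{G}/\mathbf{exp}(\Z_p X_5)$, etc.\ are inherited from $\mathbb{H}_1$ and $\mathcal{B}_4$), verifies the homomorphism property by hand, proves irreducibility by computing $\int_{\mathbb{G}}|\chi_{\pi_\xi}|^2\,d\mathbf{x}=1$ via the $p$-adic Gaussian integral lemmas (Lemmas \ref{lemagaussian}--\ref{lemaauxG54} and their relatives), and separates equivalence classes by comparing the resulting characters. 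Your route is legitimate and arguably more systematic, but it silently imports the Kirillov correspondence for compact $p$-adic groups (irreducibility of the induced representations and the orbit-equivalence criterion), which for these groups is a theorem of Howe/Boyarchenko--Sabitova type that the paper cites in the introduction but deliberately avoids using in any proof. If you invoke it, you must verify its hypotheses under $p>\mathscr{N}$; if you do not, you must supply the irreducibility and inequivalence arguments by hand, which is exactly what the character computations in Sections 3--7 do.

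The one concrete gap is exhaustion: nothing in your proposal shows that the representations you construct are \emph{all} of $\widehat{\mathbb{G}}$. The paper closes this by the Peter--Weyl count
\begin{equation*}
\sum_{[\pi_\xi]\in\widehat{\mathbb{G}}\cap\mathbb{G}(p^n\Z_p)^\bot} d_\xi^2 \;=\; p^{dn}\;=\;\bigl|\mathbb{G}/\mathbb{G}(p^n\Z_p)\bigr|,
\end{equation*}
carried out ball by ball for each group; this simultaneously proves completeness and the identification of the annihilators with the balls $B_{\widehat{\mathbb{G}}}(n)$, which is the actual content of part (1). You need either this count or the full strength of the orbit-method bijection; as written your argument delivers a subset of the dual only. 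On part (2) your phrasing ``the inf-norm of $\sigma_{\partial_{X_k}^\alpha}(\xi)$ is bounded below by $|\xi_k|_p^\alpha$ \ldots and summing over $k$ yields the inequality'' is too quick as stated (an infimum of a sum is not a sum of infima), but it is repairable exactly along the lines the paper uses: the $\partial_{X_k}^\alpha$ are simultaneously diagonalised by the eigenfunctions $\mathscr{e}_{\xi,h',\tau}$, each eigenvalue of each summand is nonnegative, and the shifts $\tau$, $h'$ entering the $k$-th slot lie in $p^{\vartheta(\lambda)}$-lattices that cannot alter $|\xi_k|_p$ because of the choice of coset representatives for $\xi_k$ --- you flag this last point yourself, and it is indeed where the choice of $\Gamma$ is used essentially.
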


\subsection{The concept of global hypoellipticity}
When talking about the real case, global hypoellipticty comes in two versions, one local and one global, because we can talk about local and global differentiability. If $M$ is a smooth manifold (over $\R$) and $L$ is some differential operator, these definitions are: 
\begin{itemize}
    \item \textbf{Local hypoellipcity :} If $Lf$ is differentiable \emph{around} $x_0 \in M$, then $f$ is differentiable around $x_0$.
    \item \textbf{Global hypoellipticity:} If $Lf$ is differentiable in \emph{the whole manifold}  $M$, then $f$ is differentiable in the whole $M$.
\end{itemize}
Our interest here is the global hypoellipticity, which on compact smooth manifolds can actually be put very nicely in terms of the Fourier analysis associated to some elliptic operator. In particular for compact Lie groups the Fourier analysis is given in terms of the representation theory, where the matrix entries of the representations are $C^\infty$-eigenfunctions of the Laplace-Beltrami operator, considering the group as a compact smooth manifold. In that setting, the space of smooth functions can be completely described in terms of the Fourier transform: $f \in C^\infty(G)$ if and only if $\| \widehat{f} (\xi) \|_{HS} \lesssim (1 + \mu_{[\xi]})^{-k/2}$, for any $k \geq 0$ and any $[\xi] \in \widehat{G}$, where $\mu_{[\xi]}$ denotes the eigenvalue of the Laplace-Beltrami operator associated to $[\xi] \in \widehat{G}$. Here $\widehat{G}$ denotes the unitary dual of $G$, and $\widehat{f}(\xi)$ is defined as: $$\widehat{f}(\xi):= \int_G f(x) \xi^* (x)dx, \quad \|f(\xi) \|_{HS}:=Tr[\widehat{f}(\xi)\widehat{f}(\xi)^*]^{1/2},$$and $\| \cdot \|_{HS}$ denotes the \emph{Hilbert-Schmidt} norm of a linear operator. As a direct consequence, see for instance \cite{Kirilov2020}, an invariant pseudo-differential operator $$T_\sigma f(x) := \sum_{[\xi] \in \widehat{G}} d_\xi Tr[\xi (x)\sigma (\xi)\widehat{f}(\xi) ], \quad f \in C^\infty (G),$$defines a globally hypoelliptic operator if and only if, for all except finitely many $[\xi] \in \widehat{G}$  and some $C>0$, we have $$C (1 + \mu_{[\xi]})^{s/2} \leq \| \sigma(\xi)\|_{inf}:=\inf \{ \| \sigma (\xi)v\|_{\mathcal{H}_\xi} \, : \, \| v \|_{\mathcal{H}_\xi} = 1 \},$$where $s \in \R$, $\mu_{[\xi]}$ denotes the eigenvalue of the Laplace-Beltrami operator associated to $[\xi] \in \widehat{G}$, and $\mathcal{H}_\xi$ is the corresponding representation space.  

In the case of metrizable profinite groups, and even more general ultrametric spaces, the definition of differentiability is replaced by the Vladimirov-Taibleson operator as it is discussed in \cite{Dragovich2017}, which serves as an analogue to the Laplace-Beltrami operator from the real case. See for instance \cite{Bendikov2014}. In this paper we are interested specifically on nilpotent Lie groups over $\Z_p$, which we will always think of as $\Z_p^d$ endowed with some non-commutative operation "$\star$". In this kind of groups the Vladimirov-Taibleson operator on $\mathbb{G}$ has the particular form \[
D^\alpha f(\mathbf{x
}) := \frac{1 - p^\alpha}{1 - p^{- (\alpha + d)}} \int_{\mathbb{G}} \frac{f (\mathbf{x} \star \mathbf{y}^{-1}) - f(\mathbf{x})}{\|\mathbf{y} \|_p^{ \alpha + d}} d\mathbf{y}.
\]Here $dy$ denotes the unique normalized Haar measure on $\mathbb{G}$. The right analogue of a $C^\infty$-function is a \emph{Schwartz function}, which is a square integrable function $f$ with the property that $$D^\alpha f \in L^2 (\mathbb{G}), \quad \text{for all} \, \, \alpha >0.$$ Actually, on compact nilpotent $p$-adic Lie groups the right definition of Sobolev spaces is given in terms of the invariant operator operator \[
\mathbb{D}^\alpha f(\mathbf{x}) :=\frac{1-p^{-d}}{1-p^{-(\alpha +d)}}f(\mathbf{x}) + \frac{1 - p^\alpha}{1 - p^{- (\alpha + d)}} \int_{\mathbb{G}} \frac{f (\mathbf{x} \star \textbf{y}^{-1}) - f(\mathbf{x})}{\|\textbf{y} \|_p^{ \alpha + d}} d \mathbf{y},
\]where$$W^{k,p}(\mathbb{G}):= \{ f \in L^r (\mathbb{G}) \, : \, \mathbb{D}^\alpha f \in  L^r (\mathbb{G})\}, \quad \mathcal{S}(\mathbb{G}):= \bigcap_{k>0} W^{k,p}(\mathbb{G}), \quad 1 \leq p < \infty .$$It is not hard to see how the matrix coefficients of a compact nilpotent $p$-adic Lie group are actually eigenfunctions of $\mathbb{D}^\alpha$, and for a given $[\xi] \in \widehat{G}$, all the functions $\xi_{ij}$ share the same eigenvalues. We will denote by $\langle \xi \rangle_{\mathbb{G}}$ the eigenvalue of $\mathbb{D}^\alpha$ corresponding to $[\xi] \in \widehat{\mathbb{G}}$. The problem of global hypoellipticity on a compact nilpotent $p$-adic Lie groups has two different versions, due to the existence of two different notions of smooth functions:
\begin{itemize}
    \item What we call \emph{smooth functions} in this context, is the collection of locally constant functions on $\mathbb{G}$ with a fixed index of local constancy, which we denote along the paper by $\mathcal{D}(\mathbb{G})$. That is $$\quad \quad \quad \mathcal{D}(\mathbb{G}) = \{ f \in L^2 (\mathbb{G}) \, : \, \text{There is an} \, \, n_f \in \N_0 \, \, \text{such that} \,  \, f(\mathbf{x} \star \textbf{y}) = f(\mathbf{x}), \quad \text{for} \,\, \|\textbf{y}\|_p \leq p^{-n_f}  \}.$$This is going to be the initial domain of all of our operators, and its finite dimensional sub-spaces are very important for our analysis. Via Fourier transform, $f \in \mathcal{D}(\mathbb{G})$ if and only if $\widehat{f}(\xi) =0$ for all but finitely many $[\xi] \in \widehat{\mathbb{G}}$.
    \item The second relevant space is the \emph{Schwartz space}, which via Fourier transform can be written as $$\quad \quad \quad \mathcal{S}(\mathbb{G}) = \{ f \in L^2 (\mathbb{G}) \, : \, \| \widehat{f}(\xi)\|_{HS} \leq C \langle \xi \rangle_{\mathbb{G}}^{-k}, \, \text{for all but finitely many} \, \, [\xi] \in \widehat{G}\,\, \text{and all} \, \, k >0 \}.$$
    This is going to be the relevant space for our discussion, so that our definition of global hypoellipticity is the following: 
\end{itemize}
\begin{defi}\normalfont\label{defiGH}
    Let $\mathbb{G}$ be a compact nilpotent $p$-adic Lie group. We say that an invariant pseudo-differential operator $T_\sigma$ is a \emph{globally hypoelliptic operator}, if and only if the condition $T_\sigma f \in \mathcal{S}(\mathbb{G})$ implies that $f \in \mathcal{S}(\mathbb{G})$. 
\end{defi}

In a similar way as it is proven in \cite{Kirilov2020}, an invariant operator with associated symbol $\sigma$ is a globally hypoelliptic operator if and only if $$C \langle \xi \rangle_{\mathbb{G}}^\alpha \leq \|\sigma (\xi) \|_{inf}, \quad \text{for some} \, \, C>0, \, \, \alpha \in \R, \, \, \text{and all but finitely many} \, [\xi] \in \widehat{\mathbb{G}}.$$According to Theorem \ref{teogeneralresult}, if $\mathbb{G}$ has dimension $d \leq 5$ then the elements of $\widehat{\mathbb{G}}$ are indexed by some subset of $\widehat{\Z}_p^d \cong \Q_p^d / \Z_p^d$, and it also holds that \[\langle \xi \rangle_{\mathbb{G}}^\alpha = \begin{cases}
     \frac{1 - p^{-d}}{1 - p^{-(\alpha + d)}} , & \, \text{if} \, \| \xi \|_p = 1,\\
    \|\xi \|_p^\alpha I_{d_\xi}. \, & \, \text{in other case,}
\end{cases}\]for any $\alpha \in \R$. So, actually the condition for the global hypoellipticity in our setting is going to be $$C \| \xi \|_p^\alpha  \leq \| \widehat{f}(\xi) \|_{inf}, \quad \text{for all but finitely many} \, \, [\xi] \in \widehat{\mathbb{G}}, \,\, \text{and some} \, \alpha \in \R.$$Our goal now is to present some interesting operators for which the previous condition holds. For instance, according to Theorem \ref{TeoRepresentationsHd}, the Fourier series of a square integrable function $f \in L^2 (\mathbb{H}_d)$ on the Heisenberg group $\mathbb{H}_d$ has the form $$f(\mathbf{x},\mathbf{y},z) = \sum_{\lambda \in \widehat{\Z}_p} \sum_{(\xi , \eta) \in\Q_p^{2d} / {p^{\vartheta(\lambda)}}\Z_p^d} |\lambda|_p^d Tr [\pi_{(\xi ,\eta , \lambda)} (\mathbf{x},\mathbf{y}, z) \widehat{f} (\xi, \eta ,\lambda)],$$so that an invariant operator looks like $$T_\sigma f(\mathbf{x},\mathbf{y},z) = \sum_{\lambda \in \widehat{\Z}_p} \sum_{(\xi , \eta) \in\Q_p^{2d} / p^{\vartheta(\lambda)}\Z_p^d} |\lambda|_p^d Tr [\pi_{(\xi ,\eta , \lambda)} (\mathbf{x},\mathbf{y}, z) \sigma(\xi , \eta , \lambda) \widehat{f} (\xi, \eta ,\lambda)],$$where $\sigma$ is a mapping $$\sigma: \widehat{\mathbb{H}}_d \to \bigcup_{(\xi, \eta, \lambda) \in \widehat{\mathbb{H}}_d } \C^{|\lambda|_p^d \times |\lambda|_p^d}, \quad \sigma(\xi , \eta , \lambda) \in \C^{|\lambda|_p^d \times |\lambda|_p^d}.$$In this way, observing how the Vladimirov sub-Laplacian is an invariant operator, we can prove the following spectral theorem on $\mathbb{H}_d$:

\begin{teo}\label{TeoSpectrumSublaplacianHd}
Let $p>2$. Let $\mathfrak{h}_d$ be the $(2d+1)$-dimensional Heisenberg Lie algebra, with generators $$\{ X_1,...,X_d,Y_1,...,Y_d,Z\}, \, \, \, [X_i, Y_j] = \delta_{ij} Z,$$ and let $\textbf{V} = \{V_1,...,V_d \} \subset \mathrm{span}_{\Z_p} \{ X_1,...,X_d\}, \,\, \textbf{W} =  \{W_1,...,W_d \} \subset \mathrm{span}_{\Z_p} \{ Y_1,...,Y_d\}$ be collections of linearly independent vectors. The Vladimirov sub-Laplacian associated to this collection  $$T_{\textbf{V}, \textbf{W}}^{\alpha } : = \sum_{k=1}^\varkappa \partial_{V_k}^{\alpha} + \partial_{W_k}^{\alpha} ,$$defines a left-invariant, self-adjoint, globally hypoelliptic operator on $\mathbb{H}_d$. The spectrum of this operator is purely punctual, and its associated eigenfunctions form an orthonormal basis of $L^2(\mathbb{H}_d)$. Furthermore, the symbol of $T_{\textbf{V}, \textbf{W}}^{\alpha }$ acts on each representation space as a $p$-adic Schr{\"o}dinger operator, and the space $L^2(\mathbb{H}_d)$ can be written as the direct sum $$L^2(\mathbb{H}_d) = \overline{\bigoplus_{(\xi,\eta,\lambda) \in \widehat{\mathbb{H}}_d} \bigoplus_{h' \in  \Z_p^d / p^{-\vartheta(\lambda)} \Z_p^d} \mathcal{V}_{(\xi , \eta, \lambda)}^{h'}}, \, \,\, \mathcal{V}_{(\xi , \eta , \lambda)} = \bigoplus_{h' \in  \Z_p^d / p^{-\vartheta(\lambda)} \Z_p^d} \mathcal{V}_{(\xi , \eta, \lambda)}^{h'}, $$where each finite-dimensional sub-space$$\mathcal{V}_{(\xi , \eta, \lambda)}^{h'}:= \mathrm{span}_\C \{ (\pi_{(\xi , \eta , \lambda)})_{hh'} \, : \, h \in \Z_p^d / p^{-\vartheta(\lambda)} \Z_p^d \},$$is an invariant sub-space where $T_{\textbf{V}, \textbf{W}}^{\alpha }$ acts like the Schr{\"o}dinger-type operator operator $$ \sum_{k=1}^d \partial_{V_k}^{\alpha} + |W_k \cdot ( \lambda  h' + \eta ) |_p^{\alpha} - \frac{1 - p^{-1}}{1 - p^{-(\alpha +1)}}.$$
Consequently, the spectrum of $T_{\textbf{V}, \textbf{W}}^{\alpha }$ restricted to $\mathcal{V}_{(\xi , \eta, \lambda)}^{h'}$ is given by $$Spec(T_{\textbf{V}, \textbf{W}}^{\alpha }|_{\mathcal{V}_{(\xi , \eta, \lambda)}^{h}})= \Big\{ \sum_{k=1}^d |V_k \cdot(\tau + \xi)|_p^\alpha + |W_k \cdot ( \lambda  h' + \eta ) |_p^{\alpha} - 2\frac{1 - p^{-1}}{1 - p^{-(\alpha +1)}} \, : \, 1 \leq\| \tau \|_p \leq |\lambda|_p \Big\},$$
so that $Spec(T_{\textbf{V}, \textbf{W}}^{\alpha })$ is going to be the collection of real numbers \begin{align*}
    \sum_{k=1}^d |V_k \cdot(\tau + \xi)|_p^\alpha + |W_k \cdot ( \lambda  h' + \eta ) |_p^{\alpha} - 2\frac{1 - p^{-1}}{1 - p^{-(\alpha +1)}},
\end{align*}where $(\xi, \eta, \lambda) \in \widehat{\mathbb{H}}_d, \, h' \in \Z_p^d / p^{-\vartheta(\lambda)} \Z_p^d , \,  1 \leq \| \tau \|_p \leq |\lambda|_p$, and the corresponding eigenfunctions are given by  $$\mathscr{e}_{(\xi, \eta, \lambda) , h', \tau}(\mathbf{x}) = e^{ 2 \pi i \{  (\xi+\tau) \cdot \mathbf{x} +  \eta \cdot \mathbf{y} + \lambda(z +  h' \cdot \mathbf{y} )  \}_p}, \, \, \, \,(\xi, \eta, \lambda) \in \widehat{\mathbb{H}}_d, \, h' \in \Z_p^d / p^{-\vartheta(\lambda)} \Z_p^d , \,  1 \leq \| \tau \|_p \leq |\lambda|_p.$$
\end{teo}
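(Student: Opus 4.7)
The approach is to exploit the explicit Schr\"odinger realization of Theorem \ref{TeoRepresentationsHd} together with the left-invariance of $T_{\mathbf{V},\mathbf{W}}^\alpha$, reducing the spectral problem to a finite-dimensional Vladimirov Schr\"odinger operator on each representation space $\mathcal{H}_\lambda$. First, each directional operator $\partial_X^\alpha$ is built from right translations $R_{\exp(tX)^{-1}}$, hence commutes with every left translation; summing in $k$, $T_{\mathbf{V},\mathbf{W}}^\alpha$ is therefore left-invariant. Formal self-adjointness on the dense subspace $\mathcal{D}(\mathbb{H}_d) \subset L^2(\mathbb{H}_d)$ follows from the unimodularity of the Haar measure on $\Z_p$ combined with the substitution $t \mapsto -t$ in the defining integral.

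\textbf{Computing the operator-valued symbol.} Peter--Weyl combined with left-invariance yields, for every class $[\pi_{(\xi,\eta,\lambda)}] \in \widehat{\mathbb{H}}_d$, a symbol $\sigma_T(\pi) \in \mathrm{End}(\mathcal{H}_\lambda)$. A direct calculation from the Schr\"odinger realization of Theorem \ref{TeoRepresentationsHd} gives the two basic pieces: for $V_k \in \spn_{\Z_p}\{X_1,\ldots,X_d\}$ one has $\exp(tV_k) = (tV_k, 0, 0)$ and $\pi(\exp(-tV_k))$ acts on $\varphi \in \mathcal{H}_\lambda$ as the twisted translation $\varphi(u) \mapsto e^{-2\pi i\{t\xi \cdot V_k\}_p}\varphi(u - tV_k)$, while for $W_k \in \spn_{\Z_p}\{Y_1,\ldots,Y_d\}$ one has $\exp(tW_k) = (0, tW_k, 0)$ and $\pi(\exp(-tW_k))$ acts as pure multiplication $\varphi(u) \mapsto e^{-2\pi i\{t(\eta + \lambda u) \cdot W_k\}_p}\varphi(u)$. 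Integrating in $t \in \Z_p$ against $|t|_p^{-\alpha-1}\,dt$ and invoking the standard one-dimensional Vladimirov identity
\[
\frac{1 - p^\alpha}{1 - p^{-(\alpha+1)}}\int_{\Z_p}\frac{e^{-2\pi i\{at\}_p} - 1}{|t|_p^{\alpha+1}}\,dt \;=\; |a|_p^\alpha - \frac{1 - p^{-1}}{1 - p^{-(\alpha+1)}}, \qquad |a|_p > 1,
\]
(a short Gauss-sum calculation stratifying $\Z_p$ by valuation), the symbol $\sigma_T(\pi)$ is displayed as the Vladimirov Schr\"odinger operator in the statement, with a ``kinetic'' part built from the $V_k$-directional Vladimirov on $\Z_p^d$ and a ``potential'' $\sum_k |(\eta + \lambda u) \cdot W_k|_p^\alpha$.

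\textbf{Diagonalization and reassembly.} The kinetic part is diagonalized by the finite set of characters $\psi_\tau(u) = e^{2\pi i\{\tau \cdot u\}_p}$ with $1 \le \|\tau\|_p \le |\lambda|_p$, yielding the eigenvalue $\sum_k |(\xi+\tau)\cdot V_k|_p^\alpha$. The potential $\sum_k|(\eta+\lambda u) \cdot W_k|_p^\alpha$ is constant on each ball $h + p^{-\vartheta(\lambda)}\Z_p^d$, because $\lambda \cdot p^{-\vartheta(\lambda)}\Z_p^d \subset \Z_p^d$ and the symbol depends only on its argument modulo $\Z_p$; hence, pulling back $\sigma_T(\pi)$ through the identification $\varphi_h \leftrightarrow \pi_{hh'}$ between $\mathcal{H}_\lambda$ and the column subspace $\mathcal{V}^{h'}_{(\xi,\eta,\lambda)}$, the potential reduces to a scalar determined by the coset class of $\lambda h' + \eta$. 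The eigenvectors of the combined operator on each $\mathcal{V}^{h'}$ are therefore the exponentials $\mathscr{e}_{(\xi,\eta,\lambda),h',\tau}$ listed in the statement, with the eigenvalues as announced. Peter--Weyl, applied to the further refinement $\mathcal{V}_{(\xi,\eta,\lambda)} = \bigoplus_{h'}\mathcal{V}^{h'}_{(\xi,\eta,\lambda)}$, then assembles these into the complete orthonormal eigenbasis of $L^2(\mathbb{H}_d)$. Reading off the explicit eigenvalue formula gives the lower bound $\|\sigma_T(\pi)\|_{inf} \gtrsim \|(\xi,\eta,\lambda)\|_p^\alpha$, and by the criterion recorded after Definition \ref{defiGH} this yields the global hypoellipticity of $T_{\mathbf{V},\mathbf{W}}^\alpha$.

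\textbf{Main obstacle.} The crux is the simultaneous diagonalization: since $\partial_{V_k}^\alpha$ and $\partial_{W_l}^\alpha$ do not commute on $L^2(\mathbb{H}_d)$, a common eigenbasis is not a priori available. The resolution is the quantization imposed by $|\lambda|_p$: on each representation the position variable $u$ effectively lives in the finite set $\Z_p^d/p^{-\vartheta(\lambda)}\Z_p^d$, so upon restriction to a single column $\mathcal{V}^{h'}$ the $W$-potential collapses to a scalar and the non-commutation disappears, leaving only the abelian kinetic part to be diagonalized by the characters $\psi_\tau$. Carrying out this collapse carefully --- tracking which coset representatives are used for $\xi$, $\eta$, and $\tau$ under the conventions of Theorem \ref{TeoRepresentationsHd}, and verifying that $\mathscr{e}_{(\xi,\eta,\lambda),h',\tau}$ indeed lies in $\mathcal{V}^{h'}_{(\xi,\eta,\lambda)}$ --- is the main technical step.
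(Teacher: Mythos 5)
Your architecture --- left-invariance from right translations, the reduction to the column spaces $\mathcal{V}^{h'}_{(\xi,\eta,\lambda)}$, and the computation of the symbol as a twisted kinetic part in the $V$-directions plus the multiplication operator $u\mapsto\sum_k|(\eta+\lambda u)\cdot W_k|_p^\alpha$ coming from the $W$-directions --- is exactly the method the paper uses for its five-dimensional analogues, and your symbol computation on $\mathcal{H}_\lambda$ is correct as far as it goes.

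The step that fails is precisely the one you flag as the main obstacle: the claim that on a single column $\mathcal{V}^{h'}$ the $W$-potential ``collapses to a scalar determined by the coset class of $\lambda h'+\eta$.'' Under the identification $\varphi_h\leftrightarrow(\pi_{(\xi,\eta,\lambda)})_{hh'}$ the multiplication operator acts diagonally with eigenvalue $\sum_k|(\eta+\lambda h)\cdot W_k|_p^\alpha$ on $\varphi_h$, i.e.\ it is determined by the coset of $\lambda h+\eta$, not of $\lambda h'+\eta$. The index frozen on the column is the second one, $h'$; the first index $h$ still runs over all of $\Z_p^d/p^{-\vartheta(\lambda)}\Z_p^d$, so the potential remains a non-constant diagonal operator in the position basis and the non-commutation with the kinetic part does not disappear. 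Equivalently, a direct computation using the group law (which produces the cross term $-t\,\mathbf{x}\cdot W_k$ in the central variable) gives
\begin{equation*}
\partial_{W_k}^\alpha\,\mathscr{e}_{(\xi,\eta,\lambda),h',\tau}(\mathbf{x},\mathbf{y},z)=\Big(\frac{1-p^\alpha}{1-p^{-(\alpha+1)}}\int_{\Z_p}\frac{e^{-2\pi i\{t(\eta+\lambda h'+\lambda\mathbf{x})\cdot W_k\}_p}-1}{|t|_p^{\alpha+1}}\,dt\Big)\,\mathscr{e}_{(\xi,\eta,\lambda),h',\tau}(\mathbf{x},\mathbf{y},z),
\end{equation*}
and the prefactor depends on $\mathbf{x}$ through $\lambda\mathbf{x}\cdot W_k$ modulo $\Z_p$ whenever $|(\eta+\lambda h')\cdot W_k|_p\le|\lambda|_p$. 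For instance with $d=1$, $V_1=X_1$, $W_1=Y_1$, $\eta$ trivial and $|\lambda|_p=p$, the prefactor is $0$ on the residue class $\mathbf{x}\equiv-h'\ (\mathrm{mod}\ p)$ and equals $p^\alpha-\frac{1-p^{-1}}{1-p^{-(\alpha+1)}}$ elsewhere, so $\mathscr{e}_{(\xi,\eta,\lambda),h',\tau}$ is not an eigenfunction. The exponentials do diagonalize $T^{\alpha}_{\mathbf{V},\mathbf{W}}$ exactly in the regime of ultrametric domination $|\eta\cdot W_k|_p>|\lambda|_p$ for all $k$, where the potential is forced to be the constant $|\eta\cdot W_k|_p^\alpha$; outside that regime $T^{\alpha}_{\mathbf{V},\mathbf{W}}|_{\mathcal{V}^{h'}}$ is an honest Schr\"odinger operator whose diagonalization is not achieved by the character basis, so the claimed spectrum and eigenfunctions need a genuinely different argument (the global hypoellipticity bound can still be rescued, since the potential is a nonnegative operator and the kinetic part alone already gives $\|\sigma(\pi)\|_{inf}\gtrsim\min_\tau\sum_k|(\xi+\tau)\cdot V_k|_p^\alpha$, but that is not what you wrote).
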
 
\begin{rem}
The restriction of the Vladimirov sub-Laplacian to each representation space acts in a similar way as a the $p$-adic Schrodinger operators studied in \cite{vladiBook}. However, in that reference the authors take a rather different approach, calculating the eigenspaces associated to a certain eigenvalue and estimating the associated multiplicities. Here we found a very convenient form to decompose the operator $D^\alpha + V$, but this is not very useful to find out some desirable information like the multiplicity of each eigenvalue. Unfortunately, investigating this kind of question is beyond the scope of this paper.       
\end{rem}

\subsection{Organization of the paper}In order to prove our results, we will organize our ideas in the following way: 

\begin{itemize}
    \item In Section 2 we recall the basic facts about $p$-adic analysis we will be using. A key detail in this section will be the introduction of $p$-adic Gaussian integrals studied in \cite{vladiBook}, which are essential for the analysis of certain characters of representations. Apart from that, we will recall in this section the basic elemets of the analysis on $\mathbb{H}_d$ and $\mathcal{B}_4$. With these two groups we have covered already dimensions $3$ and $4$, and one of the groups of dimension $5$: the $5$-dimensional Heisenberg group $\mathbb{H}_2 = \mathbf{exp}(\mathfrak{g}^{5,1})$. This serve us as a preparation to continue proving Theorem \ref{teogeneralresult} on the compact nilpotent $p$-adic Lie groups with dimension $d=5$, $\mathbb{G}^{5,i}= \mathbf{exp}(\mathfrak{g}^{5,i})$, $2 \leq i \leq 6$. Once again, we emphasize the importance of stating Theorems \ref{TeoRepresentationsHd} and \ref{TeoRepresentationsB4}, as the descriptions of the unitary duals of $\mathbb{H}_d$ and $\mathcal{B}_4$ will be essential for analyzing the duals of some of our $5$-dimensional groups, due to the fact that certain representations of $5$-dimensional groups descend to representations of lower-dimensional groups. For detailed proofs and the complete calculations on $\mathbb{H}_d$ and $\mathcal{B}_4$, we refer the reader to \cite{velasquezrodriguez2024spectrumvladimirovsublaplaciancompact, velasquezrodriguez2024Engelspectrumvladimirovsublaplaciancompact}.
     \item In Sections 3 to 7 we explain the proof of Theorem \ref{teogeneralresult} in the particular cases of $\mathbb{G}^{5,i}$, $1 \leq i \leq 6$. The arguments for each group follow the same lines as for $\mathbb{H}_d$, but sometimes it is necessary to introduce some extra details, like $p$-adic Gaussian integrals or Hensel's lemma. hopefully, this will provide some light about the more general case of nilpotent $p$-adic Lie groups over $\Z_p$.
    \item In Section 8 we discuss briefly our results, and some future challenges. 
\end{itemize}

\section{Preliminaries}
\subsection{$p$-adic numbers and Gaussian integrals}
Throughout this section $p>2$ will denote a fixed prime number. The field of $p$-adic numbers, usually denoted by $\Q_p$, can be defined as the completion of the field of rational numbers $\Q$ with respect to the $p$-adic norm $|\cdot|_p$, defined as \[|u|_p := \begin{cases}
0 & \esp \text{if} \esp u=0, \\ p^{-l} & \esp \text{if} \esp u= p^{l} \frac{a}{b},
\end{cases}\]where $a$ and $b$ are integers coprime with $p$. The integer $l:= \vartheta(u)$, with $\vartheta(0) := + \infty$, is called the \emph{$p$-adic valuation} of $u$. The unit ball of $\Q_p^d$ with the $p$-adic norm $$\|u \|_p:=\max_{1 \leq j \leq d} |u_j|_p,$$ is called the group of $p$-adic integers, and it will be denoted by $\Z_p^d$. Any $p$-adic number $u \neq 0$ has a unique expansion of the form $$u = p^{\vartheta(u)} \sum_{j=0}^{\infty} u_j p^j,$$where $u_j \in \{0,1,...,p-1\}$ and $u_0 \neq 0$. With this expansion we define the fractional part of $u \in \Q_p$, denoted by $\{u\}_p$, as the rational number\[\{u\}_p := \begin{cases}
0 & \esp \text{if} \esp u=0 \esp \text{or} \esp \vartheta(u) \geq 0, \\ p^{\vartheta(u)} \sum_{j=0}^{-\vartheta(u)-1} u_j p^j,& \esp \text{if} \esp ord(u) <0.
\end{cases}\] 
\begin{defi}\normalfont\label{defivaluation}
Given a vector $u \in \Q_p^d$, we define its associated \emph{$p$-adic valuation} as the minimum among the valuations $\vartheta(u_j)$, $1 \leq j \leq d$, that is $$\vartheta(u):= \min_{1 \leq j \leq d} \vartheta(u_j).$$ 
\end{defi}
The ring $\Z_p^d$ is  compact, totally disconected, i.e. profinite, and abelian. Its dual group in the sense of Pontryagin, the collection of characters of $\Z_p^d$, will be denoted by $\widehat{\Z}_p^d$. The dual group of the $p$-adic integers is known to be the Pr{\"u}fer group $\Z (p^{\infty})$,  the unique $p$-group in which every element has $p$ different $p$-th roots. The Pr{\"u}fer group may be identified with the quotient group $\Q_p/\Z_p$. In this way, each character of the group $\Z_p^d$ may be written as $$\chi_p (\tau  u) := e^{2 \pi i \{\tau \cdot u \}_p}, \esp \esp u \in \Z_p^d, \esp \tau \in \widehat{\Z}_p^d 
\cong \Q_p^d / \Z_p^d ,$$where $\tau$ is some fixed element of $\widehat{\Z}_p^d.$

By the Peter–Weyl theorem, the elements of $\widehat{\Z}_p^d$ constitute an orthonormal basis for the Hilbert space $L^2 (\Z_p^d)$, which provide us a Fourier analysis for suitable functions defined on $\Z_p$ in such a way that the formula $$\varphi(u) = \sum_{\tau \in \widehat{\Z}_p^d} \widehat{\varphi}(\tau) \chi_p (\tau  u),$$holds almost everywhere in $\Z_p$. Here $\mathcal{F}_{\Z_p^d}[\varphi]$ denotes the Fourier transform of $f$, in turn defined as $$\mathcal{F}_{\Z_p^d}[\varphi](\tau):= \int_{\Z_p^d} \varphi(u) \overline{\chi_p (\tau  u)}du,$$where $du$ is the normalised Haar measure on $\Z_p^d$. 

Another important tool from $p$-adic analysis that we are going to be using, is the calculation of the $p$-adic Gaussian integral on the disk, see \cite[p.p. 65]{vladiBook}. 
\begin{lema}\label{lemagaussian}
    Let $p \neq 2$ be a prime number, and take any $\gamma \in \Z$. Then, for any $a,b \in \Q_p$, we have \[ \int_{p^\gamma \Z_p} e^{2 \pi i \{ a u^2 + b u\}_p} du =  \begin{cases}
        p^{-\gamma} \1_{p^{-\gamma} \Z_p} (b), & \, \, \text{if} \, \, \, |a|_p \leq  p^{2 \gamma}, \\ \lambda_p(a) |a|_p^{-1/2} e^{2 \pi i \{ \frac{- b^2}{4a}  \}_p }\1_{p^{\gamma}\Z_p} (b/a)& \, \, \, \text{if} \, \, \, |a|_p >  p^{2 \gamma}.
    \end{cases}\]Here $\lambda_p : \Q_p \to \C$ is the function defined as \[\lambda_p (a) := \begin{cases}
        1, & \, \, \text{if} \, \, ord(u) \, \text{is even,} \\
         \big( \frac{a_0}{p} \big), & \, \, \text{if} \, \, ord(u) \, \text{is odd and} \, \, p \equiv 1 (\mathrm{mod} \, 4), \\ 
         i\big( \frac{a_0}{p} \big), & \, \, \text{if} \, \, ord(u) \, \text{is odd and} \, \, p \equiv 3 (\mathrm{mod} \, 4),
    \end{cases}\]
where $(\frac{a_0}{p})$ denotes the Legendre symbol of $a_0$. Here, to simplify our notation, we are going to define $$\Lambda (a,b):= \lambda_p(a) |a|_p^{-1/2} e^{2 \pi i \{ \frac{- b^2}{4a}  \}_p }, $$ so that $|\Lambda (a,b)|= |a|_p^{-1/2}$, and in this way $$\int_{p^\gamma \Z_p} e^{2 \pi i \{ a u^2 + b u\}_p} du =  p^{-\gamma} \1_{p^{-2\gamma} \Z_p}(a) \cdot \1_{p^{-\gamma} \Z_p} (b)    + \1_{\Q_p \setminus p^{-2\gamma} \Z_p}(a) \cdot \Lambda (a,b) \cdot \1_{p^{\gamma}\Z_p} (b/a) .  $$
\end{lema}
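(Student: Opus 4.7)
The plan is to split on the relative size of $|a|_p$ and $p^{2\gamma}$, reducing the nontrivial branch to a classical quadratic Gauss sum by completing the square.

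First, for the \emph{regular case} $|a|_p \leq p^{2\gamma}$: every $u \in p^\gamma \Z_p$ satisfies $|au^2|_p \leq |a|_p \, p^{-2\gamma} \leq 1$, so $au^2 \in \Z_p$ and $\{au^2\}_p \in \Z$. The quadratic phase therefore disappears and the integral collapses to $\int_{p^\gamma \Z_p} e^{2\pi i\{bu\}_p} du$. The substitution $u = p^\gamma v$ with $v \in \Z_p$, together with Pontryagin orthogonality on $\Z_p$, yields $p^{-\gamma} \1_{\Z_p}(p^\gamma b) = p^{-\gamma} \1_{p^{-\gamma}\Z_p}(b)$, which is the first branch of the formula.

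Second, for the \emph{singular case} $|a|_p > p^{2\gamma}$: since $p>2$, the element $2$ is a unit in $\Z_p$, so one may complete the square as $au^2+bu = a\bigl(u+\tfrac{b}{2a}\bigr)^2 - \tfrac{b^2}{4a}$. The translation $v = u + b/(2a)$ is measure-preserving on $\Q_p$, and the integral becomes $e^{2\pi i \{-b^2/(4a)\}_p} \cdot I(c)$, where $c := b/(2a)$ and $I(c) := \int_{c + p^\gamma \Z_p} e^{2\pi i \{av^2\}_p} dv$. I would then evaluate $I(c)$ by stratifying $c + p^\gamma \Z_p$ into cosets of $p^m \Z_p$ with $m$ large enough that $|ap^{2m}|_p \leq 1$; on each such sub-coset the quadratic contribution is integer-valued and the phase reduces to a linear character in the inner variable. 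When $c \notin p^\gamma \Z_p$, this linear character is non-trivial on each sub-coset and the resulting finite sum vanishes, producing the indicator $\1_{p^\gamma\Z_p}(b/a)$ (using $|2|_p = 1$, so $2p^\gamma\Z_p = p^\gamma\Z_p$). When $c \in p^\gamma \Z_p$, the translated ball coincides with $p^\gamma \Z_p$, and after the rescaling $v = p^\gamma w$ the remaining integral is a classical quadratic Gauss sum on $\Z_p$ with leading coefficient $ap^{2\gamma}$, whose value is precisely $\lambda_p(a)|a|_p^{-1/2}$.

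The main obstacle is this last Gauss sum evaluation producing the factor $\lambda_p(a)$. The stratification and all the measure-preserving changes of variables are routine bookkeeping, but extracting the exact constant requires either an induction on $\vartheta(a)$ that reduces to the primitive quadratic Gauss sum $\sum_{k=0}^{p-1} e^{2\pi i k^2/p}$ over $\mathbb{F}_p$, or a direct appeal to its classical value (which distinguishes the cases $p \equiv 1$ and $p \equiv 3 \pmod 4$ and encodes the quadratic-residue dichotomy through the Legendre symbol). This parity analysis on $\vartheta(a)$ is exactly what produces the piecewise definition of $\lambda_p$. Assembling these pieces recovers the two-branch formula, with the indicator supports in $b$ and in $b/a$ arising respectively from the character integral of the regular case and from the translation-induced shift in the singular case.
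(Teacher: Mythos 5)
Your outline is correct. Note that the paper does not actually prove this lemma: it is quoted verbatim from the literature (the citation is to Vladimirov--Volovich--Zelenov, p.~65), so there is no in-paper argument to compare against. Your two-case strategy --- killing the quadratic phase when $|a|_p\le p^{2\gamma}$ and reducing to a character integral, versus completing the square (legitimate since $|2|_p=1$ for $p\ne 2$) and translating when $|a|_p>p^{2\gamma}$ --- is exactly the standard proof, and the details you defer all check out: the ultrametric inequality guarantees $|c'|_p=|c|_p$ on the whole translated ball, so the inner linear characters are uniformly nontrivial precisely when $b/(2a)\notin p^\gamma\Z_p$, which produces the indicator $\1_{p^\gamma\Z_p}(b/a)$; and the surviving integral rescales to $\int_{\Z_p}e^{2\pi i\{a p^{2\gamma}w^2\}_p}\,dw$, whose evaluation by stratifying $w$ modulo $p$ reduces (after one or an inductive number of steps, according to the parity of $\vartheta(a)$) to the primitive sum $\sum_{k=0}^{p-1}e^{2\pi i a_0k^2/p}=\big(\tfrac{a_0}{p}\big)g_p$ with $g_p=\sqrt{p}$ or $i\sqrt{p}$, which is precisely where $\lambda_p$ and the $|a|_p^{-1/2}$ normalization come from (and $\lambda_p(ap^{2\gamma})=\lambda_p(a)$, so the answer is as stated). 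Invoking the classical value of that finite Gauss sum is entirely appropriate here.
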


With this tool we can prove the following auxiliary lemma, which is simply the calculation of the $L^2$-norm of a function given in terms of $p$-adic Gaussian integral: 
\begin{lema}\label{lemaaux}
    Let $(x_2, x_3) \in \Z_p^2$, and take $\xi_3 , \xi_4 \in \widehat{\Z}_p$. Then $$\int_{\Z_p^2} \Big| \int_{ \Z_p}e^{2 \pi i \{(\xi_3   x_2 +  \xi_4 x_3)u +  \xi_4 x_2 \frac{u^2}{2} \}_p} du \Big|^2 dx_2 dx_3 = \max \{|\xi_3|_p , |\xi_4|_p \}^{-1} = \|(\xi_3 , \xi_4) \|_p^{-1}.$$
\end{lema}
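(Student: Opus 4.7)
The plan is to apply Lemma \ref{lemagaussian} to the inner integral and then observe that, after integrating over $x_3$, the two Gaussian regimes collapse to a single expression, so the final $x_2$ integral reduces to a bare indicator computation. First I would rewrite the exponent as a quadratic $au^2 + bu$ in $u$ with $a = \xi_4 x_2 / 2$ (valid because $p > 2$) and $b = \xi_3 x_2 + \xi_4 x_3$. Applying Lemma \ref{lemagaussian} with $\gamma = 0$ then yields that the inner integral $I(x_2,x_3)$ equals $\1_{\Z_p}(\xi_3 x_2 + \xi_4 x_3)$ when $|\xi_4 x_2|_p \leq 1$, while $|I(x_2, x_3)|^2 = |\xi_4 x_2|_p^{-1} \, \1_{\Z_p}\bigl( 2\xi_3/\xi_4 + 2 x_3/x_2 \bigr)$ when $|\xi_4 x_2|_p > 1$, using the identity $|\Lambda(a,b)|^2 = |a|_p^{-1}$ built into the definition of $\Lambda$.

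Next, for fixed $x_2 \in \Z_p$ I would compute $\int_{\Z_p} |I(x_2,x_3)|^2 \, dx_3$ separately in each regime. In the first regime, the condition $\xi_3 x_2 + \xi_4 x_3 \in \Z_p$ singles out a coset of $\xi_4^{-1}\Z_p$ inside $\Z_p$ with measure $|\xi_4|_p^{-1}$, provided the compatibility $|\xi_3 x_2|_p \leq |\xi_4|_p$ holds. In the second regime, the condition $2\xi_3/\xi_4 + 2 x_3/x_2 \in \Z_p$ rewrites (using $|2|_p = 1$) as $x_3 \in -\xi_3 x_2/\xi_4 + x_2 \Z_p$, whose intersection with $\Z_p$ has measure $|x_2|_p$ under the same compatibility condition; combined with the prefactor $|\xi_4 x_2|_p^{-1}$, this yields $|\xi_4|_p^{-1}$ once again. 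The key observation---and the main nontrivial point of the proof---is that the identity
\[
\int_{\Z_p} |I(x_2, x_3)|^2 \, dx_3 = |\xi_4|_p^{-1} \, \1_{|\xi_3 x_2|_p \leq |\xi_4|_p}
\]
holds uniformly in $x_2 \in \Z_p$, with no trace of the regime split surviving.

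The last step is then straightforward: integrating over $x_2$ reduces to computing the Haar measure of $\{ x_2 \in \Z_p : |x_2|_p \leq |\xi_4|_p/|\xi_3|_p \}$, which equals $\min(1, |\xi_4|_p/|\xi_3|_p)$. Multiplying by $|\xi_4|_p^{-1}$ yields $\max\{|\xi_3|_p,|\xi_4|_p\}^{-1}$ in both subcases $|\xi_3|_p \leq |\xi_4|_p$ and $|\xi_3|_p > |\xi_4|_p$, which is the claim. The degenerate boundary cases with $\xi_3 = 0$ or $\xi_4 = 0$ are strictly easier and checked by direct inspection. I expect the only subtle moment to be the cancellation in the second regime between the factor $|\xi_4 x_2|_p^{-1}$ coming from the $p$-adic Gauss sum and the measure $|x_2|_p$ of the $x_3$-coset; this balance is exactly what forces the two regimes to produce the same uniform formula and keeps the bookkeeping clean.
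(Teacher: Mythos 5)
Your proof is correct, and it takes a genuinely different route from the paper's. The paper never actually evaluates the inner Gaussian integral: it treats $f(x_2,x_3)=\int_{\Z_p}e^{2\pi i\{\cdots\}_p}du$ as a function on $\Z_p^2$, computes its Fourier coefficients $\mathcal{F}_{\Z_p^2}[f](\alpha,\beta)$ (which, after using the constraint $\xi_4 u\equiv\beta$ to linearize the quadratic term, become Haar measures of intersections of two balls), and then obtains the $L^2$-norm by Plancherel, splitting into the cases $|\xi_3|_p>|\xi_4|_p$ and $|\xi_3|_p\leq|\xi_4|_p$ at the level of the frequency sum. You instead work entirely in physical space: you apply Lemma \ref{lemagaussian} with $a=\xi_4x_2/2$, $b=\xi_3x_2+\xi_4x_3$, $\gamma=0$, and then integrate $|I(x_2,x_3)|^2$ by Fubini. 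Your computations check out: in the non-degenerate regime the factor $|\xi_4x_2|_p^{-1}$ from $|\Lambda(a,b)|^2$ exactly compensates the measure $|x_2|_p$ of the coset $-\xi_3x_2/\xi_4+x_2\Z_p$ inside $\Z_p$ (the intersection being all-or-nothing since $x_2\Z_p\subset\Z_p$), so both regimes yield the uniform formula $\int_{\Z_p}|I|^2\,dx_3=|\xi_4|_p^{-1}\1_{|\xi_3x_2|_p\leq|\xi_4|_p}$, and the final $x_2$-integral gives $|\xi_4|_p^{-1}\min(1,|\xi_4|_p/|\xi_3|_p)=\|(\xi_3,\xi_4)\|_p^{-1}$ in both orderings. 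Amusingly, your argument is the one the paper advertises (``with this tool we can prove the following auxiliary lemma,'' referring to the Gaussian integral) but does not actually carry out; its advantage is that it is self-contained and makes the source of the answer transparent, while the paper's Plancherel argument avoids the two-regime case analysis and is the template reused verbatim for the harder integrals in Lemmas \ref{lemaauxG54} and \ref{lemaauxG56}, where a closed form for the inner integral is less convenient.
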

\begin{proof}
    Let us define the auxiliary function $$f(x_2 , x_3):=\int_{ \Z_p}e^{2 \pi i \{(\xi_3   x_2 +  \xi_4 x_3)u +  \xi_4 x_2 \frac{u^2}{2} \}_p} du. $$Then, as a function on $\Z_p^2$, we can compute its Fourier transform  as \begin{align*}
        \mathcal{F}_{\Z_p^2}[f](\alpha , \beta)&= \int_{\Z_p^2} \int_{ \Z_p}e^{2 \pi i \{(\xi_3   x_2 +  \xi_4 x_3)u +  \xi_4 x_2 \frac{u^2}{2} - \alpha x_2 - \beta x_3 \}_p} du dx_2 dx_3\\&=\int_{ \Z_p}\int_{\Z_p^2} e^{2 \pi i \{(\xi_3 u  +  \xi_4 \frac{u^2}{2} - \alpha )x_2 - ( \xi_4 u - \beta)x_3 \}_p}  dx_2 dx_3du\\&= \int_{\Z_p} \1_{\Z_p} (\xi_3 u  +  \xi_4 \frac{u^2}{2} - \alpha) \1_{\Z_p} (\xi_4 u - \beta) du.  
    \end{align*}Notice how $|\xi_4 u - \beta|_p \leq 1 $ implies that $\xi_4 u =\beta$ in $\Q_p / \Z_p$. In this way, we have the following equality modulo $\Z_p$:  $$\xi_3 u  +  \xi_4 \frac{u^2}{2} = \frac{u}{2}\big( 2 \xi_3   +  \xi_4 u\big) = \frac{u}{2}\big( 2 \xi_3   +  \beta \big)  \in \Q_p / \Z_p,$$therefore \begin{align*}
        \mathcal{F}_{\Z_p^2}[f](\alpha , \beta)&= \int_{\Z_p} \1_{\Z_p} (u(  \xi_3   +  \beta/2) - \alpha) \1_{\Z_p} (\xi_4 u - \beta) du \\ &=\int_{\Z_p} \1_{\frac{\alpha}{\xi_3   +  \beta/2} + p^{-\vartheta(\xi_3   +  \beta/2)} \Z_p} (u) \1_{\frac{\beta}{\xi_4} + p^{-\vartheta(\xi_4)} \Z_p }(u)du \\ &= \mu_{\Z_p}\Big(  \big[ \frac{\alpha}{\xi_3   +  \beta/2} + p^{-\vartheta(\xi_3   +  \beta/2)} \Z_p \big] \cap \big[\frac{\beta}{\xi_4} + p^{-\vartheta(\xi_4)} \Z_p \big] \Big),
    \end{align*}where $\mu_{\Z_p^d}$ denotes the normalized Haar measure of $\Z_p^d$. To continue we need to see how, since we are integrating over $u\in \Z_p$, in order for the above integral to be different from zero we need $$\big| \frac{\alpha}{\xi_3   +  \beta/2}\big|_p \leq 1, \, \, \text{and} \, \, \, \big| \frac{\beta}{\xi_4}\big|_p \leq 1,$$so that only finitely many terms  $\mathcal{F}_{\Z_p^2}[f](\alpha , \beta)$ can be non-zero. Also \[ \mathcal{F}_{\Z_p^2}[f](\alpha , \beta) = \begin{cases}
    0, & \, \, \text{if the balls are disjoint}, \\ 
    \max\{|\xi_4|_p ,  |\xi_3   +  \beta/2|_p\}^{-1}, &\, \, \text{if the balls are not disjoint}.
\end{cases}\] 
Finally, we use this Fourier transform to calculate the $L^2$-norm of $f$ in two different cases. When $|\xi_3|_p >|\xi_4|_p$, we have $|\xi_3   +  \beta/2|_p = |\xi_3|_p$, so that  
\begin{align*}
    \| f \|_{L^2 (\Z_p^2)}^2 & = \sum_{(\alpha , \beta) \in \widehat{\Z}_p^2} \mu_{\Z_p}\Big( \big[\frac{\alpha}{\xi_3   +  \beta/2} + p^{-\vartheta(\xi_3   +  \beta/2)} \Z_p\big] \cap \big[ \frac{\beta}{\xi_4} + p^{-\vartheta(\xi_4)} \Z_p \big] \Big)^2 \\ &=\sum_{(\alpha , \beta) \in \widehat{\Z}_p^2} \mu_{\Z_p} \Big( \big[\frac{\alpha}{\xi_3   +  \beta/2} + p^{-\vartheta(\xi_3)} \Z_p \big] \cap \big[ \frac{\beta}{\xi_4} + p^{-\vartheta(\xi_4)} \Z_p \big] \Big)^2 \\ &=\sum_{\beta \in \widehat{\Z}_p}|\xi_3|_p^{-1} \sum_{\alpha \in \widehat{\Z}_p} \mu_{\Z_p}\Big( \big[ \frac{\alpha}{\xi_3   +  \beta/2} + p^{-\vartheta(\xi_3)} \Z_p \big] \cap \big[ \frac{\beta}{\xi_4} + p^{-\vartheta(\xi_4)} \Z_p \big] \Big) \\ &=|\xi_3|_p^{-1} \sum_{\beta \in \widehat{\Z}_p} \mu_{\Z_p} \Big( \frac{\beta}{\xi_4} + p^{-\vartheta(\xi_4)} \Z_p \Big) = |\xi_3|_p^{-1}.
\end{align*}
When $|\xi_3|_p \leq |\xi_4|_p,$we get $|\xi_3   +  \beta/2|_p \leq |\xi_4|_p$, so we conclude in a similar way

\begin{align*}
    \| f \|_{L^2 (\Z_p^2)}^2 & = \sum_{(\alpha , \beta) \in \widehat{\Z}_p^2} \mu_{\Z_p}\Big( \big[ \frac{\alpha}{\xi_3   +  \beta/2} + p^{-\vartheta(\xi_3   +  \beta/2)} \Z_p \big] \cap \big[ \frac{\beta}{\xi_4} + p^{-\vartheta(\xi_4)} \Z_p \big] \Big)^2 \\ &= \sum_{\alpha \in \widehat{\Z}_p} \sum_{\beta \in \widehat{\Z}_p} |\xi_4|_p^{-1} \mu_{\Z_p} \Big( \big[ \frac{\alpha}{\xi_3   +  \beta/2} + p^{-\vartheta(\xi_3  +  \beta/2)} \Z_p \big] \cap \big[ \frac{\beta}{\xi_4} + p^{-\vartheta(\xi_4)} \Z_p \big] \Big) \\ &= |\xi_4|^{-1}_p  \sum_{\alpha \in \widehat{\Z}_p} \mu_{\Z_p} \Big(\frac{\alpha}{\xi_3   +  \beta/2} + p^{-\vartheta(\xi_3 +  \beta/2)} \Z_p \Big) = |\xi_4|_p^{-1}.
\end{align*}
This concludes the proof. 
\end{proof}

A more general version of this lemma, which we need to handle a more complicated Gaussian integral, is the following: 

\begin{lema}\label{lemaauxG54}
Let $\xi \in \widehat{\Z}_p^3$ and $(x_1,x_2) \in \Z_p^2$. Consider the function $$f(x_1,x_2) = \int_{\Z_p}e^{2 \pi i \{ \xi_1 x_1 u^2 +(\xi_2 x_1 + \xi_3 x_2) \}_p} du.$$Then $\| f\|_{L^2(\Z_p^2)}^2 = \| \xi \|_p^{-1}.$
\end{lema}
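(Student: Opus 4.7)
The plan is to mimic the strategy of Lemma \ref{lemaaux}: compute the Fourier transform of $f$ over $\Z_p^2$ and then apply Parseval's identity to evaluate $\|f\|_{L^2(\Z_p^2)}^2 = \sum_{(\alpha,\beta) \in \widehat{\Z}_p^2} |\mathcal{F}_{\Z_p^2}[f](\alpha,\beta)|^2$. Interchanging the integration order (the integrand has modulus $1$ on a compact domain, so Fubini applies) and using the orthogonality relation $\int_{\Z_p} e^{2\pi i \{c y\}_p}\,dy = \1_{\Z_p}(c)$ separately on the $x_1$ and $x_2$ variables will reduce the Fourier transform to
\[
\mathcal{F}_{\Z_p^2}[f](\alpha,\beta) = \int_{\Z_p} \1_{\Z_p}\bigl(\xi_1 u^2 + \xi_2 u - \alpha\bigr)\, \1_{\Z_p}\bigl(\xi_3 u - \beta\bigr) \, du,
\]
which is the Haar measure of the set of $u \in \Z_p$ simultaneously solving the linear congruence $\xi_3 u \equiv \beta$ and the quadratic congruence $\xi_1 u^2 + \xi_2 u \equiv \alpha$ modulo $\Z_p$.

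From there I would handle the linear constraint first. Assuming $\xi_3 \neq 0$, the congruence $\xi_3 u \equiv \beta \pmod{\Z_p}$ has solutions only when $|\beta|_p \le |\xi_3|_p$, in which case it pins $u$ to a coset of the form $u_0(\beta) + p^{-\vartheta(\xi_3)}\Z_p$ of Haar measure $|\xi_3|_p^{-1}$. Substituting $u = u_0(\beta) + v$ with $v \in p^{-\vartheta(\xi_3)}\Z_p$, the quadratic becomes a new quadratic in $v$ with rescaled leading coefficient; one can then apply Lemma \ref{lemagaussian} on the shifted disk $p^{-\vartheta(\xi_3)}\Z_p$ (or a direct measure argument) to obtain a closed form for $\mathcal{F}_{\Z_p^2}[f](\alpha,\beta)$. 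Summing $|\mathcal{F}_{\Z_p^2}[f](\alpha,\beta)|^2$ over $(\alpha,\beta) \in \widehat{\Z}_p^2$ should then telescope to the claimed value $\|\xi\|_p^{-1}$.

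The main obstacle will be the case analysis needed to combine these two constraints cleanly. Unlike Lemma \ref{lemaaux}, the quadratic term $\xi_1 u^2$ forces a split into subcases depending on which of $|\xi_1|_p,|\xi_2|_p,|\xi_3|_p$ realizes $\|\xi\|_p$, and within each subcase one has to keep careful track of the support conditions coming from both branches of Lemma \ref{lemagaussian}. A cleaner alternative that avoids part of this bookkeeping is to apply Lemma \ref{lemagaussian} directly to $f(x_1,x_2)$ with $a = \xi_1 x_1$, $b = \xi_2 x_1 + \xi_3 x_2$, $\gamma = 0$: this yields $f = \1_{\Z_p}(\xi_2 x_1 + \xi_3 x_2)$ on the region $|\xi_1 x_1|_p \leq 1$ and $|f|^2 = |\xi_1 x_1|_p^{-1} \1_{\Z_p}\bigl((\xi_2 x_1 + \xi_3 x_2)/(\xi_1 x_1)\bigr)$ on $|\xi_1 x_1|_p > 1$. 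One then integrates $|f|^2$ over $\Z_p^2$ by decomposing the $x_1$-domain at $|x_1|_p = |\xi_1|_p^{-1}$ and reducing each annular piece to a measure computation of the type already performed in the proof of Lemma \ref{lemaaux}. Whichever route is chosen, the geometric series over the relevant $p$-adic annuli should collapse the answer to $\max\{|\xi_1|_p,|\xi_2|_p,|\xi_3|_p\}^{-1} = \|\xi\|_p^{-1}$.
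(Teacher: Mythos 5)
Your setup coincides with the paper's: both reduce, via Fubini and the orthogonality relation, to
\[
\mathcal{F}_{\Z_p^2}[f](\alpha,\beta)=\int_{\Z_p}\1_{\Z_p}\bigl(\xi_1u^2+\xi_2u-\alpha\bigr)\,\1_{\Z_p}\bigl(\xi_3u-\beta\bigr)\,du ,
\]
and then invoke Parseval. The gap is precisely in the step you defer ("should then telescope to the claimed value"). Since the coefficients $\mathcal{F}_{\Z_p^2}[f](\alpha,\beta)$ are nonnegative and sum to $1$, Parseval yields $\|\xi\|_p^{-1}$ if and only if every nonzero coefficient equals exactly $\|\xi\|_p^{-1}$. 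That holds when $|\xi_3|_p\ge\|(\xi_1,\xi_2)\|_p$: the linear constraint then pins $u$ to a single ball of radius $\|\xi\|_p^{-1}$ on which the quadratic phase is constant modulo $\Z_p$, so each intersection has measure $0$ or $\|\xi\|_p^{-1}$ and no Gaussian integral is needed. But when $|\xi_3|_p<\|(\xi_1,\xi_2)\|_p$ the statement itself fails, so no amount of bookkeeping will make the series collapse. Concretely, take $\xi_2,\xi_3$ trivial and $|\xi_1|_p=p$: then $f(x_1,x_2)=\int_{\Z_p}e^{2\pi i\{\xi_1x_1u^2\}_p}du$ equals $1$ for $|x_1|_p\le p^{-1}$ and is a normalized Gauss sum of modulus $p^{-1/2}$ for $|x_1|_p=1$, whence $\|f\|_{L^2}^2=p^{-1}+(1-p^{-1})p^{-1}=2p^{-1}-p^{-2}\neq p^{-1}=\|\xi\|_p^{-1}$. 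A counterexample with all components nontrivial: $|\xi_1|_p=p^2$, $|\xi_2|_p=|\xi_3|_p=p$ gives $\mathcal{F}_{\Z_p^2}[f](0,0)=\mu_{\Z_p}(p\Z_p)=p^{-1}$, because the coset $p\Z_p$ cut out by $\xi_3u\in\Z_p$ lies entirely inside $\{u:\xi_1u^2+\xi_2u\in\Z_p\}$; since the remaining coefficients sum to $1-p^{-1}>0$, we get $\|f\|_{L^2}^2>p^{-2}=\|\xi\|_p^{-1}$. The underlying mechanism is that the quadratic congruence $\xi_1u^2+\xi_2u\equiv\alpha$ can have two roots (or a solution set strictly larger than a ball of radius $\|\xi\|_p^{-1}$) inside one coset of the linear constraint.

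Your second route (evaluating $|f|^2$ pointwise with Lemma \ref{lemagaussian} and integrating) is genuinely different from the paper's argument, which never evaluates the Gaussian and relies only on local constancy of the phase polynomials; but it runs into exactly the same obstruction, and in fact would expose it: the branch $|\xi_1x_1|_p>1$ contributes $|\xi_1x_1|_p^{-1}\1_{\Z_p}\bigl((\xi_2x_1+\xi_3x_2)/(\xi_1x_1)\bigr)$, which is what produces the excess mass $2p^{-1}-p^{-2}$ above. The fix for either route is to add the hypothesis $|\xi_3|_p\ge\max\{|\xi_1|_p,|\xi_2|_p\}$ — this is satisfied in every invocation of the lemma in the paper, where the coefficient playing the role of $\xi_3$ always realizes $\|\xi\|_p$ — and under that hypothesis your first route closes cleanly in two lines, with no case analysis and no Gaussian integrals.
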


\begin{proof}
We only need to see how \begin{align*}
    \mathcal{F}_{\Z_p^2}[f](\gamma_1, \gamma_2)&= \int_{\Z_p} \1_{\Z_p}(\xi_1u^2 + \xi_2  - \gamma_1 u)\1_{\Z_p}(\xi_3 u - \gamma_2) du \\& = \mu_{\Z_p}(\{u:\, \xi_1 u^2 + \xi_2 u - \gamma_1 \in \Z_p \} \cap \{ u: \, \xi_3u - \gamma_2 \in \Z_p \} ),
\end{align*}where $\mu_{\Z_p}$ denotes the unique normalized Haar measure on $\Z_p$. Now we consider cases: 
\begin{itemize}
    \item We know that $\xi_1 u (u+\xi_2/\xi_1) - \gamma_1 \in \Z_p$ is possible only for $|\gamma_1|_p \leq \|(\xi_1, \xi_2)\|_p$. Also, it should be clear that $$\sum_{|\gamma_1|_p \leq \|(\xi_1,\xi_2)\|_p} \mu_{\Z_p}(\{u:\, \xi_1 u^2 + \xi_2 u - \gamma_1 \in \Z_p \}) =1 .$$Since it holds $$h_1 (u)= \xi_1 u^2 + \xi_2 u, \quad h_1(u+v)=h_1(u), \quad \text{for} \,\, |v|_p \leq \| (\xi_1, \xi_2)\|_p^{-1},$$each set $\{u:\, \xi_1 u^2 + \xi_2 u - \gamma_1 \in \Z_p \}$ should me expressible as a characteristic function of a $p$-adic ball of radius $\| (\xi_1 , \xi_2)\|_p^{-1}$.
    \item Similarly, $\xi_3u - \gamma_2 \in \Z_p$ is possible only for $|\gamma_2|_p \leq |\xi_3|_p$. Also, $$\sum_{|\gamma_2|_p \leq |\xi_3|_p} \mu_{\Z_p}(\{u:\, \xi_3 u^2  - \gamma_2 \in \Z_p \}) =1 .$$Since it holds $$h_2 (u)= \xi_3 u , \quad h_2(u+v)=h_2(u), \quad \text{for} \,\, |v|_p \leq | \xi_3|_p^{-1},$$each set $\{u:\, \xi_3 u - \gamma_2 \in \Z_p \}$ should be expressible as a characteristic function of a $p$-adic ball of radius $| \xi_3|_p^{-1}$.
\end{itemize}
In conclusion, each set $$\{u:\, \xi_1 u^2 + \xi_2 u - \gamma_1 \in \Z_p \} \cap \{ u: \, \xi_3u - \gamma_2 \in \Z_p \}$$is the intersection of two balls, at least one of them with measure $\| \xi \|_p^{-1}$, so that we can write $$\widehat{f}(\gamma_1 , \gamma_2) = \mathbf{N}_\xi (\gamma_1 , \gamma_2) \| \xi \|_p^{-1},$$ where $\mathbf{N}_\xi (\gamma_1 , \gamma_2)$ is always zero or one, depending on whether the intersections of the balls are empty or not. In conclusion:
\begin{align*}
    \sum_{(\gamma_1 , \gamma_2) \in \widehat{\Z}_p^2} |\mathcal{F}_{\Z_p^2}[f](\gamma_1, \gamma_2)|^2 &=\sum_{(\gamma_1 , \gamma_2) \in \widehat{\Z}_p^2} \mathbf{N}_\xi^2 (\gamma_1 , \gamma_2) \|\xi\|_p^{-2} \\ &=\|\xi\|_p^{-1}\sum_{(\gamma_1 , \gamma_2) \in \widehat{\Z}_p^2} \mathbf{N}_\xi (\gamma_1 , \gamma_2) \|\xi\|_p^{-1}=\|\xi\|_p^{-1},
\end{align*}where we used the fact that $\mathbf{N}_\xi^2 = \mathbf{N}_\xi$. The proof is concluded. 
\end{proof}

\subsection{The Heisenberg group over $\Z_p$}
Let $p>2$ be a prime number. Let us denote by $\mathbb{H}_d (\Z_p)$ the $(2d+1)$-dimensional Heisenberg group over $\Z_p$, or simply $\mathbb{H}_d $ for short, here defined as \[
\mathbb{H}_{d}(\Z_p)= \left\{
  \begin{bmatrix}
    1 & \mathbf{x}^t & z \\
    0 & I_{d} & \mathbf{y} \\
    0 & 0 & 1 
  \end{bmatrix}\in \mathrm{GL}_{d+2}(\Z_p) \, : \, \mathbf{x} , \mathbf{y} \in \Z_p^d, \, z \in \Z_p \right\}. 
\]
Clearly $\mathbb{H}_{d}(\Z_p)$ is a compact analytic $d$-dimensional manifold, which is homeomorphic to $\Z_p^{2d+1}$. Moreover, the operations on $\mathbb{H}_{d}(\Z_p)$ are analytic functions, making $\mathbb{H}_{d}(\Z_p)$ a $p$-adic Lie group. Let us denote by $\mathfrak{h}_{d}(\Z_p)$ its associated $\Z_p$-Lie algebra. We can write explicitly \[
\mathfrak{h}_{d}(\Z_p)= \left\{
  \begin{bmatrix}
    0 & \textbf{a}^t & c - \frac{\textbf{a} \cdot \textbf{b}}{2} \\
    0 & 0_{d} & \textbf{b} \\
    0 & 0 & 0 
  \end{bmatrix}\in \mathcal{M}_{d+2}(\Z_p) \, : \, \textbf{a} , \textbf{b} \in \Z_p^d, c \in \Z_p \right\}. 
\]Recall how for an element of the Lie algebra \[u:= \begin{bmatrix}
    0 & \textbf{a}^t & c - \frac{\textbf{a} \cdot \textbf{b}}{2}\\
    0 & 0_{d} & \textbf{b} \\
    0 & 0 & 0 
  \end{bmatrix},\]the exponential map evaluates to \[\textbf{exp} (u) = \begin{bmatrix}
    1 & \textbf{a}^t & c  \\
    0 & I_{d} & \textbf{b} \\
    0 & 0 & 1 
  \end{bmatrix}.\]The exponential map transform sub-ideals of the Lie algebra $\mathfrak{h}_{d} $ to subgroups of $\mathbb{H}_{d} $. Actually, we can turn the exponential map into a group homomorphism by using the Baker–Campbell–Hausdorff formula. Let us define the operation ``$\star$" on $\mathfrak{h}_{d}$ by $$X \star Y:= X + Y + \frac{1}{2} [X,Y].$$ Then clearly $(\mathfrak{h}_{d}  , \star) \cong \mathbb{H}_d $ is a profinite topological group, and it can be endowed with the sequence of subgroups $J_n := ( \mathfrak{h}_{d}(p^n\Z_p),\star)$, where $$ \mathfrak{h}_{d}(p^n \Z_p)= p^n \Z_p X_1 +...+ p^n \Z_p X_d + p^n \Z_p Y_1 + ...+ p^n\Z_p Y_{d} + p^{n} \Z_p Z,$$so $\mathbb{H}_d$ is a compact Vilenkin group, together with the sequence of compact open subgroups $$G_n := \mathbb{H}_d (p^n \Z_p)= \textbf{exp}( \mathfrak{h}_{d}(p^n \Z_p)), \,\,\, n \in \N_0.$$
  Notice how the sequence $\mathscr{G}=\{G_n\}_{n \in \N_0}$ forms a basis of neighbourhoods at the identity, so the group is metrizable, and we can endow it with the natural ultrametric \[ |(\mathbf{x},\mathbf{y},z)\star(\mathbf{x}',\mathbf{y}',z')^{-1}|_{\mathscr{G}} :=\begin{cases} 0 & \, \, \text{if} \, (\mathbf{x},\mathbf{y},z)=(\mathbf{x}',\mathbf{y}',z'), \\ |G_n| = p^{-n(2d+1)}  & \, \, \text{if} \, (\mathbf{x},\mathbf{y},z) \star(\mathbf{x}',\mathbf{y}',z')^{-1} \in G_n \setminus G_{n+1}.\end{cases}\]   
Nevertheless, instead of this ultrametric we will use the $p$-adic norm $$\| (\mathbf{x},\mathbf{y},z) \|_p:= \max \{\|\mathbf{x}\|_p, \| \mathbf{y} \|_p , |z|_p  \}.$$ 
\begin{rem}\label{remp-adicnormonG}
Notice that how $\|(\mathbf{x},\mathbf{y},z) \|_p^{2d+1} = |(\mathbf{x},\mathbf{y},z)|_{\mathscr{G}},$ for any $(\mathbf{x},\mathbf{y},z) \in \mathbb{H}_d,$ and we also have that \begin{align*}
     \|(\mathbf{x},\mathbf{y},z)\star(\mathbf{x}',\mathbf{y}',z')^{-1} \|_p= \| (\mathbf{x} - \mathbf{x'}, \mathbf{y} - \mathbf{y'}, z - z' + \mathbf{x}' \cdot \mathbf{y}'- \mathbf{x} \cdot \mathbf{y'} )\|_p.
\end{align*}From the above it is clear how $$\|(\mathbf{x},\mathbf{y},z)\star(\mathbf{x}',\mathbf{y}',z')^{-1} \|_p = 0 \iff (\mathbf{x},\mathbf{y},z) = (\mathbf{x}',\mathbf{y}',z'),$$proving that $\| \cdot \|_p$ is a valid choice for an alternative distance function on $\mathbb{H}_d$. The reason to use the $p$-adic norm instead of the Vilenkin distance function is to make the Vladimirov-Taibleson operator to look the same as for the abelian case  $\mathbb{G} = \Z_p^d$.     
\end{rem}

\subsection{The Engel group over $\Z_p$}
Let $p>2$ be a prime number. Let us denote by $\mathfrak{b}_4$ the $\Z_p$-Lie algebra generated by $X_1,..,X_4$, with the commutation relations $$[X_1 , X_2] = X_3, \,\,\, [X_1 , X_3 ] = X_4, \,\,\, \mathfrak{b}_4 = \bigoplus_{j=1}^3 \mathcal{V}_j, $$where $$\mathcal{V}_1 = \mathrm{span}_{\Z_p} \{ X_1 , X_2 \}, \,\,\, \mathcal{V}_1 = \mathrm{span}_{\Z_p} \{ X_3 \}, \,\,\, \mathcal{V}_3 = \mathrm{span}_{\Z_p} \{ X_4 \}.$$We call the $\Z_p$-Lie algebra $\mathfrak{b}_4$ the $4$-dimensional \emph{Engel algebra}, and its exponential image, which we denote here by $\mathcal{B}_4$, is called the \emph{Engel group} over the $p$-adic integers.    
Let us consider the realization of $\mathfrak{b}_4$ as the matrix algebra  \[
\mathfrak{b}_{4}(\Z_p)= \left\{
  \begin{bmatrix}
    0 & x_1 & 0 & x_4 -\frac{1}{2}x_1 (x_3  - \frac{1}{2}x_1 x_2) - \frac{1}{6}x_1^2 x_2 \\
    0 & 0 & x_1 & x_3 - \frac{1}{2}x_1 x_2 \\
    0 & 0 & 0 & x_2  \\
    0 & 0 & 0 & 0 \\
  \end{bmatrix}\in \mathcal{M}_{4}(\Z_p) \, : \, x \in \Z_p^4 \right\}. 
\]
With this realization, and using the usual matrix exponential map, we can think on $\mathcal{B}(\Z_p)$ as the matrix group \[
\mathcal{B}_{4}(\Z_p)= \left\{
  \begin{bmatrix}
    1 & x_1 & \frac{1}{2}x_1^2 & x_4  \\
    0 & 1 & x_1 & x_3  \\
    0 & 0 & 1 & x_2  \\
    0 & 0 & 0 & 1 \\
  \end{bmatrix}\in \mathcal{M}_{4}(\Z_p) \, : \, x \in \Z_p^4 \right\}, 
\]
which is analytically isomorphic to $\Z_p^4$ with the operation $$\mathbf{x}\star \mathbf{y}:=(x_1 + y_1, x_2 + y_2, x_3 + y_3 + x_1 y_2, x_4 + y_4 + x_1 y_3 + \frac{1}{2} x_1^2 y_2).$$

The exponential map transforms sub-ideals of the Lie algebra $\mathfrak{b}_{4} $ to subgroups of $\mathcal{B}_4 \cong (\mathfrak{b}_4 , *)$, which can be endowed with the sequence of subgroups $J_n := (  \mathfrak{b}_{4}(p^n\Z_p),*)$, where $$ \mathfrak{b}_{4}(p^n\Z_p)= p^n \Z_p X_1 + p^n \Z_p X_2 + p^n \Z_p X_3 + p^n\Z_p X_{4},$$so $\mathcal{B}_4$ is a compact Vilenkin group, together with the sequence of compact open subgroups $$G_n := \mathcal{B}_4 (p^n \Z_p)= \textbf{exp}( \mathfrak{b}_{4}(p^n\Z_p)), \,\,\, n \in \N_0.$$
  Notice how the sequence $\mathscr{G}=\{G_n\}_{n \in \N_0}$ forms a basis of neighbourhoods at the identity, so the group is metrizable, and we can endow it with the natural ultrametric \[ |\mathbf{x} \star \mathbf{y}^{-1}|_{\mathscr{G}} :=\begin{cases} 0 & \, \, \text{if} \, \mathbf{x}=\mathbf{y}, \\ |G_n| = p^{-4n}  & \, \, \text{if} \, \mathbf{x}\star \mathbf{y}^{-1} \in G_n \setminus G_{n+1}.\end{cases}\]   
Nevertheless, instead of this ultrametric we will use the $p$-adic norm $$\| \mathbf{x} \|_p:= \max_{1 \leq j \leq 4} \|x_j\|_p, .$$ Notice that how $\|\mathbf{x} \|_p^{4} = |\mathbf{x}|_{\mathscr{G}},$ for any $\mathbf{x} \in \mathcal{B}_4$

The group $\mathcal{B}_4$ is another important example where a version of Theorem \ref{teogeneralresult} holds. In dimension $5$, given that nilpotent groups of nilpotency index \(\mathscr{N}\) are extensions of nilpotent groups with lower indices and dimensions, the representation theory of $5$-dimensional groups often reduces to that of dimensions $3$ and $4$. Dimension $3$ corresponds to the \emph{compact Heisenberg group} $\mathbb{H}_1(\mathbb{Z}_p)$, while dimension $4$ is represented by $\mathcal{B}_4$, so that we need to include the representation theory of $\mathcal{B}_4$ too: 
\begin{teo}\label{TeoRepresentationsB4}
Let $\mathcal{B}_4(\Z_p)$, or simply $\mathcal{B}_4$ for short, be the $4$-dimensional Engel group over the $p$-adic integers. Let us denote by $\widehat{\mathcal{B}}_4$ the unitary dual of $\mathcal{B}_4$, i.e., the collection of all unitary irreducible representations of $\mathcal{B}_4$. Then we can identify $\widehat{\mathcal{B}}_4$ with the following subset of $\widehat{\Z}_p^{4} \cong \Q_p^{4}/\Z_p^{4}$:  
$$\widehat{\mathcal{B}}_4:= \{ \xi  \in \widehat{\Z}_p^4 \, : \, 1 \leq |\xi_4|_p <|\xi_3|_p \, \wedge \,  (\xi_1 , \xi_2 , \xi_3) \in \widehat{\mathbb{H}}_1 , \, \text{or} \,  |\xi_3|_p = 1 \,  \wedge  \,  \xi_1 \in \Q_p / p^{\vartheta(\xi_4)} \Z_p \}.$$
Moreover, each non-trivial representation $[\pi_\xi]$ is equivalent to one of the representation $[ \pi_\xi ] \in \widehat{\mathcal{B}}_4 $ which can be realized in the following finite dimensional sub-space $\mathcal{H}_\xi$ of $L^2(\Z_p)$:
\[ \mathcal{H}_\xi :=  \mathrm{span}_\C \{ \|(\xi_3 ,\xi_4) \|_p^{1/2} \1_{h + p^{-\vartheta(\xi_3 , \xi_4)} \Z_p } \,\, : \, h \in \Z_p / p^{-\vartheta(\xi_3 , \xi_4)}\Z_p \},    \]
where $d_\xi:= dim_\C (\mathcal{H}_\xi) = \max\{ |\xi_3|_p , |\xi_4|_p \}$, and the representation acts on functions $\varphi \in \mathcal{H}_{\xi}$ according to the formula 
\[\pi_{\xi}(\mathbf{x}) \varphi (u) := 
    e^{2 \pi i \{\xi_1 x_1 + \xi_2 x_2 + \xi_3 (x_3 +  u x_2)  + \xi_4 (x_4 +  u x_3 + \frac{u^2}{2} x_2) \}_p} \varphi (u + x_1)  . \]
With this explicit realization of $[\pi_\xi]  \in \widehat{\mathcal{B}}_4 $, and with the natural choice of basis for each representation space, the associated matrix coefficients $(\pi_{\xi})_{h h'}$ are given by \[(\pi_{\xi}(\mathbf{x}))_{hh'} = e^{2 \pi i \{\xi \cdot \mathbf{x} +(\xi_3   x_2 +  \xi_4 x_3)(h') +  \xi_4 x_2 \frac{(h')^2}{2} \}_p} \1_{h' - h + p^{\vartheta(\xi_3 , \xi_4)} \Z_p} (x_1) . \]
Sometimes we will use the notation $$\mathcal{V}_{\xi}:= \mathrm{span}_\C \{ (\pi_{\xi})_{h h'} \, : \, h,h' \in \Z_p/p^{-\vartheta(\xi_3 , \xi_4)} \Z_p \}.$$See again Definition \ref{defivaluation} for the definition of the $p$-adic valuation $\vartheta$.
\end{teo}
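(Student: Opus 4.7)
The strategy is to build each irreducible representation of $\mathcal{B}_4$ explicitly by induction from a character of the abelian normal subgroup $N = \exp(\mathrm{span}_{\Z_p}\{X_2, X_3, X_4\})$, mirroring the approach used for $\mathbb{H}_d$ in Theorem \ref{TeoRepresentationsHd}. The key facts are that $\mathrm{span}_{\Z_p}\{X_2, X_3, X_4\}$ is an abelian ideal of $\mathfrak{b}_4$ (since all pairwise brackets vanish and $[\mathfrak{b}_4, \mathrm{span}_{\Z_p}\{X_2, X_3, X_4\}] \subset \mathrm{span}_{\Z_p}\{X_2, X_3, X_4\}$) and that the quotient $\mathcal{B}_4/N \cong \Z_p$ is one-dimensional, generated by the coset of $X_1$. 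Equivalence classes of irreducible representations should then correspond to coadjoint orbits in $\mathfrak{b}_4^* \cong \widehat{\Z}_p^4$, which is what will produce the displayed parameter set.

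The plan has four main steps. First, for a character $\chi_{(\xi_2, \xi_3, \xi_4)}$ of $N$, realize $\mathrm{Ind}_N^{\mathcal{B}_4}\chi$ on $L^2(\mathcal{B}_4/N) \cong L^2(\Z_p)$ using the transversal $\{(u, 0, 0, 0) : u \in \Z_p\}$ and the identity $(u, 0, 0, 0) \star \mathbf{x} = (0, x_2, x_3 + ux_2, x_4 + ux_3 + \tfrac{u^2}{2}x_2) \star (u + x_1, 0, 0, 0)$; tensoring with the one-dimensional character $e^{2\pi i \{\xi_1 x_1\}_p}$ of $\mathcal{B}_4^{\mathrm{ab}}$ yields the full family $\pi_\xi$ of the theorem. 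Second, identify the invariant subspace $\mathcal{H}_\xi$: the phase $(\xi_3 x_2 + \xi_4 x_3)u + \xi_4 \tfrac{u^2}{2} x_2$ has coefficients of $p$-adic norm at most $\|(\xi_3, \xi_4)\|_p$, hence is locally constant on cosets of $p^{-\vartheta(\xi_3, \xi_4)}\Z_p$. Third, establish irreducibility on $\mathcal{H}_\xi$: the operators $\pi_\xi(0, x_2, 0, 0)$ act diagonally on $\{\varphi_h\}$ with eigenvalues $e^{2\pi i \{\xi_2 x_2 + \xi_3 h x_2 + \xi_4 \tfrac{h^2}{2} x_2\}_p}$ which separate distinct cosets $h$ for suitable $x_2$, so any intertwiner must be diagonal; the action of $\pi_\xi(x_1, 0, 0, 0)$ cyclically permutes the basis, forcing any diagonal intertwiner to be scalar. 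Fourth, enumerate equivalence classes via the coadjoint action: $\mathrm{Ad}^*(\exp(tX_1))$ sends $(\xi_1, \xi_2, \xi_3, \xi_4) \mapsto (\xi_1, \xi_2 - t\xi_3 + \tfrac{t^2}{2}\xi_4, \xi_3 - t\xi_4, \xi_4)$, while $\mathrm{Ad}^*(\exp(aX_2 + bX_3))$ shifts $\xi_1 \mapsto \xi_1 - a\xi_3 - b\xi_4$. When $|\xi_4|_p \geq |\xi_3|_p$ with $\xi_4 \neq 0$, choose $t = \xi_3/\xi_4 \in \Z_p$ to reduce to $\xi_3 = 0$, landing in the branch $|\xi_3|_p = 1$ with $\xi_1$ taken modulo $p^{\vartheta(\xi_4)}\Z_p$; when $|\xi_3|_p > |\xi_4|_p \geq 1$, the triple $(\xi_1, \xi_2, \xi_3)$ fits the $\widehat{\mathbb{H}}_1$-parameterization with $\xi_1 \in \Q_p/p^{\vartheta(\xi_3)}\Z_p$. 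Completeness is verified by a Plancherel-type cardinality check, summing $d_\xi^2 = \|(\xi_3, \xi_4)\|_p^2$ over the parameter set to recover the Haar measure. Finally, the matrix coefficients follow from direct evaluation of $\langle \pi_\xi(\mathbf{x})\varphi_{h'}, \varphi_h\rangle$ in the chosen basis.

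The main technical obstacle is the quadratic phase $\xi_4 \tfrac{u^2}{2} x_2$, which arises precisely when $\xi_4 \neq 0$ and turns the $L^2$-orthogonality estimates and intertwiner unitarity verifications into $p$-adic Gauss-sum calculations; Lemmas \ref{lemagaussian} and \ref{lemaaux} are tailored to handle exactly these integrals. A subsidiary subtlety is ensuring that the case dichotomy $\{1 \leq |\xi_4|_p < |\xi_3|_p\} \sqcup \{|\xi_3|_p = 1\}$ provides a genuine set of coadjoint orbit representatives with no overlap or omission, which requires careful bookkeeping of how $\xi_1$ transforms under both types of coadjoint generators.
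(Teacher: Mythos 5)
Your architecture --- Mackey induction from the abelian ideal $N=\mathbf{exp}(\spn_{\Z_p}\{X_2,X_3,X_4\})$, a commutant argument for irreducibility, and coadjoint orbits for the enumeration --- is a legitimate route and genuinely different from the one this paper uses for its analogous theorems, where the operators $\pi_\xi$ are written down outright, the homomorphism property is checked through a polynomial identity, irreducibility is obtained from $\int_{\mathcal{B}_4}|\chi_{\pi_\xi}(\mathbf{x})|^2\,d\mathbf{x}=1$ via the Gaussian-integral lemmas, and the classes are enumerated by comparing characters and closing with the Peter--Weyl count $\sum d_\xi^2=|\mathcal{B}_4/\mathcal{B}_4(p^n\Z_p)|$. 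Your route has the merit of explaining where the formula for $\pi_\xi$ and the modulus $p^{\vartheta(\xi_3,\xi_4)}\Z_p$ for $\xi_1$ come from; the paper's route avoids having to justify that unitary equivalence is governed by the coadjoint action.

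There are, however, two concrete problems. First, the separation claim in your step 3 fails as written: the eigenvalues of $\pi_\xi(0,x_2,0,0)$ on $\varphi_h$ and $\varphi_{h'}$ differ in the exponent by $x_2(h-h')\bigl(\xi_3+\tfrac{h+h'}{2}\xi_4\bigr)$, and whenever $|\xi_3|_p\le|\xi_4|_p$ one may take $m\equiv-\xi_3/\xi_4\in\Z_p$ and $h=m+c$, $h'=m-c$, so that $\xi_3+\tfrac{h+h'}{2}\xi_4\in\Z_p$ and the difference lies in $\Z_p$ for \emph{every} $x_2\in\Z_p$; the commutant of $\{\pi_\xi(0,x_2,0,0)\}$ then contains non-diagonal operators and your argument stalls. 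The repair is to use all of $N$: the eigenvalue of $\pi_\xi(0,x_2,x_3,x_4)$ on $\varphi_h$ is the $N$-character with parameters $\bigl(\xi_2+\xi_3h+\xi_4\tfrac{h^2}{2},\,\xi_3+\xi_4h,\,\xi_4\bigr)$, and these are pairwise distinct for $h\in\Z_p/p^{-\vartheta(\xi_3,\xi_4)}\Z_p$ (the middle slot separates those $h,h'$ with $|h-h'|_p>|\xi_4|_p^{-1}$, the first slot the remaining ones, since their first slots then differ by $(h-h')\xi_3$ modulo $\Z_p$). Second, in step 4 the delicate coordinate is not $\xi_1$ but the pair $(\xi_2,\xi_3)$: the orbit is $\bigl(\xi_2-t\xi_3+\tfrac{t^2}{2}\xi_4,\,\xi_3-t\xi_4\bigr)$, so when $|\xi_4|_p>1$ you cannot reduce $\xi_2$ modulo $p^{\vartheta(\xi_3)}\Z_p$ while holding $\xi_3$ fixed --- removing a piece of $\xi_2$ of norm near $|\xi_3|_p$ forces $|t|_p$ near $1$ and simultaneously moves $\xi_3$ by a multiple of $\xi_4$ of norm $>1$. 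The stated transversal is still correct, but showing that every parameter reduces to exactly one listed $\xi$ is precisely the bookkeeping you defer, and the Plancherel count cannot certify it on its own (a list with compensating duplications and omissions also sums to $p^{4n}$); you must either carry out this orbit reduction in full or fall back on the paper's substitute, namely computing the characters $\chi_{\pi_\xi}$ explicitly with Lemmas \ref{lemagaussian} and \ref{lemaaux} and checking that they are pairwise distinct on the proposed index set.
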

Theorem \ref{TeoRepresentationsB4} provides us with the following Fourier series representation of a function $f \in L^2 (\mathcal{B}_4)$: \begin{align*}
    f(\mathbf{x}) &= \sum_{\, \|(\xi_3 , \xi_4) \|_p = 1 } \widehat{f}(\xi) e^{2 \pi i \{ \xi \cdot \mathbf{x} \}_p} + \sum_{ \xi_4 \in \widehat{\Z}_p } \sum_{  |\xi_4|_p < |\xi_3|_p}  \sum_{(\xi_1 , \xi_2) \in \Q_p^2 / p^{\vartheta(\xi_3)}\Z_p^2} |\xi_3|_p Tr[\pi_\xi (\mathbf{x}) \widehat{f}(\xi)] \\ & \quad \quad +\sum_{\,|\xi_4|_p > |\xi_3|_p=1 } \sum_{\xi_2 \in \widehat{\Z}_p} \sum_{\xi_1 \in \Q_p/ p^{\vartheta(\xi_4)}\Z_p}|\xi_4|_p Tr[\pi_\xi (\mathbf{x}) \widehat{f}(\xi)],
\end{align*}
and in a similar way as for the Heisenberg group, we want to use this series representation to study linear invariant operators. For instance, with the description of the unitary dual given in Theorem \ref{TeoRepresentationsB4}, we can prove the following spectral theorem for the Vladimirov sub-Laplacian on $\mathcal{B}_4$:

\begin{teo}\label{TeoSpectrumSublaplacianB4}
Let $p>3$. The Vladimirov sub-Laplacian associated to the basis $\{X_1 ,X_2\}$ of the first stratum of $\mathfrak{b}_4$ $$\mathscr{L}_{sub}^{\alpha } : =  \partial_{X_1}^{\alpha} + \partial_{X_2}^{\alpha} ,$$defines a left-invariant, self-adjoint, sub-elliptic operator on $\mathcal{B}_4$. The spectrum of this operator is purely punctual, and its associated eigenfunctions form an orthonormal basis of $L^2(\mathcal{B}_4)$. Furthermore, the symbol of $\mathscr{L}_{sub}^{\alpha }$ acts on each representation space as a $p$-adic Schr{\"o}dinger type operator, and the space $L^2(\mathcal{B}_4)$ can be written as the direct sum $$L^2(\mathcal{B}_4) = \overline{\bigoplus_{[\xi] \in \widehat{\mathcal{B}}_4} \bigoplus_{h' \in  \Z_p / p^{-\vartheta(\xi_3 , \xi_4)} \Z_p} \mathcal{V}_{\xi}^{h'}}, \, \,\, \mathcal{V}_{\xi} = \bigoplus_{{h'} \in  \Z_p / p^{-\vartheta(\xi_3 , \xi_4)} \Z_p} \mathcal{V}_{\xi}^{h'}, $$where each finite-dimensional sub-space$$\mathcal{V}_{\xi}^{h'}:= \mathrm{span}_\C \{ (\pi_{\xi})_{hh'} \, : \, h \in \Z_p / p^{-\vartheta(\xi_3 , \xi_4)} \Z_p \},$$is an invariant sub-space where $\mathscr{L}_{sub}^{\alpha }$ acts like the Schr{\"o}dinger-type operator operator $$  \partial_{x_1}^{\alpha} + Q(\xi , h') - \frac{1 - p^{-1}}{1 - p^{-(\alpha +1)}}.$$
Consequently, the spectrum of $\mathscr{L}_{sub}^{\alpha }$ restricted to $\mathcal{V}_{\xi}^{h'}$ is purely punctual, and it is given by \[\begin{cases} |\xi_1|_p^\alpha + |\xi_2|^\alpha_p - 2\frac{1 - p^{-1}}{1 - p^{-(\alpha +1)}} \quad & \text{if} \quad |\xi_3|_p = |\xi_4 |_p =1,   \\ |\xi_1 + \tau|_p^{\alpha} +| \xi_2  + \xi_3    h'  |_p^\alpha - 2\frac{1 - p^{-1}}{1 - p^{-(\alpha +1)}} \quad & \text{if} \quad 1= |\xi_4|_p <|\xi_3|_p,  \\|\xi_1 + \tau|_p^{\alpha} +| \xi_2  + \xi_3    h' +  \xi_4  \frac{(h')^2}{2} |_p^\alpha - 2\frac{1 - p^{-1}}{1 - p^{-(\alpha +1)}} \quad & \text{if} \quad 1< |\xi_4|_p <|\xi_3|_p, \\ |\xi_1 + \tau|_p^{\alpha} +| \xi_2  + \xi_4  \frac{(h')^2}{2} |_p^\alpha - 2\frac{1 - p^{-1}}{1 - p^{-(\alpha +1)}} \quad & \text{if} \quad 1= |\xi_3|_p <|\xi_4|_p,
\end{cases} \]where $\tau \in \Z_p / p^{-\vartheta(\xi_3 , \xi_4)}\Z_p$, and the corresponding eigenfunctions are given by  $$\mathscr{e}_{\xi, h', \tau}(\mathbf{x}): = e^{ 2 \pi i \{ \xi x  + \tau x_1 +(\xi_3   x_2 +  \xi_4 x_3)(h') +  \xi_4 x_2 \frac{(h')^2}{2} \}_p},$$ 
where $$\xi \in \widehat{\mathcal{B}}_4, \, h '\in \Z_p / p^{-\vartheta(\xi_3 , \xi_4)} \Z_p, \,  1 \leq | \tau |_p \leq \| (\xi_3 , \xi_4)\|_p .$$
\end{teo}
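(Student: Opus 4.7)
The strategy follows the blueprint established for the Heisenberg group in Theorem \ref{TeoSpectrumSublaplacianHd}, adapted to the non-commutative structure of $\mathcal{B}_4$. The key input is Theorem \ref{TeoRepresentationsB4}, which furnishes an explicit parametrisation of $\widehat{\mathcal{B}}_4$ by a subset of $\widehat{\Z}_p^4$ together with closed-form matrix coefficients $(\pi_\xi)_{hh'}$. Since the directional Vladimirov--Taibleson operators $\partial_{X_1}^\alpha$ and $\partial_{X_2}^\alpha$ are defined as convolutions against right-invariant kernels, they are left-invariant pseudo-differential operators; hence their joint symbol acts by right multiplication on every $\pi_\xi(\mathbf{x})$, and each column space $\mathcal{V}_\xi^{h'} = \mathrm{span}\{(\pi_\xi)_{hh'}\}_h$ is an invariant finite-dimensional subspace. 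This reduces the spectral analysis of $\mathscr{L}^\alpha_{sub}$ on $L^2(\mathcal{B}_4)$ to the diagonalisation of the restriction $\mathscr{L}^\alpha_{sub}|_{\mathcal{V}_\xi^{h'}}$ for each pair $(\xi,h')$.

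The central calculation is the explicit evaluation of $\partial_{X_k}^\alpha (\pi_\xi)_{hh'}$ via the formula $\mathbf{x}\star\exp(-tX_k)^{-1}$ of Definition \ref{defiDirectionalK}. From the multiplication law of $\mathcal{B}_4$ one obtains $\mathbf{x}\star\exp(-tX_1)=(x_1-t,x_2,x_3,x_4)$, so that $\partial_{X_1}^\alpha$ reduces to the one-dimensional Vladimirov--Taibleson operator in the first coordinate; after expanding the indicator factor of $(\pi_\xi)_{hh'}$ in the finite character system dual to $\Z_p/p^{-\vartheta(\xi_3,\xi_4)}\Z_p$, the $t$-integral yields multiplication by $|\xi_1+\tau|_p^\alpha-\tfrac{1-p^{-1}}{1-p^{-(\alpha+1)}}$ on each character. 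For $\partial_{X_2}^\alpha$, the less trivial identity $\mathbf{x}\star\exp(-tX_2)=(x_1,x_2-t,x_3-tx_1,x_4-\tfrac12 tx_1^2)$ produces an extra phase $e^{-2\pi i\{t\,B(\mathbf{x})\}_p}$ with $B(\mathbf{x})=\xi_2+\xi_3(x_1+h')+\tfrac12\xi_4(x_1+h')^2$; the hypothesis $p>3$ together with $\vartheta(\xi_3,\xi_4)=\min\{\vartheta(\xi_3),\vartheta(\xi_4)\}$ forces all variations of $\xi_3 x_1$, $\xi_4 x_1 h'$ and $\xi_4 x_1^2/2$ on the support of the indicator to lie in $\Z_p$, so $B$ collapses modulo $\Z_p$ to a quantity depending only on $\xi$ and the basis parameter, and the Vladimirov $t$-integral returns the potential $Q(\xi,h')-\tfrac{1-p^{-1}}{1-p^{-(\alpha+1)}}$. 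Adding the two contributions realises $\mathscr{L}^\alpha_{sub}|_{\mathcal{V}_\xi^{h'}}$ in the Schr\"odinger form stated.

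The diagonalisation on each $\mathcal{V}_\xi^{h'}$ then proceeds by passing from the indicator basis $\{(\pi_\xi)_{hh'}\}_h$ to the character basis $\{\mathscr{e}_{\xi,h',\tau}\}_\tau$ via the inverse discrete $p$-adic Fourier transform in $h$; the four explicit spectrum formulas of the statement arise from the case split according to the relative sizes of $|\xi_3|_p$, $|\xi_4|_p$ and $1$, the simpler formulas in cases $1$, $2$ and $4$ being a consequence of the fact that the missing term of $Q$ then falls into $\Z_p$ and disappears from the absolute value. Self-adjointness is inherited from the symmetric form of the Vladimirov--Taibleson kernel together with the reality of the potential, while orthogonality and completeness of $\{\mathscr{e}_{\xi,h',\tau}\}$ in $L^2(\mathcal{B}_4)$ follow from Peter--Weyl applied to the decomposition of Theorem \ref{TeoRepresentationsB4}.

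The main obstacle is the sub-ellipticity estimate that is required for global hypoellipticity, which via the general criterion recalled in the introduction amounts to the lower bound $\|\xi\|_p^\alpha \lesssim \inf_{\tau,h'}\bigl\{|\xi_1+\tau|_p^\alpha + |\xi_2+\xi_3 h'+\tfrac12\xi_4 (h')^2|_p^\alpha\bigr\}$ uniformly outside a finite subset of $\widehat{\mathcal{B}}_4$. In the three regimes where one of $|\xi_3|_p$, $|\xi_4|_p$ equals $1$ the bound follows by choosing $\tau$ or $h'$ so that one of the two summands dominates, essentially reducing the argument to the Heisenberg case already treated in Theorem \ref{TeoSpectrumSublaplacianHd}. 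The hardest case is the mixed regime $1<|\xi_4|_p<|\xi_3|_p$, where the quadratic polynomial $h'\mapsto \xi_2+\xi_3 h'+\tfrac12\xi_4(h')^2$ could in principle have small $p$-adic valuation on an exceptional coset; ruling this out simultaneously on every residue class modulo $p^{-\vartheta(\xi_3,\xi_4)}$ is precisely where Hensel's lemma and the $p$-adic Gaussian-integral computations of Lemmas \ref{lemagaussian}--\ref{lemaauxG54} enter, and is the point at which the hypothesis $p>3$ is used essentially to control the denominators produced by completing the square.
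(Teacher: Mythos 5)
Your strategy is the same template the paper uses for its five-dimensional analogues (Theorems \ref{teosubLapG22}, \ref{tepsubLapG54}, \ref{teosubLapG56}): left-invariance gives the invariant column spaces $\mathcal{V}_\xi^{h'}$, the group-law identities $\mathbf{x}\star\exp(-tX_1)=(x_1-t,x_2,x_3,x_4)$ and $\mathbf{x}\star\exp(-tX_2)=(x_1,x_2-t,x_3-tx_1,x_4-\tfrac12 tx_1^2)$ reduce everything to one-dimensional Vladimirov integrals, and your phase increment $B(\mathbf{x})=\xi_2+\xi_3(x_1+h')+\tfrac12\xi_4(x_1+h')^2$ is exactly right. (The paper itself does not prove Theorem \ref{TeoSpectrumSublaplacianB4} here; it quotes it from the companion paper on $\mathcal{B}_4$, but the intended argument is visibly the one you describe.)

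The gap is at the step where you assert that $B$ ``collapses modulo $\Z_p$ to a quantity depending only on $\xi$ and the basis parameter'' and then diagonalise by a discrete Fourier transform in $h$. On the support of $(\pi_\xi)_{hh'}$ the variable $x_1$ is pinned only modulo $p^{-\vartheta(\xi_3,\xi_4)}\Z_p$, so $B$ collapses to $\xi_2+\xi_3 k+\tfrac12\xi_4 k^2 \pmod{\Z_p}$ where $k$ is the residue of $x_1+h'$, i.e.\ it depends on the \emph{row} index $h$ and not only on $(\xi,h')$. Consequently $\partial_{X_2}^\alpha|_{\mathcal{V}_\xi^{h'}}$ is a genuine multiplication operator, diagonal in the indicator basis with the non-constant diagonal $|\xi_2+\xi_3k+\tfrac12\xi_4k^2|_p^\alpha$ (non-constant because the parametrisation of $\widehat{\mathcal{B}}_4$ allows $|\xi_2|_p\le\|(\xi_3,\xi_4)\|_p$, in which case the quadratic has a root modulo $\Z_p$ by Hensel's lemma while being large elsewhere), whereas $\partial_{X_1}^\alpha|_{\mathcal{V}_\xi^{h'}}$ is diagonal in the character basis $\{\mathscr{e}_{\xi,h',\tau}\}_\tau$. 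These two matrices do not commute, so the Fourier transform in $h$ diagonalises the kinetic part and scrambles the potential: the functions $\mathscr{e}_{\xi,h',\tau}$ are not eigenfunctions of the sum, and the spectrum of $D^\alpha+V$ is not the set of sums of the separate eigenvalues. This is exactly the $p$-adic Schr\"odinger difficulty acknowledged in the Remark following Theorem \ref{TeoSpectrumSublaplacianHd}, and your proposal needs either a proof that the potential is constant on each $\mathcal{V}_\xi^{h'}$ (it is not) or a different diagonalisation argument. Relatedly, your hypoellipticity bound $\|\xi\|_p^\alpha\lesssim\inf_{\tau,h'}\{|\xi_1+\tau|_p^\alpha+|\xi_2+\xi_3h'+\tfrac12\xi_4(h')^2|_p^\alpha\}$ cannot hold: $\tau$ runs over a full set of residues so one may take $\xi_1+\tau\in\Z_p$, and the quadratic in $h'$ vanishes modulo $\Z_p$ for a suitable residue, so the infimum is bounded (indeed the corresponding symbol eigenvalue is $0$) for infinitely many $\xi$ with $\|\xi\|_p$ arbitrarily large; the inequality actually claimed in Theorem \ref{teogeneralresult} is only $\|(\xi_1,\xi_2)\|_p^\alpha\lesssim\|\sigma(\xi)\|_{inf}$, and even that requires minimising the two terms jointly rather than separately.
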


\subsection{Notation}
Throughout this section $\mathbb{G}$ will denote a compact nilpotent $p$-adic Lie group with dimension $d \leq 5$. In order to study the behavior of the directional VT operators, we need to introduce first several important definitions and notation. First, we have the notation about representation theory of compact Vilenkin groups.

\begin{defi}\normalfont\label{definotrep}
\,
\begin{itemize}
        \item The symbol $\mathrm{Rep}(\mathbb{G})$ will denote the collection of all (equivalence classes of) unitary finite-dimensional representations of $\mathbb{G}$. We will denote by $\widehat{\mathbb{G}}$ the \emph{unitary dual of $\mathbb{G}$}, that is, the collection of all equivalence classes of $\mathrm{Rep}(\mathbb{G})$.
        \item Let $K$ be a normal sub-group of $\mathbb{G}$. We denote by $K^\bot$ the \emph{anihilator of $K$}, here defined as $$K^\bot:= \{ [\pi] \in \mathrm{Rep}(\mathbb{G}) \, : \, \pi|_{K}=I_{d_\pi} \}.$$Also, we will use the notation $$B_{\widehat{\mathbb{G}}}(n):=\widehat{\mathbb{G}} \cap \mathbb{G}(p^n\Z_p)^\bot,$$and $\widehat{\mathbb{G}}(n):= B_{\widehat{\mathbb{G}}}(n) \setminus B_{\widehat{\mathbb{G}}}(n-1).$
\end{itemize}
\end{defi}
\begin{rem}\label{remball}
Recall how $\widehat{\Z}_p^d \cong \Q_p^d / \Z_p^d$ is actually an ultrametric space with the $p$-adic norm, and the balls of $\widehat{\Z}_p^d$ are precisely the balls of $\Q_p^d$ modulo $\Z_p^d$. For $\mathbb{H}_d$ and $\mathcal{B}_4$, their unitary duals can be identified with a certain subset of $\widehat{\Z}_p^{2d+1}$ and $\widehat{\Z}_p^4$ respectively, and this implies the same for any compact nilpotent $p$-adic Lie group with dimension $d \leq 4$. So, at least for $d \leq 4$, we already know how $$B_{\widehat{\mathbb{G}}}(n) = \{ \pi_\xi \, : \, \, \| \xi \|_p \leq p^n \} = \widehat{\mathbb{G}} \cap B_{\widehat{\Z}_p^d}(n),$$where $$B_{\widehat{\Z}_p^d}(n) := \{ \xi \in \Q_p^d / \Z_p^d \, \, : \, \, \| \xi \|_p \leq p^n \} = p^{-n}\Z_p / \Z_p, \, \, \, n \in \N.$$This highlights the motivation for our notation, which is the fact that for $d \leq 4$ the anihilators of balls in $\mathbb{G}$ are balls in the unitary dual $\mathbb{G}$. Similarly, the sets $\mathbb{G}(n)$ are spheres. Here in this paper we will prove the same for compact nilpotent groups with dimension $d=5$.
\end{rem}
A particularity of compact $p$-adic Lie groups, in contrast with their real counterparts, is how their continuous representations are actually \emph{smooth functions}. We say that a function $f:\mathbb{G} \to \C$ is \emph{smooth} if $f$ is a locally constant function with a fixed index of local constancy, i.e., there is an $n_f \in \N_0$, which we always choose to be the minimum possible, such that $$f(\mathbf{x}\star\textbf{y}) = f(\mathbf{x}), \, \, \,\text{ for all} \,\,\, \textbf{y} \in \mathbb{G}(p^{n_f} \Z_p).$$Here $\star$ is the operation on $\mathbb{G}$. The reason why any $[\pi] \in \mathrm{Rep}(\mathbb{G})$ must be smooth is that, due to their continuity and the lack of small subgroups for the general group $\mathrm{GL}(\mathcal{H}_\pi)$, any unitary representation $[\pi]: \mathbb{G} \to  \mathrm{GL} (\mathcal{H}_\pi)$ must map a certain ball $\mathbb{G}(p^n \Z_p), n \in \N,$ to the identity element. In other words, any continuous representation of $\mathbb{G}$ must have a non-trivial kernel, which is a compact open subgroup of $\mathbb{G}$. This is the reason why Definition \ref{definotrep} makes sense. We will denote by $\mathcal{D}(\mathbb{G})$ the collection of all smooth complex-valued functions on $\mathbb{G}$, and $\mathcal{D}_n (\mathbb{G})$ will denote the collection of smooth functions with index of local constancy equal to $n \in \N_0$. 

By the Peter-Weyl theorem, one has a definition of Fourier series on general compact groups, which is given in terms of the representation theory of noncommutative groups. The matrix entries of the unitary irreducible representations span a dense subspace of $L^2 (\mathbb{G})$, and we can write any $f \in L^2 (\mathbb{G})$ as $$f(\mathbf{x}):= \sum_{[\xi] \in \widehat{\mathbb{G}}} d_\xi Tr[ \xi(\mathbf{x}) \widehat{f} (\xi)].$$A powerful implication of this series the representations is that, if a given linear operator turns out to be invariant, let's say it commutes with left translations, then it is diagonalized by the matrix entries of the representations. And even if the operator is not that compatible with the group structure, we can still exploit the Fourier series of the group to write it down in a way that simplifies many problems. See for instance \cite{Ruzhansky2010}, where it is shown how a densely defined linear operator on a compact group has the following symbolic representation:    $$Tf(\mathbf{x}):= \sum_{[\xi] \in \widehat{\mathbb{G}}} d_\xi Tr[ \xi(\mathbf{x}) \sigma_T (\mathbf{x}, \xi) \widehat{f} (\xi)].$$
Here the symbol $\sigma_T$ is a matrix valued map which we define formally below. 

\begin{defi}\normalfont\label{defisymbol}
\, 
\begin{itemize}
    \item We will use the notation $\mathcal{D}'(\mathbb{G})$ for the space of \emph{smooth distributions} on $\mathbb{G}$, that is, the collection of all continuous linear functionals $F: \mathcal{D}(\mathbb{G}) \to \C$. Here $\mathcal{D}(\mathbb{G})$ is considered a topological vectors space with the topology of convergence over finite-dimensional sub-spaces.  
    \item A \emph{symbol $\sigma$} is a mapping $$\sigma: \mathbb{G} \times \mathrm{Rep}(\mathbb{G}) \to \bigcup_{[\pi] \in \mathrm{Rep}(G)} \mathcal{L}(\mathcal{H}_{\pi}), \,\,\,\, (\mathbf{x},[\pi]) \mapsto \sigma(\mathbf{x}, \pi) \in \mathcal{L}(\mathcal{H}_\pi).$$Given a symbol on $\mathbb{G}$, we define its associated pseudo-differential operator as the linear operator $T_\sigma$ acting on $\mathcal{D}(\mathbb{G})$ via the formula $$T_\sigma f(\mathbf{x}):= \sum_{[\xi] \in \widehat{\mathbb{G}}} d_\xi Tr[ \xi(\mathbf{x}) \sigma (\mathbf{x},\xi) \widehat{f} (\xi)].$$
    \item Conversely, given a densely defined linear operator $T : \mathcal{D} (\mathbb{G}) \subset D(T) \to \mathcal{D}'(\mathbb{G})$, we define its associated symbol via the formula $$\sigma(\mathbf{x}, [\pi]) = \pi^* (\mathbf{x}) T \pi (\mathbf{x}), \quad [\pi] \in \mathrm{Rep}(\mathbb{G}).$$ 
\end{itemize}
\end{defi}
\begin{rem}\label{remindependencesymbol}
The symbol $\sigma_T$ of a linear operator is well defined. To see it, take $\pi, \pi' \in [\xi] \in \widehat{\mathbb{G}}$. Then there is some unitary linear mapping $M$ such that $M^{*} \pi M = \pi'$, and for that reason $$\widehat{f}(\pi') = \int_{\mathbb{G}} f(\mathbf{x}) (\pi')^*(\mathbf{x}) d \mathbf{x}= \int_{\mathbb{G}} f(\mathbf{x}) M^* \pi^*(\mathbf{x}) M d \mathbf{x} =  M^* \widehat{f}(\pi) M.$$Hence, the Fourier series of a function are well defined and independent of the representative chosen for each class, because $$Tr(\pi'(\mathbf{x}) \widehat{f}(\pi')) = Tr(M^* \pi(\mathbf{x}) M M^*  \widehat{f}(\pi)M)=Tr(M^* \pi(\mathbf{x})  \widehat{f}(\pi)M) = Tr(\pi(\mathbf{x}) \widehat{f}(\pi)).$$Similarly $$\sigma_T (\mathbf{x}, \pi') = M^*  \pi^* (\mathbf{x}) T \pi (\mathbf{x}) M = M^* \sigma(\mathbf{x}, \pi) M,$$so we can see that $$Tr(\pi'(\mathbf{x}) \sigma_T (\mathbf{x}, \pi') \widehat{f}(\pi')) =Tr(M^* \pi(\mathbf{x}) \sigma_T (\mathbf{x}, \pi)  \widehat{f}(\pi)M) = Tr(\pi(\mathbf{x}) \sigma_T (\mathbf{x}, \pi) \widehat{f}(\pi)).$$
\end{rem}

To conclude with this subsection, we need to define formally the concept of global hypoellipticity we will be using in the following sections. As we anticipated in the introduction, there is actually two kinds of "smooth" functions on $\mathbb{G}$, but here we are interested only in \emph{Schwartz functions} and the concept of global hypoellipticity attached to their definition. This property will be a great example of a wide range of features of operators which are encoded in the behavior of their symbol.

\begin{defi}\label{defihypo}\normalfont
Let $\mathbb{G}$ be a compact $p$-adic Lie group, and let $\mathrm{Rep}(\mathbb{G})$ be the collection of all unitary continuous finite-dimensional representations of $\mathbb{G}$.
\begin{itemize}
    \item The \emph{Schwartz} space $\mathcal{S}(\mathbb{G})$ is defined as the collection of all $L^2$-functions such that $$\| \widehat{f}(\xi) \|_{HS} \lesssim  \langle \xi \rangle_{\mathbb{G}}^{-k},\,\,\, \text{for all} \, k \in \N_0,$$where $\widehat{f}(\xi) := \int_{\mathbb{G}} f(x) \xi^* (x) dx,$ and $\langle \xi \rangle_{\mathbb{G}}^k$ denotes the eigenvalue of the Vladimirov-Taibleson operator $\mathbb{D}^k$ defined in Definition \ref{defiVToperator}, associated to the class $[\xi] \in \widehat{\mathbb{G}}$.
    \item Let $T_\sigma : \mathcal{D} (\mathbb{G}) \subset D(T) \to \mathcal{D}'(\mathbb{G})$ be a densely defined linear operator. We say that $T_\sigma$ is \emph{globally hypoelliptic} if the condition $T_\sigma f = g$ with $f \in \mathcal{D}'(\mathbb{G})$ and $g \in \mathcal{S} (\mathbb{G})$ implies that $f \in \mathcal{S} (\mathbb{G})$.
\end{itemize}
\end{defi}
\begin{rem}
 It is well known, see for instance \cite{Kirilov2020}, how the global hypoellipticity of an invariant operator is equivalent to the condition $$\langle \pi \rangle_{\mathbb{G}}^m \lesssim \| \sigma (\pi) \|_{inf}, \, \, \, m \in \R, \, \,  \quad \text{for all} \, \, [\pi] \in \widehat{\mathbb{G}}.$$Here we are using the notation  $$\| \sigma (\pi)  \|_{inf} : = \inf \{ \|\sigma (\pi) v \|_{\mathcal{H}_\pi}  \, :  \, \| v\|_{\mathcal{H}_\pi} = 1 \}. $$Here $\| v\|_{\mathcal{H}_\pi}$ denotes the Hilbert space norm a vector $v$ in the representation space. 
\end{rem}

\subsection{VT operators}
One important idea from the theory of differential and pseudo-differential operators on Lie groups, is the correspondence between directional derivatives and elements of the Lie algebra. However, in the $p$-adic case, there are plenty of non-trivial locally constant functions, due to the fact that $p$-adic numbers are totally disconnected. This means that the usual notion of derivative does not apply, and therefore we need to find an alternative kind of operators to talk about differentiability on these groups. A first approach to this problem can be the \emph{Vladimitov-Taibleson operator} \cite{Dragovich2023, Dragovich2017}, which we define for general compact $\K$-Lie groups as follows:    

\begin{defi}\label{defiVToperator}\normalfont
Let $\K$ be a non-archimedean local field with ring of integers $\mathscr{O}_\K$, prime ideal $\mathfrak{p}=\textbf{p} \mathscr{O}_\K$ and residue field $\mathbb{F}_q \cong \mathscr{O}_\K/\textbf{p} \mathscr{O}_\K$. Let  $\mathbb{G} \leq \mathrm{GL}_m (\mathscr{O}_\K)$ be a compact $d$-dimensional $\K$-Lie group. We define the \emph{Vladimirov–Taibleson operator} on $\mathbb{G}$  via the formula \[
D^\alpha f(\mathbf{x}) := \frac{1 - q^\alpha}{1 - q^{- (\alpha + d)}} \int_{\mathbb{G}} \frac{f (\mathbf{x} \star \mathbf{y}^{-1}) - f(\mathbf{x})}{\|\mathbf{y} \|_\K^{ \alpha + d}} d\mathbf{y},
\]where \[\| \mathbf{y} \|_\K := \begin{cases}
    1, \, & \, \, \text{if} \, \, \mathbf{y} \in \mathbb{G} \setminus \mathrm{GL}_m (\textbf{p}\mathscr{O}_\K), \\ q^{-n}, \, & \, \, \text{if} \, \, \mathbf{y} \in \mathrm{GL}_m (\textbf{p}^n\mathscr{O}_\K) \setminus \mathrm{GL}_m (\textbf{p}^{n+1}\mathscr{O}_\K). 
\end{cases}\]Here $d\mathbf{y}$ denotes the unique normalized Haar measure on $\mathbb{G}$. Sometimes it will be convenient to consider the operator \[
\mathbb{D}^\alpha f(\mathbf{x}) :=\frac{1-q^{-d}}{1-q^{-(\alpha +d)}}f(\mathbf{x}) + \frac{1 - q^\alpha}{1 - q^{- (\alpha + d)}} \int_{\mathbb{G}} \frac{f (\mathbf{x} \star \mathbf{y}^{-1}) - f(\mathbf{x})}{\|\mathbf{y} \|_\K^{ \alpha + d}} d\mathbf{y}.
\]
\end{defi}
As for any profinite group, the unitary irreducible representations of a compact $p$-adic Lie group $\mathbb{G}$ must be smooth functions, and therefore they all have a non-trivial kernel which contains a certain subgroup $\mathbb{G}(p^l \Z_p)$, where we can assume that $l=l(\xi)$ to be the minimum possible. Thus, as a left-inavriant operator, the VT-operator $D^\alpha$ has the associated symbol: 

\begin{align*}
    \sigma_{D^\alpha}(\xi) &= \xi^* (\mathbf{x}) D^\alpha \xi(x) = \frac{1 - p^\alpha}{1 - p^{- (\alpha + d)}} \int_{\mathbb{G}} \frac{ \xi ( \mathbf{y}^{-1}) - I_{d_\xi}}{\|\mathbf{y} \|_\K^{ \alpha + d}} d\mathbf{y} \\ &= \frac{1 - p^\alpha}{1 - p^{- (\alpha + d)}} \int_{\mathbb{G} \setminus \mathbb{G}(p^l \Z_p) } \frac{ \xi ( \mathbf{y}^{-1}) - I_{d_\xi}}{\|\mathbf{y} \|_\K^{ \alpha + d}} d\mathbf{y} \\  &= \frac{1 - p^\alpha}{1 - p^{- (\alpha + d)}} \Big( -p^{(l-1)(\alpha + d)} p^{-ld} I_{d_\xi}-  \int_{\mathbb{G} \setminus \mathbb{G}(p^l \Z_p) } \frac{  I_{d_\xi}}{\|\mathbf{y} \|_\K^{ \alpha + d}} d\mathbf{y} \Big) \\ &= \frac{1 - p^\alpha}{1 - p^{- (\alpha + d)}} \Big( -p^{l\alpha} p^{-(\alpha + d)} I_{d_\xi}-  \sum_{k=0}^{l-1} p^{k (\alpha + d)} p^{-k d} (1-p^{-d})I_{d_\xi} \Big) \\ &= \frac{1 - p^\alpha}{1 - p^{- (\alpha + d)}} p^{l\alpha} \Big( - p^{-(\alpha + d)} -  p^{-\alpha}(1-p^{-d})\sum_{k=0}^{l-1} p^{-k \alpha }   \Big)I_{d_\xi} \\ &= \frac{1 - p^\alpha}{1 - p^{- (\alpha + d)}} p^{l\alpha} \Big( - p^{-(\alpha + d)} -  p^{-\alpha}(1-p^{-d})\frac{1 - p^{-l\alpha}}{1-p^{-\alpha}}   \Big)I_{d_\xi} \\ &= \frac{1 - p^\alpha}{1 - p^{- (\alpha + d)}} p^{l\alpha} \Big( - p^{-(\alpha + d)} + (1-p^{-d})\frac{1 - p^{-l\alpha}}{1-p^{\alpha}}   \Big)I_{d_\xi} \\ &= \Big( p^{l \alpha} -  \frac{1 - p^{-d}}{1 - p^{- (\alpha + d)}}\Big)I_{d_\xi} .
\end{align*}
In particular, using the fact we can identify the unitary dual of our groups with a certain subset of $\widehat{\Z}_p^d \cong \Q_p^d / \Z_p^d$, we get $$\sigma_{D^\alpha}(\xi) =\Big( \| \xi \|_p^\alpha -  \frac{1 - p^{-d}}{1 - p^{- (\alpha + d)}}\Big)I_{d_\xi}.$$

The Vladimirov–Taibleson operator can be considered as a fractional Laplacian for functions on totally disconnected spaces, and it provides a first notion of differentiability. However, for functions of several variables it is natural to consider the differentiability of the function in each variable, or in a certain given direction. For that reason, we introduce the following definition.

\begin{defi}\normalfont
Let $\K$ be a non-archimedean local field with ring of integers $\mathscr{O}_\K$, prime ideal $\mathfrak{p}= \textbf{p} \mathscr{O}_\K$, and residue field $\mathbb{F}_q = \mathscr{O}_\K/\textbf{p} \mathscr{O}_\K.$ Let $\mathfrak{g} = \spn_{\mathscr{O}_\K} \{ X_1,..,X_d \}$ be a nilpotent $\mathscr{O}_\K$-Lie algebra, and let $\mathbb{G}$ be the exponential image of $\mathfrak{g}$, so that $\mathbb{G}$ is a compact nilpotent $\K$-Lie group. We will use the symbol symbol $\partial_{X}^\alpha$ to denote the \emph{directional Vladimirov–Taibleson operator in the direction of $X \in \mathfrak{g}$}, or directional VT operator for short, which we define as $$\partial_{X}^\alpha f(\mathbf{x}) := \frac{1 - q^\alpha}{1-q^{-(\alpha +1)}} \int_{\mathscr{O}_\K} \frac{f(\mathbf{x} \star \textbf{exp}(tX)^{-1}) - f(\mathbf{x}) dt}{|t|_\K^{\alpha +1}}, \, \, \, \, f \in \mathcal{D}(\mathbb{G}).$$Here $\mathcal{D}(\mathbb{G})$ denotes the space of smooth functions on $\mathbb{G}$, i.e., the collection of locally constant functions with a fixed index of local constancy.   
\end{defi}
 Directional VT operators are interesting because they associate a certain pseudo-differential operator to each element of the Lie algebra. Nonetheless, it is important to remark how this association does not follow the same patter as in the locally connected case, where the correspondence between vectors and operators preserves the Lie algebra structure. However, these operator bear some resemblance to partial derivatives, as we can easily see for the abelian groups $\mathbb{G} = \Z_p^d$. In such context, any analytic vector field $V: \Z_p^d \to \Z_p^d$ has an associated symbol $\partial_V^\alpha$:
\begin{align*}
    \sigma_{\partial_V^\alpha}(x, \xi) & = e^{-2 \pi i \{ \xi \cdot x \}_p} \partial_V^\alpha (e^{2 \pi i \{ \xi \cdot x \}_p}) \\ &= e^{-2 \pi i \{ \xi \cdot x \}_p} \int_{\Z_p} \frac{e^{2 \pi i \{ \xi \cdot  (x - t V(x)) \}_p} - e^{2 \pi i \{ \xi \cdot  x  \}_p}}{|t|_p^{\alpha + 1}} dt \\ &=\int_{\Z_p} \frac{e^{2 \pi i \{ - t (\xi \cdot  V(x)) \}_p} -1}{|t|_p^{\alpha + 1}} dt = | V(x)\cdot \xi|_p^\alpha - \frac{1 - p^{-1}}{1-p^{- (\alpha + 1)}}.
\end{align*}In particular, if we define $\partial_{x_i}^\alpha := \partial_{\mathbf{e}_i}^\alpha $, where $\mathbf{e}_i$, $1 \leq i \leq d$ are the canonical vectors of $\Q_p^d$, then \[\sigma_{\partial_{x_i}^\alpha}(\xi) = \begin{cases}
    0, \, & \, \, \text{if} \, \, |\xi_i|_p=1,\\| \xi_i|_p^\alpha - \frac{1 - p^{-1}}{1 - p^{- (\alpha + 1)}}  & \, \, \text{if} \, \, |\xi_i|_p>1,
\end{cases}
 \]which resembles the symbol of the usual partial derivatives on $\R^d$, justifying that way our choice of notation. However, we want to be emphatic about the fact that these directional VT operators are not derivatives, but rather some special kind of pseudo-differential operators which we will study with the help of the Fourier analysis on compact groups.

\section{The group $\mathbb{G}^{5,2}$}
Let $p>2$ be a prime number. We will start our work by studying the group $\mathbb{G}^{5,2}(\Z_p)$, or just $\mathbb{G}^{5,2}$ for simplicity, defined here as $\Z_p^{5}$ together with the non-commutative operation$$\mathbf{x} \star \textbf{y} := (x_1 + y_1, x_2 + y_2, x_3 + y_3, x_4 + y_4 +x_1 y_2 , x_5 + y_5 + x_1 y_3 ),$$where the inverse element "$\mathbf{x}^{-1}$" of $\mathbf{x}$ in the above operation is given by $$\mathbf{x}^{-1} = (-x_1 , -x_2,-x_3, -x_4 + x_1x_2,-x_5+x_1x_3).$$ We can identify this group with the exponential image of the $\Z_p$-Lie algebra $\mathfrak{g}^{5,2}$ defined by the commutation relations$$[X_1,X_2] = X_4 , \quad [X_1 , X_3]= [X_5].$$

In the following theorem, the full representation theory of this group is given, together with the essentials on the arguments we will be using along this paper. The idea is simple: while in the real case one only has a single kind on relevant representations, corresponding to the generic orbits of the co-adjoint action, here we need to consider all kind of representations. Indeed, we have those representations which in the real case would correspond to the generic orbits, but here the full dual is going to contain also the representations of $\mathbb{G}/ K$, for any closed subgroup $K$, in particular $K = \mathcal{Z}(\mathbb{G})$, where $\mathcal{Z}(\mathbb{G})$ denotes the center of $\mathbb{G}$. Since any nilpotent group can be considered as a sequence of consecutive central extensions, we will always have two kinds of representations: the "old ones" corresponding to the previous central extensions and the "new" representations, corresponding to the particular central extension that we are considering. Let us illustrate this with the proof of our results on $\mathbb{G}^{5,2}$. 
\begin{rem}
    In this paper, we will identify each equivalence class $\lambda$ in $\widehat{\Z}_p \cong \Q_p / \Z_p$, with its associated representative in the complete system of representatives $$\{1\} \cup \big\{ \sum_{k =1}^\infty \lambda_k p^{-k} \, : \, \, \text{only finitely many $\lambda_k$ are non-zero.} \big\}.$$Similarly, every time we consider an element of the quotients $\Q_p / p^{-n} \Z_p$ it will be chosen from the complete system of representatives   $$\{1\} \cup \big\{ \sum_{k =n+1}^\infty \lambda_k p^{-k} \, : \, \text{only finitely many $\lambda_k$ are non-zero} \big\}.$$Also, given any $p$-adic number $u$, we will denote by $\vartheta(u)$ the $p$-adic valuation of $u \in \Q_p$, and for $u \in \Q_p^d$ we write $$\vartheta (u) := \min_{1 \leq j \leq d} \vartheta(u_j).$$
\end{rem}
\begin{teo}\label{teoRepG22}
The unitary dual $\widehat{\mathbb{G}}^{5,2}$ of $\mathbb{G}^{5,2}$ can be identified with following subset of $\widehat{\Z}_p^{5}$:
$$\widehat{\mathbb{G}}^{5,2} = \{ \xi \in \widehat{\Z}_p^5 \, : |\xi_4|_p>|\xi_5|_p, \, (\xi_1,\xi_2) \in \Q_p^2 / p^{\vartheta(\xi_4)}\Z_p, \, \, \, \text{or}, \,\,\,\,  |\xi_4|_p\leq |\xi_5|_p ,\, (\xi_1,\xi_3) \in \Q_p^2 / p^{\vartheta(\xi_5)}\Z_p \},$$and we can write $\widehat{\mathbb{G}}^{5,2} = A_1 \cup A_2 \cup A_3$, where
\begin{itemize}
    \item $A_1 := \{ \xi\in \widehat{\Z}_p^5 \, : \, \|(\xi_4 , \xi_5) \|_p = 1\},$
    \item $A_2:= \{\xi\in \widehat{\Z}_p^5 \, : \, \|(\xi_4 , \xi_5) \|_p > 1, \, \, |\xi_4|_p > |\xi_5|_p, \, \, (\xi_1 , \xi_2) \in \Q_p^2 / p^{\vartheta(\xi_4 )} \Z_p^2 \},$
    \item $A_3:= \{\xi\in \widehat{\Z}_p^5 \, : \, \|(\xi_4 , \xi_5) \|_p > 1, \, \, |\xi_4|_p \leq  |\xi_5|_p, \, \, (\xi_1 , \xi_3) \in \Q_p^2 / p^{\vartheta(\xi_5)} \Z_p^2 \}.$
\end{itemize}Moreover, each unitary irreducible representation can be realized in the finite dimensional Hilbert space  $$\mathcal{H}_\xi := \mathrm{span}_\C \{\|(\xi_4 , \xi_5)\|_p^{1/2}\1_{h + p^{\vartheta(\xi_4 , \xi_5)}\Z_p} \, : \, h \in \Z_p / p^{-\vartheta(\xi_4 , \xi_5)}\Z_p\} , \, \, d_\xi:= dim_\C(\mathcal{H}_\xi) =\|(\xi_4 , \xi_5) \|_p,$$acting according to the formula $$\pi_\xi(\mathbf{x}) \varphi (u) :=e^{2 \pi i \{ \xi \cdot \mathbf{x} + (\xi_4 x_2  + \xi_5 x_3)u \}_p} \varphi (u + x_1).$$With this realization and the natural choice of basis for $\mathcal{H}_\xi$, the associated matrix coefficients are going to be \begin{align*}
    (\pi_\xi)_{hh'} (\mathbf{x}) = e^{2 \pi i \{ \xi \cdot \mathbf{x} + (\xi_4 x_2  + \xi_5 x_3)(h') \}_p } \1_{h' - h + p^{-\vartheta(\xi_4 , \xi_5)}\Z_p } (x_1) , \end{align*}and the associated characters are $$\chi_{\pi_\xi} (\mathbf{x}) = \|(\xi_4, \xi_5) \|_p  e^{2 \pi i \{ \xi \cdot \mathbf{x} \}_p } \1_{ p^{-\vartheta(\xi_4 , \xi_5)}\Z_p } (x_1)\1_{ \Z_p } (\xi_4 x_2  + \xi_5 x_3).$$
    Sometimes we will use the notation $$\mathcal{V}_\xi := \mathrm{span}_\C \{ (\pi_\xi)_{hh'} (\mathbf{x}) \, : \, h,h' \in \Z_p / p^{-\vartheta(\xi_4 , \xi_5)}\Z_p \} \subset L^2 (\mathbb{G}^{5,2}).$$
\end{teo}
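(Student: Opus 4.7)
The plan is to carry out the Mackey orbit construction in the same spirit as Theorems~\ref{TeoRepresentationsHd} and~\ref{TeoRepresentationsB4}, exploiting that $\mathfrak{g}^{5,2}$ consists of two Heisenberg-type relations $[X_1,X_2]=X_4$ and $[X_1,X_3]=X_5$ sharing the direction $X_1$. First I would compute the center and commutator subgroup of $\mathbb{G}^{5,2}$, both of which coincide with $\{(0,0,0,x_4,x_5)\}\cong\Z_p^2$, so the abelianization is $\Z_p^3$ and classifies the one-dimensional representations. Splitting $\widehat{\mathbb{G}}^{5,2}$ according to the central character $(\xi_4,\xi_5)\in\widehat{\Z}_p^2$: when $\|(\xi_4,\xi_5)\|_p=1$ the representation factors through the abelianization, giving exactly the one-dimensional characters $e^{2\pi i\{\xi\cdot\mathbf{x}\}_p}$ parametrized by $A_1$; the nontrivial central characters require a genuine induction.

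For nontrivial central character, take the codimension-one normal abelian subgroup $H=\{(0,x_2,x_3,x_4,x_5)\}\cong\Z_p^4$, together with the section $x_1\mapsto(x_1,0,0,0,0)$ that identifies $\mathbb{G}^{5,2}/H\cong\Z_p$. Given a character $\chi_\xi(0,x_2,x_3,x_4,x_5)=e^{2\pi i\{\xi_2x_2+\xi_3x_3+\xi_4x_4+\xi_5x_5\}_p}$, define $\pi_\xi=\mathrm{Ind}_H^{\mathbb{G}^{5,2}}\chi_\xi$ by the formula in the theorem, and verify directly that it is a group homomorphism using the two commutation relations. Invariance of the finite-dimensional subspace $\mathcal{H}_\xi$ follows because the factor $e^{2\pi i\{(\xi_4x_2+\xi_5x_3)u\}_p}$ is constant in $u$ on balls of radius $p^{-\vartheta(\xi_4,\xi_5)}$, matching the local constancy of the basis indicators; unitarity and the matrix coefficient formula then follow by direct computation.

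Irreducibility is obtained by restricting $\pi_\xi$ to $H$: each $\varphi_h$ is a joint eigenvector with eigencharacter $(x_2,x_3,x_4,x_5)\mapsto e^{2\pi i\{(\xi_2+h\xi_4)x_2+(\xi_3+h\xi_5)x_3+\xi_4x_4+\xi_5x_5\}_p}$, and distinct $h\in\Z_p/p^{-\vartheta(\xi_4,\xi_5)}\Z_p$ yield distinct eigencharacters precisely because $(\xi_4,\xi_5)$ is nontrivial, while $\pi_\xi(x_1,0,0,0,0)$ cyclically permutes the $\varphi_h$'s up to phase, leaving no proper invariant subspace. The equivalences among the $\pi_\xi$ are controlled by two kinds of intertwiners on $\mathcal{H}_\xi$: the shift $T_\gamma\varphi(u)=\varphi(u+\gamma)$ with $\gamma\in\Z_p$ implements the Mackey orbit action $(\xi_2,\xi_3)\mapsto(\xi_2+\gamma\xi_4,\xi_3+\gamma\xi_5)$, forcing the case split $A_2$ versus $A_3$ according to whether $|\xi_4|_p$ or $|\xi_5|_p$ dominates (and hence which of $\xi_2$ or $\xi_3$ is reduced to representatives in $\Q_p/p^{\vartheta(\xi_4,\xi_5)}\Z_p$); the phase $T_\alpha\varphi(u)=e^{2\pi i\{\alpha u\}_p}\varphi(u)$, well defined on $\mathcal{H}_\xi$ precisely when $|\alpha|_p\leq\|(\xi_4,\xi_5)\|_p$, satisfies $T_\alpha\pi_\xi T_\alpha^{-1}=\pi_{\xi-\alpha e_1}$, so that $\xi_1$ is determined only modulo $p^{\vartheta(\xi_4,\xi_5)}\Z_p$, giving the coarser quotient appearing in the description of $A_2$ and $A_3$.

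Finally, the character formula is obtained by summing $\sum_h\langle\pi_\xi(\mathbf{x})\varphi_h,\varphi_h\rangle$ and applying the orthogonality $\sum_{h\in\Z_p/p^{-\vartheta(\xi_4,\xi_5)}\Z_p}e^{2\pi i\{\alpha h\}_p}=\|(\xi_4,\xi_5)\|_p\1_{\Z_p}(\alpha)$, and completeness of $A_1\cup A_2\cup A_3$ as an enumeration of $\widehat{\mathbb{G}}^{5,2}$ is verified via Plancherel by matching the total dimension contribution against $L^2(\mathbb{G}^{5,2})$. I expect the main technical obstacle to lie in the careful orbit and intertwiner analysis: showing that each $\xi\in A_1\cup A_2\cup A_3$ indexes exactly one equivalence class requires simultaneously handling the shift intertwiner (to avoid double-counting $\xi_2$ or $\xi_3$ along the central direction) and the phase intertwiner (to produce the coarser quotient on $\xi_1$), together with the case split on which component of $(\xi_4,\xi_5)$ has larger $p$-adic norm.
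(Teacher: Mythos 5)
Your construction produces exactly the same representations on the same spaces $\mathcal{H}_\xi$, and your overall architecture (trivial central character $\Rightarrow$ $A_1$; nontrivial $\Rightarrow$ induction; Plancherel count $\sum d_\xi^2 = p^{5n}$ for completeness) matches the paper's. Where you genuinely diverge is in how irreducibility and the equivalence classification are established. The paper computes the character $\chi_{\pi_\xi}$ explicitly, proves irreducibility by verifying $\int_{\mathbb{G}^{5,2}}|\chi_{\pi_\xi}|^2\,d\mathbf{x}=1$ (splitting into the cases $|\xi_4|_p>|\xi_5|_p$ and $|\xi_4|_p\leq|\xi_5|_p$ to evaluate $\int\1_{\Z_p}(\xi_4x_2+\xi_5x_3)\,dx_2dx_3$), and then counts equivalence classes by counting distinct character functions, reading the reduction of $\xi_2$ (resp.\ $\xi_3$) off the support condition $\xi_4x_2+\xi_5x_3\in\Z_p$. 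You instead run the Mackey machine: restriction to the abelian normal subgroup $H\cong\Z_p^4$ decomposes $\mathcal{H}_\xi$ into distinct joint eigencharacters permuted transitively by $\pi_\xi(x_1,0,0,0,0)$, which gives irreducibility without any integral computation, and the shift and phase intertwiners $T_\gamma$, $T_\alpha$ exhibit the orbit action $(\xi_2,\xi_3)\mapsto(\xi_2+\gamma\xi_4,\xi_3+\gamma\xi_5)$ and $\xi_1\mapsto\xi_1-\alpha$ that produce the quotients defining $A_2$ and $A_3$. Your route is more structural and explains \emph{why} those particular quotients appear; the paper's is more self-contained and computational, and its character formulas are reused later for the Fourier analysis. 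The one point you should make fully explicit is the converse direction of the classification: that representations with distinct parameters in $A_1\cup A_2\cup A_3$ are pairwise \emph{in}equivalent (the Plancherel count only proves exhaustiveness once inequivalence is known). In your framework this follows because the multiset of $H$-eigencharacters $\{(\xi_2+h\xi_4,\xi_3+h\xi_5,\xi_4,\xi_5)\}_h$ together with the action of the transversal determines $\xi$ modulo exactly the listed intertwiners — equivalently, one can fall back on distinctness of characters as the paper does. With that caveat addressed, your proof is correct.
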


\begin{proof}
Notice how the center of $\mathbb{G}^{5,2}$, here denoted by $\mathcal{Z}(\mathbb{G}^{5,2})$, is isomorphic to $\Z_p^2$. If a finite-dimensional continuous representation $\pi$ of $\mathbb{G}^{5,2}$ is trivial on $\mathcal{Z}(\mathbb{G}^{5,2})$, then it must descend to a representation of the abelian group $\mathbb{G}^{5,2}/\mathcal{Z}(\mathbb{G}^{5,2})\cong \Z_p^3$. In this way, we obtain the one dimensional representations $$\pi_\xi(x) = \chi_{(\xi_1 , \xi_2 , \xi_3 , 1 , 1)} (\mathbf{x}):= e^{2 \pi i \{ \xi \cdot \mathbf{x} \}_p}, \, \, \, (\xi_1, \xi_2 , \xi_3) \in \widehat{\Z}_p^3,$$and these can be indexed by the set $$A_1 :=\{\xi \in \widehat{\Z}_p^5 \, : \, \| (\xi_4 , \xi_5) \|_p = 1 \}.$$   
On the other hand, if a representation is not trivial on the center, i.e. if $\| (\xi_4 , \xi_5)\|_p>1$, we need to consider the non-commutative representations $$\pi_\xi(\mathbf{x}) \varphi (u) :=e^{2 \pi i \{ \xi \cdot \mathbf{x} + (\xi_4 x_2  + \xi_5 x_3)u \}_p} \varphi (u + x_1),$$ realized in the finite-dimensional sub-spaces of $L^2 (\Z_p)$ $$\mathcal{H}_\xi := \mathrm{span}_\C \{\1_{h + p^{-\vartheta(\xi_4 , \xi_5)}\Z_p}(u) \, : \, h \in \Z_p / p^{-\vartheta(\xi_4 , \xi_5)}\Z_p\}, \, \, d_\xi :=\mathrm{dim}_\C (\mathcal{H}_\xi) = \|(\xi_4 , \xi_5) \|_p.$$
Notice how: 

\begin{itemize}
    \item The representation space $\mathcal{H}_\xi$ is invariant under the action of $\pi_\xi$, since \begin{align*}
        \pi_\xi(\mathbf{x}) \varphi (u + t) &=e^{2 \pi i \{ \xi \cdot \mathbf{x} + (\xi_4 x_2  + \xi_5 x_3)u+ (\xi_4 x_2  + \xi_5 x_3)t  \}_p} \varphi (u + x_1 + t) \\ &=\pi_\xi(\mathbf{x}) \varphi (u ), 
    \end{align*}for any $t \in \Z_p$ with $|t|_p \leq \|(\xi_4 , \xi_5) \|_p^{-1}.$\
    \item The linear operator $\pi_\xi$ is indeed a representation of $\mathbb{G}^{5,2}$, since \begin{align*}
        \quad \quad \quad \pi_\xi (\mathbf{x}) \pi_\xi (\textbf{y}) \varphi (u) & = \pi_\xi (\mathbf{x})(e^{2 \pi i \{ (\xi_4 y_2 + \xi_5 y_3 )u \}_p} \varphi(u + y_1)) \\ &=e^{2 \pi i \{ (\xi_4 x_2 + \xi_5 x_3 )u \}_p} e^{2 \pi i \{ (\xi_4 y_2 + \xi_5 y_3 )(u + x_1) \}_p} \varphi(u + y_1 + x_1) \\ &=e^{2 \pi i \{ \xi_1(x_1 + y_1) + \xi_2(x_2 + y_2)) + \xi_3(x_3 + y_3) + \xi_4(x_4+ y_4 + y_1 x_2) + \xi_5(x_5 + y_5 + y_1 x_3) + (\xi_4 (x_2 + y_2)  + \xi_5 (x_3+y_3))u \}_p}\\& \quad \quad \times \varphi (u + y_1 +x_1 ) \\ &= \pi_\xi (\mathbf{x} \star \textbf{y})\varphi(u).
    \end{align*}
    Clearly, $\pi_\xi$ also defines an unitary operator since $$\pi^*_\xi (\mathbf{x}) \varphi (u) = e^{2 \pi i \{ - \xi \cdot \mathbf{x} - (\xi_4 x_2  + \xi_5 x_3)(u - x_1) \}_p} \varphi (u - x_1) = \pi_\xi^{-1} (\mathbf{x}) \varphi (u) =\pi_\xi (\mathbf{x}^{-1}) \varphi (u)  .$$
\end{itemize}Let us check these are indeed all the desired representations. To this end we write $$\varphi_h (u):= \| (\xi_4 , \xi_5)\|_p^{1/2}\1_{h + p^{-\vartheta(\xi_4 , \xi_5)}\Z_p}(u), \quad \, \, h \in \Z_p / p^{-\vartheta(\xi_4 , \xi_5)}\Z_p.$$
    This collection of functions defines an orthonormal basis of $\mathcal{H}_\xi$, and the associated matrix coefficients are:
    \begin{align*}
    (\pi_\xi)_{hh'} (\mathbf{x}) &:= (\pi_\xi \varphi_h , \varphi_{h'})_{L^2(\Z_p)} \\ &=\| (\xi_4 , \xi_5) \|_pe^{2 \pi i \{ \xi \cdot \mathbf{x} \}_p }  \int_{\Z_p} e^{2 \pi i \{ (\xi_4 x_2  + \xi_5 x_3)u  \}_p}\varphi_h (u + x_1) \overline{\varphi}_{h'} (u)du \\&= \| (\xi_4 , \xi_5) \|_pe^{2 \pi i \{ \xi \cdot \mathbf{x} \}_p } \1_{h' - h + p^{-\vartheta(\xi_4 , \xi_5)}\Z_p } (x_1) \int_{h' + p^{-\vartheta(\xi_4 , \xi_5)}\Z_p} e^{2 \pi i \{ (\xi_4 x_2  + \xi_5 x_3)u \}_p}du \\&= e^{2 \pi i \{ \xi \cdot \mathbf{x} + (\xi_4 x_2  + \xi_5 x_3)(h') \}_p}\1_{h' - h + p^{-\vartheta(\xi_4 , \xi_5)}\Z_p } (x_1)  .
    \end{align*}
    With this we can compute \begin{align*}
        \chi_{\pi_\xi} (\mathbf{x}) & = \sum_{h \in \Z_p / p^{-\vartheta(\xi_4 , \xi_5)}\Z_p }e^{2 \pi i \{ \xi \cdot \mathbf{x} \}_p } \1_{ p^{-\vartheta(\xi_4 , \xi_5)}\Z_p } (x_1) e^{2 \pi i \{ (\xi_4 x_2  + \xi_5 x_3)h \}_p} \\ &= \|(\xi_4, \xi_5) \|_p  e^{2 \pi i \{ \xi \cdot \mathbf{x} \}_p } \1_{ p^{-\vartheta(\xi_4 , \xi_5)}\Z_p } (x_1)\1_{ \Z_p } (\xi_4 x_2  + \xi_5 x_3),
    \end{align*}and from the above it is clear that \begin{align*}
        \int_{\mathbb{G}^{5,2}} |\chi_{\pi_\xi} (\mathbf{x})|^2 d\mathbf{x} &= \int_{\Z_p^5} \|(\xi_4 , \xi_5) \|_p^2 \1_{ p^{-\vartheta(\xi_4 , \xi_5)}\Z_p } (x_1)\1_{ \Z_p } (\xi_4 x_2  + \xi_5 x_3) dx \\ & = \|(\xi_4 , \xi_5) \|_p \int_{\Z_p^2} \1_{ \Z_p } (\xi_4 x_2  + \xi_5 x_3) dx_2 dx_3.
    \end{align*}
    Now we consider cases. First, if $|\xi_4|_p > |\xi_5|_p,$ we get that $$\1_{ \Z_p } (\xi_4 x_2  + \xi_5 x_3) = \1_{ p^{-\vartheta(\xi_4 )}\Z_p } (x_2  + \frac{\xi_5}{\xi_4} x_3) = \1_{ \frac{\xi_5}{\xi_4} x_3 + p^{-\vartheta(\xi_4 )}\Z_p } (x_2  ), $$so that\begin{align*}
        \int_{\Z_p} \int_{\Z_p} \1_{ \Z_p } (\xi_4 x_2  + \xi_5 x_3) dx_2 dx_3 & =\int_{\Z_p} \int_{\Z_p} \1_{ \frac{\xi_5}{\xi_4} x_3 + p^{-\vartheta(\xi_4 )}\Z_p } (x_2  ) dx_2 dx_3  \\ &= \int_{\Z_p} |\xi_4|_p^{-1} dx_3 = |\xi_4|_p^{-1}.
    \end{align*}
    Similarly, if $|\xi_5|_p \geq |\xi_4|$ then $$\1_{ \Z_p } (\xi_4 x_2  + \xi_5 x_3) = \1_{ p^{-\vartheta(\xi_5)}\Z_p } (x_3  + \frac{\xi_4}{\xi_5} x_2) = \1_{ \frac{\xi_4}{\xi_5} x_2 + p^{-\vartheta(\xi_5 )}\Z_p } (x_2 ), $$so that\begin{align*}
        \int_{\Z_p} \int_{\Z_p} \1_{ \Z_p } (\xi_4 x_2  + \xi_5 x_3) dx_2 dx_3 & =\int_{\Z_p} \int_{\Z_p} \1_{ \frac{\xi_4}{\xi_5} x_2 + p^{-\vartheta(\xi_5)}\Z_p } (x_2 )dx_2 dx_3  \\ &= \int_{\Z_p} |\xi_5|_p^{-1} dx_2 = |\xi_5|_p^{-1}.
    \end{align*}
    In any case, $\int_{\mathbb{G}^{5,2}} |\chi_{\pi_\xi} (\mathbf{x})|^2 d\mathbf{x} =1$, proving how each $\pi_\xi$ defines an unitary irreducible representation. Next, we need to use the obtained characters to count the number of different equivalence classes of representations, by counting the number of different characters obtained. Once again we need to consider two cases. First, if $|\xi_4|_p > |\xi_5|_p,$ then the condition $\xi_4 x_2  + \xi_5 x_3 = a \in  \Z_p$ implies that $$ x_2 = \frac{a}{\xi_4} - \frac{\xi_5}{\xi_4} x_3, \, \implies \, \, \xi_2 x_2 = \frac{\xi_2}{\xi_4}a - \frac{\xi_5 \xi_2}{\xi_4} x_3.$$This shows how $$e^{2 \pi i \{\xi_1 x_1 + \xi_2 x_2 +\xi_3 x_3\}_p} = e^{2 \pi i \{\xi_1 x_1 + (\xi_3 +\frac{\xi_5 \xi_2}{\xi_4}) x_3\}_p}, \quad \text{for} \, \, |\xi_2|_p \leq |\xi_4|_p,$$so that we only need to consider $\xi_2 \in \Q_p/ p^{\vartheta(\xi_4 )}\Z_p$, and thus, these representations are indexed by the set $$A_2 := \{\xi\in \widehat{\Z}_p^5 \, : \, \|(\xi_4 , \xi_5) \|_p > 1, \, \, |\xi_4|_p > |\xi_5|_p, \, \, (\xi_1 , \xi_2) \in \Q_p^2 / p^{\vartheta(\xi_4 )} \Z_p^2 \}.$$ 
In a very similar way, if $|\xi_4|_p \leq |\xi_5|_p,$ then the condition $\xi_4 x_2  + \xi_5 x_3 = a \in  \Z_p$ implies that $$ x_3 = \frac{a}{\xi_5} - \frac{\xi_4}{\xi_5} x_2, \, \implies \, \, \xi_3 x_3 = \frac{\xi_3}{\xi_5}a - \frac{\xi_4 \xi_2}{\xi_5} x_2,$$showing how $$e^{2 \pi i \{\xi_1 x_1 + \xi_2 x_2 +\xi_3 x_3\}_p} = e^{2 \pi i \{\xi_1 x_1 + (\xi_2 -\frac{\xi_4 \xi_2}{\xi_5} )x_2\}_p}, \quad \text{for} \, \, |\xi_2|_p \leq |\xi_4|_p,$$so that we only need to count $\xi_3 \in \Q_p/ |\xi_5|_p^{1}\Z_p$, and thus these representations are indexed by the set $$A_3 := \{\xi\in \widehat{\Z}_p^5 \, : \, \|(\xi_4 , \xi_5) \|_p > 1, \, \, |\xi_5|_p \geq |\xi_4|_p, \, \, (\xi_1 , \xi_3) \in \Q_p^2 / p^{\vartheta(\xi_5 )} \Z_p^2 \}.$$ 
Finally, in order to prove these are indeed all the desired representations, let us define
\begin{itemize}
    \item $A_1 (n) := \{\| \xi \|_p \leq p^n\, : \, \|(\xi_4 , \xi_5) \|_p = 1 \},$
    \item $A_2 (n):= \{\| \xi \|_p \leq p^n\, : \, \|(\xi_4 , \xi_5) \|_p > 1, \, \, |\xi_4|_p > |\xi_5|_p, \, \, (\xi_1 , \xi_2) \in \Q_p^2 / p^{\vartheta(\xi_4)} \Z_p^2 \}$,
    \item $A_3(n) := \{\| \xi \|_p \leq p^n \, : \, \|(\xi_4 , \xi_5) \|_p > 1, \, \, |\xi_5|_p \geq |\xi_4|_p, \, \, (\xi_1 , \xi_3) \in \Q_p^2 / p^{\vartheta(\xi_5)} \Z_p^2 \},$ 
    \item $B_{\widehat{\mathbb{G}}^{5,2}} (n)= A_1 (n) \cup A_2(n) \cup A_3(n) \subset p^{-n} \Z_p^5 / \Z_p^5 ,$
\end{itemize}
where each of the sets $A_j (n)$ , $j=1,2,3,$ are subsets of the $p$-adic ball $B_{\widehat{\Z_p^5}} (n)$. With this notation 
\begin{align*}
    \sum_{\xi \in  B_{\widehat{\mathbb{G}}^{5,2}} (n)} d_\xi^2 & = \sum_{\xi \in A_1(n)} 1^2 + \sum_{\xi \in A_2(n)} |\xi_4|_p^2 + \sum_{\xi \in A_3(n)} |\xi_5|_p^2 \\ & = \sum_{\| \xi \|_p \leq p^n, \, \| (\xi_4 , \xi_5)\|_p =1} 1^2 + \sum_{1< \| (\xi_4 , \xi_5)\|_p \leq p^n, \, \, |\xi_5|_p< |\xi_4|_p}  \sum_{(\xi_1,\xi_2) \in \Q_p^2 / p^{\vartheta(\xi_4,)} \Z_p^2} \sum_{1 \leq |\xi_3|_p \leq p^n}   | \xi_4|_p^2  \\& + \sum_{1< \| (\xi_4 , \xi_5)\|_p \leq p^n, \, \, |\xi_4|_p \leq|\xi_5|_p}  \sum_{(\xi_1,\xi_3) \in \Q_p^2 / p^{\vartheta(\xi_5,)} \Z_p^2} \sum_{1 \leq |\xi_2|_p \leq p^n}   | \xi_4|_p^2\\ \, \, \, & + \sum_{\| (\xi_4 , \xi_5)\|_p >1, \,\, 1 \leq |\xi_4|_p \leq |\xi_5|_p }  \sum_{(\xi_1 , \xi_3) \in \Q_p^2 / |\xi_5|_p^{-1} \Z_p} \sum_{1 \leq |\xi_2| \leq p^n}|\xi_5|_p^2  \\ &= p^{3n } +\sum_{1< \| (\xi_4 , \xi_5)\|_p \leq p^n, \, \, |\xi_5|_p< |\xi_4|_p}  (\frac{p^{2n}}{|\xi_4|_p^2}) (p^n) |\xi_4|_p^2 + \sum_{1< \| (\xi_4 , \xi_5)\|_p \leq p^n, \, \, |\xi_4|_p \leq |\xi_5|_p}  (\frac{p^{2n}}{|\xi_5|_p^2}) (p^n)|\xi_5|_p^2 \\ &= p^{3n}+ p^{3n} \sum_{1 <\|(\xi_4 , \xi_5) \|_p \leq p^n} 1 =p^{3n} + p^{3n}(p^{2n} -1 )\\ &= p^{5n} = |\mathbb{G}^{5,2} / \mathbb{G}^{5,2}(p^n\Z_p) |.   
\end{align*}
This concludes the proof.
\end{proof}
With the above theorem, functions $f \in L^2 (\mathbb{G}^{5,2})$ have the Fourier series representation \begin{align*}
    f(\mathbf{x}) &= \sum_{ \| (\xi_4, \xi_5)\|_p =1 } \widehat{f}(\xi) e^{2 \pi i \{ \xi \cdot \mathbf{x}\}_p} \\ &\quad \quad + \sum_{1< \| (\xi_4 , \xi_5)\|_p , \, \, |\xi_5|_p< |\xi_4|_p}  \sum_{(\xi_1,\xi_2) \in \Q_p^2 / p^{\vartheta(\xi_4)} \Z_p^2} \sum_{ \xi_3 \in \widehat{\Z}_p} | \xi_4 |_p Tr[ \pi_\xi (\mathbf{x}) \widehat{f}(\xi)] \\ &\quad \quad + \sum_{1< \| (\xi_4 , \xi_5)\|_p , \, \, |\xi_4|_p \leq |\xi_5|_p}  \sum_{(\xi_1,\xi_3) \in \Q_p^2 / p^{\vartheta(\xi_5)} \Z_p^2} \sum_{ \xi_2 \in \widehat{\Z}_p} | \xi_5 |_p Tr[ \pi_\xi (\mathbf{x}) \widehat{f}(\xi)]  \\ &= \sum_{\xi \in \widehat{\mathbb{G}}^{5,2}} \| (\xi_4 , \xi_5) \|_p Tr[\pi_\xi (\mathbf{x}) \widehat{f}(\xi)],
\end{align*}where the Fourier transform $\widehat{f}(\xi)$ is the linear operator defined as \begin{align*}
    \widehat{f}(\xi) \varphi (u)&:= \int_{\mathbb{G}^{5,2}} f(\mathbf{x}) \pi_\xi^* (\mathbf{x}) \varphi (u) d \mathbf{x} \\ &= \int_{\Z_p^5} f(\mathbf{x}) e^{-2 \pi i \{ \xi \cdot \mathbf{x} + (\xi_4 x_2  + \xi_5 x_3)(u - x_1) \}_p} \varphi (u - x_1) d\mathbf{x}, \quad \varphi \in \mathcal{H}_\xi.
\end{align*}Alternatively, by using our expressions for the matrix coefficients, we can see this operator as the matrix $\widehat{f}(\xi) \in \C^{\|(\xi_4, \xi_5)\|_p \times \|(\xi_4, \xi_5)\|_p}$ defined entry wise by the expression \begin{align*}
    \widehat{f}(\xi)_{h h'}&:= \int_{\mathbb{G}^{5,2}} f(\mathbf{x}) \pi_\xi^* (\mathbf{x})_{hh'} d \mathbf{x} \\ &= \int_{\Z_p^5} f(\mathbf{x}) e^{-2 \pi i \{ \xi \cdot \mathbf{x} +2 \pi i \{ (\xi_4 x_2  + \xi_5 x_3)(h) \}_p } \1_{h - h' + p^{-\vartheta(\xi_4 , \xi_5)}\Z_p } (x_1) d\mathbf{x}, \quad \varphi \in \mathcal{H}_\xi, 
\end{align*}which in terms of the $\Z_p^5$-Fourier transform is expressed as \begin{align*}
    \widehat{f}(\xi)_{h h'} &= \mathcal{F}_{\Z_p^5} [\1_{h - h' + p^{-\vartheta(\xi_4 , \xi_5)}\Z_p } (x_1) f] (\xi_1 ,\xi_2 + h \xi_4, \xi_3 + \xi_5 h, \xi_4 , \xi_5 )\\ &=\mathcal{F}_{\Z_p^5} [\1_{h - h' + p^{-\vartheta(\xi_4 , \xi_5)}\Z_p } (x_1)]*_{\widehat{\Z}_p^5}\mathcal{F}_{\Z_p^5}[  f](\xi_1 ,\xi_2 + h \xi_4, \xi_3 + \xi_5 h, \xi_4 , \xi_5 ) .
\end{align*}
With this Fourier series representation, a linear invariant operator $T$ can be written as a pseudo-differential operator \begin{align*}
    T f(\mathbf{x}) &= \sum_{\xi \in \widehat{\mathbb{G}}^{5,2}} \| (\xi_4 , \xi_5) \|_p Tr[\pi_\xi (\mathbf{x})\sigma_T (\xi) \widehat{f}(\xi)]  ,
\end{align*}where the symbol is defined as the mapping $$\sigma_T (\xi) := \pi_\xi^*(\mathbf{x}) T \pi_\xi (\mathbf{x}) \in \C^{\|(\xi_4 ,\xi_4)\|_p \times \|(\xi_4 ,\xi_4)\|_p}, \quad \xi \in \widehat{\mathbb{G}}^{5,2},$$and we can use this association of symbols to study the particular example of the Vladimirov sub-Laplacian, which we can easily check defines an left invariant operator. We know how $\mathbb{G}^{5,2}$ is a stratified group since $\mathfrak{g}^{5,2} = V_1 \oplus V_2$, where $$V_1 := \mathrm{span}_{\Z_p} \{X_1, X_2, X_2 \}, \quad V_2:= \mathrm{span}_{\Z_p} \{X_3 , X_4 \},$$so that, given any collection $\textbf{W}:=\{W_1 , W_2, W_3$\}  with $$W_i = w_i^1 X_1 + w_i^2 X_2 + w_i^3 X_3,$$which spans $\mathfrak{g}^{5,2}/ [\mathfrak{g}^{5,2},\mathfrak{g}^{5,2}]$, we can define its associated Vladimirov sub-Laplacian as $$\mathcal{L}_{sub, \textbf{W}}^\alpha f (x) : = (\partial^\alpha_{W_1} + \partial^\alpha_{W_2} + \partial^\alpha_{W_3}) f (x), \,\,\, f \in \mathcal{D}(\mathbb{G}^{5,2}).$$For $\alpha>0$ the following theorem proves how Conjecture \ref{conjecture} holds true for this operator on $\mathbb{G}^{5,2}$, and give a particular version of the first par of Theorem \ref{teogeneralresult}. 

\begin{rem}
    In every section of this work we are dedicated to the proof of two results: one about the representation theory of a certain compact group, like Theorems \ref{TeoRepresentationsHd}, \ref{TeoRepresentationsB4} and \ref{teoRepG22}, and a second, which actually is a corollary of the first, about the spectrum of the Vladimirov sub-Laplacian. For this reason, we will often be reusing arguments and the reader will notice a lot of similarities from section to section, because we are essentially proving the same results again and again, each time on a different group. Sill, the author decided to repeat some of our arguments in each section aiming to make possible to read them independently. The author sincerely hopes this is not bothersome to the reader.
\end{rem}

\begin{teo}\label{teosubLapG22}
The Vladimirov sub-Laplacian $\mathcal{L}_{sub, \textbf{W}}^\alpha$ assocated with the collection $\textbf{W}$ is a globally hypoelliptic operator, which is invertible in the space of mean zero functions. Moreover, the space $L^2(\mathbb{G}^{5,2})$ can be written as the direct sum $$L^2(\mathbb{G}^{5,2}) = \overline{\bigoplus_{\xi \in \widehat{\mathbb{G}}^{5,2}} \bigoplus_{h' \in  \Z_p / p^{-\vartheta( \xi_4 , \xi_5)} \Z_p} \mathcal{V}_{\xi}^{h'}}, \, \,\, \mathcal{V}_{\xi} = \bigoplus_{h' \in  \Z_p / p^{-\vartheta( \xi_4 , \xi_5)} \Z_p} \mathcal{V}_{\xi}^{h'}, $$where each finite-dimensional sub-space$$\mathcal{V}_{\xi}^{h'}:= \mathrm{span}_\C \{ (\pi_{\xi})_{hh'} \, : \, h \in \Z_p / p^{-\vartheta( \xi_4 , \xi_5)} \Z_p \},$$is an invariant sub-space of $\mathcal{L}_{sub, \textbf{W}}^\alpha$, and its spectrum restricted to $\mathcal{V}_{\xi }^{h'}$ is given by $$Spec(\mathcal{L}_{sub, \textbf{W}}^\alpha|_{\mathcal{V}_{\xi}^{h'}})= \Big\{ \sum_{k=1}^3 |(\xi_1 , \xi_2 + h' \xi_4, \xi_3 + h' \xi_5)\cdot W_k|_p^\alpha - 3\frac{1 - p^{-1}}{1 - p^{-(\alpha +1)}} \, : \, 1 \leq| \tau |_p \leq \| (\xi_4 , \xi_5)\|_p \Big\},$$
so that $Spec(\mathcal{L}_{sub, \textbf{W}}^\alpha)$ is going to be the collection of real numbers \begin{align*}
     \sum_{k=1}^3 |(\xi_1 , \xi_2 + h' \xi_4, \xi_3 + h' \xi_5) \cdot W_k|_p^\alpha - 3\frac{1 - p^{-1}}{1 - p^{-(\alpha +1)}} ,
\end{align*}where $\xi \in \widehat{\mathbb{G}}^{5,2}, \, h' \in \Z_p / p^{-\vartheta( \xi_4 , \xi_5)} \Z_p , \,  1 \leq | \tau |_p \leq \|(\xi_4 , \xi_5)\|_p$, and the corresponding eigenfunctions are given by  $$\mathscr{e}_{\xi , h' , \tau} (\mathbf{x}) := e^{2 \pi i \{ \xi \cdot \mathbf{\mathbf{x}} + (\xi_4 x_2  + \xi_5 x_3)(h') + \tau x_1 \}_p}, \, \, \, \,\xi \in \widehat{\mathbb{G}}^{5,2}, \, h' \in \Z_p / p^{-\vartheta( \xi_4 , \xi_5)}  \Z_p , \,  1 \leq | \tau |_p \leq \| (\xi_4 , \xi_5) \|_p.$$
\end{teo}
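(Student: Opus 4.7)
$\mathcal{L}_{sub,\mathbf{W}}^\alpha=\sum_{k=1}^{3}\partial_{W_k}^{\alpha}$ is left-invariant since each directional VT operator commutes with left translations. Combined with the Peter--Weyl decomposition provided by Theorem~\ref{teoRepG22}, this reduces the spectral problem to diagonalizing the restriction of $\mathcal{L}_{sub,\mathbf{W}}^\alpha$ to each finite-dimensional subspace $\mathcal{V}_\xi^{h'}$. Invariance of $\mathcal{V}_\xi^{h'}$ follows from the general identity $T(\pi_\xi)_{hh'}=\sum_j(\sigma_T(\xi))_{hj}(\pi_\xi)_{jh'}$ valid for any left-invariant $T$, which keeps the second index $h'$ fixed.

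The plan is to pass to a Fourier basis of $\mathcal{V}_\xi^{h'}$: expanding $\1_{h'-h+p^{-\vartheta(\xi_4,\xi_5)}\Z_p}(x_1)$ in characters of $\Z_p$ rewrites $(\pi_\xi)_{hh'}$ as a linear combination of the $\|\xi_4,\xi_5\|_p$ characters $\mathscr{e}_{\xi,h',\tau}(\mathbf{x})=e^{2\pi i\{\xi\cdot\mathbf{x}+(\xi_4 x_2+\xi_5 x_3)h'+\tau x_1\}_p}$ for $|\tau|_p\leq\|\xi_4,\xi_5\|_p$, which therefore form a basis of $\mathcal{V}_\xi^{h'}$.

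Then, by expanding $\mathbf{x}\star\exp(tW_k)^{-1}$ via the group law, direct computation yields
\[
\partial_{W_k}^\alpha\mathscr{e}_{\xi,h',\tau}(\mathbf{x})=\mathscr{e}_{\xi,h',\tau}(\mathbf{x})\cdot\frac{1-p^\alpha}{1-p^{-(\alpha+1)}}\int_{\Z_p}\frac{e^{2\pi i\{-t(A_k+x_1 B_k)+t^2 w_k^1 B_k/2\}_p}-1}{|t|_p^{\alpha+1}}dt,
\]
with $A_k=(\xi_1+\tau,\xi_2+h'\xi_4,\xi_3+h'\xi_5)\cdot W_k$ and $B_k=\xi_4 w_k^2+\xi_5 w_k^3$. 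The quantization condition from Theorem~\ref{teoRepG22} places nonzero representatives of $(\xi_1,\xi_2)$ or $(\xi_1,\xi_3)$ at $p$-adic norm strictly greater than $\|\xi_4,\xi_5\|_p\geq|\tau|_p,|B_k|_p$. By the ultrametric inequality this forces $|A_k+x_1 B_k|_p=|A_k|_p=|(\xi_1,\xi_2+h'\xi_4,\xi_3+h'\xi_5)\cdot W_k|_p$ uniformly in $\tau$ and $x_1$, so the Gaussian integral (cf.\ Lemma~\ref{lemagaussian}) collapses to the standard one-variable VT integral and evaluates to $|A_k|_p^\alpha-\frac{1-p^{-1}}{1-p^{-(\alpha+1)}}$. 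Summing over $k$ gives exactly the eigenvalue in the statement, and $\mathcal{L}_{sub,\mathbf{W}}^\alpha$ acts as a scalar on each $\mathcal{V}_\xi^{h'}$. Global hypoellipticity and invertibility on the mean-zero subspace then follow from the lower bound $\lambda(\xi,h')\gtrsim\|\xi\|_p^\alpha$ for all but finitely many $\xi$, obtained by a case analysis on which coordinate of $\xi$ attains the maximum together with the spanning property of $\mathbf{W}$.

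The main technical obstacle is the ultrametric analysis just described: one must check dominance in all the degenerate subcases allowed by the quantization, in particular when one of the components of $\xi$ vanishes, or when some $w_k^1=0$ so that $\exp(tW_k)^{-1}$ leaves the $x_1$-coordinate fixed and the $x_1$-dependence is absent from the start. Once these subcases are treated separately the integral evaluation is uniform, and assembling everything across $\xi\in\widehat{\mathbb{G}}^{5,2}$ and $h'$ produces the full spectrum and the eigenbasis claimed.
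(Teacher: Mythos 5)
Your overall route is the same as the paper's: decompose $L^2(\mathbb{G}^{5,2})$ into the subspaces $\mathcal{V}_\xi^{h'}$, pass to the character basis $\mathscr{e}_{\xi,h',\tau}$ by expanding the indicator in $x_1$, compute the action of each $\partial_{W_k}^\alpha$ on these characters, and deduce global hypoellipticity from the resulting lower bound on $\|\sigma(\xi)\|_{inf}$. At the decisive step you are in fact more careful than the paper: expanding $\mathbf{x}\star\mathbf{exp}(tW_k)^{-1}$ really does produce the phase $-t(A_k+x_1B_k)+\tfrac{t^2}{2}w_k^1B_k$ with $B_k=\xi_4w_k^2+\xi_5w_k^3$, whereas the paper's displayed formula for $\partial_{W_i}^\alpha\mathscr{e}_{\xi,h',\tau}$ silently drops the $\tau w_i^1$ term, the $x_1B_i$ term and the quadratic term.

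The gap is exactly the dominance claim you make next. The quantization in Theorem~\ref{teoRepG22} constrains only two of the three components $\xi_1,\xi_2,\xi_3$ (which pair depends on whether $|\xi_4|_p>|\xi_5|_p$), it always admits the trivial representative $0$, and it does not preclude cancellation between terms of equal norm in $\xi_1w_k^1+\xi_2w_k^2+\xi_3w_k^3$. Hence $|A_k+x_1B_k|_p=|A_k|_p$ is not uniform in $x_1$. Concretely, take $\xi=(0,0,0,\xi_4,0)\in A_2$ with $|\xi_4|_p=p^2$ and $W_k=X_2$: the multiplier acting on $\mathscr{e}_{\xi,0,\tau}$ is $|\xi_4x_1|_p^\alpha-\tfrac{1-p^{-1}}{1-p^{-(\alpha+1)}}$ (truncated to $0$ when $|\xi_4x_1|_p\leq1$), a non-constant function of $x_1$. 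Equivalently, on $\mathcal{V}_\xi^{h'}$ the operator $\partial_{X_2}^\alpha$ is diagonal in the basis $\{(\pi_\xi)_{hh'}\}_h$ with eigenvalues $|\xi_2+\xi_4h|_p^\alpha$ depending on the running index $h$, while $\partial_{X_1}^\alpha$ is diagonal in the $\tau$-basis; the two parts do not commute, so in these degenerate cases $\mathcal{L}^\alpha_{sub,\mathbf{W}}|_{\mathcal{V}_\xi^{h'}}$ is a genuine Schr\"odinger-type operator, is not scalar, and its spectrum is not the set of sums written in the statement (which for the example above would predict the single eigenvalue $-3\tfrac{1-p^{-1}}{1-p^{-(\alpha+1)}}$, i.e.\ the value $0$ after truncation, with multiplicity $p^2$ on $\mathcal{V}_\xi^{0}$). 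You correctly flag these subcases as the main obstacle, but they cannot be disposed of by further case analysis because the scalar conclusion itself fails there; the same defect is present, less visibly, in the paper's own computation. A smaller point: your closing bound $\lambda(\xi,h')\gtrsim\|\xi\|_p^\alpha$ is too strong (the same $\xi$ above has $\|\xi\|_p=p^2$ but eigenvalue $0$ on part of $\mathcal{V}_\xi$); what the argument can yield, and what the paper claims, is only $\|(\xi_1,\xi_2,\xi_3)\|_p^\alpha\lesssim\|\sigma_{\mathcal{L}^\alpha_{sub,\mathbf{W}}}(\xi)\|_{inf}$.
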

\begin{proof}
To prove our theorem, and this will be a recurrent argument along this paper, we simply need to consider the subspaces $$\mathcal{V}_{\xi}^{h'}:= \mathrm{span}_\C \{ (\pi_{\xi})_{hh'} \, : \, h \in \Z_p / p^{-\vartheta( \xi_4 , \xi_5)} \Z_p \}.$$
An alternative description of this space could be \begin{align*}
    \mathcal{V}_{\xi}^{h'} &= e^{2 \pi i \{ \xi \cdot \mathbf{x} + (\xi_4 x_2  + \xi_5 x_3)(h') \}_p} \mathcal{D}_{-\vartheta(\xi_4 , \xi_5) } (\Z_p) \\ & =\{  e^{2 \pi i \{ \xi \cdot \mathbf{x} + (\xi_4 x_2  + \xi_5 x_3)(h') \}_p} f \, : \, f(x_1 + u ) = f(x_1), \, \, \, \text{for} \, \, \, | u|_p \leq \|(\xi_4 , \xi_5) \|_p^{-1} \},
\end{align*}
and in this way it is not hard to see how the following set defines an orthonormal basis for the space: $$\mathscr{e}_{\xi , h' , \tau} (\mathbf{x}) := e^{2 \pi i \{ \xi \cdot \mathbf{x} + (\xi_4 x_2  + \xi_5 x_3)(h') + \tau x_1 \}_p}, \quad 1 \leq |\tau|_p \leq \|(\xi_4 , \xi_5) \|_p.$$
With this we can check how \begin{align*}
    \partial_{W_i}^\alpha \mathscr{e}_{\xi , h' , \tau} (\mathbf{x}) &= \Big(\int_{\Z_p} \frac{e^{2 \pi i \{ -t(\xi_1 w_i^1 + (\xi_2 + h' \xi_4) w_i^2 + (\xi_3+ h' \xi_5) w_i^3)  \}_p } - 1}{|t|_p^{\alpha + 1}} dt \Big) \mathscr{e}_{\xi , h' , \tau} (\mathbf{x}),
\end{align*}
so that $\partial_{W_i}^\alpha \mathscr{e}_{\xi , h' , \tau} (\mathbf{x}) =$ \[
\begin{cases}
 \Big(| (\xi_1 , \xi_2 + h' \xi_4, \xi_3 + h' \xi_5) \cdot W_i|_p^\alpha - \frac{1 - p^{-1}}{1 - p^{-(\alpha + 1)}}\Big)\mathscr{e}_{\xi , h' , \tau} (\mathbf{x}), & \, \text{if} \,  |(\xi_1 , \xi_2 + h' \xi_4, \xi_3 + h' \xi_5) \cdot W_i|_p>1 , \\ 0, & \, \text{if} \,  (\xi_1 , \xi_2 + h' \xi_4, \xi_3 + h' \xi_5) \cdot W_i \in \Z_p .
\end{cases}
\]Thus we can write $$\mathcal{L}_{sub, \textbf{W}}^\alpha\mathscr{e}_{\xi , h' , \tau} (\mathbf{x}) = \Big(\sum_{i=1}^3| (\xi_1 , \xi_2 + h' \xi_4, \xi_3 + h' \xi_5) \cdot W_i|_p^\alpha - 3\frac{1 - p^{-1}}{1 - p^{-(\alpha + 1)}}\Big)\mathscr{e}_{\xi , h' , \tau} (\mathbf{x}).$$
Finally, in order to prove the global hypoellipticity, let us recall for the reader the arguments in \cite{Kirilov2020}. Given any compact group, if a linear operator $T$ is a translation invariant operator, then it can be written as $$Tf(x) = \sum_{\xi \in \widehat{G}} d_\xi Tr[ \xi(x) \sigma_T (\xi) \widehat{f}(\xi)].$$
Using this symbolic representation we can see how, for the particular case of a compact Lie group $G$, the global hypoellipticity becomes equivalent to two conditions: first, the invertibility of the symbol $\sigma_T(\xi)$, for all but finitely many $\xi \in  \widehat{G}$, and second, the operator norm of the inverse of the symbol should be polynomially bounded. The reason is how smooth functions are characterized via their Fourier coefficients as the functions with rapidly decreasing Fourier transform. In our case the same is true, and we have the direct sum decomposition for each reprsentation space
$$\mathcal{V}_{\xi} = \bigoplus_{h' \in  \Z_p / p^{-\vartheta( \xi_4 , \xi_5)} \Z_p} \mathcal{V}_{\xi}^{h'},$$
so that for the particula case of $\mathcal{L}_{sub, \textbf{W}}^\alpha$, it should be clear that \begin{align*}
    \| \sigma_{\mathcal{L}_{sub, \textbf{W}}^\alpha}(\xi) \|_{op} &= \| \mathcal{L}_{sub, \textbf{W}}^\alpha|_{\mathcal{V}_{\xi}} \|_{op}  = \max_{h'  \in \Z_p / p^{-\vartheta(\xi_4 , \xi_5)} \Z_p }  \| \mathcal{L}_{sub, \textbf{W}}^\alpha \|_{\mathcal{V}_{\xi}^{h'}} \|_{op},
\end{align*}
and $$ \| \sigma_{\mathcal{L}_{sub, \textbf{W}}^\alpha}(\xi)^{-1} \|_{op}^{-1} =  \| \sigma_{\mathcal{L}_{sub, \textbf{W}}^\alpha}(\xi) \|_{inf} = \| \mathcal{L}_{sub, \textbf{W}}^\alpha |_{\mathcal{V}_{\xi}} \|_{inf}  = \min_{h'  \in \Z_p / p^{-\vartheta(\xi_4 , \xi_5)} \Z_p }  \| \mathcal{L}_{sub, \textbf{W}}^\alpha |_{\mathcal{V}_{\xi}^{h'}} \|_{inf},$$so, we can conclude that $$\| \sigma_{\mathcal{L}_{sub, \textbf{W}}^\alpha}(\xi) \|_{op} \leq \| \xi\|_p^\alpha, \, \, \text{and} \, \, \, \| (\xi_1 , \xi_2, \xi_3) \|_p^\alpha \leq \| \sigma_{\mathcal{L}_{sub, \textbf{W}}^\alpha}(\xi) \|_{inf}.$$
This concludes the proof. 
\end{proof}

\section{The group $\mathbb{G}^{5,3}$}
Let $p>3$ be a prime number. In this section we consider the group $\mathbb{G}^{5,3}(\Z_p)$, or just $\mathbb{G}^{5,3}$ for simplicity, defined here as $\Z_p^{5}$ together with the non-commutative operation$$\mathbf{x} \star \textbf{y} := (x_1 + y_1, x_2 + y_2, x_3 + y_3, x_4 + y_4 +x_1 y_2 , x_5 + y_5 + x_2 y_3 + x_1y_4 + \frac{1}{2} x_1^2 y_2 ),$$and inverse element $$\mathbf{x}^{-1}:= (-x_1, -x_2, -x_3, -x_4 + x_1 x_2, - x_5 + x_1 x_4 + x_2 x_3 - \frac{1}{2} x_1 x_2^2).$$We can identify this group with the exponential image of the $\Z_p$-Lie algebra $\mathfrak{g}^{5,3}$ defined by the commutation relations$$[X_1,X_2] = X_4 , \quad [X_1 , X_4]= X_5, \quad [X_2 , X_3] = X_5.$$
In Theorem \ref{teorepG53}, in a similar way as for the previous groups, we will provide an explicit description of the unitary dual of $\mathbb{G}^{5,3}$. This time, before stating the theorem and proceeding with its proof, we will establish the following auxiliary lemma. 

\begin{lema}\label{lemaauxG53}
Let $(\xi_4 , \xi_5) \in \widehat{\Z}_p^5$. Then $$\int_{\Z_p^2} \Big| \int_{\Z_p }e^{2 \pi i \{(\xi_4 x_2 + \xi_5 x_4) u\}_p}du  \Big|^2 dx_2 dx_4 = \|(\xi_4, \xi_5) \|_p^{-1}.$$
\end{lema}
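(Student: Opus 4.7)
The plan is to reduce the double integral to a single integral of an indicator function, and then handle the two cases on which of $|\xi_4|_p,|\xi_5|_p$ dominates, essentially recycling a computation that already appears in the proof of Theorem \ref{teoRepG22}.

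First I would evaluate the inner integral in closed form. The standard identity $\int_{\Z_p} e^{2\pi i\{\alpha u\}_p}\,du = \1_{\Z_p}(\alpha)$, valid for every $\alpha\in\Q_p$, applied with $\alpha = \xi_4 x_2 + \xi_5 x_4$, gives
\[
f(x_2,x_4) := \int_{\Z_p} e^{2\pi i\{(\xi_4 x_2+\xi_5 x_4)u\}_p}\,du = \1_{\Z_p}(\xi_4 x_2+\xi_5 x_4).
\]
Since this function takes only the values $0$ and $1$, we have $|f(x_2,x_4)|^2 = f(x_2,x_4)$, so the quantity we must compute is simply
\[
I := \int_{\Z_p^2} \1_{\Z_p}(\xi_4 x_2+\xi_5 x_4)\,dx_2\,dx_4.
\]

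Next I would split into the two cases $|\xi_4|_p > |\xi_5|_p$ and $|\xi_4|_p \le |\xi_5|_p$, which is the same dichotomy exploited in the proof of Theorem \ref{teoRepG22}. In the first case, rewriting the condition $\xi_4 x_2+\xi_5 x_4 \in \Z_p$ as $x_2 + (\xi_5/\xi_4) x_4 \in p^{-\vartheta(\xi_4)}\Z_p$ shows that, for each $x_4\in\Z_p$, the slice in $x_2$ is a coset of $p^{-\vartheta(\xi_4)}\Z_p$, which has measure $|\xi_4|_p^{-1}$; integrating over $x_4\in\Z_p$ then yields $I = |\xi_4|_p^{-1}$. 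In the second case the symmetric substitution $x_4 + (\xi_4/\xi_5) x_2 \in p^{-\vartheta(\xi_5)}\Z_p$ gives $I = |\xi_5|_p^{-1}$. Together these produce $I = \max\{|\xi_4|_p,|\xi_5|_p\}^{-1} = \|(\xi_4,\xi_5)\|_p^{-1}$, as required.

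There is essentially no obstacle here: the only subtlety is checking that the change-of-variables for the indicator is indeed a translation by a $p$-adic integer in each factor, which is clear since the measures of cosets of $p^{-\vartheta(\xi_j)}\Z_p$ inside $\Z_p$ are $|\xi_j|_p^{-1}$. One could also give a more uniform, case-free proof via the Plancherel identity on $\Z_p^2$: computing $\widehat{f}(\gamma_2,\gamma_4)$ as in Lemma \ref{lemaaux} yields an indicator of a ball in $\widehat{\Z}_p^2$, and squaring-and-summing gives the same value $\|(\xi_4,\xi_5)\|_p^{-1}$. Either approach works, but the direct indicator-function calculation is the shortest.
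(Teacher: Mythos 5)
Your proof is correct, but it takes a different route from the one the paper actually writes down. You collapse the inner integral to the indicator $\1_{\Z_p}(\xi_4 x_2+\xi_5 x_4)$ via the standard character identity, use $|f|^2=f$, and then compute the measure of $\{(x_2,x_4):\xi_4 x_2+\xi_5 x_4\in\Z_p\}$ by slicing; all of these steps are sound (including the trivial case $\|(\xi_4,\xi_5)\|_p=1$, where the indicator is identically $1$). The paper instead computes the Fourier transform of $f_{\xi_4,\xi_5}$ on $\Z_p^2$, identifies each coefficient as the Haar measure of an intersection of two balls $\frac{\alpha}{\xi_4}+p^{-\vartheta(\xi_4)}\Z_p$ and $\frac{\beta}{\xi_5}+p^{-\vartheta(\xi_5)}\Z_p$, and sums the squares via Plancherel — which is essentially the ``uniform'' alternative you sketch in your last paragraph. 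Notably, the paper itself records your shortcut in the remark immediately following the lemma, and explains why it does not adopt it: the Plancherel/ball-intersection argument is deliberately rehearsed here because it is the template reused for the genuinely quadratic and cubic phases in Lemmas \ref{lemaaux}, \ref{lemaauxG54} and \ref{lemaauxG56}, where the inner integral is a $p$-adic Gaussian and no longer reduces to an indicator. So your proof is the shorter one for this particular statement, but it does not generalize to the later lemmas the way the paper's does.
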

\begin{proof}
Define the auxiliary function $$f_{\xi_4 , \xi_5} (x_2, x_4) := \int_{\Z_p }e^{2 \pi i \{(\xi_4 x_2 + \xi_5 x_4) u\}_p}du.$$
Then, using its Fourier transform to calculate its $L^2$-norm, we get \begin{align*}
    \mathcal{F}_{\Z_p^2} [f_{\xi_4 , \xi_5}] (\alpha , \beta) &= \int_{\Z_p} \1_{\Z_p} (\xi_4 u - \alpha)  \1_{\Z_p}(\xi_5 u - \beta) du \\ & =\int_{\Z_p} \1_{ \frac{\alpha}{ \xi_4} + p^{-\vartheta(\xi_4)}\Z_p}  ( u)  \1_{\frac{\beta}{\xi_5} + p^{-\vartheta(\xi_5)}\Z_p}( u ) du \\ &= \mu_{\Z_p}\Big( \big[  \frac{\alpha}{ \xi_4} + p^{-\vartheta(\xi_4)}\Z_p \big] \cap \big[ \frac{\beta}{\xi_5} + p^{-\vartheta(\xi_5)}\Z_p \big] \Big),
\end{align*}where $\mu_{\Z_p}$ denotes the normalized Haar measure on $\Z_p$. In this way its is clear how \begin{align*}
\| f_{\xi_4 , \xi_5}\|_{L^2 (\Z_p^2)}^2 &= \sum_{(\alpha, \beta) \in \widehat{\Z}_p^2}      \mu_{\Z_p} \Big(  \big[\frac{\alpha}{ \xi_4} + p^{-\vartheta(\xi_4)}\Z_p \big] \cap \big[ \frac{\beta}{\xi_5} + p^{-\vartheta(\xi_5)}\Z_p \big] \Big)^2 \\ &= \max\{ |\xi_4|_p, |\xi_5|_p \}^{-1} \sum_{(\alpha, \beta) \in \widehat{\Z}_p^2} \mu_{\Z_p} \Big( \big[ \frac{\alpha}{ \xi_4} + p^{-\vartheta(\xi_4)} \Z_p \big] \cap \big[ \frac{\beta}{\xi_5} + p^{-\vartheta(\xi_5)} \Z_p \big] \big) \\ &= \max\{ |\xi_4|_p, |\xi_5|_p \}^{-1} \mu_{\Z_p}(\Z_p) = \| (\xi_4 , \xi_5) \|_p.
\end{align*}
This concludes the proof. 
\end{proof}
\begin{rem}
    There is an alternative proof for the previous lemma, which is observing how $$\int_{\Z_p }e^{2 \pi i \{(\xi_4 x_2 + \xi_5 x_4) u\}_p}du = \1_{\Z_p}(\xi_4 x_2 + \xi_5 x_4),$$but we prefer here the previous proof because it introduce the argument we will use later for more complicated integrals, like $p$-adic Gaussians.  
\end{rem}

Now we are ready to prove the main result of this section. 
\begin{teo}\label{teorepG53}
The unitary dual $\widehat{\mathbb{G}}^{5,3}$ of $\mathbb{G}^{5,3}$ can be identified with the  following subset of $\widehat{\Z}_p^{5}$: 
$$\mathbb{G}^{5,3}=\{\xi \in \widehat{\Z}_p^5 \, : \, \, (\xi_3, \xi_4) \in \Q_p^2/p^{-\vartheta(\xi_5)}\Z_p^2, \, \, (\xi_1, \xi_2) \in \Q_p^2 / p^{-\vartheta(\xi_4 , \xi_5)} \Z_p^2 \}.$$Moreover, each unitary irreducible representation can be realized in the finite dimensional Hilbert space \[\mathcal{H}_\xi := 
    \mathrm{span}_\C \{\varphi_h:= \| (\xi_4, \xi_5)\|_p^{1/2} |\xi_5|_p^{1/2} \1_{h + p^{-\vartheta(\xi_4 , \xi_5)} \Z_p \times p^{\vartheta(\xi_5)}\Z_p}(u) \, : \, h \in \Z_p^2 / p^{\vartheta(\xi_4 , \xi_5)} \Z_p \times p^{-\vartheta(\xi_5)}\Z_p\},\]acting according to the formula \[\pi_\xi(\mathbf{x}) \varphi (u) := e^{2 \pi i \{ \xi \cdot \mathbf{x} + \xi_4 (u_1 x_2)+ \xi_5(-x_4 u_1   + \frac{1}{2} x_2 u_1^2+x_3 u_2) \}_p} \varphi (u + (x_1 , x_2)) ,\]where \[d_\xi := \mathrm{dim}_\C(\mathcal{H}_\xi) =  \|(\xi_4 , \xi_5) \|_p |\xi_5|_p= \begin{cases}
    |\xi_4|_p |\xi_5|_p,   \, \, & \,\, \text{if} \, \, |\xi_4|_p > |\xi_5|_p>1, \\
    |\xi_5|_p^2, \, \, & \,\, \text{if} \, \, |\xi_5|_p>1, \, \, |\xi_4|_p =1.
\end{cases}\]With this realization and the natural choice of basis for $\mathcal{H}_\xi$, the associated matrix coefficients are going to be \[(\pi_\xi)_{hh'} (\mathbf{x}) = 
     e^{2 \pi i \{ \xi \cdot \mathbf{x}+\xi_4 x_2 (h_1')+ \xi_5 x_3(h'_2) + \xi_5(x_4 h_1'   + \frac{1}{2} x_2 (h_1')^2) \}_p } \1_{h' - h + p^{-\vartheta(\xi_4, \xi_5)} \Z_p \times p^{-\vartheta(\xi_5)}\Z_p } (x_1,x_2),\]and the associated characters are $\chi_{\pi_\xi} (\mathbf{x})= $ \[\begin{cases}
    |\xi_4|_p e^{2 \pi i \{ \xi \cdot \mathbf{x} \}_p } \1_{ p^{-\vartheta(\xi_4)}  \Z_p^2 } (x_1,x_2) ,   \, \, & \,\, \text{if} \, \,  |\xi_5|_p=1,\\
     |\xi_5|_p^2 e^{2 \pi i \{ \xi \cdot \mathbf{x} \}_p } \1_{ p^{-\vartheta(\xi_5)} \Z_p^4 } (x_1,x_2,x_3,x_4), \, \, & \,\, \text{if} \, \, |\xi_5|_p>1, \, \, |\xi_4|_p = 1,\\ |\xi_4|_p |\xi_5|_p  e^{2 \pi i \{\xi  \cdot \mathbf{x} \}_p}  \1_{p^{-\vartheta(\xi_4)} \Z_p} (x_1) \1_{p^{-\vartheta(\xi_5)} \Z_p} (x_2)\1_{p^{-\vartheta(\xi_5)} \Z_p}(x_3) \1_{\Z_p} ( \xi_4  x_2+ \xi_5 x_4)\,\, \, & \,\, \text{if} \, \, 1<|\xi_5|_p<|\xi_4|_p.
\end{cases}\]Sometimes we will use the notation $$\mathcal{V}_\xi := \mathrm{span}_\C \{ (\pi_\xi)_{hh'} (\mathbf{x}) \, : \, h,h' \in I_\xi \},$$where \[I_\xi :=\Z_p^2 / p^{-\vartheta(\xi_4, \xi_5)} \Z_p \times p^{-3\vartheta(\xi_5)}\Z_p= \begin{cases}
    \Z_p / p^{-\vartheta(\xi_4)}\Z_p,   \, \, & \,\, \text{if} \, \, |\xi_5|_p=1, \\
    \Z_p^2/p^{-\vartheta(\xi_4, \xi_5)} \Z_p \times p^{-\vartheta(\xi_5)}\Z_p, \, \, & \,\, \text{if} \, \, |\xi_5|_p>1.
\end{cases}\]
\end{teo}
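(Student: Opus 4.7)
The strategy mirrors the proof of Theorem \ref{teoRepG22}, splitting $\widehat{\mathbb{G}}^{5,3}$ according to the restriction to the center. The center is $\mathcal{Z}(\mathbb{G}^{5,3}) = \{\mathbf{0}\}^4 \times \Z_p$, corresponding to $X_5$, and a direct inspection of the commutation relations shows that $\mathbb{G}^{5,3}/\mathcal{Z}(\mathbb{G}^{5,3}) \cong \mathbb{H}_1 \times \Z_p$: modulo $X_5$, only the relation $[X_1,X_2]=X_4$ survives, $X_3$ becomes central in the quotient, and the remaining law $(x_1+y_1,x_2+y_2,x_3+y_3,x_4+y_4+x_1 y_2)$ factors as the Heisenberg operation on $(X_1,X_2,X_4)$ times a free translation in $X_3$. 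Consequently, representations with trivial central character ($|\xi_5|_p=1$) descend to $\mathbb{H}_1 \times \Z_p$, and Theorem \ref{TeoRepresentationsHd} supplies the parametrization $(\xi_1,\xi_2) \in \Q_p^2/p^{\vartheta(\xi_4)}\Z_p^2$ together with $\xi_3 \in \widehat{\Z}_p$; in this case the formula stated in the theorem reduces to the corresponding Heisenberg realization, the $u_2$-coordinate being decoupled since $\xi_5 u_2 x_3 \in \Z_p$ and the $u_2$-step collapses to $|\xi_5|_p^{-1}=1$.

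For the non-trivial central character case ($|\xi_5|_p > 1$), the natural construction is Kirillov-style induction from the abelian normal subgroup $N = \{(0,0,x_3,x_4,x_5) : (x_3,x_4,x_5) \in \Z_p^3\}$, whose normality follows because all commutators with elements outside $N$ land in $N$. Inducing the character $e^{2\pi i\{\xi_3 x_3 + \xi_4 x_4 + \xi_5 x_5\}_p}$ of $N$ yields a representation acting on $L^2(\Z_p^2)$ by translation in $u = (u_1, u_2)$, and a direct computation from the group law produces the explicit formula for $\pi_\xi(\mathbf{x})$ stated in the theorem. Verifying the cocycle identity $\pi_\xi(\mathbf{x})\pi_\xi(\mathbf{y}) = \pi_\xi(\mathbf{x} \star \mathbf{y})$ is a careful bookkeeping step: the extra phase arising from the substitution $u \mapsto u + (x_1,x_2)$ in the $\mathbf{y}$-phase must match precisely the commutator contributions $x_1 y_2$ (appearing in the $x_4$-coordinate of $\mathbf{x}\star\mathbf{y}$) and $x_2 y_3 + x_1 y_4 + \tfrac{1}{2}x_1^2 y_2$ (in the $x_5$-coordinate). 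The invariant subspace $\mathcal{H}_\xi$ is then identified by determining the maximal lattice of admissible shifts in $u$: the $u_2$-direction admits step $|\xi_5|_p^{-1}$ (from $\xi_5 x_3 u_2$), while the $u_1$-direction admits step $\|(\xi_4,\xi_5)\|_p^{-1}$ (from the combined linear $\xi_4 x_2 u_1 - \xi_5 x_4 u_1$, the quadratic $\tfrac{1}{2}\xi_5 x_2 u_1^2$ being automatically integral at that scale), yielding $d_\xi = \|(\xi_4,\xi_5)\|_p \cdot |\xi_5|_p$.

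Once the realization is in place, the matrix coefficients $(\pi_\xi)_{hh'}$ and the characters $\chi_{\pi_\xi}$ follow by direct computation on the natural basis $\{\varphi_h\}$. Irreducibility is established via the standard criterion $\|\chi_{\pi_\xi}\|_{L^2(\mathbb{G}^{5,3})}^2 = 1$: the integrations in $x_1, x_3, x_5$ yield explicit volume factors from the indicator functions appearing in $\chi_{\pi_\xi}$, and the remaining integral in $(x_2, x_4)$ is exactly $\int_{\Z_p^2}\bigl|\int_{\Z_p} e^{2\pi i\{(\xi_4 x_2 + \xi_5 x_4)u\}_p}\,du\bigr|^2 dx_2\,dx_4 = \|(\xi_4,\xi_5)\|_p^{-1}$, which is precisely Lemma \ref{lemaauxG53}; these factors combine to cancel $d_\xi^2$ exactly. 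Inequivalence among distinct $\pi_\xi$ then follows from distinctness of characters.

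Completeness of the dual is verified via the Plancherel identity $\sum_{\xi \in B_{\widehat{\mathbb{G}}^{5,3}}(n)} d_\xi^2 = |\mathbb{G}^{5,3}/\mathbb{G}^{5,3}(p^n\Z_p)| = p^{5n}$, partitioned into the three strata $\|(\xi_4,\xi_5)\|_p = 1$, $|\xi_4|_p = 1 < |\xi_5|_p$, and $1 < |\xi_5|_p$ with $|\xi_4|_p \geq 1$, each contributing through an elementary geometric sum once the range of the remaining $\xi_i$ modulo the appropriate lattice is determined. The main obstacle is precisely this bookkeeping: because $X_5$ appears as a commutator in two distinct ways, namely $[X_1,X_4]$ and $[X_2,X_3]$, both $\xi_4$ and $\xi_5$ couple to $u_1$ in the phase, and one must verify that the parametrization $(\xi_3,\xi_4) \in \Q_p^2/p^{-\vartheta(\xi_5)}\Z_p^2$ together with $(\xi_1,\xi_2) \in \Q_p^2/p^{-\vartheta(\xi_4,\xi_5)}\Z_p^2$ enumerates each equivalence class exactly once, tracking carefully which shifts of each $\xi_i$ get absorbed into phase changes of the remaining variables modulo $\Z_p$, in the same manner exploited in the $\mathbb{G}^{5,2}$ argument.
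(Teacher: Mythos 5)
Your proposal follows essentially the same route as the paper: split by central character, descend to $\mathbb{H}_1\times\Z_p$ when $|\xi_5|_p=1$, realize the remaining representations explicitly on a two-variable space with $u_1$-step $\|(\xi_4,\xi_5)\|_p^{-1}$ and $u_2$-step $|\xi_5|_p^{-1}$, prove irreducibility by computing $\|\chi_{\pi_\xi}\|_{L^2}^2=1$ via Lemma \ref{lemaauxG53}, and close with the count $\sum d_\xi^2=p^{5n}$; the paper simply writes down the action and verifies the cocycle identity for the polynomial phase $P_\xi$ directly rather than packaging it as induction from $\mathbf{exp}(\Z_p X_3+\Z_p X_4+\Z_p X_5)$, but the computation is the same. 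One small slip worth fixing: the three strata you list for the Plancherel count do not partition the dual as written (the case $|\xi_5|_p=1<|\xi_4|_p$ is omitted and the case $|\xi_4|_p=1<|\xi_5|_p$ is covered twice); the paper's partition is $|\xi_5|_p=1$ as a single block of weight $p^{4n}$, then $|\xi_5|_p>1$ with $|\xi_4|_p>1$, then $|\xi_5|_p>1$ with $|\xi_4|_p=1$.
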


\begin{proof}
Let us denote by $\mathcal{Z}(\mathbb{G}^{5,3})$ the center of $\mathbb{G}^{5,3}$, which in this case is isomorphic to $\Z_p$. If a finite-dimensional continuous representation $\pi$ of $\mathbb{G}^{5,3}$ is trivial on $\mathcal{Z}(\mathbb{G}^{5,3})$, then it must descend to a representation of $\mathbb{G}^{5,3}/\mathcal{Z}(\mathbb{G}^{5,3}) \cong \Z_p \times \mathbb{H}_1$, and these where already described in Theorem \ref{TeoRepresentationsHd}, where it is proven that $$\widehat{ \Z_p \times \mathbb{H}_1  } = \widehat{\Z_p} \times \widehat{\mathbb{H}}_1   \cong \{(\xi_1 , \xi_2, \xi_3, \xi_4) \in \widehat{\Z}_p^4 \, : \, (\xi_1,\xi_2) \in \Q_p^2/|\xi_4|_p^{-1}\Z_p^2 \},$$and the associated representations are given by$$\pi_{ \xi}(\mathbf{x}) \varphi (u) := e^{2 \pi i \{ \xi \cdot \mathbf{x} +  \xi_4 (u x_2) \}_p} \varphi (u + x_1), \, \, \, \, \varphi \in \mathcal{H}_{\xi},$$where $$\mathcal{H}_{\xi} := \spn_\C \{ \varphi_h \, : \, h \in \Z_p / p^{-\vartheta(\xi_4)} \Z_p  \}, \, \, \, \varphi_h (u) := |\xi_4|_p^{ 1/2} \1_{h + p^{-\vartheta(\xi_4)}} (u), \, \, \dim_\C(\mathcal{H}_{\xi}) =|\xi_4|_p.$$The characters associated to these representations have the form $$\chi_{\pi_{\xi_3}}(\mathbf{x}) = \sum_{h \in  \Z_p/p^{-\vartheta(\xi_4)}\Z_p} e^{ 2 \pi i \{\xi \cdot \mathbf{x} + {\xi_4}(h x_2 ) \}_p} \1_{p^{-\vartheta(\xi_4)} \Z_p}(x_1) = |\xi_4|_p e^{ 2 \pi i \{ \xi \cdot \mathbf{x}  \}_p} \1_{p^{-\vartheta(\xi_4)} \Z_p}(x_1) \1_{p^{-\vartheta(\xi_4)} \Z_p}(x_2), $$thus the irreducibility of $[\pi_{\xi}]$ is proved by the condition  $$\int_{\mathbb{G}^{5,3}} |\chi_{\pi_{\xi}}(\mathbf{x})|^2 d\mathbf{x} \,  = |\xi_4|_p^{2} \int_{\Z_p^5} |\1_{p^{-\vartheta(\xi_4)} \Z_p}(x_1) \1_{p^{-\vartheta(\xi_4)} \Z_p}(x_2)|^2 d \mathbf{x} \,  = 1.$$

Alternatively, if a representation is not trivial on the center, then we must have $|\xi_5|_p>1$, and we can realize these in the finite dimensional Hilbert spaces  $$\mathcal{H}_\xi := \mathrm{span}_\C \{ \varphi_h := \|(\xi_4 , \xi_5) \|_p^{1/2}|\xi_5|_p^{1/2}\1_{h + \Z_p^2 / p^{-\vartheta(\xi_4 , \xi_5)}  \Z_p \times p^{-\vartheta(\xi_5)}\Z_p}(u) \, : \, h \in I_\xi\},$$where $$d_\xi :=\mathrm{dim}_\C (\mathcal{H}_\xi) = \| (\xi_4 , \xi_5)\|_p |\xi_5 |_p, \quad I_\xi:= \Z_p^2 / p^{-\vartheta(\xi_4, \xi_5)} \Z_p \times p^{-\vartheta(\xi_5)}\Z_p,$$acting according to the formula $$\pi_\xi(\mathbf{x}) \varphi (u) :=e^{2 \pi i \{ \xi \cdot \mathbf{x} + \xi_4 u_1 x_2 + \xi_5(x_4 u_1   + \frac{1}{2} x_2 u_1^2+x_3 u_2) \}_p} \varphi (u + (x_1 , x_2)).$$To see how this is indeed a representation, let us define the polynomial $$P_\xi (\mathbf{x} , u) = \xi \cdot \mathbf{x} + \xi_4 u_1 x_2 + \xi_5(x_4 u_1   + \frac{1}{2} x_2 u_1^2+x_3 u_2),$$so that \begin{align*}
    P_\xi (\mathbf{y} , u + (x_1 , x_2)) &= \xi \cdot \mathbf{y} + \xi_4 x_1 y_2  + \xi_5( x_1y_4 +\frac{1}{2} x_1^2 y_2+ x_2 y_3 )\\ & \quad  + \xi_4 y_2  u_1 + \xi_5(( y_4 +x_1 y_2)u_1  + \frac{1}{2} u_1^2 y_2 + y_3 u_2)   
\end{align*}proving how $$P_\xi(\mathbf{x} , u ) + P_\xi (\mathbf{y} , u + (x_1, x_2)) = P_\xi (\mathbf{x} \star \mathbf{y}, u), \quad \text{and} \quad \pi_\xi (\mathbf{x}) \pi_\xi (\mathbf{y}) \varphi (u) = \pi_\xi (\mathbf{x} \star \mathbf{y}) \varphi(u).$$Let us simplify our notation by writing $K_\xi := p^{-\vartheta( \xi_4, \xi_5)} \Z_p \times p^{-\vartheta(\xi_5)} \Z_p.$ We can compute \begin{align*}
    (\pi_{\xi } (\mathbf{x}))_{h h'}&= (\pi_{\xi} (\mathbf{x}) \varphi_h , \varphi_{h'})_{L^2 (\Z_p^2)} \\ &= \|(\xi_4 , \xi_5) \|_p |\xi_5|_p \int_{\Z_p^2} e^{2 \pi i \{ \xi \cdot \mathbf{x}+\xi_4 u_1 x_2 + \xi_5(x_4 u_1   + \frac{1}{2} x_2 u_1^2+x_3 u_2) \}_p} \1_h (u +(x_1,x_2)) \1_{h'} (u) du \\ &= \| (\xi_4 , \xi_5) \|_p |\xi_5|_p  e^{2 \pi i \{\xi \cdot \mathbf{x} \}_p} \1_{h - h' + K_\xi} (x_1,x_2) \int_{h' + K_\xi}e^{2 \pi i \{\xi_4 u_1 x_2+ \xi_5(x_4 u_1   + \frac{1}{2} x_2 u_1^2+x_3 u_2) \}_p} du \\ &= e^{2 \pi i \{ \xi \cdot \mathbf{x}+  \xi_4 x_2 (h_1')  +  \xi_5(x_4 h_1'   + \frac{1}{2} x_2 (h_1')^2 + x_2 (h_1') + x_3(h'_2)) \}_p } \1_{h' - h + \Z_p^2 / p^{-\vartheta(\xi_4, \xi_5)} \Z_p \times p^{-\vartheta(\xi_5)}\Z_p } (x_1,x_2), 
\end{align*}and from this it readily follows that \begin{align*}
    \chi_{\pi_\xi}(\mathbf{x}) &= \sum_{h \in I_\xi  } \| (\xi_4 , \xi_5)\|_p |\xi_5|_p  e^{2 \pi i \{\xi \cdot \mathbf{x} \}_p} \1_{ K_\xi} (x_1,x_2) \int_{h' + K_\xi}e^{2 \pi i \{ \xi_4 u_1 x_2 + \xi_5(x_4 u_1   + \frac{1}{2} x_2 u_1^2+x_3 u_2) \}_p} du \\ &= \| (\xi_4 , \xi_5)\|_p |\xi_5|_p  e^{2 \pi i \{\xi \cdot \mathbf{x} \}_p} \1_{K_\xi} (x_1,x_2) \int_{\Z_p^2}e^{2 \pi i \{ \xi_4 u_1 x_2+ \xi_5(x_4 u_1   + \frac{1}{2} x_2 u_1^2+x_3 u_2) \}_p} du \\&= \| (\xi_4 , \xi_5)\|_p |\xi_5|_p  e^{2 \pi i \{\xi \cdot \mathbf{x} \}_p} \1_{p^{-\vartheta(\xi_4, \xi_5)} \Z_p \times p^{-\vartheta(\xi_5)}\Z_p} (x_1,x_2)\1_{p^{-\vartheta(\xi_5)}\Z_p}(x_3)  \int_{\Z_p} e^{2 \pi i \{ \xi_4 u_1 x_2+ \xi_5(x_4 u_1   + \frac{1}{2} x_2 u_1^2) \}_p} du_1 \\ &=\| (\xi_4 , \xi_5)\|_p |\xi_5|_p  e^{2 \pi i \{\xi \cdot \mathbf{x} \}_p} \1_{K_\xi} (x_1,x_2)\1_{p^{-\vartheta(\xi_5)} \Z_p}(x_3)  \int_{\Z_p} e^{2 \pi i \{ (\xi_4  x_2+ \xi_5 x_4) u_1 \}_p} du_1.
\end{align*}
Finally we have two cases. If $| \xi_4 |_p \leq |\xi_5|_p$ then
\begin{align*}
    \chi_{\pi_\xi} (\mathbf{x}) & =  |\xi_5|_p^2  e^{2 \pi i \{\xi \cdot \mathbf{x} \}_p} \1_{p^{-\vartheta(\xi_5)} \Z_p^2} (x_1,x_2)\1_{p^{-\vartheta(\xi_5)} \Z_p}(x_3)  \int_{\Z_p} e^{2 \pi i \{ \xi_5 x_4 u_1 \}_p} du_1 \\ &= |\xi_5|_p^2  e^{2 \pi i \{\xi_1 x_1 + \xi_2 x_2 + \xi_3 x_3 \}_p} \1_{p^{-\vartheta(\xi_5)} \Z_p^4} (x_1,x_2, x_3 , x_4), 
\end{align*}
so that $\xi_4$ disappears and we see how we only need to count those characters indexed by $$(\xi_1, \xi_2, \xi_3) \in \Q_p^3 / p^{\vartheta(\xi_5)}\Z_p .$$Notice how these are all irreducible since $$\int_{\mathbb{G}^{5,3}} |\chi_{\pi_\xi} (\mathbf{x})|^2 d\mathbf{x}  = \int_{\Z_p^5} |\xi_5|_p^4 \1_{p^{-\vartheta(\xi_5)} \Z_p^4} (x_1,x_2, x_3 , x_4) d \mathbf{x} =1. $$ 
In the case when $|\xi_4|_p > |\xi_5|_p$,
\begin{align*}
    \chi_{\pi_\xi} (\mathbf{x}) & =  |\xi_4|_p |\xi_5|_p  e^{2 \pi i \{\xi \cdot \mathbf{x} \}_p} \1_{p^{-\vartheta(\xi_4)} \Z_p} (x_1) \1_{p^{-\vartheta(\xi_5)} \Z_p} (x_2)\1_{p^{-\vartheta(\xi_5)}}(x_3)  \int_{\Z_p} e^{2 \pi i \{ (\xi_4  x_2+ \xi_5 x_4) u_1 \}_p} du_1 \\ &= |\xi_4|_p |\xi_5|_p  e^{2 \pi i \{\xi  \cdot \mathbf{x} \}_p}  \1_{|p^{-\vartheta(\xi_4)} \Z_p} (x_1) \1_{p^{-\vartheta(\xi_5)} \Z_p} (x_2)\1_{p^{-\vartheta(\xi_5)} \Z_p}(x_3) \1_{\Z_p} ( \xi_4  x_2+ \xi_5 x_4),
\end{align*}
so the desired representations are indexed by $$(\xi_1, \xi_2) \in \Q_p^2 / p^{\vartheta(\xi_4)} \Z_p^2, \, \,  \xi_3 \in \Q_p / p^{\vartheta(\xi_5)} \Z_p, $$and these are irreducible because \begin{align*}
 \int_{\mathbb{G}^{5,3}}  | \chi_{\pi_\xi} (\mathbf{x})|^2 d \mathbf{x} & =  \int_{\Z_p^5} |\xi_4|_p^2 |\xi_5|_p^2    \1_{p^{-\vartheta(\xi_4)} \Z_p} (x_1) \1_{p^{-\vartheta(\xi_5)}\Z_p} (x_2)\1_{p^{-\vartheta(\xi_5)}\Z_p}(x_3) \Big|  \int_{\Z_p} e^{2 \pi i \{ (\xi_4  x_2+ \xi_5 x_4) u_1 \}_p} du_1 \Big|^2 d\mathbf{x}\\ &=  \int_{\Z_p^5} |\xi_4|_p^2 |\xi_5|_p    \1_{p^{-\vartheta(\xi_4)} \Z_p} (x_1) \1_{p^{-\vartheta(\xi_5)} \Z_p}(x_3) \1_{\Z_p} ( \xi_4  x_2+ \xi_5 x_4) d\mathbf{x} \\ &=  \int_{\Z_p^5} |\xi_4|_p    \1_{\Z_p} ( \xi_4  x_2+ \xi_5 x_4) dx_2 dx_4  = 1 ,
\end{align*}where the last equality comes as an application of Lemma \ref{lemaauxG53}. To conclude our proof, we define the sets \begin{itemize}
    \item $$A_1(n):= \{ \| \xi \|_p \leq p^n \, : \, |\xi_5|_p=1, \, \xi_3 \in \widehat{\Z}_p , \,\,\, (\xi_1, \xi_2 ) \in \Q_p^2 / p^{\vartheta(\xi_4)} \Z_p\}.$$
    \item $A_2(n):=$ $$\{\| \xi \|_p \leq p^n \, : \, 1<  |\xi_5|_p,\, \,  \,  (\xi_1, \xi_2) \in \Q_p^2/p^{\vartheta(\xi_4)} \Z_p^2,\,\, (\xi_3, \xi_4) \in \Q_p^2/p^{\vartheta(\xi_5)}\Z_p^2, \,\, |\xi_4|_p >1\}.$$
    \item $$A_3(n):=\{\| \xi \|_p \leq p^n \, : \,1=|\xi_4|_p<|\xi_5|_p , \,\,\,  (\xi_1, \xi_2, \xi_3) \in \Q_p^3/ p^{\vartheta(\xi_5)} \Z_p^3\}.$$
\end{itemize} 
The representations obtained are all the desired representations since \begin{align*}
    \sum_{[\pi_\xi] \in \widehat{\mathbb{G}}^{5,3} \cap (\mathbb{G}^{5,3}(p^n\Z_p))^{\bot}} &d_\xi^2 =  \sum_{\xi \in A_1(n) \cup A_2 (n) \cup A_3(n)} d_\xi^2= \sum_{\xi \in  A_1(n)  }d_\xi^2 + \sum_{\xi \in A_2(n)  }d_\xi^2 + \sum_{\xi \in  A_3(n) } d_\xi^2 \\ &= p^{4n} + \sum_{1<|\xi_5|_p \leq p^n} \sum_{(\xi_1,\xi_2, \xi_3) \in \Q_p^3 / p^{\vartheta(\xi_5)} \Z_p^{3}} (|\xi_5|_p^2)^2 \\ & \quad + \sum_{1<|\xi_5|_p \leq p^n}   \sum_{ (\xi_3, \xi_4) \in \Q_p^2 / p^{\vartheta(\xi_5)} \Z_p^2 , \,\, |\xi_4|_p > |\xi_5|_p } \sum_{(\xi_1, \xi_2) \in \Q_p^2 / p^{\vartheta(\xi_4)}\Z_p^2} (|\xi_4|_p|\xi_5|_p)^2 \\ &= p^{4n} + p^{3n} \sum_{1<|\xi_5|_p \leq p^n} |\xi_5|_p +  \sum_{1<|\xi_5|_p \leq p^n} \sum_{\xi_4 \in \Q_p / p^{\vartheta(\xi_5)}\Z_p, \,\, |\xi_4|_p > |\xi_5|_p } |\xi_5|_p \\&=p^{4n} + p^{3n} \sum_{1<|\xi_5|_p \leq p^n} |\xi_5|_p +  p^{3n}\sum_{1<|\xi_5|_p \leq p^n} \frac{p^n - |\xi_5|_p}{|\xi_5|_p } |\xi_5|_p  \\&=p^{5n}= |\mathbb{G}^{5,3}/\mathbb{G}^{5,3}(p^n \Z_p)|. 
\end{align*} 
With this, the proof is concluded. 
\end{proof}
According to the above theorem, functions $f \in L^2 (\mathbb{G}^{5,3})$ have the Fourier series representation \begin{align*}
    f(\mathbf{x}) &= \sum_{ \| (\xi_4, \xi_5)\|_p =1 } \widehat{f}(\xi) e^{2 \pi i \{ \xi \cdot \mathbf{x}\}_p} + \sum_{\, |\xi_5|_p >1 } \sum_{(\xi_3 , \xi_4) \in \Q_p^2 / p^{\vartheta(\xi_5)}\Z_p^{-1}} \sum_{(\xi_1, \xi_2) \in \Q_p^2 / p^{ \vartheta(\xi_3 , \xi_4)} \Z_p^2} \| (\xi_4, \xi_5) \|_p |\xi_5|_p Tr[ \pi_\xi (\mathbf{x}) \widehat{f}(\xi)]  ,
\end{align*}where the Fourier transform $\widehat{f}(\xi)$ is the linear operator defined as \begin{align*}
    \widehat{f}(\xi) \varphi (u)&:= \int_{\mathbb{G}^{5,3}} f(\mathbf{x}) \pi_\xi^* (\mathbf{x}) \varphi(u) d \mathbf{x} \\ &= \int_{\Z_p^5} f(\mathbf{x}) e^{-2 \pi i \{ \xi \cdot \mathbf{x} + \xi_4 ((u_1 - x_1) x_2)+ \xi_5(x_4 (u_1 -x_1)   + \frac{1}{2} x_2 (u_1 - x_1)^2+x_3 (u_2- x_2)) \}_p} \varphi (u - (x_1 , x_2)) , d\mathbf{x}, \quad \varphi \in \mathcal{H}_\xi.
\end{align*}Alternatively, by using our expressions for the matrix coefficients we can see this operator as the matrix $\widehat{f}(\xi) \in \C^{\|(\xi_4, \xi_5)\|_p|\xi_5|_p \times \|(\xi_4, \xi_5)\|_p |\xi_5|_p}$ defined by the expression \begin{align*}
    \widehat{f}(\xi)_{h h'}&:= \int_{\mathbb{G}^{5,3}} f(\mathbf{x}) \pi_\xi^* (\mathbf{x})_{hh'} d \mathbf{x} \\ &= \int_{\Z_p^5} f(\mathbf{x}) e^{ -2 \pi i \{ \xi \cdot \mathbf{x}+  \xi_4 x_2 (h_1)  +  \xi_5(x_4 h_1   + \frac{1}{2} x_2 (h_1)^2 + x_2 (h_1) + x_3(h_2)) \}_p } \1_{h - h' + \Z_p^2 / p^{-\vartheta(\xi_4,\xi_5)} \Z_p \times p^{-\vartheta(\xi_5)}\Z_p } (x_1,x_2) d\mathbf{x}, 
\end{align*}where $\varphi \in \mathcal{H}_\xi$, which in terms of the $\Z_p^5$-Fourier transform is expressed as \begin{align*}
    \widehat{f}&(\xi)_{h h'} = \mathcal{F}_{\Z_p^5} [\1_{h - h' + \Z_p^2 / p^{\vartheta(\xi_4, \xi_5)} \Z_p \times p^{\vartheta(\xi_5)}\Z_p } (x_1,x_2)f] (\xi_1 ,\xi_2 + h_1 + \xi_5 (\frac{h_1^2}{2} + h_1), \xi_3 + \xi_5 h_2, \xi_4 , \xi_5 ) \\ &=\mathcal{F}_{\Z_p^5} [\1_{h - h' + \Z_p^2 / p^{\vartheta(\xi_4, \xi_5)} \Z_p \times p^{\vartheta(\xi_5)}\Z_p } ] *_{\widehat{\Z}_p^5} \mathcal{F}_{\Z_p^5}[f] (\xi_1 ,\xi_2 + h_1 + \xi_5 (\frac{h_1^2}{2} + h_1), \xi_3 + \xi_5 h_2, \xi_4 , \xi_5 ).
\end{align*}
With this Fourier series representation, a linear invariant operator $T$ can be written as a pseudo-differential operator \begin{align*}
    T_{\sigma}f(\mathbf{x}) &= \sum_{ \| (\xi_4, \xi_5)\|_p =1 } \sigma_T (\xi) \widehat{f}(\xi) e^{2 \pi i \{ \xi \cdot \mathbf{x}\}_p} +\\& \quad \quad  \sum_{\, |\xi_5|_p >1 } \sum_{(\xi_3 , \xi_4) \in \Q_p^2 / p^{\vartheta(\xi_5)}\Z_p^{-1}} \sum_{(\xi_1, \xi_2) \in \Q_p^2 / p^{\vartheta(\xi_3 , \xi_4)} \Z_p^2} \| (\xi_3, \xi_4) \|_p |\xi_5|_p Tr[ \pi_\xi (\mathbf{x}) \sigma_T(\xi) \widehat{f}(\xi)],
\end{align*}

Notice how this time $\mathbb{G}^{5,3}$ is not an stratified group. Still, given any collection $\textbf{W}:=\{W_1 , W_2, W_3$\}  with $$W_i = w_i^1 X_1 + w_i^2 X_2 + w_i^3 X_3,$$which spans $\mathfrak{g}^{5,3} / [\mathfrak{g}^{5,3}, \mathfrak{g}^{5,3}] $, we can define its associated Vladimirov sub-Laplacian as $$\mathcal{L}_{sub, \textbf{W}}^\alpha f (x) : = (\partial^\alpha_{W_1} + \partial^\alpha_{W_2} + \partial^\alpha_{W_3}) f (x), \,\,\, f \in \mathcal{D}(\mathbb{G}^{5,3}).$$For $\alpha>0$ the following theorem proves how our conjecture for this operator holds true on $\mathbb{G}^{5,3}$ too, even when we lack here the graded or stratified structure 
\begin{teo}
The Vladimirov sub-Laplacian $\mathcal{L}_{sub, \textbf{W}}^\alpha$ associated with the collection $\textbf{W}$ is a globally hypoelliptic operator, which is invertible in the space of mean zero functions. Moreover, the space $L^2(\mathbb{G}^{5,3})$ can be written as the direct sum $$L^2(\mathbb{G}^{5,3}) = \overline{\bigoplus_{\xi \in \widehat{\mathbb{G}}^{5,3}} \bigoplus_{h' \in  I_\xi} \mathcal{V}_{\xi}^{h'}}, \, \,\, \mathcal{V}_{\xi} = \bigoplus_{h' \in  I_\xi} \mathcal{V}_{\xi}^{h'}, $$where each finite-dimensional sub-space$$\mathcal{V}_{\xi}^{h'}:= \mathrm{span}_\C \{ (\pi_{\xi})_{hh'} \, : \, h \in I_\xi \},$$is an invariant sub-space of $\mathcal{L}_{sub, \textbf{W}}^\alpha$, and its spectrum restricted to $\mathcal{V}_{\xi }^{h'}$ is given by  \[Spec(\mathcal{L}_{sub, \textbf{W}}^\alpha|_{\mathcal{V}_{\xi}^{h'}})=\sum_{i=1}^3| (\xi_1 + \tau_1, \xi_2 + \tau_2 + \frac{1}{2}(h_1')^2 \xi_5, \xi_3 + (h_2')\xi_5 ) \cdot W_i|_p^\alpha - 3\frac{1 - p^{-1}}{1 - p^{-(\alpha + 1)}},\]and the corresponding eigenfunctions are given by $$\mathscr{e}_{\xi , h' , \tau} (\mathbf{x}) := e^{2 \pi i \{ \xi \cdot \mathbf{x}+\xi_4 x_2 (h_1')+ \xi_5 x_3(h'_2) + \xi_5(x_4 h_1'   + \frac{1}{2} x_2 (h_1')^2) + \tau \cdot (x_1 , x_2) \}_p } ,$$where $1 \leq | \tau_1 |_p \leq \|(\xi_4, \xi_5)\|_p, \,\,1 \leq | \tau_2 |_p \leq | \xi_5|_p .$  
\end{teo}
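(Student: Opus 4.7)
The plan is to extend the argument used in Theorem \ref{teosubLapG22} for $\mathbb{G}^{5,2}$, organizing the proof in four steps that parallel the strategy for the simpler groups. First, using Theorem \ref{teorepG53} together with the Peter-Weyl decomposition, I would write $L^2(\mathbb{G}^{5,3}) = \overline{\bigoplus_{\xi \in \widehat{\mathbb{G}}^{5,3}} \mathcal{V}_\xi}$ and further each $\mathcal{V}_\xi = \bigoplus_{h' \in I_\xi} \mathcal{V}_\xi^{h'}$. Each directional VT operator $\partial_{W_i}^\alpha$ is left-invariant, being defined via right translation by elements of the one-parameter subgroup, so each column subspace $\mathcal{V}_\xi^{h'}$ is preserved by $\mathcal{L}_{sub, \textbf{W}}^\alpha$. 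Second, from the matrix coefficient formula of Theorem \ref{teorepG53}, I would rewrite $\mathcal{V}_\xi^{h'}$ as the exponential factor $e^{2\pi i \{\xi \cdot \mathbf{x} + \xi_4 x_2 h_1' + \xi_5(x_3 h_2' + x_4 h_1' + \frac{1}{2} x_2 (h_1')^2)\}_p}$ multiplied by the space of locally constant functions of $(x_1, x_2)$ with indices of local constancy $-\vartheta(\xi_4, \xi_5)$ and $-\vartheta(\xi_5)$; Fourier analysis in these two variables then provides the claimed orthonormal basis $\{\mathscr{e}_{\xi, h', \tau}\}_\tau$.

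The core of the argument is the computation of $\partial_{W_i}^\alpha \mathscr{e}_{\xi, h', \tau}$. One first determines the one-parameter subgroup $\gamma_{W_i}(t)$ for $W_i = w_i^1 X_1 + w_i^2 X_2 + w_i^3 X_3$ by solving the functional equation $\gamma_{W_i}(t+s) = \gamma_{W_i}(t) \star \gamma_{W_i}(s)$; a polynomial ansatz yields $\gamma_{W_i}(t) = (t w_i^1, t w_i^2, t w_i^3, \frac{1}{2} t^2 w_i^1 w_i^2, \frac{1}{2} t^2 w_i^2 w_i^3 + \frac{1}{6} t^3 (w_i^1)^2 w_i^2)$. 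Expanding $\mathbf{x} \star \gamma_{W_i}(-t)$ and evaluating the difference in the exponent of $\mathscr{e}_{\xi, h', \tau}$ gives a polynomial in $t$ with certain coefficients depending on components of $\mathbf{x}$. Substituting into the integral defining $\partial_{W_i}^\alpha$ and applying the $p$-adic Gaussian identity (Lemma \ref{lemagaussian}) to handle the cross-terms, the integration reduces to a one-variable Vladimirov-type integral whose value is $|W_i \cdot (\xi_1 + \tau_1, \xi_2 + \tau_2 + \frac{1}{2}(h_1')^2 \xi_5, \xi_3 + h_2' \xi_5)|_p^\alpha - \frac{1 - p^{-1}}{1 - p^{-(\alpha+1)}}$. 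Summing over $i = 1, 2, 3$ produces the claimed spectrum on each $\mathcal{V}_\xi^{h'}$.

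With the spectrum in hand, global hypoellipticity follows in the same manner as Theorem \ref{teosubLapG22}: the spanning hypothesis on $\textbf{W}$ guarantees, via a minimum argument over $i$, the lower bound $\|\xi\|_p^\alpha \lesssim \|\sigma_{\mathcal{L}_{sub, \textbf{W}}^\alpha}(\xi)\|_{inf}$ for all non-trivial $\xi \in \widehat{\mathbb{G}}^{5,3}$, which by the remark after Definition \ref{defihypo} is exactly the condition for global hypoellipticity. Invertibility in the subspace of mean-zero functions then follows because the zero eigenvalue is attained only at the trivial representation.

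The main obstacle is the eigenvalue computation of the second paragraph. The 3-step nilpotent nature of $\mathfrak{g}^{5,3}$ introduces the cubic term $\frac{1}{6} t^3 (w_i^1)^2 w_i^2$ in the fifth coordinate of $\gamma_{W_i}(t)$, and the right-translation $\mathbf{x} \star \gamma_{W_i}(-t)$ generates exponent contributions both linear and quadratic in $x_1$ together with a linear contribution in $x_2$, each weighted by products of $\xi_5$ with components of $W_i$. These $\mathbf{x}$-dependent pieces must be either controlled by the valuation constraints built into the indexing set $I_\xi$, or integrated out via the $p$-adic Gaussian identity, so as to produce a scalar eigenvalue. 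Handling the interplay between the quadratic-in-$t$ terms (where Lemma \ref{lemagaussian} applies directly) and the cubic-in-$t$ term (which requires a separate valuation estimate ensuring it lies in $\Z_p$ for all relevant $t$) is the central technical point.
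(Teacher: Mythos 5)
Your overall architecture --- the Peter--Weyl decomposition into the column spaces $\mathcal{V}_{\xi}^{h'}$, their description as a fixed oscillatory factor times locally constant functions of $(x_1,x_2)$, the basis $\mathscr{e}_{\xi,h',\tau}$, and the passage from the spectrum to the lower bound $\|(\xi_1,\xi_2,\xi_3)\|_p^{\alpha}\lesssim\|\sigma_{\mathcal{L}^{\alpha}_{sub,\mathbf{W}}}(\xi)\|_{inf}$ --- coincides with the paper's proof, and your explicit formula for the one-parameter subgroup $\gamma_{W_i}(t)$ is correct. The gap is that you never execute the step you yourself label ``the central technical point,'' and it is not a formality. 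Expanding $P_{\xi,h',\tau}(\mathbf{x}\star\gamma_{W_i}(-t))-P_{\xi,h',\tau}(\mathbf{x})$, the coefficient of $-t$ is not the constant $(\xi_1+\tau_1)w_i^1+(\xi_2+\tau_2+\xi_4h_1'+\tfrac12\xi_5(h_1')^2)w_i^2+(\xi_3+\xi_5h_2')w_i^3$ appearing in the statement, but rather $(\xi_1+\tau_1)w_i^1+\bigl(\xi_2+\tau_2+\xi_4(x_1+h_1')+\tfrac12\xi_5(x_1+h_1')^2\bigr)w_i^2+\bigl(\xi_3+\xi_5(x_2+h_2')\bigr)w_i^3$, and there are additional $t^2$- and $t^3$-terms with coefficients proportional to $\xi_4w_i^1w_i^2$, $\xi_5w_i^2w_i^3$, $\xi_5x_1w_i^1w_i^2$ and $\xi_5(w_i^1)^2w_i^2$. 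Since $|\xi_4|_p$ and $|\xi_5|_p$ are large on the nontrivial part of the dual, none of these extra contributions lies in $\Z_p$ for general $x_1,x_2\in\Z_p$, so they cannot be dropped from the fractional part: a priori $\partial_{W_i}^{\alpha}\mathscr{e}_{\xi,h',\tau}$ equals $\mathscr{e}_{\xi,h',\tau}$ times a locally constant \emph{function} of $(x_1,x_2)$, not a scalar. Neither of the two mechanisms you propose closes this as stated: the valuation constraints in $I_\xi$ bound $h'$, not the free variables $x_1,x_2$; and Lemma \ref{lemagaussian} evaluates $\int_{p^{\gamma}\Z_p}e^{2\pi i\{au^2+bu\}_p}du$ with no singular weight, so it does not evaluate $\int_{\Z_p}(e^{2\pi i\{at+bt^2+ct^3\}_p}-1)|t|_p^{-\alpha-1}dt$ unless one first shows $b,c\in\Z_p$, which is exactly what is in question. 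Note the issue already arises for $\mathbf{W}=\{X_1,X_2,X_3\}$, where the $t^2,t^3$ terms vanish but the terms $\xi_4x_1w_i^2$, $\tfrac12\xi_5(2x_1h_1'+x_1^2)w_i^2$ and $\xi_5x_2w_i^3$ in the $t$-coefficient do not.

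To be fair, the paper's own proof follows the identical route and simply writes down the eigenfunction identity with these $\mathbf{x}$-dependent contributions already absent, so your skeleton is faithful to it; but a complete argument must either verify that the extra terms do not change the value of $|\cdot|_p^{\alpha}$ on each coset of $K_\xi$ (which requires a genuine case analysis on valuations, and can fail to give a single scalar), or diagonalize the resulting multiplication operator directly and show the spectrum is still as claimed. As written, your proposal asserts the output of the computation while deferring its justification, and that deferred justification is precisely the content of the theorem.
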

\begin{proof}
As we anticipated, we will be following the same arguments as in Theorem \ref{teosubLapG22}. We focus again on the sub-spaces $$\mathcal{V}_{\xi}^{h'}:= \mathrm{span}_\C \{ (\pi_{\xi})_{hh'} \, : \, h \in I_\xi \}.$$By Theorem \ref{TeoRepresentationsHd}, an alternative description of this space could be \begin{align*}
    &\mathcal{V}_{\xi}^{h'} = e^{2 \pi i \{ \xi \cdot \mathbf{x} + \xi_4 u_1 x_2 + \xi_5(x_4 (h_1')   + \frac{1}{2} x_2 (h_1')^2+x_3 (h_2')) \}_p} \mathcal{D}_{\xi_4 , \xi_5} (\Z_p^2) \\ & =\{  e^{2 \pi i \{ \xi \cdot \mathbf{x} + \xi_4 (h_1') x_2 + \xi_5(x_4 (h_1')   + \frac{1}{2} x_2 (h_1')^2+x_3 (h_2')) \}_p} f \, : \, f((x_1,x_2) + u ) = f(x_1, x_2), \, \, \, \text{for} \, \, \, u \in K_\xi  \},
\end{align*}where $$K_\xi:=:= p^{-\vartheta(\xi_4 , \xi_5)} \Z_p \times p^{-\vartheta(\xi_5)} \Z_p.$$
In this way, it is not hard to see how the following set defines an orthonormal basis for the space: $$\mathscr{e}_{\xi , h' , \tau} (\mathbf{x}) := e^{2 \pi i \{ \xi \cdot \mathbf{x}+\xi_4 x_2 (h_1')+ \xi_5 x_3(h'_2) + \xi_5(x_4 h_1'   + \frac{1}{2} x_2 (h_1')^2) + \tau \cdot (x_1 , x_2) \}_p } ,$$where $1 \leq | \tau_1 |_p \leq \|(\xi_4, \xi_5)\|_p, \,\,1 \leq | \tau_2 |_p \leq | \xi_5|_p .$
With this we can check how \begin{align*}
    \partial_{W_i}^\alpha \mathscr{e}_{\xi , h' , \tau} (\mathbf{x}) &= \Big(\int_{\Z_p} \frac{e^{2 \pi i \{ -t(( \xi_1 + \tau_1) w_i^1 + (\xi_2 + h_1' \xi_4 + \xi_5 \frac{(h_1')^2}{2} + \tau_2) w_i^2 + (\xi_3 + \xi_5 h_2') w_i^3)  \}_p } - 1}{|t|_p^{\alpha + 1}} dt \Big) \mathscr{e}_{\xi , h' , \tau} (\mathbf{x}),
\end{align*}
so that \begin{align*}
    \partial_{W_i}^\alpha & \mathscr{e}_{\xi , h' , \tau} (\mathbf{x}) =\1_{\Q_p \setminus \Z_p}((\xi_1 , \xi_2 + h' \xi_4, \xi_3 + h' \xi_5) \cdot W_i) \\ & \times \Big(| (\xi_1 + \tau, \xi_2 + h_1' \xi_4 + \xi_5 \frac{(h_1')^2}{2} + \tau_2, \xi_3 + h' \xi_5) \cdot W_i|_p^\alpha - \frac{1 - p^{-1}}{1 - p^{-(\alpha + 1)}}\Big) \mathscr{e}_{\xi , h' , \tau}(\mathbf{x}).
\end{align*}Thus we can write $$\mathcal{L}_{sub, \textbf{W}}^\alpha = \Big(\sum_{i=1}^3| (\xi_1 + \tau, \xi_2 + h_1' \xi_4 + \xi_5 \frac{(h_1')^2}{2} + \tau_2, \xi_3 ) \cdot W_i|_p^\alpha - 3\frac{1 - p^{-1}}{1 - p^{-(\alpha + 1)}}\Big)\mathscr{e}_{\xi , h' , \tau} (\mathbf{x}).$$
Finally, in order to prove the global hypoellipticity of $\mathcal{L}_{sub, \textbf{W}}^\alpha$, it should be clear that \begin{align*}
    \| \sigma_{\mathcal{L}_{sub, \textbf{W}}^\alpha}(\xi) \|_{op} &= \| \mathcal{L}_{sub, \textbf{W}}^\alpha|_{\mathcal{V}_{\xi}} \|_{op}  = \max_{h'  \in I_\xi }  \| \mathcal{L}_{sub, \textbf{W}}^\alpha \|_{\mathcal{V}_{\xi}^{h'}} \|_{op},
\end{align*}
and $$\| \sigma_{\mathcal{L}_{sub, \textbf{W}}^\alpha}(\xi) \|_{inf} = \| \mathcal{L}_{sub, \textbf{W}}^\alpha |_{\mathcal{V}_{\xi}} \|_{inf}  = \min_{h'  \in I_\xi }  \| \mathcal{L}_{sub, \textbf{W}}^\alpha |_{\mathcal{V}_{\xi}^{h'}} \|_{inf},$$so, we can conclude that $$\| \sigma_{\mathcal{L}_{sub, \textbf{W}}^\alpha}(\xi) \|_{op} \leq \| \xi\|_p^\alpha, \, \, \text{and} \, \, \, \| (\xi_1 , \xi_2, \xi_3) \|_p^\alpha \leq \| \sigma_{\mathcal{L}_{sub, \textbf{W}}^\alpha}(\xi) \|_{inf}.$$
This concludes the proof.
\end{proof}

\section{The Cartan group $\mathbb{G}^{5,4}$}
Let $p>3$ be a prime number. In this section we consider the group $\mathbb{G}^{5,4}(\Z_p)$, or just $\mathbb{G}^{5,4}$ for simplicity, defined here as $\Z_p^{5}$ together with the non-commutative operation\begin{align*}
    \mathbf{x} \star \textbf{y} &:=(x_1 + y_1)X_1 + (x_2 + y_2)X_2 + ( x_3 + y_3 + x_1 y_2)X_3 \\ &+ ( x_4 + y_4+ \frac{1}{2}x_1^2 y_2 +x_1 y_3)X_4 + ( x_5 + y_5 + \frac{1}{2} x_1 y_2^2 +  x_2y_3  + x_1 x_2 y_2)X_5,
\end{align*}with associated inverse element $$\mathbf{x}^{-1} = (-x_1, - x_2, - x_3 + x_1x_2, - x_4 + x_1x_3 - x_2\frac{x_1^2}{2}, -x_5 + x_2x_3 - \frac{x_2^2}{2}x_1).$$
We can identify this group with the exponential image of the $\Z_p$-Lie algebra $\mathfrak{g}^{5,4}$ defined by the commutation relations$$[X_1,X_2] = X_3, \quad [X_1 , X_3]= X_4,  \quad [X_2 , X_3] = X_5.$$ Some authors call this algebra \emph{the Cartan algebra}, and to its corresponding exponential image \emph{the Cartan group} $\mathbb{G}^{5,4}$. In the following theorem, we will provide an explicit description of the unitary dual of the Cartan group over the $p$-adic integers.  

\begin{rem}
    Recall how, given any $p$-adic number $\lambda$, the symbol $\vartheta(\lambda)$ denotes the $p$-adic valuation of $\lambda$. If $\lambda=(\lambda_1,...,\lambda_d) \in \Q_p^d$ then $$\vartheta(\lambda) = \min_{1 \leq j \leq d} \vartheta(\lambda_j).$$
\end{rem}
\begin{teo}\normalfont
The unitary dual $\widehat{\mathbb{G}}^{5,4}$ of the compact Cartan group $\mathbb{G}^{5,4}$ can be identified with the union of the five following disjoint subsets of $\widehat{\Z}_p^{5}$: $\widehat{\mathbb{G}}^{5,4}=A_1 \cup A_2 \cup A_3 \cup A_4 \cup A_5$, where
$$A_1 =  \{\xi \in \widehat{\Z}_p^{5}  \, : \, |\xi_5|_p=1 \leq |\xi_4|_p <|\xi_3|_p \, \wedge \,  (\xi_1 , \xi_2 , \xi_3) \in \widehat{\mathbb{H}}_1 , \, \text{or}  \,  |\xi_3|_p = 1 \,  \wedge  \,  \xi_1 \in \Q_p / p^{\vartheta(\xi_4)} \Z_p \},$$
$$A_2:= \{\xi \in \widehat{\Z}_p^5 \, : |\xi_4|_p =1 <|\xi_5 |_p, \, \, |\xi_3|_p \leq |\xi_5|_p, \,\,\, \xi_2 \in \Q_p / p^{\vartheta(\xi_5)}\Z_p,\,\, \xi_1 \in \Q_p/\Z_p \},$$ $$A_3:= \{\xi \in \widehat{\Z}_p^5 \, : |\xi_3|_p =1 <|\xi_5 |_p, \, \, |\xi_4|_p > |\xi_5|_p, \,\,\, \xi_1 \in \Q_p / p^{\vartheta(\xi_4)}\Z_p,\,\, \xi_2 \in \Q_p/\Z_p \},$$ $$A_4:= \{\xi \in \widehat{\Z}_p^5 \, : 1 <|\xi_5 |_p,\,\, |\xi_4|_p >|\xi_5|_p, \,\,\, |\xi_4 |_p <|\xi_3|_p, \, \,( \xi_2, \xi_3) \in \Q_p^2 / p^{\vartheta(\xi_3)} \Z_p^2\},$$ $$A_5:= \{\xi \in \widehat{\Z}_p^5 \, : 1 <|\xi_5 |_p,\,\, |\xi_4|_p >|\xi_5|_p \geq |\xi_3|_p,  \, \,( \xi_2, \xi_3) \in \Q_p^2 / p^{\vartheta(\xi_4)} \Z_p^2\}.$$Moreover, if we define the polynomials $P_\xi (\mathbf{x},u):=$ \[\begin{cases}
    \xi \cdot \mathbf{x} + \frac{\xi_3\xi_5}{\xi_4} x_2  u  + \xi_4(  x_2 \frac{u^2}{2} + x_3u ), \, & \, \, \text{if} \, \, \xi \in A_1, \\  \xi \cdot \mathbf{x}  + \frac{\xi_3 \xi_5}{2\xi_4} x_2^2 + \frac{1}{6\xi_4 \xi_5}( \xi_5^3 x_2^3+3\xi_5 x_2 p^{2\vartheta(\xi_3, \xi_4, \xi_5)}u^2 )+p^{\vartheta(\xi_3, \xi_4, \xi_5)}(\frac{\xi_5}{2\xi_4}x_2^2+ \frac{\xi_3}{\xi_4} x_2  + x_3)u, \, & \, \, \text{if} \, \, |\xi_5|_p >1,   
\end{cases} \]then each unitary irreducible representation can be realized in the finite dimensional Hilbert space \[\mathcal{H}_\xi :=
    \mathrm{span}_\C \{  \varphi_h:= \|(\xi_3, \xi_4, \xi_5)\|_p^{1/2} \1_{h+ p^{-\vartheta(\xi_3, \xi_4, \xi_5)}\Z_p} \, : \, h \in I_\xi := \Z_p / p^{-\vartheta(\xi_3, \xi_4, \xi_5)} \Z_p \}\]where $d_\xi := \mathrm{dim}_\C (\mathcal{H}_\xi) =
   \|(\xi_3, \xi_4, \xi_5)\|_p$, and the representation acts according to the formula \[\pi_\xi(\mathbf{x}) \varphi (u) := \begin{cases}
    e^{2 \pi i \{ P_\xi (\mathbf{x},u)  \}_p} \varphi (u + x_1), \, & \, \, \text{if} \, \, \xi \in A_1, \\  e^{2 \pi i \{ P_\xi (\mathbf{x},u)\}_p} \varphi \big(u + \frac{\xi_4x_1 + \xi_5 x_2}{p^{\vartheta(\xi_3, \xi_4, \xi_5)}} \big), \, & \, \, \text{if} \, \, \xi \in A_2 \cup A_3 \cup A_4 \cup A_5.    
\end{cases} \] With the previous realization, and the natural choice of basis for $\mathcal{H}_\xi$, the associated matrix coefficients are going to be\[(\pi_\xi)_{hh'} (\mathbf{x}) =\begin{cases}
    e^{2 \pi i \{ P_1 (\xi , \mathbf{x}, h' )  \}_p} \1_{h'-h + \| (\xi_3 , \xi_4)\|_p\Z_p} ( x_1), \, & \, \, \text{if} \, \, \xi \in A_1, \\
    e^{2 \pi i \{ P_\xi  (\mathbf{x},  h' )\}_p}  \1_{h'-h + p^{-\vartheta(\xi_3, \xi_4, \xi_5)}\Z_p}\big( \frac{\xi_4x_1 + \xi_5 x_2}{p^{\vartheta(\xi_3, \xi_4, \xi_5)}}\big), \, & \, \, \text{if} \, \, \xi \in A_2 \cup A_3 \cup A_4 \cup A_5,      
\end{cases} \]and the associated characters are $\chi_{\pi_\xi} (\mathbf{x}) :=$ \[\begin{cases}
    \| (\xi_3 , \xi_4)\|_p e^{2 \pi i \{ \xi \cdot \mathbf{x}  \}_p} \int_{\Z_p} e^{2 \pi i \{  \xi_4x_2 \frac{u^2}{2} + ( \xi_3 x_2 + \xi_4 x_3)u  \}_p}du, \, & \, \, \text{if} \, \, \xi \in A_1, \\ \frac{\1_{ \Z_p } (\xi_4x_1 +\xi_5x_2)e^{2 \pi i \{ \xi \cdot \mathbf{x} + \frac{\xi_3 \xi_5}{2\xi_4}x_2^2 + \frac{\xi_5^2}{6\xi_4}x_2^3 \}_p}}{\| (\xi_3,\xi_4,  \xi_5)\|_p^{-1}}     \int_{p^{\vartheta(\xi_3,\xi_4,  \xi_5)}\Z_p}e^{2 \pi i \{ \frac{x_2}{2\xi_4}u^2+ (\frac{\xi_5}{2\xi_4}x_2^2 + x_3)u + \frac{\xi_3}{\xi_4}x_2u\}_p} du, \, & \, \, \text{if} \, \, |\xi_5|_p>1.    
\end{cases} \]Sometimes we will use the notation $$\mathcal{V}_\xi := \mathrm{span}_\C \{ (\pi_\xi)_{hh'} (\mathbf{x}) \, : \, h,h' \in \Z_p / p^{-\vartheta(\xi_3,\xi_4,  \xi_5)}\Z_p \}.$$
\end{teo}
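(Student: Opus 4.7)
The strategy is to follow the pattern established in the earlier sections for $\mathbb{H}_d$, $\mathcal{B}_4$, $\mathbb{G}^{5,2}$, and $\mathbb{G}^{5,3}$: decompose the irreducible representations of $\mathbb{G}^{5,4}$ according to their behavior on the center, use the representation theory of a quotient group whenever possible, and otherwise construct the representations explicitly. In our setting the center is two-dimensional, $\mathcal{Z}(\mathbb{G}^{5,4}) = \exp(\Z_p X_4 \oplus \Z_p X_5)$, since $X_4$ and $X_5$ span the last nontrivial term of the lower central series. The central character is thus parameterized by $(\xi_4, \xi_5)$, and I expect exactly two regimes: $|\xi_5|_p = 1$ (trivial on the $X_5$-factor of the center), and $|\xi_5|_p > 1$.

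The first regime yields $A_1$. A representation trivial on $\exp(\Z_p X_5)$ descends to the quotient $\mathbb{G}^{5,4}/\exp(\Z_p X_5)$, and killing $X_5$ annihilates the relation $[X_2, X_3] = X_5$ while preserving $[X_1, X_2] = X_3$ and $[X_1, X_3] = X_4$, so this quotient is precisely the Engel group $\mathcal{B}_4$. Pulling back the unitary dual provided by Theorem \ref{TeoRepresentationsB4} reproduces verbatim the description, matrix coefficients, and characters claimed for $\xi \in A_1$ (with the extra constraint $|\xi_5|_p = 1$).

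For the second regime, I construct the $\pi_\xi$ directly on $\mathcal{H}_\xi$ via the stated formula. The crucial verification is the cocycle identity $P_\xi(\mathbf{x}, u) + P_\xi(\mathbf{y}, u + (\xi_4 x_1 + \xi_5 x_2)/p^{\vartheta(\xi_3, \xi_4, \xi_5)}) \equiv P_\xi(\mathbf{x} \star \mathbf{y}, u) \pmod{\Z_p}$, checked by a direct expansion using the explicit multiplication law of $\mathbb{G}^{5,4}$. Stability of $\mathcal{H}_\xi$ and unitarity then follow exactly as in the preceding sections, because the shift $u \mapsto u + (\xi_4 x_1 + \xi_5 x_2)/p^{\vartheta(\xi_3,\xi_4,\xi_5)}$ is a measure-preserving bijection of $\Z_p / p^{-\vartheta(\xi_3, \xi_4, \xi_5)} \Z_p$. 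Matrix coefficients $(\pi_\xi)_{hh'}$ are then obtained as the inner products $\langle \pi_\xi(\mathbf{x}) \varphi_h, \varphi_{h'}\rangle_{L^2(\Z_p)}$ by a direct integration, and setting $h = h'$ before summing collapses the diagonal to a single $p$-adic Gaussian integral in $u$, which is evaluated using Lemma \ref{lemagaussian}.

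Irreducibility of each $\pi_\xi$ is established by showing $\int_{\mathbb{G}^{5,4}} |\chi_{\pi_\xi}(\mathbf{x})|^2 \, d\mathbf{x} = 1$, using an $L^2$-Fourier-norm computation in the spirit of Lemma \ref{lemaauxG54}; the Gaussian character produced in the previous step is precisely of the type handled by that auxiliary lemma. Finally, to conclude that $A_1 \cup A_2 \cup A_3 \cup A_4 \cup A_5$ exhausts $\widehat{\mathbb{G}}^{5,4}$, I apply the Plancherel identity on each finite quotient, i.e. for every $n \in \N$ one must have $\sum_{[\pi] \in \widehat{\mathbb{G}}^{5,4} \cap \mathbb{G}^{5,4}(p^n\Z_p)^{\perp}} d_\pi^2 = |\mathbb{G}^{5,4}/\mathbb{G}^{5,4}(p^n\Z_p)| = p^{5n}$; splitting the sum into the five pieces $A_j(n) := A_j \cap B_{\widehat{\Z}_p^5}(n)$ and computing each as a geometric series (keyed by which of $|\xi_3|_p, |\xi_4|_p, |\xi_5|_p$ dominates) yields $p^{5n}$ on the nose. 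The main obstacle is precisely this case analysis in the $|\xi_5|_p > 1$ regime: the ``polarization'' (maximal subordinate subalgebra to the linear functional $\xi$ in the sense of Kirillov's orbit method) depends on the relative sizes of $|\xi_3|_p, |\xi_4|_p, |\xi_5|_p$, and the denominators involving $\xi_4$ in $P_\xi$ force careful tracking of $p$-adic valuations throughout. The hypothesis $p > 3$ is essential so that the rational coefficients $\tfrac{1}{2}, \tfrac{1}{6}$ appearing in the group law and in $P_\xi$ lie in $\Z_p$, allowing the Gaussian evaluation of Lemma \ref{lemagaussian} to go through.
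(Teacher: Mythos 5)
Your proposal is correct and follows essentially the same route as the paper: split according to whether the representation is trivial on $\mathbf{exp}(\Z_p X_5)$, pull back $\widehat{\mathcal{B}}_4$ for the set $A_1$, construct the $|\xi_5|_p>1$ representations explicitly via the cocycle identity for $P_\xi$, verify irreducibility through $\int|\chi_{\pi_\xi}|^2=1$ using the Gaussian-integral lemmas, and close with the Plancherel count $\sum d_\xi^2=p^{5n}$. The only point worth making explicit is that the Plancherel count certifies completeness only after you have used the computed characters to show that the representations listed in $A_2,\dots,A_5$ are pairwise inequivalent and that the quotient conditions on $(\xi_1,\xi_2,\xi_3)$ exactly capture the equivalences — this is where the paper's four-case analysis of $\chi_{\pi_\xi}$ does its real work.
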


\begin{proof}
Let us start by the level one representations, that is, those who are trivial on $\mathbb{G}^{5,4}(p^1 \Z_p)$, and must therefore descend to a representation of the finite group of Lie type $\mathbb{G}^{5,4}/\mathbb{G}^{5,4}(p^1 \Z_p) \cong \mathbb{G}^{5,4}( \mathbb{F}_p)$. For this group, the Kirillov orbit method gives us $4$ kind of co-adjoint orbits, and therefore $4$ different kinds of representations, which we can realize in the following way: 

\begin{enumerate}
    \item First we have the $0$-dimensional orbits, corresponding to the characters of the abelian group $\mathbf{exp}(\mathfrak{g}^{5,4}/[\mathfrak{g}^{5,4},\mathfrak{g}^{5,4}])$, which are given by $$\chi_\xi (\mathbf{x}) = \chi_{(\xi_1, \xi_2, 1 ,1,1)} (\mathbf{x}) = e^{2 \pi i \{\xi \cdot \mathbf{x} \}_p}, \quad \| (\xi_1,\xi_2) \|_p \leq p.$$  
    \item Second, we have those representations which are trivial on the center of $\mathbb{G}^{5,4}$. If a finite-dimensional continuous representation $\pi$ of $\mathbb{G}^{5,4}$ is trivial on $\mathcal{Z}(\mathbb{G}^{5,4})$, then it must descend to a representation of $\mathbb{G}^{5,4}/\mathcal{Z}(\mathbb{G}^{5,4}) \cong \mathbb{H}_1$, and these were already described in Theorem \ref{TeoRepresentationsHd}, where it is proven that $$\widehat{\mathbb{H}}_1 = \{(\xi_1 , \xi_2, \xi_3, 1,1) \in \widehat{\Z}_p^5 \, : \, (\xi_1,\xi_2) \in \Q_p^2/p^{\vartheta(\xi_3)} \Z_p^2 \},$$and the associated representations are given by $$\pi_{ \xi}(\mathbf{x}) \varphi (u) = \pi_{ (1,1,\xi_3,1,1)}(\mathbf{x}) \varphi (u) := e^{2 \pi i \{  \xi_3 (x_3 +u x_2) \}_p} \varphi (u + x_1), \, \, \, \, \varphi \in \mathcal{H}_{\xi}, \quad \| \xi \|_p \leq p. 
    $$
    \item The third possibility is that the representation is not trivial on the center, but it is trivial on the variable $x_5$. In such case, the representation descends to a representation of $\mathbb{G}^{5,4}/\mathbf{exp}(\Z_p X_5) \cong \mathcal{B}_4 (\Z_p)$, and we discussed already how these have the form \[ \pi_{\xi}(\mathbf{x}) \varphi (u)=\pi_{(1,\xi_2, 1,\xi_4,1)}(\mathbf{x}) \varphi (u) := 
    e^{2 \pi i \{ \xi_2 x_2 + \xi_4 (x_4 +  u x_3 + \frac{u^2}{2} x_2) \}_p} \varphi (u + x_1) , \quad \|\xi \|_p \leq p .\] 
    \item For the fourth kind of co-adjoint orbit we can use the realization in \cite[p.p. 13]{bookUnitaryRep}: \begin{align*}
       \quad \quad \pi_\xi(\mathbf{x}) \varphi (u) &= \pi_{(\xi_1 , 1, 1, \xi_4, \xi_5)}(\mathbf{x}) \varphi (u) \\ &:= e^{2 \pi i \{\xi_1 x_1 + \frac{1}{6\xi_4 \xi_5}( \xi_5^3 x_2^3+3\xi_5 x_2 u^2 + 3 \xi_5^2  x_2^2 u)+x_3u \}_p} \varphi(u + \xi_4 x_1 + \xi_5 x_2),
    \end{align*}where the representation space is given by $$\mathcal{H}_\xi  := \mathrm{span}_\C\{\varphi_h := |\xi_5|_p^{1/2}\1_{h + \Z_p} \, : \, h \in p^{\vartheta(\xi_5)}    \Z_p / \Z_p \}.$$
\end{enumerate}
More generally, we can consider $\mathbb{G}^{5,4}$ as an extension of the group $\mathcal{B}_4$, which is itself an extension of $\mathbb{H}_1$, so the representation theory of $\mathbb{G}^{5,4}$ should also include that of $\mathcal{B}_4$, which are precisely those representations indexed by the set $A_1$. See again Theorem \ref{TeoRepresentationsB4} and \cite{velasquezrodriguez2024Engelspectrumvladimirovsublaplaciancompact} for more details.  On the other hand, if a representation is not trivial on the variable $x_5$, then we must have $|  \xi_5 |_p>1$, and we can realize these in the finite dimensional Hilbert spaces  $$\mathcal{H}_\xi' := \mathrm{span}_\C \{\| (\xi_3,\xi_4, \xi_5)\|_p^{1/2}\1_{h +  \Z_p}(u) \, : \, h \in I_\xi':= p^{\vartheta(\xi_3 , \xi_4, \xi_5)}\Z_p /    \Z_p\}, $$acting according to the formula \[\pi_\xi'(\mathbf{x}) \varphi (u) := e^{2 \pi i \{ \xi \cdot \mathbf{x} + \frac{\xi_3}{\xi_4} x_2 u + \frac{\xi_3 \xi_5}{2\xi_4} x_2^2  + \xi_4( \frac{x_1^2}{2}x_2) + \xi_5 ( x_1\frac{x_2^2}{2}) + \frac{1}{6\xi_5}(3x_1 u^2 + 3 \xi_4  x_1^2 u + \xi_4^2 x_1^3) + x_3 u \}_p} \varphi(u + \xi_4 x_1 + \xi_5 x_2).\]To simplify our notation let us write $$ P_\xi'(\mathbf{x} , u) :=  \xi \cdot \mathbf{x} + \frac{\xi_3}{\xi_4} x_2 u + \frac{\xi_3 \xi_5}{2\xi_4} x_2^2 + \frac{1}{6\xi_4 \xi_5}( \xi_5^3 x_2^3+3\xi_5 x_2 u^2 + 3 \xi_5^2  x_2^2 u)+x_3u.$$Then we can see how $\pi_\xi'$ is indeed a representation since:\begin{align*}
    P_\xi'&(\textbf{y}, u + \xi_4 x_1 + \xi_5 x_2) = \xi \cdot \mathbf{y}+ \xi_3 x_1 y_2 + \frac{\xi_3}{\xi_4} y_2 u + \frac{\xi_3 \xi_5}{2\xi_4} y_2^2+\frac{\xi_3 \xi_5}{\xi_4}x_2 y_2 + \frac{1}{6\xi_4 \xi_5}(  \xi_5^3 y_2^3) \\& \quad \quad \quad \quad \quad \quad \quad  + \frac{1}{6 \xi_4 \xi_5 }(3\xi_5 y_2 (u+\xi_4 x_1 + \xi_5 x_2 )^2 + 3 \xi_5^2  y_2^2 (u+\xi_4 x_1 + \xi_5 x_2 ))+y_3(u+\xi_4 x_1 + \xi_5 x_2 )\\&= \xi \cdot \mathbf{y}+ \xi_3 x_1 y_2 + \xi_5 x_1 \frac{y_2^2}{2}+ \frac{\xi_3}{\xi_4} y_2 u + \frac{\xi_3 \xi_5}{2\xi_4} y_2^2+\frac{\xi_3 \xi_5}{\xi_4}x_2 y_2 + \frac{1}{6\xi_4 \xi_5}(  \xi_5^3 y_2^3 + 3\xi_5^3x_2 y_2^2)  \\&  + \frac{1}{6 \xi_4 \xi_5 }(3\xi_5 y_2 (u^2+2(\xi_4 x_1 + \xi_5 x_2 )u +(\xi_4 x_1 + \xi_5 x_2 )^2 ) + 3 \xi_5^2  y_2^2 u)+y_3(u+\xi_4 x_1 + \xi_5 x_2 )=\xi \cdot \mathbf{y}\\& + \xi_3 x_1 y_2 +\xi_4(x_1y_3)+ \xi_5 (x_2y_3+x_1 \frac{y_2^2}{2})+ \frac{\xi_3}{\xi_4} y_2 u + \frac{\xi_3 \xi_5}{2\xi_4} y_2^2+\frac{\xi_3 \xi_5}{\xi_4}x_2 y_2 + \frac{1}{6\xi_4 \xi_5}(  \xi_5^3 y_2^3 + 3\xi_5^3x_2 y_2^2) \\&+   \frac{1}{6 \xi_4 \xi_5 }(3\xi_5 y_2 (u^2 +(\xi_4^2 x_1^2+2\xi_4\xi_5x_1x_2 + \xi_5^2 x_2^2 ) ) + 3 \xi_5^2(  y_2^2 +2x_2y_2)u)+(y_3+x_1y_2)u ,
\end{align*}which we can be rewritten as \begin{align*}
    &P_\xi'(\textbf{y}, u + \xi_4 x_1 + \xi_5 x_2)=\xi \cdot \mathbf{y}+ \xi_3 x_1 y_2+\xi_4 (x_1 y_3 + \frac{x_1^2}{2}y_2)+ \xi_5 ( x_2y_3+   x_1 \frac{y_2^2}{2}+x_1 x_2 y_2)\\ & + \frac{\xi_3}{\xi_4} y_2 u + \frac{\xi_3 \xi_5}{2\xi_4} y_2^2+\frac{\xi_3 \xi_5}{\xi_4}x_2 y_2 + \frac{1}{6\xi_4 \xi_5}(  \xi_5^3 y_2^3 + 3\xi_5^3x_2 y_2^2 + \xi_5^3 x_2^2 y_2 )  \\&  + \frac{1}{6 \xi_4 \xi_5 }(3\xi_5 y_2 u^2  +   3 \xi_5^2(  y_2^2 +2x_2y_2)u)+(y_3+x_1y_2)u,
\end{align*}and from this we obtain
$P_\xi'(\mathbf{x},u) + P_\xi'(\textbf{y}, u + \xi_4 x_1 + \xi_5 x_2) = P_\xi'(\mathbf{x} \star \textbf{y}, u),$ hence
\begin{align*}
    &\pi_{\xi}'(\mathbf{x})\pi_{\xi}'(\textbf{y})\varphi (u) =\pi_{\xi}(\mathbf{x})(e^{2 \pi i  \{ P_\xi'(\textbf{y}, u)\}_p}\varphi (u +\xi_4x_1 + \xi_5 x_2) )\\ &= e^{2 \pi i \{ P_\xi' (\mathbf{x} , u) + P_\xi' (\textbf{y},u + \xi_4 x_1 + x_2)\}_p}  \varphi (u + \xi_4 x_1 + x_2)\\&=e^{2 \pi i \{ P'(\mathbf{x} \star \textbf{y},u) \}_p} \varphi (u + \xi_4(\mathbf{x} \star \textbf{y})_1 + \xi_5 (\mathbf{x} \star \textbf{y})_2)\\ &= \pi_\xi' (\mathbf{x} \star \textbf{y}) \varphi(u).
\end{align*}It is not hard to see how this representation defines an unitary operator on $\mathcal{H}_\xi'$, since \begin{align*}
    &-P_\xi'(\mathbf{x} , u - (\xi_4 x_1 + \xi_5 x_2))= \xi \cdot -\mathbf{x} +\xi_3 x_1 x_2 - \frac{\xi_3}{\xi_4} x_2 u + \frac{\xi_3 \xi_5}{2\xi_4} x_2^2\\& - \frac{1}{6\xi_4 \xi_5}( \xi_5^3 x_2^3+3\xi_5 x_2 (u - (\xi_4 x_1 + \xi_5 x_2))^2 + 3 \xi_5^2  x_2^2 (u - (\xi_4 x_1 + \xi_5 x_2)))-x_3(u - (\xi_4 x_1 + \xi_5 x_2)),
    \end{align*}
    thus 
    \begin{align*}    
    &-P_\xi'(\mathbf{x} , u - (\xi_4 x_1 + \xi_5 x_2))= \xi \cdot -\mathbf{x} +\xi_3 x_1 x_2 +\xi_4 x_1 y_3 + \xi_5 x_2y_3 - \frac{\xi_3}{\xi_4} x_2 u + \frac{\xi_3 \xi_5}{2\xi_4} x_2^2\\& - \frac{1}{6\xi_4 \xi_5}( \xi_5^3 x_2^3+3\xi_5 x_2 (u - (\xi_4 x_1+ \xi_5 x_2)) (u -  \xi_4x_1))-x_3u\\&= \xi \cdot -\mathbf{x} +\xi_3 x_1 x_2 +\xi_4 (x_1 y_3 - \frac{x_1^2}{2}x_2) + \xi_5( x_2y_3 +x_1\frac{x_2^2}{2})- \frac{\xi_3}{\xi_4} x_2 u + \frac{\xi_3 \xi_5}{2\xi_4} x_2^2\\& - \frac{1}{6\xi_4 \xi_5}( \xi_5^3 x_2^3+3\xi_5 x_2 (u^2   - \xi_5 x_2 u ))+(-x_3 + x_1 x_2)u = P_\xi'(\mathbf{x}^{-1} , u).
\end{align*}
By using the natural choice of basis for this space, we can compute \begin{align*}
    &(\pi_{\xi }'  (\mathbf{x}))_{h h'}= (\pi_{\xi}' (\mathbf{x}) \varphi_h , \varphi_{h'})_{L^2 (\Q_p)} \\ &= \| (\xi_3,\xi_4, \xi_5) \|_p \int_{\Q_p} e^{2 \pi i \{ P_\xi '(\mathbf{x} , u) \}_p} \1_{h+\Z_p} (u +\xi_4 x_1 + \xi_5 x_2 ) \1_{h'+\Z_p} (u) du \\ &= \| (\xi_3, \xi_4, \xi_5) \|_p   \1_{h' - h +    \Z_p } (\xi_4x_1 + \xi_5 x_2) \int_{h' +    \Z_p}e^{2 \pi i \{ P_\xi' (\mathbf{x} , u) \}_p} du . 
\end{align*}From this it readily follows that \begin{align*}
    &\chi_{\pi_\xi'}(\mathbf{x})  = \sum_{h \in I_\xi' }\| (\xi_3,\xi_4 , \xi_5)\|_p  \1_{  \Z_p } (\xi_4x_1 + \xi_5 x_2) \int_{h +    \Z_p}e^{2 \pi i \{ P_\xi' (\mathbf{x} , u) \}_p} du \\&=\| (\xi_3,\xi_4,  \xi_5)\|_p  \1_{ \Z_p } (\xi_4x_1 +\xi_5x_2) e^{2 \pi i \{ \xi \cdot \mathbf{x} + \frac{\xi_3 \xi_5}{2\xi_4}x_2^2 + \frac{\xi_5^2}{6\xi_4}x_2^3 \}_p} \\& \quad \quad \quad \times  \int_{p^{\vartheta(\xi_3,\xi_4,  \xi_5)}\Z_p}e^{2 \pi i \{ \frac{x_2}{2\xi_4}u^2+ (\frac{\xi_5}{2\xi_4}x_2^2 + x_3)u + \frac{\xi_3}{\xi_4}x_2u\}_p} du. 
\end{align*}We can use the function with two purposes. First to check how these representations are indeed irreducible, and second, we can also count the number of different non-equivalent classes of unitary irreducible representations, by counting the number of different functions among these characters. We need to consider cases.

\textbf{Case 1:} When $|\xi_4|_p=1$ but $|\xi_5|_p \geq  |\xi_3|_p$ then
$$\chi_{\pi_\xi'}(\mathbf{x})= |\xi_5|_pe^{2 \pi i \{ \xi \cdot \mathbf{x} + \frac{\xi_3 \xi_5}{2}x_2^2 + \frac{\xi_5^2}{6}x_2^3 \}_p} \1_{ \Z_p } (\xi_5 x_2)  \int_{p^{\vartheta(\xi_5)}\Z_p}e^{2 \pi i \{ \frac{x_2}{2}u^2+ (\frac{\xi_5}{2}x_2^2 + x_3)u + \xi_3x_2u\}_p} du,$$so that we can rewrite
\begin{align*}
    &\chi_{\pi_\xi'}(\mathbf{x})= | \xi_5|_pe^{2 \pi i \{ \xi \cdot \mathbf{x} + \frac{\xi_5^2}{6\xi_4 }  x_2^3 \}_p} \1_{ \Z_p } (\xi_5 x_2)  \int_{p^{\vartheta(\xi_5)}\Z_p}e^{2 \pi i \{ \frac{x_2}{2}u^2+ (\frac{\xi_5}{2} x_2^2+ x_3)u + \xi_3x_2u\}_p} du \\&=| \xi_5|_pe^{2 \pi i \{ \xi \cdot \mathbf{x} + \frac{\xi_5^2}{6\xi_4 }  x_2^3 \}_p} \1_{ \Z_p } (\xi_5 x_2) \int_{\Z_p} e^{2 \pi i \{   \frac{1}{2}\xi_5^2 x_2 u^2 + \frac{1}{2} \xi_5^2  x_2^2 u+\xi_5 x_3u + \xi_3 \xi_5 x_2 u \}_p }du \\&=| \xi_5|_pe^{2 \pi i \{ \xi \cdot \mathbf{x} + \frac{\xi_5^2}{6\xi_4 }  x_2^3 \}_p} \1_{ \Z_p } (\xi_5 x_2) \int_{\Z_p} e^{2 \pi i \{   \frac{1}{2}\xi_5^2 x_2 u^2 + \xi_5 x_3u +\xi_3 \xi_5 x_2 u\}_p }du.
\end{align*}

This is the character of an irreducible representation since
\begin{align*}
    \int_{\mathbb{G}^{5,4}}  |\chi_{\pi_\xi'} (\mathbf{x})|^2 d \mathbf{x}  &= |\xi_5|_p^2 \int_{\Z_p^5} \1_{   \Z_p } (\xi_5 x_2) \Big| \int_{\Z_p} e^{2 \pi i \{   \frac{1}{2}\xi_5^2 x_2 u^2 + \xi_5 x_3u + \xi_3 \xi_5 x_2 u \}_p }du \Big|^2 d \mathbf{x} \\ &= |\xi_5|_p \int_{\Z_p^5}  \Big| \int_{\Z_p} e^{2 \pi i \{   \frac{1}{2}\xi_5 x_2 u^2 + (\xi_5 x_3 +\xi_3 x_2) u\}_p }du \Big|^2 d \mathbf{x} =1,
\end{align*}
where we used Lemma \ref{lemaaux} above. Notice how the appearance of the function  $\1_{   \Z_p } (\xi_5 x_2)$ implies that we must choose $\xi_2 \in \Q_p / p^{\vartheta(\xi_5)}\Z_p$. These are precisely the representations indexed by the set $A_2$.  

\textbf{Case 2:} If it happens that $|\xi_3|_p = 1$ and $|\xi_5|_p < |\xi_4|_p$, then we know how: 
\begin{align*}
    &\chi_{\pi_\xi'}(\mathbf{x})= |\xi_4|_p  \1_{ \Z_p } (\xi_4x_1 +\xi_5x_2) e^{2 \pi i \{ \xi \cdot \mathbf{x}  + \frac{\xi_5^2}{6\xi_4}x_2^3 \}_p} \int_{p^{\vartheta(\xi_4)}\Z_p}e^{2 \pi i \{ \frac{x_2}{2\xi_4}u^2 + (\frac{\xi_5}{2\xi_4}x_2^2 + x_3)u \}_p} du \\&= |\xi_4|_p  \1_{ \Z_p } (\xi_4x_1 +\xi_5x_2) e^{2 \pi i \{ \xi \cdot \mathbf{x}  + \frac{\xi_5^2}{6\xi_4}x_2^3 \}_p} \int_{\Z_p}e^{2 \pi i \{ \frac{x_2 \xi_4}{2}u^2+ (\frac{\xi_5}{2}x_2^2 + \xi_4 x_3)  u \}_p} du.
\end{align*}
To see how this is the character of an irreducible representation, define the auxiliary functions $$ f(x_2 , x_3) =\Big| \int_{\Z_p}e^{2 \pi i \{ \frac{x_2 \xi_4}{2}u^2+ (\frac{\xi_5}{2}x_2^2 + \xi_4 x_3)  u \}_p} du \Big|, \quad F(x_2 , x_3 , y ):= \Big| \int_{\Z_p}e^{2 \pi i \{ \frac{x_2 \xi_4}{2}u^2+ (\frac{\xi_5}{2}x_2 y + \xi_4 x_3)  u \}_p} du \Big| ,$$so that $F(x_1, x_2, x_2) = f(x_1 , x_2)$, and $$\inf_{y \in \Z_p} \int_{\Z_p^2} F(x_2, x_3, y)^2 dx_2 dx_3 \leq \int_{\Z_p^2} f(x_2 , x_3)^2 dx_2 dx_3 \leq \inf_{y \in \Z_p} \int_{\Z_p^2} F(x_2, x_3, y)^2 dx_2 dx_3.$$however, by  Lemma \ref{lemaauxG54}, for any $y$ we have $$\int_{\Z_p^2} F(x_2, x_3, y)^2 dx_2 dx_3 = |\xi_4|_p^{-1} = \int_{\Z_p^2} f(x_2 , x_3)^2 dx_2 dx_3. $$With this argument we can prove that
\begin{align*}
    \int_{\mathbb{G}^{5,4}}  |\chi_{\pi_\xi'} (\mathbf{x})|^2 d \mathbf{x}  &= |\xi_4|_p^2 \int_{\Z_p^5} \1_{   \Z_p } (\xi_4x_1 +\xi_5 x_2) \Big| e^{2 \pi i \{ \frac{x_2 \xi_4}{2}u^2+ (\frac{\xi_5}{2}x_2^2 + \xi_4 x_3)  u \}_p} du \Big|^2 d \mathbf{x}\\ &= |\xi_4|_p^2 \int_{\Z_p^5} \1_{ \frac{\xi_5}{\xi_4}x_2 + p^{-\vartheta(\xi_4)}  \Z_p } (x_1 ) \Big| \int_{\Z_p} e^{2 \pi i \{  \frac{\xi_4}{2 }  x_2 u^2 +x_3 \xi_4 u \}_p}du \Big|^2 d \mathbf{x}=1.
\end{align*} Now, to count all the different functions among these notices how when $|\xi_1|_p \leq |\xi_4|_p$ we get $$\xi_1 x_1 = \frac{\xi_1}{\xi_4}(\xi_5 x_2 + \xi_4 x_1) - \frac{\xi_1 \xi_5}{\xi_4}x_2=- \frac{\xi_1 \xi_5}{\xi_4}x_2 \in \Q_p / \Z_p,$$
so that we only need to count $\xi_1$ when $\xi_1 \in \Q_p / p^{\vartheta(\xi_4)}\Z_p$, and these correspond to the set $A_3$.

\textbf{Case 3:} Now we have the representations indexed by the set $A_4$, which in this case have the property $|\xi_3|_p > |\xi_4|_p > |\xi_5|_p$, and they are all irreducible since
\begin{align*}
    \int_{\mathbb{G}^{5,4}}  |\chi_{\pi_\xi'} (\mathbf{x})|^2 d \mathbf{x}  &= |\xi_3|_p^2 \int_{\Z_p^5}   \1_{ \Z_p } (\xi_4x_1 +\xi_5x_2) \Big|  \int_{p^{\vartheta(\xi_3)}\Z_p}e^{2 \pi i \{ \frac{x_2}{2\xi_4}u^2+ (\frac{\xi_5}{2\xi_4}x_2^2 + x_3)u + \frac{\xi_3}{\xi_4}x_2u\}_p} du \Big|^2 d \mathbf{x}\\ &= |\xi_3|_p^2 \int_{\Z_p^5} \1_{ \frac{\xi_5}{\xi_4}x_2 + p^{-\vartheta(\xi_4)}  \Z_p } (x_1 ) \Big|  \int_{\Z_p}e^{2 \pi i \{ \frac{\xi_3^2 x_2}{2\xi_4}u^2+ (\frac{\xi_5 \xi_3}{2\xi_4}x_2^2 + \xi_3 x_3)u + \frac{\xi_3^2}{\xi_4}x_2u\}_p} du \Big|^2 d \mathbf{x} \\ &= \frac{|\xi_3|_p^2}{|\xi_4|_p} \int_{\Z_p^2}   \Big|  \int_{\Z_p}e^{2 \pi i \{ \frac{\xi_3^2 x_2}{2\xi_4}u^2+ (\frac{\xi_5 \xi_3}{2\xi_4}x_2^2 + \xi_3 x_3)u + \frac{\xi_3^2}{\xi_4}x_2u\}_p} du \Big|^2 d x_2 dx_3=1,
\end{align*}where for the last equality we have used Lemma \ref{lemaauxG54}. 

\textbf{Case 4:} These are the representations indexed by $A_5$, and they have the property $|\xi_4|_p > |\xi_5|_p \geq  |\xi_3|_p,$ where 
\begin{align*}
    \int_{\mathbb{G}^{5,4}}  |\chi_{\pi_\xi'} (\mathbf{x})|^2 d \mathbf{x}  &= |\xi_4|_p^2 \int_{\Z_p^5}   \1_{ \Z_p } (\xi_4x_1 +\xi_5x_2) \Big|  \int_{p^{\vartheta(\xi_4)}\Z_p}e^{2 \pi i \{ \frac{x_2}{2\xi_4}u^2+ (\frac{\xi_5}{2\xi_4}x_2^2 + x_3)u + \frac{\xi_3}{\xi_4}x_2u\}_p} du \Big|^2 d \mathbf{x}\\ &= |\xi_4|_p^2 \int_{\Z_p^5} \1_{ \frac{\xi_5}{\xi_4}x_2 + p^{-\vartheta(\xi_4)}  \Z_p } (x_1 ) \Big|  \int_{\Z_p}e^{2 \pi i \{ \frac{\xi_4 x_2}{2}u^2+ (\frac{\xi_5 }{2}x_2^2 + \xi_4 x_3)u + \frac{\xi_3}{}x_2u\}_p} du \Big|^2 d \mathbf{x} \\ &= |\xi_4|_p \int_{\Z_p^2}   \Big|  \int_{\Z_p}e^{2 \pi i \{ \frac{\xi_4 x_2}{2}u^2+ (\frac{\xi_5 }{2}x_2^2 + \xi_4 x_3)u + \xi_3x_2u\}_p} du \Big|^2 d x_2 dx_3=1,
\end{align*}where once again we apply Lemma \ref{lemaauxG54} for the last equality. 

Finally, notice how these are all the desired representations since:
\begin{align*}
    &\sum_{[\pi_\xi] \in \widehat{\mathbb{G}}^{5,4} \cap \mathbb{G}^{5,4}(p^n \Z_p)^\bot } d_\xi^2 = \sum_{\xi \in  A_1 \, : \, \| \xi \|_p \leq p^n  }d_\xi^2 + \sum_{\xi \in A_2 \cup A_3 \cup A_4 \cup A_5 \, : \, \| \xi \|_p \leq p^n  }d_\xi^2   \\ &= p^{4n} + \sum_{1<|\xi_5|_p \leq p^n}\sum_{1 \leq |\xi_4|_p \leq |\xi_5|_p} \sum_{\xi_2 \in p^{-n}\Z_p /  p^{\vartheta(\xi_5)}\Z_p} \sum_{1 \leq |\xi_1|_p \leq p^n}  (|\xi_5 |_p )^2 \\ & \quad \quad + \sum_{1 < |\xi_5|_p \leq p^n} \sum_{ |\xi_5|_p<  |\xi_4|_p \leq p^n} \sum_{\xi_1 \in p^{-n}\Z_p / p^{\vartheta(\xi_4)}\Z_p} \sum_{1 \leq |\xi_2|_p \leq p^n} |\xi_4|_p^2 \\ & \quad \quad +  \sum_{1 < |\xi_5|_p \leq p^n } \sum_{|\xi_5|_p < |\xi_4|_p  \leq p^n} \sum_{ |\xi_4|_p<|\xi_3|_p\leq p^n} \sum_{(\xi_1 , \xi_2) \in \Q_p^2/p^{\vartheta(\xi_3)}\Z_p^2 } |\xi_3|_p^2 \\ &\quad \quad + \sum_{1 < |\xi_5|_p \leq p^n } \sum_{|\xi_5|_p< |\xi_4|_p \leq p^n} \sum_{1 \leq  |\xi_3|_p \leq |\xi_5|_p} \sum_{(\xi_1 , \xi_2) \in \Q_p^2/p^{\vartheta(\xi_4)}\Z_p^2 } |\xi_4|_p^2, 
\end{align*} 
so that working out the expression we obtain \begin{align*}
    &\sum_{[\pi_\xi] \in \widehat{\mathbb{G}}^{5,4} \cap \mathbb{G}^{5,4}(p^n \Z_p)^\bot } d_\xi^2= p^{4n} + \sum_{1<|\xi_5|_p \leq p^n}\sum_{1 \leq |\xi_4|_p \leq |\xi_5|_p}p^{2n}|\xi_5|_p^2  + \sum_{1 < |\xi_5|_p \leq p^n} \sum_{  |\xi_5|_p < |\xi_4|_p \leq p^n} p^{2n} |\xi_4|_p \\ & \quad \quad +  \sum_{1 < |\xi_5|_p \leq p^n } \sum_{|\xi_5|_p < |\xi_4|_p \leq p^n}  p^{2n}(p^n - |\xi_4|_p) + \sum_{1 < |\xi_5|_p \leq p^n }  p^{2n} |\xi_5|_p(p^n - |\xi_5|_p)\\ &= p^{4n} + \sum_{1<|\xi_5|_p \leq p^n}\sum_{|\xi_5|_p<  |\xi_4|_p \leq p^n}p^{3n} +  \sum_{1 < |\xi_5|_p \leq p^n }  p^{3n}|\xi_5|_p  \\ &= p^{4n} + p^{4n} \sum_{1<|\xi_5|_p \leq p^n} 1 =p^{5n}= |\mathbb{G}^{5,4}/\mathbb{G}^{5,4}(p^n \Z_p)|.
\end{align*}
To finish our proof, we need to define the mappings
$$\delta_\lambda : L^2(\Q_p) \to L^2(\Q_p), \quad \delta_\lambda (f) (u) := f(\lambda u).$$Then, for $|\xi_5|_p>1$ e can define the alternative representations $$\pi_\xi(\mathbf{x}) = \delta_{p^{\vartheta(\xi_3, \xi_4, \xi_5)}} \circ \pi_\xi' (\mathbf{x}) \circ \delta_{p^{-\vartheta(\xi_3, \xi_4, \xi_5)}},$$which act on the representation space $$\mathcal{H}_\xi := \spn_\C \{\varphi_h \, : \,\,  h \in \Z_p / p^{-\vartheta(\xi_3,\xi_4,\xi_5)} \Z_p\}, \quad \varphi_h(u) := \|(\xi_3, \xi_4, \xi_5) \|_p^{1/2} \1_{h + p^{-\vartheta(\xi_3,\xi_4,\xi_5)} \Z_p}(u). $$Explicitly, we can write 
\begin{align*}
    \pi_\xi (\mathbf{x})\varphi(u) &= \delta_{p^{\vartheta(\xi_3, \xi_4, \xi_5)}} \circ \pi_\xi' (\mathbf{x}) \circ \delta_{p^{-\vartheta(\xi_3, \xi_4, \xi_5)}} \varphi(u)\\& = \delta_{p^{\vartheta(\xi_3, \xi_4, \xi_5)}} \circ \pi_\xi' (\mathbf{x})  \varphi(p^{-\vartheta(\xi_3, \xi_4, \xi_5)}u) \\ &= \delta_{p^{\vartheta(\xi_3, \xi_4, \xi_5)}} \circ e^{2 \pi i \{ P_\xi ' (\mathbf{x}, u )\}_p}  \varphi(p^{-\vartheta(\xi_3, \xi_4, \xi_5)}(u+ \xi_4x_1 + \xi_5 x_2)) \\ &=  e^{2 \pi i \{ P_\xi ' (\mathbf{x}, p^{\vartheta(\xi_3, \xi_4, \xi_5)} u )\}_p}  \varphi\big(p^{-\vartheta(\xi_3, \xi_4, \xi_5)}(p^{\vartheta(\xi_3, \xi_4, \xi_5)}u)+ \frac{\xi_4x_1 + \xi_5 x_2}{p^{\vartheta(\xi_3, \xi_4, \xi_5)}}\big)\\ &= e^{2 \pi i \{ P_\xi ' (\mathbf{x}, p^{\vartheta(\xi_3, \xi_4, \xi_5)} u )\}_p}  \varphi\big( u + \frac{\xi_4x_1 + \xi_5 x_2}{p^{\vartheta(\xi_3, \xi_4, \xi_5)}}\big),
\end{align*}so that with this realization the matrix coefficients are simply \begin{align*}
    (\pi_\xi (\mathbf{x}))_{hh'} = (\pi_\xi (\mathbf{x})\varphi_h , \varphi_{h'})_{L^2(\Z_p)} = e^{2 \pi i \{ P_\xi ' (\mathbf{x}, p^{\vartheta(\xi_3, \xi_4, \xi_5)} h' )\}_p}  \1_{h'-h + p^{-\vartheta(\xi_3, \xi_4, \xi_5)}\Z_p}\big( \frac{\xi_4x_1 + \xi_5 x_2}{p^{\vartheta(\xi_3, \xi_4, \xi_5)}}\big).
\end{align*}
The proof is finished.
\end{proof}

We will use again our calculations of the unitary dual to describe the spectrum of the Vladimirov-sub-Laplacian, but this time it will be necessary to employ The following lemma, whose proof can be found in \cite{Albeverio2010}: 

\begin{lema}\label{lemaauxcomp}
    Let $\Phi: \Z_p \to \Z_p$ be a parabolic ball–morphism which is continuously differentiable as function on $\Q_p$. Then it holds: $$D^\alpha [f \circ \Phi] (x) = |\Phi'(x)|_p^\alpha D^\alpha[f] \circ \Phi (x).$$
\end{lema}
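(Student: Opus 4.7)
The plan is to prove the composition formula by a direct change of variables in the integral defining $D^\alpha[f\circ\Phi]$, exploiting the fact that a parabolic ball-morphism on $\Z_p$ scales the $p$-adic metric locally by $|\Phi'(x)|_p$.

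First, I would write out the left-hand side from the definition of the Vladimirov-Taibleson operator on $\Z_p$:
$$D^\alpha[f\circ\Phi](x) = \frac{1-p^\alpha}{1-p^{-(\alpha+1)}}\int_{\Z_p} \frac{f(\Phi(x-y)) - f(\Phi(x))}{|y|_p^{\alpha+1}}\, dy,$$
and perform the substitution $z = \Phi(x) - \Phi(x-y)$. The strategy is to show that under this substitution the integrand transforms precisely into the integrand of $(D^\alpha f)(\Phi(x))$ multiplied by the factor $|\Phi'(x)|_p^\alpha$.

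The two key identities I would need are (a) the ultrametric isometry-up-to-scaling $|\Phi(x-y) - \Phi(x)|_p = |\Phi'(x)|_p \cdot |y|_p$ for all $y \in \Z_p$, which replaces $|y|_p^{\alpha+1}$ in the denominator by $|\Phi'(x)|_p^{-(\alpha+1)}|z|_p^{\alpha+1}$; and (b) the Jacobian relation $|dy|_p = |\Phi'(x-y)|_p^{-1}|dz|_p = |\Phi'(x)|_p^{-1}|dz|_p$, which follows from the $p$-adic inverse function theorem applied to the $C^1$ bijection $\Phi$ together with the fact that on each ball where (a) holds, $|\Phi'(\cdot)|_p$ is constant. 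These two identities are exactly what the hypothesis of being a \emph{parabolic ball-morphism} is designed to guarantee: $\Phi$ maps each ball $x + p^n\Z_p$ bijectively to the ball $\Phi(x) + \Phi'(x) p^n\Z_p$, with the derivative providing the scaling factor.

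One also needs to verify that as $y$ ranges over $\Z_p$, the new variable $z = \Phi(x) - \Phi(x-y)$ likewise ranges over $\Z_p$, which follows from $\Phi$ being a bijection of $\Z_p$ combined with identity (a). Once (a), (b), and the range correspondence are in place, substitution yields
$$D^\alpha[f\circ\Phi](x) = \frac{1-p^\alpha}{1-p^{-(\alpha+1)}}\int_{\Z_p} \frac{f(\Phi(x)-z) - f(\Phi(x))}{|\Phi'(x)|_p^{-(\alpha+1)}|z|_p^{\alpha+1}}\cdot |\Phi'(x)|_p^{-1}\, dz = |\Phi'(x)|_p^\alpha \, (D^\alpha f)(\Phi(x)).$$
The main obstacle is the rigorous justification of (a) and (b) from the precise definition of \emph{parabolic ball-morphism}; in particular, one must verify that $|\Phi'|_p$ is locally constant on balls of the relevant radius, so that its value at $x-y$ equals its value at $x$ inside the integral. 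These are standard consequences of $p$-adic $C^1$-analysis on $\Z_p$ (essentially the $p$-adic mean value estimate combined with the non-archimedean triangle inequality), and once they are cited the remaining computation is purely symbolic manipulation.
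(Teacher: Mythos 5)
First, note that the paper does not actually prove this lemma: it is quoted with a pointer to \cite{Albeverio2010}, so there is no internal proof to compare yours against. Your change-of-variables strategy is the natural first attempt, and it does work verbatim in the case where $\Phi$ is an isometry of $\Z_p$ (e.g.\ affine with unit derivative), where it reduces to the standard invariance of $D^\alpha$ under measure-preserving ball maps.

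However, for the lemma to have any nontrivial content the three ingredients you invoke cannot hold simultaneously, and this is a genuine gap. If the global scaling identity (a), $|\Phi(x-y)-\Phi(x)|_p=|\Phi'(x)|_p|y|_p$ for \emph{all} $y\in\Z_p$, is true, then the image of $y\mapsto z=\Phi(x)-\Phi(x-y)$ is the ball $\Phi'(x)\Z_p$, not $\Z_p$; your range-correspondence step (c) therefore forces $|\Phi'(x)|_p=1$, in which case the asserted factor $|\Phi'(x)|_p^\alpha$ is $1$ and the statement is the trivial isometry case. If instead $|\Phi'(x)|_p<1$, the substitution leaves you with $|\Phi'(x)|_p^\alpha$ times an integral over $\Phi'(x)\Z_p$ rather than over $\Z_p$, and the missing contribution from $\Z_p\setminus\Phi'(x)\Z_p$ does not vanish in general: already for $\Phi(y)=py$ and $f=\1_{p\Z_p}$ the two sides of the claimed identity differ by a nonzero multiple of $1-p^{-1}$. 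For a genuinely parabolic map such as $\Phi(u)=au^2+bu$, the identity $|\Phi(x-y)-\Phi(x)|_p=|\Phi'(x)|_p|y|_p$ (and likewise the local constancy of $|\Phi'|_p$ that you use for the Jacobian) holds only for $y$ in a sufficiently small ball around $0$, so your substitution only handles the near part of the singular integral; the far spheres, where the phase genuinely oscillates quadratically, must be treated by a separate stationary-phase/Gaussian-integral argument of the kind the paper develops in Lemma \ref{lemagaussian}. That outer contribution is precisely the substance of the cited result, and your proposal does not address it.
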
Similar to the previous groups, for a collection $\textbf{W}:=\{W_1 , W_2\}$  with $$W_i = w_i^1 X_1 + w_i^2 X_2 ,$$which spans $\mathfrak{g}^{5,4} / [\mathfrak{g}^{5,4}, \mathfrak{g}^{5,4}] $, we can define its associated Vladimirov sub-Laplacian as $$\mathcal{L}_{sub, \textbf{W}}^\alpha f (x) : = (\partial^\alpha_{W_1} + \partial^\alpha_{W_2} ) f (\mathbf{x}), \,\,\, f \in \mathcal{D}(\mathbb{G}^{5,4}).$$For $\alpha>0$ the following theorem proves how our conjecture for this operator holds true on $\mathbb{G}^{5,4}$ too. For a matter of simplicity, we will consider here only the case $\mathbf{W}=\{ X_1, X_2\}$, and we denote $$\mathcal{L}_{sub}^\alpha f (\mathbf{x}) : = (\partial^\alpha_{X_1} + \partial^\alpha_{X_2} ) f (\mathbf{x}), \,\,\, f \in \mathcal{D}(\mathbb{G}^{5,4}),$$ but similar conclusions will hold for more general collections $\mathbf{W}$. The argument would be the same, but the calculations much longer. 
\begin{teo}\label{tepsubLapG54}
The Vladimirov sub-Laplacian $\mathcal{L}_{sub}^\alpha$ is a globally hypoelliptic operator, which is invertible in the space of mean zero functions. Moreover, the space $L^2(\mathbb{G}^{5,4})$ can be written as the direct sum $$L^2(\mathbb{G}^{5,4}) = \overline{\bigoplus_{\xi \in \widehat{\mathbb{G}}^{5,4}} \bigoplus_{h' \in  I_\xi} \mathcal{V}_{\xi}^{h'}}, \, \,\, \mathcal{V}_{\xi} = \bigoplus_{h' \in  I_\xi} \mathcal{V}_{\xi}^{h'}, $$where each finite-dimensional sub-space$$\mathcal{V}_{\xi}^{h'}:= \mathrm{span}_\C \{ (\pi_{\xi})_{hh'} \, : \, h \in I_\xi \},$$is an invariant sub-space of $\mathcal{L}_{sub, \textbf{W}}^\alpha$, and the spectrum of $\sigma_{\mathcal{L}_{sub}^\alpha} (\xi)$ is composed by the numbers $$ |\xi_1 + \tau  + \frac{1}{2}\xi_5  (h')^2|_p^\alpha  +|\xi_2 + \tau + \frac{\xi_3 \xi_5}{\xi_4}h'|_p^\alpha -2\frac{1 - p^{-1}}{1 - p^{-(\alpha + 1)}},$$where $$1 \leq  |\tau|_p \leq \| (\xi_3, \xi_4 )\|_p, \quad h' \in \Z_p / p^{-\vartheta(\xi_3,\xi_4)}\Z_p.$$ 
\end{teo}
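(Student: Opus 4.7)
The proof follows the template already used for Theorems \ref{teosubLapG22} and \ref{TeoSpectrumSublaplacianB4}, but the additional complexity of the Cartan commutation relations (notably $[X_2,X_3]=X_5$) forces us to split $\widehat{\mathbb{G}}^{5,4}=A_1\cup(A_2\cup A_3\cup A_4\cup A_5)$. For $\xi\in A_1$ the representations descend to the Engel group $\mathcal{B}_4$, so the spectral analysis of $\mathcal{L}_{sub}^\alpha$ on the corresponding $\mathcal{V}_\xi$ is an immediate consequence of Theorem \ref{TeoSpectrumSublaplacianB4}, giving the first three cases in the piecewise formula of Theorem \ref{TeoSpectrumSublaplacianB4}. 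The genuinely new work is therefore concentrated on the representations with $|\xi_5|_p>1$.

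For such $\xi$, the plan is to introduce the column subspaces $\mathcal{V}_\xi^{h'}=\mathrm{span}_\C\{(\pi_\xi)_{hh'}:h\in I_\xi\}$ and exhibit an explicit orthonormal basis of plane-wave type. From the matrix coefficient formula, every element of $\mathcal{V}_\xi^{h'}$ is the common phase $e^{2\pi i\{P_\xi(\mathbf{x},h')\}_p}$ multiplied by a function of the single variable $(\xi_4 x_1+\xi_5 x_2)/p^{\vartheta(\xi_3,\xi_4,\xi_5)}$ that is locally constant at the appropriate level. Fourier expansion in this one variable then produces an orthonormal basis
$$\mathscr{e}_{\xi,h',\tau}(\mathbf{x}):=e^{2\pi i\{P_\xi(\mathbf{x},h')+\tau(\xi_4 x_1+\xi_5 x_2)/p^{\vartheta(\xi_3,\xi_4,\xi_5)}\}_p},$$
with $\tau$ ranging over the appropriate set of dual frequencies (this is the same device used in Theorems \ref{teoRepG22}--\ref{teorepG53}). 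Each $\mathscr{e}_{\xi,h',\tau}$ is a pure character, so $\mathcal{V}_\xi^{h'}$ is a candidate invariant subspace for the translation-invariant operator $\mathcal{L}_{sub}^\alpha$.

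The heart of the proof is then to compute $\partial_{X_k}^\alpha\mathscr{e}_{\xi,h',\tau}$ for $k=1,2$. Using $\mathbf{x}\star\exp(-tX_1)=(x_1-t,x_2,x_3,x_4,x_5)$ and the explicit form of $P_\xi$, the only $x_1$-dependence is the linear term $\xi_1 x_1$, so $\mathscr{e}_{\xi,h',\tau}(\mathbf{x}\star\exp(tX_1)^{-1})/\mathscr{e}_{\xi,h',\tau}(\mathbf{x})$ is a character in $t$ whose frequency, after absorbing the contribution of the $\tau$-term and the quadratic $\frac{1}{2}\xi_5 x_2 u^2/\xi_4$ correction evaluated at $u=h'$, equals $\xi_1+\tau+\frac{1}{2}\xi_5(h')^2$ modulo $\Z_p$. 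The formula from the preliminaries then yields the first term of the eigenvalue. For $\partial_{X_2}^\alpha$ the BCH corrections are nontrivial: $\mathbf{x}\star\exp(-tX_2)$ shifts $x_3,x_4,x_5$ by $t$-linear and $t^2$-quadratic terms, and one must check that the quadratic terms in $t$ cancel modulo $\Z_p$ after pairing with $\xi_3,\xi_4,\xi_5$. A direct expansion, using $p>3$ to divide by $2$ and $6$, leaves a pure character of frequency $\xi_2+\tau+\frac{\xi_3\xi_5}{\xi_4}h'$, producing the second term. Summing gives the spectrum stated in the theorem, and the eigenfunction description follows by reading off $\mathscr{e}_{\xi,h',\tau}$.

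Global hypoellipticity and invertibility on mean-zero functions then reduce, exactly as in Theorem \ref{teosubLapG22}, to the lower bound
$$\|\sigma_{\mathcal{L}_{sub}^\alpha}(\xi)\|_{\inf}=\min_{h',\tau}\bigl(|\xi_1+\tau+\tfrac{1}{2}\xi_5(h')^2|_p^\alpha+|\xi_2+\tau+\tfrac{\xi_3\xi_5}{\xi_4}h'|_p^\alpha\bigr)-2\tfrac{1-p^{-1}}{1-p^{-(\alpha+1)}}\gtrsim\|\xi\|_p^{\alpha'}$$
for some $\alpha'>0$ and all but finitely many $\xi$. The main obstacle I anticipate is the bookkeeping in this last step: if one only fixes $\tau$ the two summands can vanish simultaneously for small $\xi_1,\xi_2$, so the lower bound must be extracted by distinguishing subcases according to the relative sizes of $|\xi_3|_p$, $|\xi_4|_p$, $|\xi_5|_p$ on the sets $A_2,\dots,A_5$, using that the difference of the two linear forms in $\tau$ is $\frac{1}{2}\xi_5(h')^2-\frac{\xi_3\xi_5}{\xi_4}h'+(\xi_1-\xi_2)$, which controls the minimum from below in terms of $\xi_5$ and $h'$. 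I expect the verification that the quadratic-in-$t$ corrections vanish modulo $\Z_p$ in the $\partial_{X_2}^\alpha$ computation to be the most delicate technical point, since it is what makes $\mathscr{e}_{\xi,h',\tau}$ a simultaneous eigenfunction rather than merely a basis element on which $\mathcal{L}_{sub}^\alpha$ acts triangularly.
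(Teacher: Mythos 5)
Your plan follows the paper's own route essentially verbatim: restrict to $|\xi_5|_p>1$ (the $A_1$ case being inherited from $\mathcal{B}_4$), Fourier-expand the indicator factor of the matrix coefficients to obtain the character basis $\mathscr{e}_{\xi,h',\tau}$ of each column space $\mathcal{V}_\xi^{h'}$, evaluate $\partial_{X_1}^\alpha$ and $\partial_{X_2}^\alpha$ on these characters via the one-dimensional symbol formula, and read off the hypoellipticity bounds from $\| \sigma_{\mathcal{L}_{sub}^\alpha}(\xi)\|_{inf}=\min_{h',\tau}\lambda_{\xi,h',\tau}$. The only discrepancies are bookkeeping: in the paper's computation the $(h')^2$ and $\tfrac{\xi_3}{\xi_4}h'$ corrections land entirely in the $X_2$-frequency (the $X_1$-symbol is just $\xi_1+\xi_4\tau/p^{\vartheta(\xi_3,\xi_4,\xi_5)}$, with no quadratic term), and the final lower bound $\min_{h',\tau}\lambda_{\xi,h',\tau}\gtrsim|\xi_1|_p^\alpha+|\xi_2|_p^\alpha$ is simply asserted there without the subcase analysis that you rightly identify as the delicate point.
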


\begin{proof}
In order to prove Theorem we want to use this calculate the associated symbol of $\mathscr{L}_{sub}^\alpha$, and its respective invariant sub-spaces. Let us start by computing the symbols of the directional VT operators $\partial_{X_1}^{\alpha}, \partial_{X_2}^{\alpha}$,  $\alpha>0$, by using what we got for the matrix coefficients. We only need to consider the case $ |\xi_5|_p > 1$.

We proved how $$(\pi_\xi)_{hh'} (\mathbf{x})=e^{2 \pi i \{ P_\xi (\mathbf{x} , h')\}_p} \1_{h'-h + p^{-\vartheta(\xi_3, \xi_4, \xi_5)} \Z_p}( \frac{\xi_4x_1 + \xi_5 x_2}{p^{\vartheta(\xi_3, \xi_4, \xi_5)}}),$$and we know how each function $\1_{h'-h + p^{-\vartheta(\xi_3, \xi_5)} \Z_p}$ has the Fourier series expansion $$\1_{h'-h + p^{-\vartheta(\xi_3, \xi_4, \xi_5)} \Z_p} (u) = \sum_{1 \leq |\tau|_p \leq  \| (\xi_3, \xi_4, \xi_5) \|_p }\| (\xi_3, \xi_4, \xi_5) \|_p^{-1} e^{ -2 \pi i \{ \tau (h' -h ) \}_p} e^{ -2 \pi i \{ \tau u \}_p} , $$ so that any $f \in \mathcal{V}_\xi^{h'}$ can be written as \begin{align*}
    f(u)=\sum_{h \in \Z_p / p^{- (\xi_3 , \xi_4,\xi_5)} \Z_p } & C_h \|(\xi_3, \xi_4, \xi_5) \|_p^{1/2}\1_{h'-h + p^{-\vartheta(\xi_3,\xi_4, \xi_5)}\Z_p} (u) \\&=  \sum_{1 \leq |\tau|_p \leq  \| (\xi_3, \xi_4, \xi_5) \|_p }\Big( \sum_{h \in \Z_p / p^{-\vartheta(\xi_3,\xi_4,\xi_5)}\Z_p}\| (\xi_3, \xi_4, \xi_5) \|_p^{-1/2} e^{ -2 \pi i \{ \tau (h' -h ) \}_p} C_h \Big) e^{ -2 \pi i \{ \tau u \}_p} \\ &=\sum_{1 \leq |\tau|_p \leq  \| (\xi_3, \xi_4, \xi_5) \|_p } (M_{h'} \mathbf{C}_f)_{\tau} e^{ 2 \pi i \{ \tau u \}_p}.
\end{align*}In order to calculate the symbol of $\partial_{X_1}^{\alpha}$, let us introduce some notation: let $\psi_\xi^1 : \Z_p \to \C^{d_\xi \times d_\xi}$ be the matrix function with entries $$(\psi_{\xi}^1)_{hh'} (u):= e^{2 \pi i \{ \xi_1 u \}_p}\1_{h' - h + p^{-\vartheta(\xi_3,\xi_4,\xi_5)}\Z_p  }\big(\frac{\xi_4 u}{ p^{\vartheta(\xi_3, \xi_4, \xi_5)}} \big), \, \, \, u \in \Z_p.$$ The associated associated symbol $\sigma_{\partial_{X_1}^{\alpha}} (\xi) = \partial_{X_1}^{\alpha} \pi_{\xi}^1|_{x=0}$ is a Toeplitz matrix which can be written as  \begin{align*}
    \sigma_{\partial_{X_1}^{\alpha}}& (\xi)_{hh'} = (\partial_u^\alpha \psi_\xi^1 (u)|_{u=0})_{hh'} \\&= \sum_{1 \leq |\tau|_p \leq  \| (\xi_3, \xi_4,\xi_5) \|_p }\Big(\big| \xi_1 + \frac{\xi_4\tau}{p^{\vartheta(\xi_3, \xi_4, \xi_5)}}\big|_p^\alpha - \frac{1 - p^{-1}}{1 - p^{-(\alpha + 1)}} \Big) \times (M_{h'} \mathbf{C}_f)_{\tau} e^{ 2 \pi i \{  (\xi_1 +\tau \frac{\xi_4}{p^{\vartheta(\xi_3, \xi_4 , \xi_5)}})u\}_p} |_{u=0} \\&= \sum_{1 \leq |\tau|_p \leq  \| (\xi_3, \xi_4,\xi_5) \|_p }\Big(\xi_1 + \big|\frac{\xi_4\tau}{p^{\vartheta(\xi_3, \xi_4, \xi_5)}}\big|_p^\alpha - \frac{1 - p^{-1}}{1 - p^{-(\alpha + 1)}} \Big) \times (M_{h'} \mathbf{C}_f)_{\tau}.
\end{align*}Similarly, let $\psi_\xi^2 : \Z_p \to \C^{d_\xi \times d_\xi}$ be the matrix function with entries \begin{align*}
    (\psi_{\xi}^2)_{hh'} (u)&:= e^{2 \pi i \{ \xi_2 u  + \frac{\xi_3 \xi_5}{2\xi_4} u^2 + \frac{1}{6\xi_4 \xi_5}( \xi_5^3 u^3+3\xi_5  p^{2\vartheta(\xi_3, \xi_4, \xi_5)}(h')^2u )+p^{\vartheta(\xi_3, \xi_4, \xi_5)}(\frac{\xi_5}{2\xi_4}u^2+ \frac{\xi_3}{\xi_4} u )(h') \}_p}\\ &\quad \quad \quad \times \1_{h' - h + \| (\xi_3 , \xi_4, \xi_5) \|_p \Z_p }(\frac{\xi_5}{p^{\vartheta(\xi_3, \xi_4, \xi_5)}}u),
\end{align*}where we use the following notation for short: $P_{\xi, \tau,h'}(u):=$
$$ \xi_2 u+ \frac{\xi_5 \tau}{p^{\vartheta(\xi_3, \xi_4, \xi_5)}}u  + \frac{\xi_3 \xi_5}{2\xi_4} u^2 + \frac{1}{6\xi_4 \xi_5}( \xi_5^3 u^3+3\xi_5  p^{2\vartheta(\xi_3, \xi_4, \xi_5)}(h')^2u )+p^{\vartheta(\xi_3, \xi_4, \xi_5)}(\frac{\xi_5}{2\xi_4}u^2+ \frac{\xi_3}{\xi_4} u )(h'),$$so that $\frac{d}{du}P_{\xi, \tau,h'}(u)=$ $$\xi_2 + \frac{\xi_5 \tau}{p^{\vartheta(\xi_3, \xi_4, \xi_5)}}  + \frac{\xi_3 \xi_5}{2\xi_4} 2u + \frac{1}{6\xi_4 \xi_5}( \xi_5^3 3u^2+3\xi_5  p^{2\vartheta(\xi_3, \xi_4, \xi_5)}(h')^2 )+p^{\vartheta(\xi_3, \xi_4, \xi_5)}(\frac{\xi_5}{2\xi_4}2u+ \frac{\xi_3}{\xi_4}  )(h'),$$and $$\frac{d}{du}P_{\xi, \tau,h'}(u)|_{u=0}= \xi_2 + \frac{\xi_5 \tau}{p^{\vartheta(\xi_3, \xi_4, \xi_5)}}   + \frac{1}{6\xi_4 \xi_5}( 3\xi_5  p^{2\vartheta(\xi_3, \xi_4, \xi_5)}(h')^2 )+p^{\vartheta(\xi_3, \xi_4, \xi_5)} \frac{\xi_3}{\xi_4} (h')$$
The associated associated symbol $\sigma_{\partial_{X_2}^{\alpha}} (\xi) = \partial_{X_2}^{\alpha} \pi_{\xi}|_{x=0}$ can be obtained as an application of \begin{align*}
    &\sigma_{\partial_{X_2}^{\alpha}} (\xi)_{hh'} = (\partial_u^\alpha \psi_\xi^2 (u)|_{u=0})_{hh'} \\&= \sum_{1 \leq |\tau|_p \leq  \| (\xi_3, \xi_4,\xi_5) \|_p }  (M_{h'} \mathbf{C}_f)_{\tau} \partial_u^{\alpha}(e^{ 2 \pi i \{ P_{\xi, \tau,h'}(u) \}_p}) |_{u=0} \\ &= \sum_{1 \leq |\tau|_p \leq  \| (\xi_3, \xi_4) \|_p }\Big( |\frac{d}{du}P_{\xi, \tau,h'}(u)|_{u=0}|_p^\alpha -\frac{1 - p^{-1}}{1 - p^{-(\alpha + 1)}}\Big)  (M_{h'} \mathbf{C}_f)_{\tau}.
\end{align*}This proves how, when $\xi \in A_2$, the spectrum of $\sigma_{\mathcal{L}_{sub}^\alpha} (\xi)$ is composed by the numbers \begin{align*}
    |\xi_1 +&  \frac{\xi_4}{p^{\vartheta(\xi_3, \xi_4, \xi_5)}}\tau |_p^\alpha  \\&+|\xi_2 + \frac{\xi_5 \tau}{p^{\vartheta(\xi_3, \xi_4, \xi_5)}}   + \frac{1}{6\xi_4 \xi_5}( 3\xi_5  p^{2\vartheta(\xi_3, \xi_4, \xi_5)}(h')^2 )+p^{\vartheta(\xi_3, \xi_4, \xi_5)} \frac{\xi_3}{\xi_4} (h')|_p^\alpha -2\frac{1 - p^{-1}}{1 - p^{-(\alpha + 1)}},
\end{align*}where $$1 \leq  |\tau|_p \leq \| (\xi_3, \xi_4 ,\xi_5)\|_p, \quad h' \in \Z_p / p^{-\vartheta(\xi_3, \xi_4,\xi_5 )}\Z_p.$$In this way how $$\| \mathscr{L}_{sub}^\alpha |_{\mathcal{V}_\xi^{h'}} \|_{op} = \max_{\tau,  h'}\lambda_{\xi, h', \tau} (\mathscr{L}_{sub}^\alpha) \lesssim  \| \xi\|_p^\alpha  .$$In the same way \begin{align*}
    \| \mathscr{L}_{sub}^\alpha |_{\mathcal{V}_\xi^{h'}} \|_{inf} &= \min_{\tau, h'}\lambda_{\xi, h', \tau} (\mathscr{L}_{sub}^\alpha) \gtrsim | \xi_1 |_p^\alpha + |\xi_2 |_p^\alpha.
\end{align*}
This implies that in each representation space we have the estimates $$| \xi_1 |_p^\alpha + |\xi_2 |_p^\alpha \lesssim \|\mathscr{L}_{sub}^\alpha |_{\mathcal{V}_\xi} \|_{inf} \leq \|\mathscr{L}_{sub}^\alpha |_{\mathcal{V}_\xi} \|_{op} \lesssim \| \xi \|_p^\alpha.,$$but we know that $$\|\mathscr{L}_{sub}^\alpha |_{\mathcal{V}_\xi} \|_{inf} = \|\sigma_{\mathscr{L}_{sub}^\alpha}(\xi)\|_{inf}, \,\,\,\|\mathscr{L}_{sub}^\alpha |_{\mathcal{V}_\xi} \|_{op} = \|\sigma_{\mathscr{L}_{sub}^\alpha}(\xi)\|_{op},$$proving how $\mathscr{L}_{sub}^\alpha$ provides an example of globally hypoelliptic operator.

This concludes the proof.

\end{proof}

\section{The group $\mathbb{G}^{5,5}$}
Let $p>3$ be a prime number. In this section we consider the group $\mathbb{G}^{5,5}(\Z_p)$, or just $\mathbb{G}^{5,5}$ for simplicity, defined here as $\Z_p^{5}$ together with the non-commutative operation\begin{align*}
    &\mathbf{x} \star \textbf{y} :=\\& (x_1 + y_1, x_2 + y_2, x_3 + y_3 + x_1 y_2, x_4 + y_4+ \frac{1}{2}x_1^2 y_2 +x_1 y_3 , x_5 + y_5 + \frac{1}{6} x_1^3y_2 + \frac{1}{2} x_1^2 y_3 +  x_1y_4 ),
\end{align*}and associated inverse element $$\mathbf{x}^{-1} = (-x_1, -x_2, -x_3 + x_1x_2, -x_4 + x_1 x_3- \frac{1}{2} x_2 x_1^2, -x_5 + x_1 x_4- \frac{1}{2}x_3 x_1^2 + \frac{1}{6}x_2 x_1^3).$$

We can identify this group with the exponential image of the filiform $\Z_p$-Lie algebra $\mathfrak{g}^{5,5}$ defined by the commutation relations$$[X_1,X_2] = X_3, \quad [X_1 , X_3]= X_4, \quad [X_1 , X_4] = X_5.$$
In the following theorem, we provide an explicit description of the unitary dual of this group. 
\begin{teo}\normalfont
The unitary dual $\widehat{\mathbb{G}}^{5,5}$ of the compact group $\mathbb{G}^{5,5}$ can be identified with union of the following 2 disjoints subsets of $\widehat{\Z}_p^{5}$: $\widehat{\mathbb{G}}^{5,5}= A_1 \cup A_2 $, where $$A_1 =  \{\xi \in \widehat{\Z}_p^{5}  \, : \, |\xi_5|_p=1 \leq |\xi_4|_p <|\xi_3|_p \, \wedge \,  (\xi_1 , \xi_2 , \xi_3) \in \widehat{\mathbb{H}}_1 , \, \text{or}  \,  |\xi_3|_p = 1 \,  \wedge  \,  \xi_1 \in \Q_p / p^{\vartheta(\xi_4)} \Z_p \},$$
    $$A_2:= \{\xi \in \widehat{\Z}_p^5 \, : 1 <|\xi_5 |_p , \,\,\, (\xi_1 ,\xi_2) \in \Q_p^2/ p^{\vartheta(\xi_3, \xi_4, \xi_5)} \Z_p^2\}.$$Moreover, each unitary irreducible representation can be realized in the finite dimensional Hilbert space \[\mathcal{H}_\xi :=
    \mathrm{span}_\C \{  \varphi_h := \|(\xi_3, \xi_4, \xi_5)\|_p^{1/2} \1_{h + p^{-\vartheta(\xi_3, \xi_4, \xi_5)} \Z_p  } \,\, : \, h \in I_\xi:= \Z_p / p^{-\vartheta(\xi_3, \xi_4, \xi_5)} \Z_p  \} \subset L^2 (\Z_p),\]where $d_\xi := \mathrm{dim}_\C (\mathcal{H}_\xi) = \| (\xi_3 , \xi_4 ,\xi_5 ) \|_p $, acting according to the formula \[\pi_\xi(\mathbf{x}) \varphi (u) := e^{2 \pi i \{ \xi \cdot \mathbf{x} + \xi_3 x_2 u  + \xi_4(  x_2 \frac{u^2}{2} + ux_3 )  + \xi_5  ( x_4 u + x_3\frac{u^2}{2} + x_2 \frac{u^3}{6})\}_p} \varphi (u + x_1) .\]With this realization and the natural choice of basis for $\mathcal{H}_\xi$, the associated matrix coefficients are going to be  \[(\pi_\xi)_{hh'} (\mathbf{x}) =    e^{2 \pi i \{\xi \cdot \mathbf{x} +  \xi_3 x_2 (h')  + \xi_4(  x_2 \frac{(h')^2}{2} + (h')x_3 )  + \xi_5  ( x_4 (h') + x_3\frac{(h')^2}{2} + x_2 \frac{(h')^3}{6}) \}_p} \1_{h - h' + p^{-\vartheta(\xi_3, \xi_4, \xi_5)}\Z_p } (x_1),\]and the associated characters are \[\chi_{\pi_\xi} (\mathbf{x})=\| (\xi_3 , \xi_4, \xi_5)\|_p  e^{2 \pi i \{\xi \cdot \mathbf{x} \}_p} \1_{p^{-\vartheta(\xi_3, \xi_4, \xi_5)} \Z_p } (x_1) \int_{\Z_p }e^{2 \pi i \{ x_2(\xi_3u + \xi_4 \frac{u^2}{2} + \xi_5 \frac{u^3}{6}) + x_3(\xi_4 u + \xi_5 \frac{u^2}{2})+ x_4(\xi_5 u) \}_p} du .\] Sometimes we will use the notation $$\mathcal{V}_\xi := \mathrm{span}_\C \{ (\pi_\xi)_{hh'}  \, : \, h,h' \in I_\xi \}.$$
\end{teo}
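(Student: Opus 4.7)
The plan is to follow the same structural approach used for the previous groups $\mathbb{G}^{5,2}$, $\mathbb{G}^{5,3}$, $\mathbb{G}^{5,4}$ in the paper, exploiting the fact that $\mathbb{G}^{5,5}$ is a filiform group of nilpotency class $4$ whose center $\mathcal{Z}(\mathbb{G}^{5,5})$ is the one-parameter subgroup generated by $X_5$, so $\mathcal{Z}(\mathbb{G}^{5,5}) \cong \Z_p$ and $\mathbb{G}^{5,5}/\mathcal{Z}(\mathbb{G}^{5,5}) \cong \mathcal{B}_4$. This dichotomy splits the unitary dual into the "old" representations (trivial on the center, $|\xi_5|_p = 1$) and the "new" ones (nontrivial on the center, $|\xi_5|_p > 1$), corresponding respectively to $A_1$ and $A_2$.

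For representations with $|\xi_5|_p = 1$, I would invoke Theorem \ref{TeoRepresentationsB4} directly: any such $\pi_\xi$ factors through the quotient $\mathbb{G}^{5,5}/\mathcal{Z}(\mathbb{G}^{5,5}) \cong \mathcal{B}_4$ and is thus equivalent to a representation of $\mathcal{B}_4$, and these are precisely the ones indexed by $A_1$. Note that the formula given for $\pi_\xi$ in the statement of the theorem reduces, when $\xi_5 = 0$, to the formula of Theorem \ref{TeoRepresentationsB4}, so no new work is needed here.

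For $|\xi_5|_p > 1$, the bulk of the proof proceeds in the usual four steps. First, introduce the auxiliary polynomial
\[
P_\xi(\mathbf{x}, u) := \xi \cdot \mathbf{x} + \xi_3 x_2 u + \xi_4\Big( x_2 \tfrac{u^2}{2} + u x_3 \Big) + \xi_5\Big( x_4 u + x_3 \tfrac{u^2}{2} + x_2 \tfrac{u^3}{6} \Big),
\]
and verify the cocycle identity $P_\xi(\mathbf{x}, u) + P_\xi(\mathbf{y}, u + x_1) \equiv P_\xi(\mathbf{x} \star \mathbf{y}, u) \pmod{\Z_p}$ by a direct polynomial expansion, using that $p > 3$ so the denominators $2$ and $6$ are units in $\Z_p$. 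This gives the homomorphism property $\pi_\xi(\mathbf{x})\pi_\xi(\mathbf{y}) = \pi_\xi(\mathbf{x} \star \mathbf{y})$. Unitarity follows from $P_\xi(\mathbf{x}, u) = -P_\xi(\mathbf{x}^{-1}, u + x_1)$ $\pmod{\Z_p}$. Invariance of $\mathcal{H}_\xi$ under $\pi_\xi$ reduces to checking that the exponent is locally constant in $u$ at scale $p^{-\vartheta(\xi_3,\xi_4,\xi_5)}\Z_p$; this uses that $|\xi_j (p^{-\vartheta(\xi_3,\xi_4,\xi_5)})^k|_p \leq 1$ for $k = 1, 2, 3$ and $j = 3, 4, 5$, together with $p \nmid 2, 6$. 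The matrix coefficients and character formulas then follow by direct calculation from the definitions.

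The hard part, as in the cases of $\mathbb{G}^{5,3}$ and $\mathbb{G}^{5,4}$, is proving irreducibility via the character computation $\int_{\mathbb{G}^{5,5}} |\chi_{\pi_\xi}(\mathbf{x})|^2 d\mathbf{x} = 1$. After simplification this amounts to showing
\[
\int_{\Z_p^3} \Big| \int_{\Z_p} e^{2 \pi i \{ x_2(\xi_3 u + \xi_4 \tfrac{u^2}{2} + \xi_5 \tfrac{u^3}{6}) + x_3(\xi_4 u + \xi_5 \tfrac{u^2}{2}) + x_4 \xi_5 u \}_p} du \Big|^2 dx_2 dx_3 dx_4 = \|(\xi_3, \xi_4, \xi_5)\|_p^{-1},
\]
which is a genuinely cubic $p$-adic oscillatory integral and so requires an extension of Lemmas \ref{lemaaux} and \ref{lemaauxG54} to cubic phases. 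My approach would mirror the proof of Lemma \ref{lemaauxG54}: compute the Fourier transform in $(x_2, x_3, x_4)$ to reduce the double integral to the Lebesgue measure of an intersection
\[
\bigl\{ u : \xi_5 u \equiv \gamma \pmod{\Z_p} \bigr\} \cap \bigl\{ u : \xi_4 u + \xi_5 \tfrac{u^2}{2} \equiv \beta \pmod{\Z_p} \bigr\} \cap \bigl\{ u : \xi_3 u + \xi_4 \tfrac{u^2}{2} + \xi_5 \tfrac{u^3}{6} \equiv \alpha \pmod{\Z_p} \bigr\},
\]
and observe that each successive set is a union of $p$-adic balls whose radii shrink (at least one always having radius $\|(\xi_3,\xi_4,\xi_5)\|_p^{-1}$), so that the Plancherel-type sum collapses as before. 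Hensel's lemma (applicable since $p > 3$) guarantees that the relevant quadratic and cubic polynomial equations can be solved modulo any power of $p$, which justifies the ``balls with well-controlled radii'' structure. Finally, the count of equivalence classes and the Plancherel identity $\sum_{[\pi_\xi] \in \widehat{\mathbb{G}}^{5,5} \cap \mathbb{G}^{5,5}(p^n\Z_p)^\bot} d_\xi^2 = p^{5n}$ is carried out as in Theorem \ref{teorepG53}: one sums $\|(\xi_3,\xi_4,\xi_5)\|_p^2 \cdot \#\{(\xi_1, \xi_2) \in \Q_p^2/p^{\vartheta(\xi_3,\xi_4,\xi_5)}\Z_p^2 : \|\cdot\|_p \leq p^n\}$ over the admissible $(\xi_3, \xi_4, \xi_5)$ with $\|\xi\|_p \leq p^n$ and $|\xi_5|_p > 1$, and checks against $p^{4n}$ from $A_1$ (inherited from the $\mathcal{B}_4$-count in Theorem \ref{TeoRepresentationsB4}). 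This closes the proof.
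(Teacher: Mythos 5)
Your proposal is correct and follows essentially the same route as the paper: the split into representations trivial/nontrivial on $\mathbf{exp}(\Z_p X_5)$ (reducing the first case to Theorem \ref{TeoRepresentationsB4}), the cocycle verification for $P_\xi$, the reduction of irreducibility to the cubic oscillatory integral handled by computing the $\Z_p^3$-Fourier transform as the measure of an intersection of level sets of polynomials that are locally constant at scales $\|(\xi_3,\xi_4,\xi_5)\|_p^{-1}$, $\|(\xi_4,\xi_5)\|_p^{-1}$, $|\xi_5|_p^{-1}$, the Hensel-type argument for identifying equivalent $\xi_2$'s, and the final Plancherel count against $p^{5n}$. This is exactly the paper's proof in outline.
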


\begin{proof}
Once again, we can consider $\mathbb{G}^{5,5}$ as an extension of the group $\mathcal{B}_4$, so the representation theory of $\mathbb{G}^{5,5}$ should also include that of $\mathcal{B}_4$. The unitary irreducible representations of $\mathcal{B}_4$ can be realized in the following finite dimensional sub-space $\mathcal{H}_\xi$ of $L^2(\Z_p)$:
\[ \mathcal{H}_\xi :=  \mathrm{span}_\C \{ \| (\xi_3 , \xi_4)\|_p^{1/2} \1_{h + p^{-\vartheta(\xi_3, \xi_4)} \Z_p } \,\, : \, h \in \Z_p / p^{-\vartheta(\xi_3, \xi_4)} \Z_p \},    \]
where $d_\xi:= \mathrm{dim}_\C (\mathcal{H}_\xi) = \max\{ |\xi_3|_p , |\xi_4|_p \}$, and the representation acts on functions $\varphi \in \mathcal{H}_{\xi}$ according to the formula 
\[\pi_{\xi}(\mathbf{x}) \varphi (u) := 
    e^{2 \pi i \{\xi_1 x_1 + \xi_2 x_2 + \xi_3 (x_3 +  u x_2)  + \xi_4 (x_4 +  u x_3 + \frac{u^2}{2} x_2) \}_p} \varphi (u + x_1)  . \] On the other hand, if a representation is not trivial on the variable $x_5$, then we must have $|  \xi_5 |_p>1$, and we can realize these in the finite dimensional Hilbert spaces  $$\mathcal{H}_\xi := \mathrm{span}_\C \{\| (\xi_3, \xi_4, \xi_5)\|_p^{1/2}\1_{h + p^{-\vartheta(\xi_3, \xi_4, \xi_5)} \Z_p}(u) \, : \, h \in I_\xi\}, \, \, d_\xi :=\mathrm{dim}_\C (\mathcal{H}_\xi) = \| (\xi_3 , \xi_4, \xi_5)\|_p ,$$acting according to the formula \[\pi_\xi(\mathbf{x}) \varphi (u) := e^{2 \pi i \{ \xi \cdot \mathbf{x} + \xi_3 x_2 u  + \xi_4(  x_2 \frac{u^2}{2} + ux_3 )  + \xi_5  ( x_4 u + x_3\frac{u^2}{2} + x_2 \frac{u^3}{6})\}_p} \varphi (u + x_1) .\]Let us write $$ P_\xi(\mathbf{x} , u) :=  \xi \cdot \mathbf{x} +  \xi_3 x_2 u  + \xi_4(  x_2 \frac{u^2}{2} + ux_3 )  + \xi_5  ( x_4 u + x_3\frac{u^2}{2} + x_2 \frac{u^3}{6}).$$
Then we can see how $\pi_\xi$ is indeed a representation since:
\begin{align*}
    &P_\xi ( \textbf{y} , u + x_1) = \\ & \xi \cdot \textbf{y} + \xi_3 y_2 (u + x_1)  + \xi_4(  y_2 \frac{(u + x_1)^2}{2} + (u+x_1)y_3 ) + \xi_5  ( y_4 (u + x_1) + y_3 \frac{(u +x_1)^2}{2} + y_2 \frac{(u + x_1)^3}{6} ) \\ &=\xi_1 y_1 + \xi_2 y_2 + \xi_3(y_3 x_1 y_2) + \xi_4( (y_4 + x_1 \frac{y_2^2}{2} + x_1 y_3)  + y_2 \frac{u^2}{2} + (y_3 + x_1 y_2 )u ) \\ & \, \, \, \, \, +\xi_5( (y_5 + \frac{x_1^3}{6}y_3+ \frac{x_1^2}{2}y_3 + x_1y_4) + (y_3 + x_1 y_2 )\frac{u^2}{2} + (y_4+ \frac{x_1^2}{2}y_2+ x_1 y_3)u+ y_2\frac{u^3}{6}),
\end{align*} and thus it clearly holds $$P_\xi(\mathbf{x},u) + P_\xi(\textbf{y}, u + x_1 ) = P_\xi(\mathbf{x} \star \textbf{y}, u).$$From this we obtain
\begin{align*}
    &\pi_{\xi}(\mathbf{x})\pi_{\xi}(\textbf{y})\varphi (u) =\pi_{\xi}(\mathbf{x})(e^{2 \pi i  \{ P_\xi(\textbf{y}, u)\}_p}\varphi (u + y_1 ) )\\ &= e^{2 \pi i \{ P_\xi (\mathbf{x} , u) + P_\xi (\textbf{y},u + x_1)\}_p}  \varphi (u + x_1 + y_1)\\&=e^{2 \pi i \{ P(\mathbf{x} \star \textbf{y},u) \}_p} \varphi (u + (\mathbf{x} \star \textbf{y})_1)\\ &= \pi_\xi (\mathbf{x} \star \textbf{y}) \varphi(u).
\end{align*}It is not hard to see how this representation defines an unitary operator on $\mathcal{H}_\xi$. By using the natural choice of basis for this space, we can compute \begin{align*}
    (\pi_{\xi } (\mathbf{x}))_{h h'}&= (\pi_{\xi} (\mathbf{x}) \varphi_h , \varphi_{h'})_{L^2 (\Z_p^2)} \\ &= \| (\xi_3 , \xi_4, \xi_5)\|_p\int_{\Z_p} e^{2 \pi i \{ P_\xi (x , u) \}_p} \1_h (u +x_1) \1_{h'} (u) du \\ &= \| (\xi_3 , \xi_4, \xi_5)\|_p  e^{2 \pi i \{\xi \cdot \mathbf{x} \}_p} \1_{h - h' + p^{-\vartheta(\xi_3, \xi_4, \xi_5)}\Z_p } (x_1)  \\ & \, \, \quad \times \int_{h' + p^{-\vartheta(\xi_3, \xi_4, \xi_5)}\Z_p}e^{2 \pi i \{  \xi_3 x_2 u  + \xi_4(  x_2 \frac{u^2}{2} + ux_3 )  + \xi_5  ( x_4 u + x_3\frac{u^2}{2} + x_2 \frac{u^3}{6}) \}_p} du \\ &= e^{2 \pi i \{\xi \cdot \mathbf{x} +  \xi_3 x_2 (h')  + \xi_4(  x_2 \frac{(h')^2}{2} + (h')x_3 )  + \xi_5  ( x_4 (h') + x_3\frac{(h')^2}{2} + x_2 \frac{(h')^3}{6}) \}_p} \1_{h' - h + p^{-\vartheta(\xi_3, \xi_4, \xi_5)}\Z_p } (x_1). 
\end{align*}From this it readily follows that \begin{align*}
    \chi_{\pi_\xi}(\mathbf{x}) &= \sum_{h \in I_\xi }\| (\xi_3 , \xi_4, \xi_5)\|_p  e^{2 \pi i \{\xi \cdot \mathbf{x} \}_p} \1_{h' - h +K_\xi } (x_1) \int_{h' + K_\xi}e^{2 \pi i \{  \xi_3 x_2 u  + \xi_4(  x_2 \frac{u^2}{2} + ux_3 )  + \xi_5  ( x_4 u + x_3\frac{u^2}{2} + x_2 \frac{u^3}{6}) \}_p} du \\&=\| (\xi_3 , \xi_4, \xi_5)\|_p  e^{2 \pi i \{\xi \cdot \mathbf{x} \}_p} \1_{p^{-\vartheta(\xi_3, \xi_4, \xi_5)} \Z_p } (x_1) \int_{\Z_p }e^{2 \pi i \{ x_2(\xi_3u + \xi_4 \frac{u^2}{2} + \xi_5 \frac{u^3}{6}) + x_3(\xi_4 u + \xi_5 \frac{u^2}{2})+ x_4(\xi_5 u) \}_p} du . 
\end{align*}We can use the above function with two purposes: first to check how these representations are indeed irreducible, and second, we can also count the number of different non-equivalent classes of unitary irreducible representations, by counting the number of different functions among these characters. Let us start by checking how these representations are indeed irreducible. With this end in mind, let us define $$f_\xi(x_2 , x_3 , x_4):=\int_{\Z_p }e^{2 \pi i \{ x_2( \xi_2 + \xi_3u + \xi_4 \frac{u^2}{2} + \xi_5 \frac{u^3}{6}) + x_3(\xi_3 + \xi_4 u + \xi_5 \frac{u^2}{2})+ x_4(\xi_4 + \xi_5 u) \}_p} du.$$ 
Then, computing its Fourier transform as an element of $L^2 (\Z_p^3)$ is given by 
\begin{align*}
    \mathcal{F}_{\Z_p^3} &[f](\alpha ,\beta , \gamma) = \int_{\Z_p^3}\int_{\Z_p}e^{2 \pi i \{ x_2( \xi_2 + \xi_3u + \xi_4 \frac{u^2}{2} + \xi_5 \frac{u^3}{6} - \alpha) + x_3(\xi_3 + \xi_4 u + \xi_5 \frac{u^2}{2}- \beta)+ x_4(\xi_4 + \xi_5 u - \gamma) \}_p} du  dx_2 dx_3 dx_4 \\ &= \int_{\Z_p} \1_{\Z_p}( \xi_2 + \xi_3u + \xi_4 \frac{u^2}{2} + \xi_5 \frac{u^3}{6} - \alpha)\1_{\Z_p}(\xi_3 +  \xi_4 u + \xi_5 \frac{u^2}{2}- \beta)\1_{\Z_p}(\xi_4 +  \xi_5 u - \gamma) du.
\end{align*}
Now let $A_\xi (\alpha, \beta , \gamma)$ be the intersection of the sets $$ \{ u: \, \xi_2 + \xi_3u + \xi_4 \frac{u^2}{2} + \xi_5 \frac{u^3}{6} - \alpha \in \Z_p \}, \,\{ u: \, \xi_3 +  \xi_4 u + \xi_5 \frac{u^2}{2}- \beta \in \Z_p\}, \,\{ u : \, \xi_4 +  \xi_5 u - \gamma \in \Z_p\} $$We know that 
\begin{itemize}
    \item $h_1(u):=\xi_2 + \xi_3u + \xi_4 \frac{u^2}{2} + \xi_5 \frac{u^3}{6} - \alpha$ satisfies $h_1 (u+v) = h_1 (u)$ for $|v|_p \leq \|(\xi_3, \xi_4 , \xi_5) \|_p^{-1}.$
    \item $h_2(u):=\xi_3 +  \xi_4 u + \xi_5 \frac{u^2}{2}- \beta$ satisfies $h_2 (u+v) = h_2 (u)$ for $|v|_p \leq \|( \xi_4 , \xi_5) \|_p^{-1}.$
    \item $h_3(u):=\xi_4 +  \xi_5 u - \gamma$ satisfies $h_3 (u+v) = h_3 (u)$ for $|v|_p \leq | \xi_5|_p^{-1}.$
\end{itemize}
In this way, we can think on the intersection $A_\xi(\alpha, \beta, \gamma)$ as a ball of radius $\|(\xi_3, \xi_4 , \xi_5) \|_p^{-1}$ whenever the intersection is non-empty. So, if we define $\mathbf{N}_\xi (\alpha , \beta, \gamma)$ as the  either $1$ or $0$, depending on whether the intersection is non-empty, we have $N_\xi = N_\xi^2$ and \begin{align*}
    \| f_\xi \|^2_{L^2} &= \sum_{\alpha, \beta, \gamma} |\mathcal{F}_{\Z_p^3} [f_\xi](\alpha ,\beta , \gamma)|^2 \\ &= \sum_{\alpha, \beta, \gamma} N_\xi(\alpha , \beta, \gamma)^2 \| (\xi_3, \xi_4, \xi_5)\|_p^{-2} \\&= \| (\xi_3, \xi_4, \xi_5)\|_p^{-1}\sum_{\alpha, \beta, \gamma} N_\xi(\alpha , \beta, \gamma) \| (\xi_3, \xi_4, \xi_5)\|_p^{-1}=\| (\xi_3, \xi_4, \xi_5)\|_p^{-1},
\end{align*}where the last equality holds because \begin{align*}
    &\sum_{\alpha, \beta, \gamma} N_\xi(\alpha , \beta, \gamma) \| (\xi_3, \xi_4, \xi_5)\|_p^{-1}= \\ &\sum_{\alpha, \beta, \gamma} \int_{\Z_p} \1_{\Z_p}( \xi_2 + \xi_3u + \xi_4 \frac{u^2}{2} + \xi_5 \frac{u^3}{6} - \alpha)\1_{\Z_p}(\xi_3 +  \xi_4 u + \xi_5 \frac{u^2}{2}- \beta)\1_{\Z_p}(\xi_4 +  \xi_5 u - \gamma) du = 1.
\end{align*}

Next we need to count our representations. Here we consider again cases. First, assume $|\xi_2|_p \leq \| (\xi_3, \xi_4 , \xi_5) \|_p$, and take $\theta \in \Z_p$ with $|\theta|_p = \| (\xi_3, \xi_4 , \xi_5) \|_p^{-1}$. Notice how \begin{align*}
    &\mathcal{F}_{\Z_p^3} [f_\xi](\alpha ,\beta , \gamma)\\&  = \sum_{a \in I_\xi } \int_{a + K_\xi} \1_{\Z_p}( \xi_2 + \xi_3u + \xi_4 \frac{u^2}{2} + \xi_5 \frac{u^3}{6} - \alpha)\1_{\Z_p}(\xi_3 +  \xi_4 u + \xi_5 \frac{u^2}{2}- \beta)\1_{\Z_p}(\xi_4 +  \xi_5 u - \gamma) du \\&=\sum_{a \in I_\xi }  \| (\xi_3, \xi_4, \xi_5)\|_p^{-1} \1_{\Z_p}( \xi_2 + \xi_3a + \xi_4 \frac{a^2}{2} + \xi_5 \frac{a^3}{6} - \alpha)\1_{\Z_p}(\xi_3 +  \xi_4 a + \xi_5 \frac{u^2}{2}- \beta)\1_{\Z_p}(\xi_4 +  \xi_5 a - \gamma) du\\ &=\sum_{a \in I_\xi }  \| (\xi_3, \xi_4, \xi_5)\|_p^{-1} \1_{d_\xi \Z_p}( \theta(\xi_2 + \xi_3a + \xi_4 \frac{a^2}{2} + \xi_5 \frac{a^3}{6} - \alpha))\\ & \quad \quad \quad \quad \times \1_{d_\xi \Z_p}(\theta(\xi_3 +  \xi_4 a + \xi_5 \frac{u^2}{2}- \beta))\1_{d_\xi \Z_p}(\theta(\xi_4 +  \xi_5 a - \gamma)) du\\ &= \textbf{N}_\xi(\alpha , \beta, \gamma)  \| (\xi_3 , \xi_4, \xi_5 )\|_p^{-1},
\end{align*}where $\textbf{N}_\xi(\alpha , \beta, \gamma)$ equals the number of solutions in $I_\xi$ to the system of equations 
\begin{itemize}
    \item $Q_1(u):= \theta(\xi_2 + \xi_3u + \xi_4 \frac{u^2}{2} + \xi_5 \frac{u^3}{6} - \alpha) \equiv 0 \, \, \mathrm{mod} \,  \|(\xi_3, \xi_4 , \xi_5) \|_p.$
    \item $Q_2 (u):=\theta(\xi_3 +  \xi_4 u + \xi_5 \frac{u^2}{2}- \beta)\equiv 0 \, \, \mathrm{mod} \,  \|(\xi_3, \xi_4 , \xi_5) \|_p. $
    \item $Q_3 (u):=\theta( \xi_4 +  \xi_5 u - \gamma) \equiv 0 \, \, \mathrm{mod} \,  \|(\xi_3, \xi_4 , \xi_5) \|_p. $
\end{itemize}Using Hensel's lemma, we can see how for every fixed $\gamma$ there is always at most one solution $u_\gamma$ for a single pair $(\alpha_\gamma, \beta_\gamma)$, so $\textbf{N}_\xi =1$ and we just proved how the Fourier transform of $f$ is independent of $\xi_2 \in \widehat{\Z}_p$ if $|\xi_2|_p$. In the case $|\xi_2|_p> \| (\xi_3, \xi_4, \xi_5)\|_p$, the only way these three equations have a solution is that $|\alpha - \xi_2 |_p \leq \| (\xi_3 , \xi_4 , \xi_5)\|_p$. This shows how this time $f_\xi$ does depend on $\xi_2$, and we only need to consider $\xi_2 \in \Q_p / p^{\vartheta(\xi_3, \xi_4, \xi_5)}\Z_p$.

Finally, to check how these are all the desired representations, just notice how \begin{align*}
    \sum_{\{ [\pi_\xi] \, : \| \xi\|_p \leq p^n\}} d_\xi^2 & = \sum_{\{ [\pi_\xi] \, : \| \xi\|_p \leq p^n, \, \, |\xi_5|_p = 1\}} d_\xi^2 + \sum_{\{ [\pi_\xi] \, : \| \xi\|_p \leq p^n, \, \, |\xi_5|_p > 1\}} d_\xi^2 \\ &= p^{4n} + \sum_{1 <|\xi_5|_p \leq p^n} \sum_{1 \leq \| (\xi_3, \xi_4)\|_p \leq p^n} \sum_{(\xi_1 , \xi_2) \in \Q_p^2 / p^{\vartheta(\xi_3, \xi_4, \xi_5)} \Z_p^2} \|(\xi_3, \xi_4 , \xi_5) \|_p^2 \\ &=p^{4n} + p^{2n}\sum_{1 <|\xi_5|_p \leq p^n} \sum_{1 \leq \| (\xi_3, \xi_4)\|_p \leq p^n} 1 \\ &= p^{4n} + p^{4n}(p^n - 1) = p^{5n}= |\mathbb{G}^{5,5}/\mathbb{G}^{5,5}(p^n \Z_p)|.
\end{align*}
This concludes the proof.
\end{proof}
With the above theorem, functions $f \in L^2 (\mathbb{G}^{5,5})$ have the Fourier series representation \begin{align*}
    f(\mathbf{x}) &= \sum_{\, \|(\xi_3 , \xi_4, \xi_5) \|_p = 1 } \widehat{f}(\xi) e^{2 \pi i \{ \xi \cdot \mathbf{x} \}_p} + \sum_{|\xi_5|_p = 1 \leq |\xi_4|_p < |\xi_3|_p } \sum_{(\xi_1 , \xi_2) \in \Q_p^2 / p^{\vartheta(\xi_3)}\Z_p^2} |\xi_3|_p Tr[\pi_\xi (\mathbf{x}) \widehat{f}(\xi)] \\ & \quad \quad +\sum_{\,|\xi_4|_p > |\xi_3|_p=1=|\xi_5|_p } \sum_{\xi_2 \in \widehat{\Z}_p} \sum_{\xi_1 \in \Q_p/ p^{\vartheta(\xi_4)}\Z_p}|\xi_4|_p Tr[\pi_\xi (x) \widehat{f}(\xi)] \\ &+\sum_{1< |\xi_5|_p}\sum_{(\xi_3,\xi_4) \in \widehat{\Z}_p^2} \sum_{(\xi_1,\xi_2)\in \Q_p^2/ p^{\vartheta(\xi_3, \xi_4, \xi_5)}\Z_p^2} \|(\xi_3 , \xi_4 , \xi_5)  \|_p Tr[\pi_{\xi} (\mathbf{x}) \widehat{f}(\xi)],
\end{align*}where the Fourier transform $\widehat{f}(\xi)$ is the linear operator defined as \begin{align*}
    \widehat{f}&(\xi) \varphi (u):= \int_{\mathbb{G}^{5,5}} f(\mathbf{x}) \pi_\xi^* (\mathbf{x}) \varphi(u) d \mathbf{x} \\ &= \int_{\Z_p^5} f(\mathbf{x}) e^{2 \pi i \{ \xi \cdot \mathbf{x} + \xi_3 x_2 (u - x_1)  + \xi_4(  x_2 \frac{(u - x_1)^2}{2} + (u - x_1)x_3 )  + \xi_5  ( x_4 (u - x_1) + x_3\frac{(u - x_1)^2}{2} + x_2 \frac{(u - x_1)^3}{6})\}_p} \varphi (u - x_1)d\mathbf{x}, \quad \varphi \in \mathcal{H}_\xi.
\end{align*}Alternatively, by using our expressions for the matrix coefficients we can see this operator as the matrix $\widehat{f}(\xi) \in \C^{\|(\xi_3,\xi_4, \xi_5)\|_p \times \|(\xi_3,\xi_4, \xi_5)\|_p}$ defined by the expression \begin{align*}
    \widehat{f}(\xi)_{h h'}&:= \int_{\mathbb{G}^{5,5}} f(\mathbf{x}) \pi_\xi^* (\mathbf{x})_{hh'} d \mathbf{x} \\ &= \int_{\Z_p^5} f(\mathbf{x}) e^{-2 \pi i \{\xi \cdot \mathbf{x} +  \xi_3 x_2 (h)  + \xi_4(  x_2 \frac{(h)^2}{2} + (h)x_3 )  + \xi_5  ( x_4 (h) + x_3\frac{(h)^2}{2} + x_2 \frac{(h)^3}{6}) \}_p} \1_{h' - h + p^{-\vartheta(\xi_3, \xi_4, \xi_5)}\Z_p } (x_1) d\mathbf{x}, 
\end{align*}where $\varphi \in \mathcal{H}_\xi$, which in terms of the $\Z_p^5$-Fourier transform is expressed as \begin{align*}
    \widehat{f}&(\xi)_{h h'} = \mathcal{F}_{\Z_p^5} [\1_{h' - h + p^{-\vartheta(\xi_3, \xi_4, \xi_5)} \Z_p  } (x_1)f] (\xi_1 ,\xi_2 + h_1 + \xi_5 (\frac{h_1^2}{2} + h_1), \xi_3 + \xi_5 h_2, \xi_4 , \xi_5 ) \\ &=\mathcal{F}_{\Z_p^5} [\1_{h' - h +  p^{-\vartheta(\xi_3, \xi_4, \xi_5)} \Z_p } ] *_{\widehat{\Z}_p^5} \mathcal{F}_{\Z_p^5}[f] (\xi_1 ,\xi_2 + \xi_3 h + \xi_4 \frac{h^2}{2} + \xi_5 \frac{h^3}{6}, \xi_3 + \xi_4 h + \xi_5 \frac{h^2}{2}, \xi_4 , \xi_5 ).
\end{align*}

Given any collection $\textbf{W}:=\{W_1 , W_2, W_3$\}  with $$W_i = w_i^1 X_1 + w_i^2 X_2 + w_i^3 X_3,$$which spans $\mathfrak{g}^{5,5} / [\mathfrak{g}^{5,5}, \mathfrak{g}^{5,5}] $, we can define its associated Vladimirov sub-Laplacian as $$\mathcal{L}_{sub, \textbf{W}}^\alpha f (x) : = (\partial^\alpha_{W_1} + \partial^\alpha_{W_2} + \partial^\alpha_{W_3}) f (x), \,\,\, f \in \mathcal{D}(\mathbb{G}^{5,5}).$$For $\alpha>0$ the following is the spectral theorem corresponding to this operator .
\begin{teo}
The Vladimirov sub-Laplacian $\mathcal{L}_{sub, \textbf{W}}^\alpha$ associated with the collection $\textbf{W}$ is a globally hypoelliptic operator, which is invertible in the space of mean zero functions. Moreover, the space $L^2(\mathbb{G}^{5,5})$ can be written as the direct sum $$L^2(\mathbb{G}^{5,5}) = \overline{\bigoplus_{\xi \in \widehat{\mathbb{G}}^{5,5}} \bigoplus_{h' \in  I_\xi} \mathcal{V}_{\xi}^{h'}}, \, \,\, \mathcal{V}_{\xi} = \bigoplus_{h' \in  I_\xi:=\Z_p/ p^{-\vartheta(\xi_3, \xi_4, \xi_5)} \Z_p} \mathcal{V}_{\xi}^{h'}, $$where each finite-dimensional sub-space$$\mathcal{V}_{\xi}^{h'}:= \mathrm{span}_\C \{ (\pi_{\xi})_{hh'} \, : \, h \in I_\xi \},$$is an invariant sub-space of $\mathcal{L}_{sub, \textbf{W}}^\alpha$, and its spectrum restricted to $\mathcal{V}_{\xi }^{h'}$ is given by  \[Spec(\mathcal{L}_{sub, \textbf{W}}^\alpha|_{\mathcal{V}_{\xi}^{h}})=\sum_{i=1}^3| (\xi_1 + \tau, \xi_2  + \xi_3h'+ \xi_4\frac{1}{2}(h')^2+\xi_5 \frac{(h')^3}{6} , \xi_3 + \xi_4h' +\xi_5 \frac{(h')^2}{2}) \cdot W_i|_p^\alpha - 3\frac{1 - p^{-1}}{1 - p^{-(\alpha + 1)}},\]and the corresponding eigenfunctions are given by $$\mathscr{e}_{\xi , h' , \tau} (\mathbf{x}) := e^{2 \pi i \{\xi \cdot \mathbf{x} +  \xi_3 x_2 (h')  + \xi_4(  x_2 \frac{(h')^2}{2} + (h')x_3 )  + \xi_5  ( x_4 (h') + x_3\frac{(h')^2}{2} + x_2 \frac{(h')^3}{6}) + \tau x_1\}_p} ,$$where $1 \leq | \tau |_p \leq \|(\xi_3, \xi_4, \xi_5)\|_p.$  
\end{teo}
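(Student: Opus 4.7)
The strategy is to follow the template already established in Sections 3--5 for $\mathbb{G}^{5,2}$, $\mathbb{G}^{5,3}$, and $\mathbb{G}^{5,4}$: decompose $L^2(\mathbb{G}^{5,5})$ using the Peter--Weyl theorem together with the explicit unitary dual from the preceding theorem, exhibit $\mathcal{V}_\xi^{h'}$ as an invariant subspace of $\mathcal{L}_{sub,\mathbf{W}}^\alpha$, and compute the action of the directional VT operators on a convenient orthonormal basis. Invariance of $\mathcal{V}_\xi^{h'}$ is formal: each $\partial_{W_i}^\alpha$ is left-invariant by the standard change-of-variables identity, and the transformation rule $L_{\mathbf{y}}(\pi_\xi)_{hh'} = \sum_k \pi_\xi(\mathbf{y}^{-1})_{hk}(\pi_\xi)_{kh'}$ only mixes the first index, keeping $h'$ fixed. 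A convenient alternative presentation is
\[
\mathcal{V}_\xi^{h'} = e^{2\pi i\{ P_\xi(\mathbf{x};\, h') \}_p}\cdot \mathcal{D}_{-\vartheta(\xi_3,\xi_4,\xi_5)}(\Z_p),
\]
where the second factor consists of functions of $x_1$ locally constant on balls of radius $\|(\xi_3,\xi_4,\xi_5)\|_p^{-1}$, and a Fourier expansion in $x_1$ produces the orthonormal basis of pure characters $\{\mathscr{e}_{\xi, h', \tau}\}_{1\leq |\tau|_p \leq \|(\xi_3,\xi_4,\xi_5)\|_p}$.

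The heart of the proof is the direct computation of $\partial_{W_i}^\alpha \mathscr{e}_{\xi,h',\tau}$. Writing $\mathbf{exp}(tW_i)^{-1}$ through its (finite) BCH expansion as a polynomial in $t$ supported in the first three coordinates, and applying the filiform operation $\star$, the coefficient of $t^1$ in $\vec{a}(\xi,h',\tau)\cdot (\mathbf{x}\star \mathbf{exp}(tW_i)^{-1})$ should reduce modulo $\Z_p$ to the constant $(\xi_1+\tau,\,P(h'),\,P'(h'))\cdot W_i$, where $P(y) = \xi_2 + \xi_3 y + \xi_4 y^2/2 + \xi_5 y^3/6$. Higher-order powers of $t$ contribute integrands that drop out of the VT integral for valuation reasons, yielding the eigenvalue
\[
|(\xi_1+\tau,\,P(h'),\,P'(h'))\cdot W_i|_p^\alpha - \tfrac{1-p^{-1}}{1-p^{-(\alpha+1)}}.
\]
Summing over $i=1,2,3$ gives the claimed spectrum, and global hypoellipticity then follows from the criterion recalled in the introduction: the spanning property of $\{W_1,W_2,W_3\}$ in $\mathfrak{g}^{5,5}/[\mathfrak{g}^{5,5},\mathfrak{g}^{5,5}]$ makes $\sum_i|(a_1,a_2,a_3)\cdot W_i|_p$ equivalent to $\|(a_1,a_2,a_3)\|_p$, producing the lower bound $\|\sigma_{\mathcal{L}_{sub,\mathbf{W}}^\alpha}(\xi)\|_{\inf}\gtrsim \|(\xi_1,\xi_2,\xi_3)\|_p^\alpha$ outside finitely many $\xi$, together with invertibility on the mean-zero subspace since the trivial representation is the only one excluded.

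The main technical obstacle is the polynomial bookkeeping in the eigenvalue computation. For $W_i$ with nonzero $X_2$- or $X_3$-component, the operation $\mathbf{x}\star \mathbf{exp}(tW_i)^{-1}$ produces cross terms coupling $t$ and $x_1$ (for instance $-t\xi_3 w_i^2 x_1$, $-t\xi_4 w_i^2 x_1^2/2$, and analogous terms in $\xi_5$ coming from the cubic part of $\star$). To recover the claimed eigenvalue one must show that these cross terms reduce to elements of $\Z_p$ for every $x_1 \in \Z_p$, so that the fractional part $\{\cdot\}_p$ cancels them. This rests on the $p$-adic valuations of $\xi_3$, $\xi_4$, $\xi_5$ being no worse than $\vartheta(\xi_3,\xi_4,\xi_5)$, combined with the $p$-adic Gaussian identities of Lemma \ref{lemaaux} and Lemma \ref{lemaauxG54} when higher-degree cross terms survive. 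Once this identity is in hand, the remainder of the proof is a direct assembly of the ingredients already used in the treatments of $\mathbb{G}^{5,3}$ and $\mathbb{G}^{5,4}$, namely the convolution-type argument for the invariance of the $\tau$-blocks and the symbol interpretation of invertibility on mean-zero functions.
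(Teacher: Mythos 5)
The paper offers no written proof of this theorem (it is stated and the next section begins), so your reconstruction from the template of Theorems \ref{teosubLapG22} and its $\mathbb{G}^{5,3}$, $\mathbb{G}^{5,4}$ analogues is exactly what the author intends, and the structural parts of your argument --- the Peter--Weyl decomposition, the invariance of $\mathcal{V}_\xi^{h'}$ under left-invariant operators, and the identification of $\mathcal{V}_\xi^{h'}$ with $e^{2\pi i\{P_\xi(\mathbf{x},h')\}_p}\mathcal{D}_{-\vartheta(\xi_3,\xi_4,\xi_5)}(\Z_p)$ with its character basis --- are sound.

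The gap is precisely at the step you flag as the main technical obstacle, and your proposed resolution of it is backwards. Take $W_i=X_2$. Then $\mathbf{x}\star\mathbf{exp}(-tX_2)=(x_1,\,x_2-t,\,x_3-tx_1,\,x_4-\tfrac{t}{2}x_1^2,\,x_5-\tfrac{t}{6}x_1^3)$, and since $\mathscr{e}_{\xi,h',\tau}=e^{2\pi i\{\vec a\cdot\mathbf{x}\}_p}$ with $\vec a=(\xi_1+\tau,\,P(h'),\,P'(h'),\,P''(h'),\,\xi_5)$ and $P(y)=\xi_2+\xi_3y+\xi_4\tfrac{y^2}{2}+\xi_5\tfrac{y^3}{6}$, the phase increment is $-t\,P(h'+x_1)$ by Taylor's formula. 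The cross term $P(h'+x_1)-P(h')$ has $p$-adic norm as large as $\|(\xi_3,\xi_4,\xi_5)\|_p>1$: the fact that the valuations of $\xi_3,\xi_4,\xi_5$ are ``no worse than $\vartheta(\xi_3,\xi_4,\xi_5)$'' makes this term \emph{large}, not integral, so it does not disappear inside $\{\cdot\}_p$, and Lemmas \ref{lemaaux} and \ref{lemaauxG54} (which compute $L^2$-norms of Gaussian integrals, not cancellations of this kind) do not help. Consequently $\partial_{X_2}^\alpha\mathscr{e}_{\xi,h',\tau}=\bigl(|P(h'+x_1)|_p^\alpha-\tfrac{1-p^{-1}}{1-p^{-(\alpha+1)}}\bigr)\mathscr{e}_{\xi,h',\tau}$ is a multiplication by a non-constant locally constant function of $x_1$: the characters $\mathscr{e}_{\xi,h',\tau}$ are eigenfunctions only of the $X_1$-component, while the matrix coefficients $(\pi_\xi)_{hh'}$ (on whose support $x_1\equiv h-h'$ modulo $p^{-\vartheta(\xi_3,\xi_4,\xi_5)}\Z_p$, so that $P(h'+x_1)\equiv P(h)$) are eigenfunctions of the $X_2$- and $X_3$-components, with eigenvalue depending on $h$. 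On $\mathcal{V}_\xi^{h'}$ the two pieces of $\mathcal{L}^\alpha_{sub,\mathbf{W}}$ are therefore diagonal in two Fourier-dual bases and do not commute, so the spectrum of the sum is not obtained by adding the two eigenvalue lists. The stated formula does hold whenever $\xi_1$ or $\xi_2$ is nontrivial in $\Q_p/p^{\vartheta(\xi_3,\xi_4,\xi_5)}\Z_p$ (then $|\xi_1+\tau|_p=|\xi_1|_p$, resp.\ $|P(h)|_p=|\xi_2|_p$, by norm dominance of the chosen representatives, and one piece becomes scalar), but for representations such as $\xi=(0,0,\xi_3,\xi_4,\xi_5)$ the restriction is a genuine $p$-adic Schr{\"o}dinger operator $D^\alpha+V$ with non-constant potential, whose diagonalization you must either carry out or avoid by weakening the claim to the two-sided symbol estimates needed for global hypoellipticity (which do follow from positivity of both pieces).
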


\section{The group $\mathbb{G}^{5,6}$}
Let $p>3$ be a prime number. In this final section we consider the group $\mathbb{G}^{5,6}(\Z_p)$, or just $\mathbb{G}^{5,6}$ for simplicity, defined here as $\Z_p^{5}$ together with the non-commutative operation\begin{align*}
    &\mathbf{x} \star \textbf{y} := \\& (x_1 + y_1)X_1 + (x_2 + y_2)X_2 + ( x_3 + y_3 + x_1 y_2)X_3 \\ &+ ( x_4 + y_4+ \frac{1}{2}x_1^2 y_2 +x_1 y_3)X_4 + ( x_5 + y_5 + \frac{1}{6} x_1^3y_3 + x_1y_4 + \frac{1}{2} x_1 y_2^2 +  x_2y_3  + x_1 x_2 y_2)X_5,
\end{align*}and associated inverse element $$\mathbf{x}^{-1} = (-x_1, -x_2, -x_3 + x_1 x_2, -x_4 + x_1 x_3 -\frac{1}{2}x_2 x_1^2, - x_5 +x_1 x_4 + x_2 x_3 -\frac{1}{2} x_3 x_1^2 - \frac{1}{2}x_2^2 x_1 + \frac{1}{6} x_2 x_1^3).$$We can identify this group with the exponential image of the $\Z_p$-Lie algebra $\mathfrak{g}^{5,6}$ defined by the commutation relations$$[X_1,X_2] = X_3, \quad [X_1 , X_3]= X_4, \quad [X_1 , X_4] = X_5, \quad [X_2 , X_3] = X_5.$$
In the following theorem, we provide an explicit description of the unitary dual of this group, which is actually the last one in the list of $5$-dimensional groups which we intend to consider in this work. But before that, we wil need to make use of the following lemma: 

\begin{lema}\label{lemaauxG56}
    Let $(x_2, x_3, x_4) \in \Z_p^3$, and take $\xi_3 , \xi_4, \xi_5 \in \widehat{\Z}_p$. Then $$\int_{\Z_p^3} \Big| \int_{ \Z_p}e^{2 \pi i \{(\xi_3   x_2 +  \xi_4 x_3 + \xi_5 x_4)u +  \xi_4 x_2 \frac{u^2}{2} \}_p} du \Big|^2 dx_2 dx_3 dx_4  = \|(\xi_3 , \xi_4, \xi_5) \|_p^{-1}.$$
\end{lema}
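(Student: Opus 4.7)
The plan is to mimic the approach used for Lemma \ref{lemaaux}, but with an additional variable. I would define
$$f(x_2, x_3, x_4) := \int_{\Z_p} e^{2 \pi i \{(\xi_3 x_2 + \xi_4 x_3 + \xi_5 x_4)u + \xi_4 x_2 \frac{u^2}{2}\}_p} du$$
and apply Parseval's identity $\|f\|_{L^2(\Z_p^3)}^2 = \sum_{(\alpha,\beta,\gamma) \in \widehat{\Z}_p^3} |\mathcal{F}_{\Z_p^3}[f](\alpha,\beta,\gamma)|^2$. Swapping the order of integration in the Fourier transform yields
$$\mathcal{F}_{\Z_p^3}[f](\alpha,\beta,\gamma) = \int_{\Z_p} \1_{\Z_p}\bigl(\xi_3 u + \xi_4 \tfrac{u^2}{2} - \alpha\bigr)\, \1_{\Z_p}(\xi_4 u - \beta)\, \1_{\Z_p}(\xi_5 u - \gamma)\, du.$$

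Exactly as in Lemma \ref{lemaaux}, the second factor forces $\xi_4 u \equiv \beta$ modulo $\Z_p$, which permits the replacement $\xi_3 u + \xi_4 u^2/2 \equiv u(\xi_3 + \beta/2)$ modulo $\Z_p$ (using $p > 2$ so that $|2|_p = 1$). The first characteristic function thereby becomes a linear ball condition on $u$, so $\mathcal{F}_{\Z_p^3}[f](\alpha,\beta,\gamma)$ is the Haar measure of the intersection inside $\Z_p$ of the three balls $\frac{\alpha}{\xi_3+\beta/2} + p^{-\vartheta(\xi_3+\beta/2)}\Z_p$, $\frac{\beta}{\xi_4} + p^{-\vartheta(\xi_4)}\Z_p$, and $\frac{\gamma}{\xi_5} + p^{-\vartheta(\xi_5)}\Z_p$. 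By the ultrametric property of $p$-adic balls, this intersection is either empty or equals the smallest of the three, whose measure is $M_\xi^{-1}$ with $M_\xi := \max\{|\xi_3 + \beta/2|_p, |\xi_4|_p, |\xi_5|_p\}$. Writing $\mathcal{F}_{\Z_p^3}[f] = \mathbf{N}_\xi(\alpha,\beta,\gamma) M_\xi^{-1}$ with $\mathbf{N}_\xi \in \{0,1\}$, the crucial observation is that $M_\xi = \|(\xi_3,\xi_4,\xi_5)\|_p$ uniformly: when $|\xi_3|_p > \max\{|\xi_4|_p, |\xi_5|_p\}$ we have $|\xi_3 + \beta/2|_p = |\xi_3|_p$ since $|\beta/2|_p \leq |\xi_4|_p < |\xi_3|_p$, while in the remaining cases $|\xi_4|_p$ or $|\xi_5|_p$ already attains $\|(\xi_3,\xi_4,\xi_5)\|_p$ and dominates $|\xi_3 + \beta/2|_p \leq \max\{|\xi_3|_p, |\xi_4|_p\}$.

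With this uniform bound in hand, Parseval reduces the problem to
$$\|f\|_{L^2}^2 = \|(\xi_3,\xi_4,\xi_5)\|_p^{-2} \sum_{(\alpha,\beta,\gamma)} \mathbf{N}_\xi^2 = \|(\xi_3,\xi_4,\xi_5)\|_p^{-1} \sum_{(\alpha,\beta,\gamma)} \mathcal{F}_{\Z_p^3}[f](\alpha,\beta,\gamma),$$
where I used $\mathbf{N}_\xi^2 = \mathbf{N}_\xi$. The remaining sum is evaluated by swapping summation with the $u$-integral: since the three characteristic functions depend on $\alpha,\beta,\gamma$ independently and $\sum_{\alpha \in \widehat{\Z}_p} \1_{\Z_p}(A - \alpha) = 1$ for every $A \in \Q_p$, the three sums collapse and we are left with $\int_{\Z_p} du = 1$. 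This yields the claimed identity.

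The main obstacle is precisely the step of showing $M_\xi = \|(\xi_3,\xi_4,\xi_5)\|_p$ uniformly in $\beta$, since the $\beta$-dependence of the effective linear coefficient $\xi_3 + \beta/2$ is what distinguishes this computation from the simpler product situation of Lemma \ref{lemaauxG53}; the hypothesis $p > 2$ is essential here to control halving $p$-adically and to activate the ultrametric case split.
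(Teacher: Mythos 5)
Your argument is correct and is exactly the route the paper intends: its proof of this lemma is literally the one-line remark ``same arguments as in Lemma \ref{lemaaux},'' and you have carried out precisely that extension, including the one genuinely new point — that $\max\{|\xi_3+\beta/2|_p,|\xi_4|_p,|\xi_5|_p\}$ equals $\|(\xi_3,\xi_4,\xi_5)\|_p$ uniformly over the $\beta$ with nonzero contribution — which you verify correctly using $|\beta|_p\leq|\xi_4|_p$ and $|2|_p=1$. No gaps.
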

\begin{proof}
Same arguments as in Lemma \ref{lemaaux}. 
\end{proof}

\begin{teo}\label{teorepG56}
The unitary dual $\widehat{\mathbb{G}}^{5,6}$ of $\mathbb{G}^{5,6}$ can be identified with the union of the following two disjoint subsets of $\widehat{\Z}_p^{5}$: $\widehat{\mathbb{G}}^{5,6}=A_1 \cup A_2$, where
$$A_1 =  \{\xi \in \widehat{\Z}_p^{5}  \, : \, |\xi_5|_p=1 \leq |\xi_4|_p <|\xi_3|_p \, \wedge \,  (\xi_1 , \xi_2 , \xi_3) \in \widehat{\mathbb{H}}_1 , \, \text{or}  \,  |\xi_3|_p = 1 \,  \wedge  \,  \xi_1 \in \Q_p / p^{\vartheta(\xi_4)} \Z_p \},$$
$$A_2 = \{\xi \in \widehat{\Z}_p^5 \, : \, |\xi_5|_p>1, \, (\xi_3, \xi_4) \in \Q_p^2/ p^{\vartheta( \xi_5)} \Z_p^2, \, \, \, (\xi_1 , \xi_2) \in  \Q_p^2 / p^{\vartheta(\xi_3, \xi_4, \xi_5)} \Z_p^2\}.$$
Moreover, if we denote $K_\xi:= p^{-\vartheta(\xi_3 , \xi_4, \xi_5)}\Z_p \times p^{-\vartheta(\xi_5)}\Z_p$ and $I_\xi:= \Z_p^2 / K_\xi$, each unitary irreducible representation can be realized in the finite dimensional Hilbert space $$\mathcal{H}_\xi := \mathrm{span}_\C \{\|(\xi_3, \xi_4, \xi_5)\|_p^{1/2}|\xi_5|_p^{1/2}\1_{h + K_\xi}(u) \, : \, h \in I_\xi\}, \, \, d_\xi :=\mathrm{dim}_\C (\mathcal{H}_\xi) = \| (\xi_3 , \xi_4, \xi_5)\|_p |\xi_5 |_p.$$If we define the polynomials \begin{align*}
    P_\xi (\mathbf{x} , u) &:=\xi \cdot \mathbf{x}+ \xi_3 x_2 u_1 +\xi_4(  x_2 \frac{u_1^2}{2} + u_1x_3 ) + \xi_5 \big(\frac{1}{2}x_2^2 x_1 + \frac{1}{6}x_2 x_1^3 +x_3 u_2 + x_1 x_2 u_2\\ & + \frac{1}{6}x_1^3 u_2 + \frac{1}{2} x_1 u_2^2 +x_4 u_1 + \frac{1}{2}x_2 x_1^2 u_1 + \frac{1}{2} x_1 x_2 u_1^2 + \frac{1}{2}x_1^3 u_1^2 + \frac{1}{2} x_1^2 u_1 u_2 + \frac{1}{2} x_1 u_2 u_1^2 \big)
\end{align*} then the representations act according to the formula \[\pi_\xi(\mathbf{x}) \varphi (u) :=
  e^{2 \pi i \{ P_\xi (\mathbf{x} , u) \}_p} \varphi (u + (x_1, x_2)).\]With this realization and the natural choice of basis for $\mathcal{H}_\xi$, the associated matrix coefficients are going to be   \[ (\pi_\xi)_{hh'} (\mathbf{x})= e^{2 \pi i \{ P_\xi (\mathbf{x} , h')\}_p} \1_{h' - h + p^{-\vartheta(\xi_3, \xi_4, \xi_5)} \Z_p \times p^{-\vartheta( \xi_5)} \Z_p } (x_1,x_2), \]and the associated characters are $\chi_{\pi_\xi} (\mathbf{x}) =$ \[ \| (\xi_3 , \xi_4, \xi_5)\|_p |\xi_5|_p  e^{2 \pi i \{\xi \cdot \mathbf{x} \}_p} \1_{p^{-\vartheta(\xi_3 , \xi_4, \xi_5)}\Z_p } (x_1) \1_{p^{-\vartheta(\xi_5)}\Z_p } (x_2)\1_{p^{-\vartheta(\xi_5)}\Z_p } (x_3)\int_{\Z_p }e^{ 2 \pi i \{ \xi_4 x_2 \frac{u_1^2}{2}+ (\xi_3x_2 +  \xi_4 x_3  + \xi_5 x_4) u_1  \}_p} du_1.   \]Sometimes we will use the notation $$\mathcal{V}_\xi := \mathrm{span}_\C \{ (\pi_\xi)_{hh'} \, : \, h,h' \in I_\xi \}.$$
\end{teo}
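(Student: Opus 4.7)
The plan is to mirror the arguments used previously for $\mathbb{G}^{5,3}$ and $\mathbb{G}^{5,5}$, partitioning $\widehat{\mathbb{G}}^{5,6}$ according to whether a representation is trivial on the center $\mathcal{Z}(\mathbb{G}^{5,6}) = \mathbf{exp}(\Z_p X_5)$. When $|\xi_5|_p = 1$ the representation descends to the quotient $\mathbb{G}^{5,6}/\mathcal{Z}(\mathbb{G}^{5,6}) \cong \mathcal{B}_4(\Z_p)$, and Theorem \ref{TeoRepresentationsB4} recovers exactly the set $A_1$. For $|\xi_5|_p > 1$ I would build the representations explicitly on $\mathcal{H}_\xi$ via the prescribed formula, first verifying that $\mathcal{H}_\xi$ is stable under $\pi_\xi$: an $u_1$-shift by $t$ with $|t|_p \leq \|(\xi_3,\xi_4,\xi_5)\|_p^{-1}$ and an $u_2$-shift by $s$ with $|s|_p \leq |\xi_5|_p^{-1}$ leave the exponent invariant modulo $\Z_p$, by inspection of $P_\xi$.

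The first key step is establishing the cocycle identity $P_\xi(\mathbf{x},u) + P_\xi(\mathbf{y}, u + (x_1,x_2)) = P_\xi(\mathbf{x}\star\mathbf{y}, u)$, which forces $\pi_\xi(\mathbf{x})\pi_\xi(\mathbf{y}) = \pi_\xi(\mathbf{x}\star\mathbf{y})$. This is the most delicate step, because $P_\xi$ combines the Engel-type terms inherited from $[X_1,X_3]=X_4,\,[X_1,X_4]=X_5$ with the Heisenberg-type coupling $x_3 u_2$ coming from $[X_2,X_3]=X_5$; one must carefully track the cross terms produced by the $\star$-product. Unitarity then follows from verifying $-P_\xi(\mathbf{x}, u - (x_1,x_2)) \equiv P_\xi(\mathbf{x}^{-1}, u) \pmod{\Z_p}$. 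Matrix coefficients and the character are then a direct computation with the natural orthonormal basis: the $u_2$-integration produces $\mathbf{1}_{p^{-\vartheta(\xi_5)}\Z_p}(x_2)\mathbf{1}_{p^{-\vartheta(\xi_5)}\Z_p}(x_3)$ and the $u_1$-integration leaves the quadratic Gaussian-type integral in the statement.

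Irreducibility is verified through $\int_{\mathbb{G}^{5,6}} |\chi_{\pi_\xi}(\mathbf{x})|^2 d\mathbf{x} = 1$. After using the various indicator factors to reduce the integration to the three remaining variables $x_2,x_3,x_4$, the square modulus of the $u_1$-integral becomes exactly the quantity estimated in Lemma \ref{lemaauxG56}, giving $\|(\xi_3,\xi_4,\xi_5)\|_p^{-1}$; combined with the prefactor $\|(\xi_3,\xi_4,\xi_5)\|_p^{2}|\xi_5|_p^{2}$ and the measures $|\xi_5|_p^{-2}$ coming from the two $\mathbf{1}_{p^{-\vartheta(\xi_5)}\Z_p}$ factors, this yields $1$. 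To count inequivalent classes, I would collapse the character's dependence on $(\xi_1,\xi_2,\xi_3,\xi_4)$: the indicators $\mathbf{1}_{p^{-\vartheta(\xi_3,\xi_4,\xi_5)}\Z_p}(x_1)$ and $\mathbf{1}_{p^{-\vartheta(\xi_5)}\Z_p}(x_2)$ force $\xi_1,\xi_2$ to be counted modulo $p^{\vartheta(\xi_3,\xi_4,\xi_5)}\Z_p$ and the third indicator shows $(\xi_3,\xi_4)$ may be taken modulo $p^{\vartheta(\xi_5)}\Z_p^2$, which is precisely the parametrization $A_2$. Completeness closes with the Plancherel identity
\[
\sum_{[\pi_\xi] \in \widehat{\mathbb{G}}^{5,6}\cap \mathbb{G}^{5,6}(p^n\Z_p)^{\bot}} d_\xi^2 \,=\, p^{4n} + \sum_{1<|\xi_5|_p\leq p^n}\,\sum_{(\xi_3,\xi_4)\in \Q_p^2/p^{\vartheta(\xi_5)}\Z_p^2}\,\sum_{(\xi_1,\xi_2)\in \Q_p^2/p^{\vartheta(\xi_3,\xi_4,\xi_5)}\Z_p^2} \|(\xi_3,\xi_4,\xi_5)\|_p^2 |\xi_5|_p^2,
\]
a bookkeeping sum that collapses to $p^{5n} = |\mathbb{G}^{5,6}/\mathbb{G}^{5,6}(p^n\Z_p)|$ exactly as in the proof for $\mathbb{G}^{5,3}$. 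The main obstacle is the algebraic verification of the cocycle identity, where every one of the cubic and quartic monomials in $P_\xi$ must match a commutator contribution from $\mathbf{x}\star\mathbf{y}$.
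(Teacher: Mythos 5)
Your proposal follows essentially the same route as the paper: split off the $|\xi_5|_p=1$ representations via the quotient $\mathbb{G}^{5,6}/\mathbf{exp}(\Z_p X_5)\cong\mathcal{B}_4$ and Theorem \ref{TeoRepresentationsB4}, construct the $|\xi_5|_p>1$ representations through the cocycle identity for $P_\xi$, verify irreducibility of the characters with Lemma \ref{lemaauxG56}, count classes by identifying coincident characters, and close with the Plancherel sum $\sum d_\xi^2 = p^{5n}$. The only cosmetic difference is your attribution of the $\1_{p^{-\vartheta(\xi_5)}\Z_p}(x_2)$ factor to the $u_2$-integration rather than to the diagonal condition $h=h'$ in the second coordinate, which does not affect the argument.
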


\begin{proof}
First, we consider the representations which are trivial on $H=\textbf{exp}(\Z_p X_5)$. Since $\mathbb{G}^{5,6}/H \cong \mathcal{B}_4$ we can also consider $\mathbb{G}^{5,6}$ as an extension of $\mathcal{B}_4$, so the representation theory of $\mathbb{G}^{5,6}$ should also include that of $\mathcal{B}_4$. See again Theorem \ref{TeoRepresentationsB4} and the full calculations in \cite{velasquezrodriguez2024Engelspectrumvladimirovsublaplaciancompact}. On the other hand, if a representation is not trivial on the variable $x_5$, we must have $|  \xi_5 |_p>1$, and we can see how $\pi_\xi$ is indeed a representation since $P_\xi(\mathbf{x}, u) + P_\xi ( \textbf{y} , u + (x_1 , x_2))=P_\xi (\mathbf{x} \star \textbf{y}, u)$. This defines an unitary operator and, by using the natural choice of basis for this space, we can compute \begin{align*}
    (\pi_{\xi }& (\mathbf{x}))_{h h'}= (\pi_{\xi} (\mathbf{x}) \varphi_h , \varphi_{h'})_{L^2 (\Z_p^2)} \\ &= \| (\xi_3, \xi_4, \xi_5)\|_p |\xi_5|_p \int_{\Z_p^2} e^{2 \pi i \{ P_\xi (x , u) \}_p} \1_h (u +(x_1,x_2)) \1_{h'} (u) du \\ &= \| ( \xi_3, \xi_4, \xi_5)\|_p|\xi_5|_p  e^{2 \pi i \{\xi \cdot \mathbf{x} \}_p} \1_{h' - h + K_\xi } (x_1,x_2) \int_{h' + K_\xi}e^{2 \pi i \{ P_\xi (\mathbf{x}, u) \}_p} du \\ &=  e^{2 \pi i \{P_\xi (\mathbf{x}, h') \}_p} \1_{h' - h + p^{-\vartheta(\xi_3, \xi_4 , \xi_5)} \Z_p \times  p^{-\vartheta(\xi_5)} \Z_p } (x_1,x_2)   . 
\end{align*}From this it readily follows that \begin{align*}
    \chi_{\pi_\xi}(\mathbf{x})   &= \sum_{h \in I_\xi }\| (\xi_3, \xi_4 , \xi_5)\|_p|\xi_5|_p  e^{2 \pi i \{\xi \cdot \mathbf{x} \}_p} \1_{K_\xi } (x_1,x_2) \times \int_{h + K_\xi}e^{2 \pi i \{  P_\xi (\mathbf{x}, u) \}_p} du \\&=\| (\xi_3 , \xi_4, \xi_5)\|_p|\xi_5|_p  e^{2 \pi i \{\xi \cdot \mathbf{x} \}_p} \1_{p^{-\vartheta(\xi_3, \xi_4 , \xi_5)} \Z_p \times  p^{-\vartheta(\xi_5)} \Z_p } (x_1,x_2)  \times \int_{\Z_p^2 }e^{P_\xi (\mathbf{x}, u)} du \\&=\| (\xi_3 , \xi_4, \xi_5)\|_p|\xi_5|_p  e^{2 \pi i \{\xi \cdot \mathbf{x} \}_p} \1_{p^{-\vartheta(\xi_3, \xi_4 , \xi_5)} \Z_p \times p^{-\vartheta(\xi_5)} \Z_p } (x_1,x_2)  \times \int_{\Z_p^2 }e^{P_\xi (\mathbf{x}, u) \}_p} du . 
\end{align*}We can use the function with two purposes. First to check how these representations are indeed irreducible, and second, we can also count the number of different non-equivalent classes of unitary irreducible representations, by counting the number of different functions among these characters. Let us start by noticing how, if $1 < \| (\xi_3 , \xi_4) \| _p \leq |\xi_5|_p,$ then we get \begin{align*}
    \chi_{\pi_\xi} (\mathbf{x}) &=|\xi_5|_p^2  e^{2 \pi i \{\xi \cdot \mathbf{x} \}_p} \1_{p^{-\vartheta(\xi_5)}\Z_p^2 } (x_1,x_2) \int_{\Z_p^2 }e^{2 \pi i \{P_\xi (\mathbf{x}, u) \}_p} du  \\ & = |\xi_5|_p^2  e^{2 \pi i \{\xi \cdot \mathbf{x} \}_p} \1_{p^{-\vartheta(\xi_5)}\Z_p^2 } (x_1,x_2) \1_{p^{-\vartheta(\xi_5)}\Z_p^2 } (x_3,x_4),
\end{align*}so that all of these $\pi_\xi$ are irreducible but equivalent to the same $\pi_\xi$ with $|\xi_3|_p = |\xi_4|_p = 1$. In the case $|\xi_5|_p < \|(\xi_3 , \xi_4) \|_p$ we get \begin{align*}
    \chi_{\pi_\xi} (\mathbf{x}) &=\|(\xi_3, \xi_4) \|_p |\xi_5|_p  e^{2 \pi i \{\xi \cdot \mathbf{x} \}_p} \1_{p^{-\vartheta(\xi_3, \xi_4)}\Z_p } (x_1) \1_{p^{-\vartheta(\xi_5)}}(x_2) \int_{\Z_p^2 }e^{ 2 \pi i \{\xi_3 x_2 u_1 + \xi_4 x_2 \frac{u_1^2}{2}+ \xi_4 x_3 u_1 + \xi_5 (x_4 u_1 + x_3 u_2)  \}_p} du  \\ & = \| (\xi_3 , \xi_4)\|_p |\xi_5|_p  e^{2 \pi i \{\xi \cdot \mathbf{x} \}_p} \1_{p^{-\vartheta(\xi_3 , \xi_4)}\Z_p } (x_1) \1_{p^{-\vartheta(\xi_5)}\Z_p } (x_2)\1_{p^{-\vartheta(\xi_5)}\Z_p } (x_3)\\ &\quad \quad \times \int_{\Z_p }e^{ 2 \pi i \{ \xi_4 x_2 \frac{u_1^2}{2}+ (\xi_3x_2 +  \xi_4 x_3  + \xi_5 x_4) u_1  \}_p} du_1  .
\end{align*}

By making a standard change of variable we can see how \begin{align*}
    &\int_{\mathbb{G}^{5,4}} |\chi_{\pi_\xi} (\mathbf{x})|^2  d\mathbf{x}
    \\ & =\int_{\Z_p^5} \| (\xi_3 , \xi_4)\|_p^2|\xi_5|_p^2 \1_{p^{-\vartheta(\xi_3, \xi_4)} } (x_1,) \1_{p^{-\vartheta(\xi_5)}}(x_2) \1_{p^{-\vartheta( \xi_5)} \Z_p }(x_3) \big|  \int_{\Z_p }e^{ 2 \pi i \{ \xi_4 x_2 \frac{u_1^2}{2}+ (\xi_3x_2 +  \xi_4 x_3  + \xi_5 x_4) u_1  \}_p} du_1 \big|^2   d\mathbf{x} \\ & = \| (\xi_3 , \xi_4)\|_p^2|\xi_5|_p^2 p^{\vartheta(\xi_3, \xi_4)} p^{2 \vartheta(\xi_5)} \int_{\Z_p^5}   \big| \int_{\Z_p }e^{ 2 \pi i \{ \xi_4 x_2 \frac{u_1^2}{2}+ (\xi_3x_2 +  \xi_4 x_3  + \xi_5 x_4) u_1  \}_p} du_1   \big|^2  d\mathbf{x}  \\ &=\| (\xi_3 , \xi_4)\|_p \int_{\Z_p^5}   \big| \int_{\Z_p }e^{ 2 \pi i \{ \xi_4 x_2 \frac{u_1^2}{2}+ (\xi_3x_2 +  \xi_4 x_3  + \xi_5 x_4) u_1  \}_p} du_1  \big|^2  d\mathbf{x}   =1,
\end{align*}where the last equality follows from Lemma \ref{lemaauxG56}. In other words, $\pi_\xi$ is irreducible. Finally, it is easy to see how the list of different functions among the characters $\chi_{\pi_\xi}$ can be indexed by: 
\begin{itemize}
    \item If $|\xi_5|_p = 1$, then $\pi_\xi$ must descend to a representation of $\mathcal{B}_4$, so these are indexed by: $$A_1 = \{\xi \in \widehat{\Z}_p^5 \, : \, | \xi_5|_p=1, \, (\xi_1, \xi_2, \xi_3, \xi_4) \in \widehat{\mathcal{B}}_4\}.$$
    \item If $|\xi_5|_p >1$ then the different characters can be indexed by $$A_2:= \{\xi \in \widehat{\Z}_p^5 \, : 1 <|\xi_5 |_p , \,\,\, (\xi_3, \xi_4) \in \Q_p^2/ p^{\vartheta(\xi_5)} \Z_p^2, \, \, \, (\xi_1 , \xi_2) \in  \Q_p^2 /p^{\vartheta(\xi_3 , \xi_4, \xi_5)}\Z_p^2\}.$$
\end{itemize}
These are all the desired representations since:
\begin{align*}
    \sum_{[\pi_\xi] \in \widehat{\mathbb{G}}^{5,5}} &d_\xi^2 = \sum_{\xi \in  A_1 \, : \, \| \xi \|_p \leq p^n  }d_\xi^2 + \sum_{\xi \in A_2 \, : \, \| \xi \|_p \leq p^n  }d_\xi^2  \\ &= p^{4n} + \sum_{1<|\xi_5|_p \leq p^n} \sum_{(\xi_3, \xi_4) \in \Q_p^2 / p^{\vartheta( \xi_5)} \Z_p^2} \sum_{(\xi_1 , \xi_2) \in \Q_p^2 / p^{\vartheta(\xi_3, \xi_4, \xi_5)} \Z_p^2} (\|(\xi_3 ,\xi_4, \xi_5) \|_p |\xi_5|_p)^2 \\ &= p^{4n} + p^{4n} \sum_{1<|\xi_5|_p \leq p^n} 1 \\&=p^{5n}= |\mathbb{G}^{5,6}/\mathbb{G}^{5,6}(p^n \Z_p)|. 
\end{align*} 
This concludes the proof.
\end{proof}

According to Theorem \ref{teorepG56}, functions $f \in L^2 (\mathbb{G}^{5,6})$ have the Fourier series representation \begin{align*}
    f(\mathbf{x}) &= \sum_{ \| (\xi_3, \xi_4, \xi_5)\|_p =1 } \widehat{f}(\xi) e^{2 \pi i \{ \xi \cdot \mathbf{x}\}_p} + \sum_{|\xi_5|_p = 1, \,\, (\xi_1, \xi_2, \xi_3, \xi_4) \in \widehat{\mathcal{B}}_4} \| (\xi_3, \xi_4) \|_p Tr[ \pi_\xi (\mathbf{x}) \widehat{f}(\xi)] \\ &+ \sum_{\, |\xi_5|_p >1 } \sum_{(\xi_3 , \xi_4) \in \Q_p^2 / p^{\vartheta(\xi_5)}\Z_p^{-1}} \sum_{(\xi_1, \xi_2) \in \Q_p^2 / p^{ \vartheta(\xi_3, \xi_4 , \xi_5)} \Z_p^2} \| (\xi_3, \xi_4, \xi_5) \|_p |\xi_5|_p Tr[ \pi_\xi (\mathbf{x}) \widehat{f}(\xi)]  ,
\end{align*}where the Fourier transform $\widehat{f}(\xi)$ is the linear operator defined as \begin{align*}
    \widehat{f}(\xi) \varphi (u)&:= \int_{\mathbb{G}^{5,6}} f(\mathbf{x}) \pi_\xi^* (\mathbf{x}) \varphi(u) d \mathbf{x} \\ &= \int_{\Z_p^5} f(\mathbf{x}) e^{2 \pi i \{ P_\xi (\mathbf{x}^{-1}, u) \}_p} \varphi (u - (x_1 , x_2)) , d\mathbf{x}, \quad \varphi \in \mathcal{H}_\xi.
\end{align*}Alternatively, if we define $$\Tilde{P}_\xi (\mathbf{x} , u) := P_\xi (\mathbf{x} , u)- \xi \cdot \mathbf{x}$$ by using our expressions for the matrix coefficients we can see this operator as the matrix $\widehat{f}(\xi) \in \C^{\|(\xi_3, \xi_4, \xi_5)\|_p|\xi_5|_p \times \|(\xi_3, \xi_4, \xi_5)\|_p |\xi_5|_p}$ defined by the expression \begin{align*}
    \widehat{f}(\xi)_{h h'}&:= \int_{\mathbb{G}^{5,6}} f(\mathbf{x}) \pi_\xi^* (\mathbf{x})_{hh'} d \mathbf{x} \\ &= \int_{\Z_p^5} f(\mathbf{x})  e^{2 \pi i \{ -P_\xi (\mathbf{x} , h)\}_p} \1_{h - h' + p^{-\vartheta(\xi_3, \xi_4, \xi_5)} \Z_p \times p^{-\vartheta( \xi_5)} \Z_p } (x_1,x_2) d\mathbf{x}, 
\end{align*}where $\varphi \in \mathcal{H}_\xi$, which in terms of the $\Z_p^5$-Fourier transform is expressed as \begin{align*}
    \widehat{f}&(\xi)_{h h'} = \mathcal{F}_{\Z_p^5} [e^{2 \pi i \{ -\Tilde{P}_\xi (\mathbf{x} , h)\}_p} \1_{h - h' + p^{-\vartheta(\xi_3, \xi_4, \xi_5)} \Z_p \times p^{-\vartheta( \xi_5)} \Z_p } (x_1,x_2) f] (\xi ) \\ &=\mathcal{F}_{\Z_p^5} [e^{2 \pi i \{ -\Tilde{P}_\xi (\mathbf{x} , h)\}_p} \1_{h - h' + p^{-\vartheta(\xi_3, \xi_4, \xi_5)} \Z_p \times p^{-\vartheta( \xi_5)} \Z_p } (\cdot, \cdot)  ] *_{\widehat{\Z}_p^5} \mathcal{F}_{\Z_p^5}[f] (\xi ).
\end{align*}
With this Fourier series representation, a linear invariant operator $T$ can be written as a pseudo-differential operator \begin{align*}
    T_\sigma f(\mathbf{x}) &= \sum_{ \| (\xi_3, \xi_4, \xi_5)\|_p =1 } \sigma(\xi)\widehat{f}(\xi) e^{2 \pi i \{ \xi \cdot \mathbf{x}\}_p} + \sum_{|\xi_5|_p = 1, \,\, (\xi_1, \xi_2, \xi_3, \xi_4) \in \widehat{\mathcal{B}}_4} \| (\xi_3, \xi_4) \|_p Tr[ \pi_\xi (\mathbf{x}) \sigma(\xi)\widehat{f}(\xi)] \\ &+ \sum_{\, |\xi_5|_p >1 } \sum_{(\xi_3 , \xi_4) \in \Q_p^2 / p^{\vartheta(\xi_5)}\Z_p^{-1}} \sum_{(\xi_1, \xi_2) \in \Q_p^2 / p^{ \vartheta(\xi_3, \xi_4 , \xi_5)} \Z_p^2} \| (\xi_3, \xi_4, \xi_5) \|_p |\xi_5|_p Tr[ \pi_\xi (\mathbf{x})\sigma(\xi) \widehat{f}(\xi)]  ,
\end{align*}

To conclude this section we are interested once again in studying the associated Vladimirov sub-Laplacian as $$\mathcal{L}_{sub}^\alpha f (x) : = (\partial^\alpha_{X_1} + \partial^\alpha_{X_2} ) f (x), \,\,\, f \in \mathcal{D}(\mathbb{G}^{5,6}).$$For $\alpha>0$ the following theorem proves how our conjecture for this operator holds true in the last one of the $5$-dimensional groups $\mathbb{G}^{5,6}$. 
\begin{teo}\label{teosubLapG56}

The Vladimirov sub-Laplacian $\mathcal{L}_{sub}^\alpha$ is a globally hypoelliptic operator, which is invertible in the space of mean zero functions. Moreover, the space $L^2(\mathbb{G}^{5,6})$ can be written as the direct sum $$L^2(\mathbb{G}^{5,6}) = \overline{\bigoplus_{\xi \in \widehat{\mathbb{G}}^{5,6}} \bigoplus_{h' \in  I_\xi} \mathcal{V}_{\xi}^{h'}}, \, \,\, \mathcal{V}_{\xi} = \bigoplus_{h' \in  I_\xi} \mathcal{V}_{\xi}^{h'}, $$where each finite-dimensional sub-space$$\mathcal{V}_{\xi}^{h'}:= \mathrm{span}_\C \{ (\pi_{\xi})_{hh'} \, : \, h \in I_\xi \},$$is an invariant sub-space of $\mathcal{L}_{sub, \textbf{W}}^\alpha$, and when restricted to $\mathcal{V}_{\xi }^{h'}$ we get the estimate \begin{align*}
    \| \mathscr{L}_{sub}^\alpha |_{\mathcal{V}_\xi^{h'}} \|_{inf} &= \min_{\tau, h'}\lambda_{\xi, h', \tau} (\mathscr{L}_{sub}^\alpha) \gtrsim | \xi_1 |_p^\alpha + |\xi_2 |_p^\alpha.
\end{align*}
\end{teo}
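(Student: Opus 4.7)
The plan is to mirror the proof of Theorem \ref{tepsubLapG54} for the Cartan group, which is the closest precedent since the polynomial $P_\xi$ there is likewise nonlinear. The scheme has three steps: (i) recognize $\mathcal{V}_\xi^{h'}$ as a left-invariant subspace for $\mathcal{L}_{sub}^\alpha$, (ii) compute $\sigma_{\partial^\alpha_{X_j}}(\xi)$ by restricting $\pi_\xi$ to the one-parameter subgroup $\exp(uX_j)$ and Fourier-expanding the resulting indicator factor, and (iii) combine the two symbols and extract a lower bound for $\|\sigma_{\mathcal{L}_{sub}^\alpha}(\xi)\|_{inf}$ strong enough to invoke the Kirilov criterion of Definition \ref{defihypo}.

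First I would note that $\partial^\alpha_{X_j}$ is left-invariant (the translate $\mathbf{x}\star\exp(tX_j)^{-1}$ sits to the right of $\mathbf{x}$), hence $\mathcal{L}_{sub}^\alpha$ is, and the ``column'' space $\mathcal{V}_\xi^{h'}$ is stabilized by left translations. To compute the symbol I would plug $\mathbf{x}=(u,0,0,0,0)$ into the formula for $(\pi_\xi)_{hh'}$, obtaining
\[
(\pi_\xi((u,0,0,0,0)))_{hh'} = e^{2\pi i \{Q_1(u; \xi, h')\}_p}\, \mathbf{1}_{h'_1 - h_1 + p^{-\vartheta(\xi_3,\xi_4,\xi_5)}\Z_p}(u)\,\delta_{h_2, h'_2},
\]
where $Q_1(u;\xi,h')$ is the polynomial in $u$ obtained from $P_\xi$ by the substitution; its linear part in $u$ at $u=0$ is $\xi_1$ plus $\xi_5$-corrections of degrees $\leq 2$ in $h'=(h'_1,h'_2)$. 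Following exactly the Toeplitz-matrix computation from Theorem \ref{tepsubLapG54}, I would Fourier expand the indicator into characters $e^{2\pi i\{\tau_1 u\}_p}$ with $1\leq|\tau_1|_p\leq\|(\xi_3,\xi_4,\xi_5)\|_p$ and apply $\partial^\alpha_u$ to each character, producing eigenvalues of the form $|\xi_1+\tau_1 + R_1(\xi,h')|_p^\alpha - (1-p^{-1})/(1-p^{-(\alpha+1)})$. The analogous calculation with $\mathbf{x}=(0,u,0,0,0)$ yields eigenvalues for $\partial^\alpha_{X_2}$ of the shape $|\xi_2+\tau_2+R_2(\xi,h')|_p^\alpha - (1-p^{-1})/(1-p^{-(\alpha+1)})$, with the Fourier parameter ranging over $1\leq|\tau_2|_p\leq|\xi_5|_p$.

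Summing the two contributions gives the full spectrum of $\sigma_{\mathcal{L}_{sub}^\alpha}(\xi)|_{\mathcal{V}_\xi^{h'}}$. Minimizing over the $\tau_j$ and $h'$, one obtains the upper bound $\|\sigma_{\mathcal{L}_{sub}^\alpha}(\xi)\|_{op}\lesssim \|\xi\|_p^\alpha$ trivially and, on the other side, $\|\sigma_{\mathcal{L}_{sub}^\alpha}(\xi)\|_{inf}\gtrsim |\xi_1|_p^\alpha + |\xi_2|_p^\alpha$, because the only way for both $|\xi_1+\tau_1+R_1|_p$ and $|\xi_2+\tau_2+R_2|_p$ to be small is for $|\xi_1|_p,|\xi_2|_p$ to be already small; this is the same mechanism as in Theorems \ref{teosubLapG22}, \ref{tepsubLapG54} and proves both the claimed lower bound and, via the Kirilov criterion, the global hypoellipticity, while invertibility on the mean-zero subspace is immediate from strict positivity of the non-trivial eigenvalues.

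The main obstacle, as in the Cartan case, is the bookkeeping: the polynomial $P_\xi$ of Theorem \ref{teorepG56} contains a long list of cubic and mixed monomials arising from the extra relation $[X_2,X_3]=X_5$ on top of the filiform tower, and one has to check that after restriction to each one-parameter subgroup the effective ``$X_j$-derivative'' of $P_\xi$ at the origin carries $\xi_j$ as its leading coefficient, so that the infimum really behaves like $|\xi_j|_p^\alpha$. Unlike the Cartan case, no $p$-adic Gaussian integral is needed in this step because only the linear-in-$u$ part of $Q_j$ matters for the spectrum; higher-order contributions are absorbed into the $h'$-dependent shifts $R_j$, which does not affect the infimum estimate once $\tau_j$ is allowed to vary over its full range.
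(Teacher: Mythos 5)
Your overall scheme coincides with the paper's own proof: the paper likewise restricts $\pi_\xi$ to the one-parameter subgroups $\exp(tX_1)$ and $\exp(tX_2)$, obtaining matrix functions $\psi_\xi^1,\psi_\xi^2$, and reads off the symbol componentwise as $\partial_t^\alpha(\psi_\xi^j)_{hh'}|_{t=0}$, exactly as in Theorems \ref{teosubLapG22} and \ref{tepsubLapG54}. The identification of $\mathcal{V}_\xi^{h'}$ as an invariant subspace, the Fourier expansion of the indicator factor in $\tau$, and the final infimum argument are all as in the paper.

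There is, however, one step you dismiss too quickly. You claim that ``no $p$-adic Gaussian integral is needed \ldots{} because only the linear-in-$u$ part of $Q_j$ matters for the spectrum; higher-order contributions are absorbed into the $h'$-dependent shifts $R_j$.'' For $\mathbb{G}^{5,6}$ the restriction of $P_\xi$ to $\mathbf{x}=(t,0,0,0,0)$ produces an exponent containing the genuinely $t$-dependent terms $\tfrac{\xi_5 h_1'h_2'}{2}\,t^2$ and $\xi_5\bigl(\tfrac{h_2'}{6}+\tfrac{(h_1')^2}{2}\bigr)t^3$ (see the definition of $\psi_\xi^1$ in the paper's proof). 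Since $|\xi_5|_p>1$, these coefficients need not lie in $\Z_p$, so the quadratic and cubic terms do not drop out of the fractional part and cannot be folded into a constant shift $R_1(\xi,h')$: the function $t\mapsto e^{2\pi i\{at+bt^2+ct^3\}_p}$ is not a character in $t$ and is not an eigenfunction of $\partial_t^\alpha$ by inspection. To conclude that the eigenvalue is governed by $\bigl|\frac{d}{dt}Q_1(t)\big|_{t=0}\bigr|_p^\alpha-\frac{1-p^{-1}}{1-p^{-(\alpha+1)}}$ you must invoke the composition formula of Lemma \ref{lemaauxcomp} for parabolic ball--morphisms (this is precisely how the Cartan case handles the cubic phase in $\psi_\xi^2$), or else evaluate the resulting oscillatory integral directly. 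Once that lemma is cited and the hypothesis that the phase defines a ball--morphism on the relevant cosets is checked, the rest of your argument --- the range of $\tau_j$, the minimization over $h'$, and the passage from $\|\sigma_{\mathcal{L}^\alpha_{sub}}(\xi)\|_{inf}\gtrsim|\xi_1|_p^\alpha+|\xi_2|_p^\alpha$ to global hypoellipticity and invertibility on mean-zero functions --- goes through as you describe.
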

\begin{proof}
As we anticipated, we will be following the same arguments as in Theorem \ref{teosubLapG22}. In order to prove Theorem \ref{teosubLapG56} we want to use this calculate the associated symbol of $\mathscr{L}_{sub}^\alpha$, and its respective invariant sub-spaces. Let us start by computing the symbols of the directional VT operators $\partial_{X_1}^{\alpha}, \partial_{X_2}^{\alpha}$,  $\alpha>0$, by using what we got for the matrix coefficients. For $\partial_{X_1}^{\alpha}$, let us introduce some notation: let $\psi_\xi^1 : \Z_p \to \C^{d_\xi \times d_\xi}$ be the matrix function with entries $$(\psi_{\xi}^1)_{hh'} (t):= \delta_{h_2' h_2} e^{2 \pi i \{ (\xi_1 + \xi_5 (\frac{(h_2')^2}{2} + \frac{(h_1')^2(h_2')}{2}) )t+  \frac{\xi_5 (h_1')(h_2')}{2}t^2 + \xi_5(\frac{h_2'}{6} + \frac{(h_1')^2}{2})t^3\}_p}\1_{h_1' - h_1 + p^{- \vartheta (\xi_3 , \xi_4, \xi_5 )} \Z_p }(t), \, \, \, u \in \Z_p.$$ The associated associated symbol $\sigma_{\partial_{X_1}^{\alpha}} (\xi) = \partial_{X_1}^{\alpha} \pi_{\xi}|_{\mathbf{x}=0}$ is a matrix which can be written as  \[ \sigma_{\partial_{X_1}^{\alpha}} (\xi)_{hh'}  =\begin{cases}
 | \xi_1 |_p^\alpha - \frac{1 - p^{-1}}{1 - p^{-(\alpha +1)}}  \quad & \text{if} \quad | \xi_3 |_p = | \xi_4 |_p = |\xi_5|_p= 1,\\\partial^\alpha_t(e^{2 \pi i \{ \xi_1 t \}_p}\1_{h' - h' + p^{- \vartheta (\xi_3, \xi_4 )} \Z_p })|_{t=0} \quad & \text{if} \quad 1=|\xi_5|_p, \,\, \| (\xi_3, \xi_4 )\|_p>1, \, \, h \in \Z_p/p^{- \vartheta (\xi_3 , \xi_4)}\Z_p, \\\partial^\alpha_t(\psi_{\xi}^1)_{hh'})|_{t=0} \quad & \text{if} \quad 1<|\xi_5|_p, \,\, h \in p^{- \vartheta(\xi_3, \xi_4, \xi_5)}\Z_p \times p^{- \vartheta( \xi_5)}\Z_p, 
\end{cases}
\]where $\partial_{t}^\alpha$ is the VT operator $$\partial_{t}^\alpha \psi (t):= \int_{\Z_p} \frac{\psi(t - v) - \psi(t) }{|v|_p^{\alpha+1}} dv, \, \, \, \, \psi \in \mathcal{D}(\Z_p).$$ In a similar way, for $\partial_{X_2}^{\alpha}$ let us introduce the $$(\psi_{\xi}^2)_{hh'} (t):= \delta_{h_1' h_1} e^{2 \pi i \{ (\xi_2 + \xi_3 (h_1') +  \xi_4 \frac{(h_1')^2}{2}) t \}_p}\1_{h_2' - h_2 + p^{- \vartheta ( \xi_5 )} \Z_p }(t), \, \, \, u \in \Z_p.$$ The associated associated symbol $\sigma_{\partial_{X_2}^{\alpha}} (\xi) = \partial_{X_2}^{\alpha} \pi_{\xi}|_{\mathbf{x}=0}$ is a matrix which can be written as  \[ \sigma_{\partial_{X_2}^{\alpha}} (\xi)_{hh'}  =\begin{cases}
 | \xi_2 |_p^\alpha - \frac{1 - p^{-1}}{1 - p^{-(\alpha +1)}}  \quad & \text{if} \quad | \xi_3 |_p = | \xi_4 |_p = |\xi_5|_p= 1,\\\big( | \xi_2  + \xi_3    h +  \xi_4  \frac{h^2}{2} |_p^\alpha - \frac{1 - p^{-1}}{1 - p^{-(\alpha +1)}} \big) \delta_{hh'}  \quad & \text{if} \quad 1=|\xi_5|_p, \,\, \| (\xi_3, \xi_4 )\|_p>1, \, \, h \in \Z_p/p^{- \vartheta (\xi_3 , \xi_4)}\Z_p, \\\partial^\alpha_t(\psi_{\xi}^2)_{hh'})|_{t=0} \quad & \text{if} \quad 1<|\xi_5|_p, \,\, h \in p^{- \vartheta(\xi_3, \xi_4, \xi_5)}\Z_p \times p^{- \vartheta( \xi_5)}\Z_p. 
\end{cases}
\]

Summing up, the symbol of the Vladimirov Sub-Laplacian $\mathscr{L}^\alpha_{sub}$ is component-wise $\sigma_{\mathscr{L}^\alpha_{sub}} (\xi)_{hh'}  =$ \[ \begin{cases}
  | \xi_1 |_p^\alpha +| \xi_2 |_p^\alpha - 2\frac{1 - p^{-1}}{1 - p^{-(\alpha +1)}}  \quad & \text{if} \quad | \xi_3 |_p= |\xi_4|_p = |\xi_5|_p = 1, \\ (\partial_{u}^\alpha  +| \xi_2  + \xi_3    h +  \xi_4  \frac{h^2}{2} |_p^\alpha - \frac{1 - p^{-1}}{1 - p^{-(\alpha +1)}} )(e^{2 \pi i \{ \xi_1 u \}_p}\1_{h' - h + p^{- \vartheta (\xi_3 )} \Z_p }) (u))|_{u=0}, \quad & \text{if} \quad 1= |\xi_5|_p< \|(\xi_3,\xi_4)\|_p , \\ \partial^\alpha_t(\psi_{\xi}^1)_{hh'})|_{t=0} + \partial^\alpha_t(\psi_{\xi}^2)_{hh'})|_{t=0}, \quad & \text{if} \quad 1< |\xi_5|.
\end{cases}
\]
\end{proof}

\section{Final remarks}
\subsection{Relation to non-invariant vector fields on $\Z_p^d$} One of the main advantages of invariant operators is how, using the representation theory of the group, one can construct easily a fundamental solution in terms of the associated symbol, just like in Rockland's seminal paper \cite{Rockland1978}. To give an example, in the particular case when $\mathbb{G}=\mathfrak{g}=\Z_p^d$, the directional VT operators introduced in Definition \ref{defiDirectionalK} take the form $$\partial_V^\alpha f (x) := \frac{1 - p^{\alpha}}{1-p^{-(\alpha+1)}}\int_{\Z_p^d} \frac{f(x -tV(x)) - f(x)}{| t |_p^{ \alpha + 1}}dt, $$where $V:\Z_p^d \to \Z_p^d$ is an analytic vector field. For this operator, one can easily compute its associated symbol: 
\[\sigma_{\partial_{V}^\alpha}(x,\xi) = \begin{cases}
    0, \, & \, \, \text{if} \, \, |V(x) \cdot \xi|_p \leq 1,\\|V(x) \cdot \xi|_p^\alpha - \frac{1 - p^{-1}}{1 - p^{- (\alpha + 1)}}  & \, \, \text{if} \, \, |V(x) \cdot\xi|_p>1.
\end{cases}
 \]With the above calculation it is easy to check how the Vladimirov Laplacian $\mathscr{L}^\alpha$ on $\Z_p^d$ is a globally hypoelliptic operator with symbol $$\sigma_{\mathscr{L}^\alpha}(\xi) = \sum_{i=1}^d \Big( | \xi_i|_p^\alpha - \frac{1 - p^{-1}}{1 - p^{- (\alpha + 1)}} \Big), \quad \xi \in \widehat{\Z}_p^d, $$and as a direct consequence we can construct a fundamental solution for the operator given by the following convolution kernel: $$E_\alpha (x) = \sum_{\xi \in \widehat{\Z}_p^d} \sigma_{\mathscr{L}^\alpha}^{-1}(\xi) e^{2 \pi i \{ \xi \cdot x \}_p}.$$When the vector field $V$ is not constant, constructing a fundamental solution becomes more complicated, but there are some interesting cases where it is actually possible to obtain it explicitly. For instance, on $\Z_p^3$, consider the directional VT operator in the direction of the vector field $$V (x,y,z)= e_{2} +  x e_{3}=(0,1,x), $$defined via the formula \begin{align*}
    \partial_{V}^\alpha f(x,y,z) &= \int_{\Z_p} \frac{f((x,y,z) + t V(x,y,z)) - f(x,y,z)}{|t|_p^{\alpha + 1}}dt \\&= \int_{\Z_p} \frac{f(x, y + t ,z + t x ) - f(x,y,z)}{|t|_p^{\alpha + 1}}dt . 
\end{align*}From the perspective of the Fourier analysis on $\Z_p^3$, the resulting operator is non-invariant, and its associated symbol, corresponding to the Fourier transform on $\Z_p^{2d+1}$ is going to be $$\sigma_{\partial_{V}^\alpha} (\gamma)= \int_{\Z_p} \frac{e^{2 \pi i \{ \gamma \cdot t (0,1,x)  \}_p } - 1}{|t|_p^{\alpha + 1}}dt=|\gamma \cdot (0,1,x)|_p^\alpha - \frac{1 - p^{-1}}{1 - p^{-(\alpha + 1)}}=|\gamma_2 +x \gamma_3|_p^\alpha - \frac{1 - p^{-1}}{1 - p^{-(\alpha + 1)}}.$$With this symbolic representation it is not trivial to check the globall hypoellipticity of the operator, but fortunately we can use an alternative perspective. If instead we consider $f$ as a function on $\mathbb{H}_1$, which is simply $\Z_p^3$ with a different operation, then the operator $\partial_V^\alpha$ is the VT directional operator associated to $Y \in \mathfrak{h}_1 (\Z_p)$, which is invariant and therefore a Fourier multiplier with the group Fourier analysis of $\mathbb{H}_1$. Actually, if we consider an operator like $\partial_x^\alpha + \partial_V^\alpha$ on $\Z_p^3$, then  by using the group Fourier transform we can check how it coincides with the Vladimirov sub-Laplacian $\mathscr{L}^\alpha_{sub}$ on $\mathbb{H}_1$, for which we can construct explicitly a fundamental solution in terms of the $
\star$-convolution kernel $$E_\alpha (x,y,z) = \sum_{\xi \in \widehat{\mathbb{H}}_1 } |\xi_3|_p Tr[ \pi_\xi(x,y,z) \sigma_{\mathscr{L}^\alpha_{sub}}^{-1}(\xi) ] .$$

\subsection{Future research}

There are several things we can argue from the results obtained in this work. More importantly, our analysis here opens several questions that I think are worth to be explored. 
\begin{enumerate}
    \item As we saw before, in every case the unitary dual can be thought as a certain "sub-tree" of the full tree $\widehat{\Z}_p^5$. So, the guess is that, at least for the nilpotent case, in general dimension $d:= \mathrm{dim}_{\Z_p}(\mathbb{G})>5$ the unitary dual of $\mathbb{G}^{5,4}$ should be something similar, a sort of pruned version of the tree $\widehat{\Z}_p^d$. 
    \item It is not clear to me how to generate the representations for arbitrary groups. I know some works on representation theory and the orbit method for some kinds of profinite groups but, to the best knowledge of the author, nothing to produce something like the matrix coefficients in the general case, just the characters. Ideally we would have the matrix coefficients to perform Fourier analysis better, as the case of the Vladimirov sub-Laplacian illustrates. 
    \item A question I would like to answer is whether this operator satisfies some sub-elliptic estimates. 
    \item The definition of directional VT operators works the same on non-compact groups, where at least in the graded case one can also study the Vladimirov sub-Laplacian. In that setting one also has homogeneous structures and homogeneous operators very similar to Rockland operators. 
    \item Another important question I would like to ask is whether one can find a nice expression for the heat kernel of the Vladimirov sub-Laplacian, maybe in a similar way as in \cite{Bendikov2014}. 
    \item Our analysis here does not include any information about the multiplicity of the eigenvalues of the Vladimirov sub-Laplacian. We want to remark how, specially in the case of the Heisenberg group, this operator acts like some kind of $p$-adic Schrodinger operator, like the ones studied in the reference book \cite{vladiBook}.  
\end{enumerate}

\nocite{*}
\bibliographystyle{acm}
\bibliography{main}
\Addresses

\end{document}